\definecolor{dhcol}{rgb}{0,0.5,0}
\definecolor{sccol}{rgb}{0,0,0.5}
\definecolor{amcol}{rgb}{0.5,0,0}
  \newcommand{\Becommente}[1]{{\color{purple}{#1}}}
\newcommand*{\RR}{{\mathbb{R}}}
\def\N{\mathbb{N}}
\def\Z{\mathbb{Z}}
\def\C{\mathbb{C}}
\def\R{\mathbb{R}}
\def\cA{\mathcal{A}}
\def\cg{\mathcal{G}^\sharp}
\def\cG{\mathcal{G}}
\def\dsp{\displaystyle}
\def\thetag{\boldsymbol{\theta}}
\def\kg{\mathbf{k}}
\def\xg{\mathbf{x}}
\def\vg{\mathbf{v}}
\newtheorem{theorem}{\bf Theorem}[section]
\newtheorem{definition}{\bf Definition}[section]
\newtheorem{lemma}{\bf Lemma}[section]
\newtheorem{proposition}{\bf Proposition}[section]
\newtheorem{remark}{\bf Remark}[section]
\newtheorem{corollary}{\bf Corollary}[section]
\begin{document}

\title{Guided modes in a hexagonal periodic graph like domain}

\author{B\'erang\`ere Delourme\footnote{LAGA (UMR7539), Université Sorbonne Paris Nord, Villetaneuse, France, delourme@math.univ-paris13.fr}  \hspace{0.1cm}and  Sonia Fliss\footnote{POEMS, CNRS, INRIA, ENSTA Paris, Institut Polytechnique de Paris, 91120 Palaiseau, France, sonia.fliss@ensta-paris.fr}
}
\maketitle

% Sets running headers as well as PDF title and authors
%%%% Subject entries to be placed here %%%%
{\bf Subjects:} Analysis of PDEs, Spectral Theory\\

%%%% Keyword entries to be placed here %%%%
{\bf Keywords:}{ honeycomb structure, periodic media, quantum graph, guided modes} \\

{\bf Abstract:} This paper deals with the existence of guided waves and edge states in particular two-dimensional media obtained by perturbing  a reference periodic medium with honeycomb symmetry. This reference medium is  a thin periodic domain (the thickness is denoted $\delta>0$) with an hexagonal structure, which is close to an honeycomb quantum graph. %It is well known now that such hexagonal properties could create conical crossings (also named Dirac points), which are the source of fundamental phenomena in condensed Matter physics (e.g. graphene and topological insulators). 
In a first step, we show the existence of Dirac points (conical crossings) at arbitrarily large frequencies if $\delta$ is chosen small enough. We then perturbe the domain by cutting the perfectly periodic medium  along the so-called zig-zag direction, and we consider either Dirichlet or Neumann boundary conditions on the cut edge. In the two cases, we prove  the existence of edges modes as well as their robustness  with respect to some perturbations, namely the location of the cut and the thickness of the perturbed edge. In particular, we show that different locations of the cut lead  to almost-non dispersive edge states, the number of locations increasing with the frequency. All the results are obtained via asymptotic analysis and semi-explicit computations done on the limit quantum graph. 
Numerical simulations illustrate the theoretical results. 
\\

{\bf Acknowledgements: }{The authors acknowledge David Gontier and Antonin Coutand for interesting and fruitful discussions. The authors would like to thank the Isaac Newton Institute for Mathematical Sciences, Cambridge, for support and hospitality during the programme " Mathematical theory and applications of multiple wave scattering" where work on this paper was undertaken. This work was supported by EPSRC grant no EP/R014604/1.
}

% REQUIRED

%%%%%%%%%%%%%%%%%%%%%%%%%%%

%%%%%%%%%% Insert the texts which can accomdate on firstpage in the tag "fmtext" %%%%%

\section{Introduction}
The propagation of waves in periodic media has known a regain of interest the past decades, in optics for micro and nano-technology. Indeed, in some frequency ranges, periodic structures behave as insulators or filters: the corresponding monochromatic waves, also called Floquet modes, cannot propagate in the bulk. The study of these modes is, from a mathematical point of view, related to the spectrum of the underlying operator that presents a so-called band structure: the spectrum may contain some forbidden frequency intervals, called gaps. Even if necessary conditions for the existence of gaps are not known, in lots of papers sufficient conditions are proposed. Let us mention, for instance, that playing with the (high) contrast of the materials \cite{Figotin:1996a,Figotin:1996b,Post:2003,lipton:2017} or the shape of the boundary of the medium \cite{Cardone:2010,Nazarov:2010a,Article_DFJV_Part1}, gaps can be created. 

In Material Science, the spectral study of the graphene, a two dimensional material with a honeycomb structure, which is well described using a tight binding model, has explained its remarkable conductivity properties and its behaviour as a topological insulator in presence of a magnetic field.  Indeed, the associated tight-binding model has a band structure consisting of two dispersion surfaces which conically touch at Dirac points, around the so-called Fermi (or Dirac) energy \cite{Fefferman:2012,Fefferman:2014}. Dirac points have been shown to appear for a large class of honeycomb Schrodinger operators \cite{Fefferman:2012,berkolaiko:2018}. Analogous properties have been proven for another class of elliptic operators of divergence form and with honeycomb symmetry, see \cite{Lee:2019, ammari:2020,Cassier:2021}. This is of particular interest in order to create engineered honeycomb media, also called artificial graphene, in order to reproduce the remarkable topological properties in another context, for photonics \cite{Plotnik:2014,Rechtsman:2013, Noh:2018,Ozawa:2019}, acoustics \cite{Torrent:2012,Zhang:2018,Coutant:2021,Coutant:2022} or elastic \cite{Wang:2015} applications.

 The first aim of this paper is to complement the references mentioned above by proving existence of several Dirac points at different energy, or, in our context, different frequencies. To be more specific, we consider the Laplace operator with Neumann boundary condition in a ladder-like periodic domain with a honeycomb symmetry and we use a standard approach of asymptotic analysis that consists in deducing properties of the operator from the ones of the limit operator when the thickness of the rung tends to 0. The limit domain consists on a honeycomb periodic graph and the limit operator on the second order derivative operator on each edge of the graph together with so-called Kirchhoff conditions at its vertices. The spectrum of the limit operator can be explicitly determined  (see for instance \cite{KuchmentBookGraph,KuchmentPost,KuchmentQuantumSurvey}). Note that in this paper we revisit the result for the quantum graph operator in order to show existence of Dirac points for our 2D operator.. 

 Another phenomenon, which is of great interest in Condensed matter physics, in Optics or Acoustics, is the propagation of energy along a line defect or an edge. 
Indeed the presence of a boundary, an interface or more generally a line perturbation in a periodic medium may create energy localization. This is directly linked to the possible presence of discrete spectrum when perturbing a perfectly periodic operator. Such phenomena can be exploited in quantum, electronic or photonic device design. In the mathematical literature, sufficient conditions on the periodic media and the perturbations have been proposed in order to ensure the existence of such localized and guided waves (see for instance \cite{KuchmentOng:2010,BrownHoangPlumWood2014,BrownHoangPlumWood2015,Article_DFJV_Part1}. Existence of edge states in graphene has been first studied in \cite{Nakada:1996,Fujita:1996} where the importance of the shape of the edge has been highlighted (the so-called zigzag and armchair edges were studied). In \cite{Fefferman:2022,Fefferman:2022b} existence of edge states for any "rational" edge has been investigated. Let us also mention \cite{Lee:2019} showing existence of edge states for photonic graphene. {Some  recent results dealing with the existence of edge spectrum for more general interfaces can be found in~\cite{gontier2021spectral, drouot2023topological}: However, the nature of the edge spectrum, in particular the localization (along the interface) of the associated eigenmodes, is not yet understood}.

  The second aim of this paper is to show existence of edge states or guided modes when our domain is perturbed in the zigzag direction. More precisely, we consider the half-space problem  obtained by cutting the periodic domain along the zizgag direction, and we impose either homogeneous Neumann or  homogeneous Dirichlet conditions on the new part of the boundary. Note that the associated edge states correspond respectively to antisymmetric and symmetric guided waves for the mirror symmetrized medium. We first study the classical zigzag edge, that we show to be robust with respect to local perturbations on the thickness of  the rungs near the edge. Then, following  the arguments used for the study of edge states in presence of dislocations in \cite{Gontier:2020}, we are able to study existence of edge states for any position of the cutting (but still in the same direction), going from the zigzag edge to the so-called bearded edge. We recover in particular the unconventional non dispersive  edge states observed in \cite{Plotnik:2014}, and we show that such phenomenon also occurs at high frequencies, for several locations of the cut, the number of locations increasing with the frequency.

 This paper is organized as follows. In Section~\ref{sec:Section1}, we present the problem under consideration (unperturbed and perturbed geometries) and give the main results. Then, Section~\ref{sec:spectrum_optot} is dedicated to the proof of existence of Dirac points. The existence of guided waves is studied Section~\ref{sec_guidedmodes}.  Numerical illustrations are given in Section~\ref{sec:numericalResultSpectreEssentiel} (essential spectrum) and Section~\ref{sec:numericalResultSpectreDiscret} (edge states and guided modes). Technical results are postponed in Appendices.

\section{Model problem}\label{sec:Section1}
\subsection{Geometry of the domains}\label{sub:geometry}
\subsubsection{The infinite periodic graph and the corresponding fatten graph like domain}

Let us first introduce a hexagonal periodic medium $\Omega_\delta$ that consists of the plane $\RR^2$ minus an infinite set of equi-spaced hexagonal perfect conductor obstacles. The distance between  neighboring obstacles is supposed to be small and is denoted $\delta$, see Figure~\ref{fig:omega} where $\Omega_\delta$ lies in the grey region. 
\begin{figure}[h]
	\begin{center}
	 \includegraphics[width=0.9\textwidth,trim=0cm 0cm 0cm 0cm,clip]{./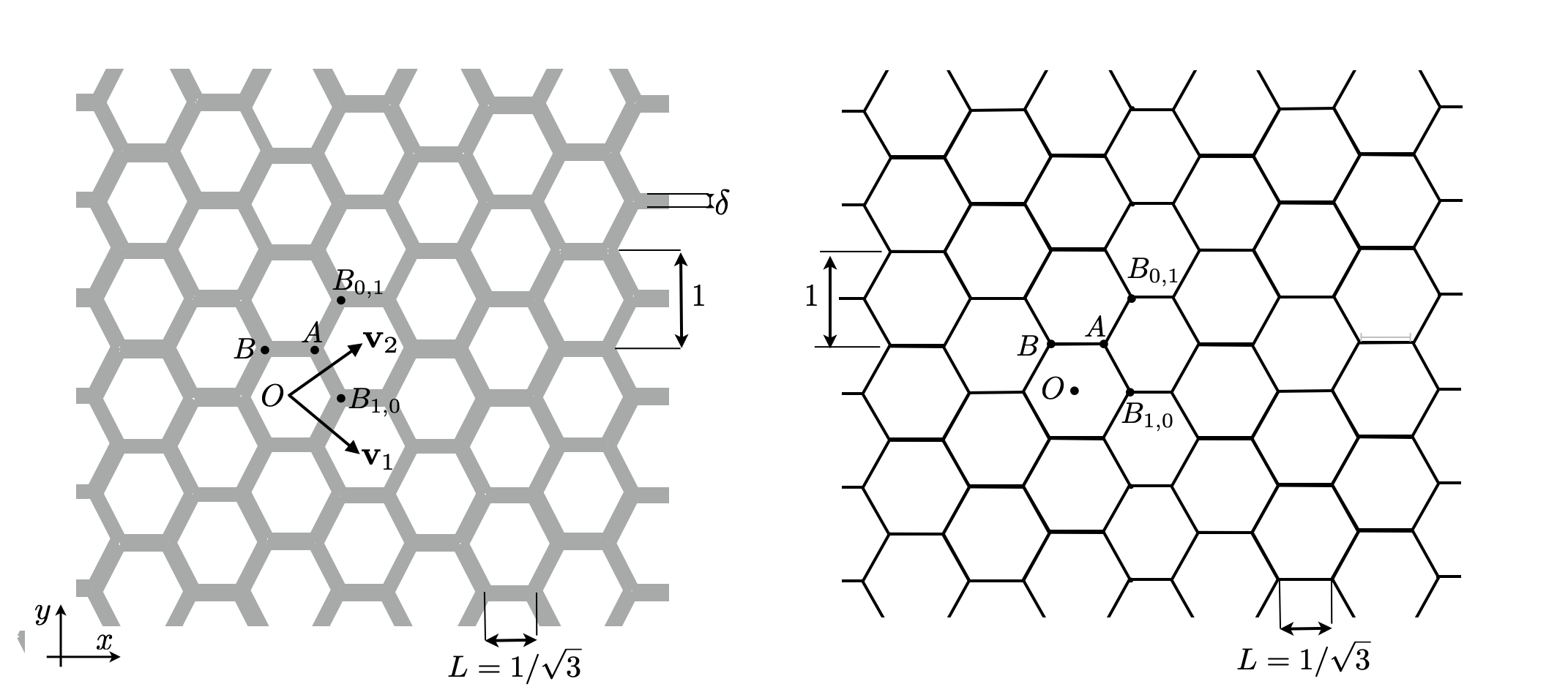}
	 \caption{The hexagonal periodic medium $\Omega_\delta$ (left), the associated quantum graph $\cG$ (right)}
	 \label{fig:omega}
 	\end{center}\vspace{-0.5cm}
\end{figure}

In order to give a precise definition of $\Omega_\delta$, let us first describe the associated quantum graph that we denote $\cG$. We first introduce the two directions of periodicity and the associated Bravais lattice $\Lambda$
  \begin{equation}\label{eq:dir-per}
    \vg_1 := (\frac{\sqrt{3}}{2}, -\frac{1}{2}),\;
    \vg_2 := (\frac{\sqrt{3}}{2}, \frac{1}{2})\quad \text{and}\quad \Lambda:=\Z\vg_1+\Z\vg_2
  \end{equation}
as well as its dual basis $(\vg_1^\ast, \vg_2^\ast)$ defined by $\vg_i \cdot \vg_j^\ast =  \delta_{ij},\,i,j \in \{ 1, 2\}$ the reciprocal lattice $\Lambda^\ast$
 \begin{equation} \label{e1stare2star}
 \vg_1^\ast = (\frac{1}{\sqrt{3}},-1), \quad \vg_2^\ast =  (\frac{1}{\sqrt{3}},1) \quad \text{and}\quad\Lambda^\ast:=\Z\vg_1^\ast+\Z\vg_2^\ast.
 \end{equation}
Let us introduce the two "generator" vertices  
$$
A:= (\frac{L}2,\frac{1}{2}) \mbox{ and } \quad {B} := (-\frac{L}2,\frac{1}{2}), 
$$ 
with $L:=1/\sqrt{3}$ corresponding to the distance between A and B, the set of "A-points", $A+\Z\vg_1+\Z\vg_2$, the set of "B-points", $B+\Z\vg_1+\Z\vg_2$ composed respectively by the points
\[
	\forall m,n\in\Z,\quad A_{m,n}:=A+m\vg_1+n\vg_2,\quad B_{m,n}:=B+m\vg_1+n\vg_2,
\]
and finally the three oriented "generator" edges (see Figure \ref{fig:percell} (left))
\begin{equation}\label{orientedgeneratorEdges}
  \begin{array}{l}
  e_0 = \Big\{ \xg \in \R^2, \; \mbox{s.t.} \;  \xg = A\,(1-{t}/{L}) + B\,{t}/{L}, \quad t \in (0,L)\Big\}, \\
   e_1  = \Big\{ \xg \in \R^2, \; \mbox{s.t.} \;  \xg = A\,(1-{t}/{L}) +B_{1,0}\,{t}/{L}, \quad t \in (0,L)\Big\}, \\
   e_2 = \Big\{ \xg \in \R^2, \; \mbox{s.t.} \;  \xg = A\,(1-{t}/{L})+B_{0,1}\,{t}/{L}, \quad t \in (0,L)\Big\} .
   \end{array}\end{equation}
The periodicity cell $\cg$ is then defined by 
$
\cg := \overline{e_0}  \cup \overline{e_1} \cup \overline{e_2} 
$ 
and the infinite periodic graph $\cG$ is defined as the union of all the translations of the periodicity cell 
\begin{equation}\label{DefintionGrapheG}
\cG := \bigcup_{(n,m) \in \Z^2} \cg + n \vg_1+m\vg_2.
\end{equation}

\noindent Finally, we shall denote by $\mathcal{V}$  the set of the vertices of the graph, i.e. the union of the sets of A-points and B-points
$
	\mathcal{V}:=\{A_{n,m}, B_{n,m},\;n,m\in\Z\},
$ and by  $\mathcal{E}$ the set of its edges
\begin{equation}\label{DefinitionSetofEdges}
	\mathcal{E}:=\{e_j+\Z\vg_1+\Z\vg_2,\;j=0,1,2\}.
\end{equation}
We will introduce functions defined on the graph which, on each edge, can be identified to 1-D functions, using the following parametrization of the edges : for all $n,m\in\Z$
  \begin{equation}\label{eq:param}
  \begin{array}{l}
  e_0+n \vg_1 + m \vg_2 = \Big\{ \xg \in \R^2, \; \mbox{s.t.} \;  \xg = A_{n,m}\,(1 - {t}/{L}) + B_{n,m}\,{t}/{L}, \quad t \in (0,L)\Big\}, \\[2ex]
   e_1+n \vg_1 + m \vg_2  = \Big\{ \xg \in \R^2, \; \mbox{s.t.} \;  \xg = A_{n,m}\,(1-  {t}/{L})) +B_{n+1,m}\,{t}/{L}, \quad t \in (0,L)\Big\}, \\[2ex]
   e_2+n \vg_1 + m \vg_2 = \Big\{ \xg \in \R^2, \; \mbox{s.t.} \;  \xg = A_{n,m}\,(1- {t}{L})+B_{n,m+1}\,{t}/{L}, \quad t \in (0,L)\Big\} .
   \end{array}\end{equation}
   In the sequel, the identification of two functions defined in different edges is done using this parametrization.
   \\\\
   Finally, the domain $\Omega_\delta$ for $\delta$ small enough is defined as
  \begin{equation}\label{eq:Omega_delta}
  \Omega_\delta := \left\{ \xg = (x, y) \in \R^2, d(\xg, \cG  ) <\delta \right\},
  \end{equation}
  where $d$ denotes the euclidian distance.
\\\\
  The particularity of $\cG$ and $\Omega_\delta$ is that they admit the so-called honeycomb symmetry defined as follows
   \begin{definition}[The honeycomb symmetry]\label{def:honeycomb}
   Let $\mathcal{O}\subset\R^2$. We say that $\mathcal{O}$ satisfies a honeycomb symmetry if 
   \begin{enumerate}
   \item $\mathcal{O}$ is periodic in the $\vg_1$ and $\vg_2$ directions :
   $
    \mathcal{O}+\vg_1=\mathcal{O}+\vg_2=\mathcal{O}.
   $
   \item $\mathcal{O}$ is stable over the symmetry $S$ with respect to the origin $(0,0)$, i.e. \begin{equation}\label{symmetry}S:{\bf x}\mapsto -{\bf x}.\end{equation} More precisely,
   $
   	\forall {\bf x}\in \R^2,\quad {\bf x}\in\mathcal{O}\;\Leftrightarrow S{\bf x}\in\mathcal{O}.
   $ 
   \item $\mathcal{O}$ is stable over the rotation $R$ of center $(0,0)$ and angle $2 \pi /3$, i.e.s
   \begin{equation}\label{MatriceRotation}
    R:{\bf x} \mapsto \left[  \begin{matrix} \cos(2 \pi/3) &   -\sin(2 \pi/3)  \\
    \sin(2 \pi/3) \ & \cos(2 \pi/3) 
     \end{matrix} \right]{\bf x}
   \end{equation}
   More precisely, 
   $
   	\forall {\bf x}\in \R^2,\quad {\bf x}\in\mathcal{O}\;\Leftrightarrow R{\bf x}\in\mathcal{O}.
   $
   \end{enumerate}
   \end{definition}
   We can then introduce natural linear transformations acting on functions defined in open sets with honeycomb symmetry. In the following, $L^2_\text{loc}(\mathcal{O})$ stands for the set of functions which are locally $L^2$.
   \begin{definition}
    Let $\mathcal{O}\subset\R^2$ with a honeycomb symmetry. We define 
    \begin{enumerate}
      \item the symmetry operator $\mathcal{S}: L^2_\text{loc}(\mathcal{O}) \rightarrow L^2_\text{loc}(\mathcal{O})$ defined by 
      \begin{equation}\label{DefinitionSym}
     \forall u\in  L^2_\text{loc}(\mathcal{O}),\quad\mathcal{S}u({\bf x})=u(S{\bf x}),\; {\bf x}\in \mathcal{O}.
      \end{equation} 
      where $S$ is the symmetry transformation defined in \eqref{symmetry};
      \item the rotation operator $\mathcal{R}:L^2_\text{loc}(\mathcal{O})\rightarrow L^2_\text{loc}(\mathcal{O})$ defined by
      \begin{equation}\label{eq:rotation_op}
        \forall u\in L^2_\text{loc}(\mathcal{O}),\quad\mathcal{R}u({\bf x})=u(R^*{\bf x}),\; {\bf x}\in \mathcal{O}
      \end{equation}
      where $R^*$ is the adjoint of the rotation $R$ defined in~\eqref{MatriceRotation}.
    \end{enumerate} 
   \end{definition}
   Let us note that these transformations depend obviously on $\mathcal{O}$ (typically $\cG$ and $\Omega_\delta$), but in this paper, we will use abusively the same notation $\mathcal{S}$ and $\mathcal{R}$ for any $\mathcal{O}$.
  
  Note finally that since $\mathcal{R}^3=\mathcal{I}$ where $\mathcal{I}$ stands for the identity operator, $\mathcal{R}$ is unitary with eigenvalues $e^{2 \imath \pi s \theta}$  where $s\in\{0,1,2\}$ and the associated eigenspaces are defined by
    \begin{equation}\label{eq:L2s}
      \forall s\in\{0,1,2\},\quad L^2_s(\mathcal{O}):=\{u\in L^2_\text{loc}(\mathcal{O}),\; \mathcal{R}u=e^{2 \imath \pi  s\theta}u \}.
    \end{equation}

Let us now introduce  ${\cal C}_\delta^\sharp$ the periodicity cell of $\Omega_\delta$ which is the union of the three fattened versions of the edges $e_0,\,e_1,\,e_2$, 
$
   	{\cal C}_\delta^\sharp=e_{0,\delta}\cup e_{1,\delta}\cup e_{2,\delta}
    $
   where $e_{0,\delta}$ is the polygon delimited by $A$, $A+\delta(-\sqrt{3}/2,1/2)$, $B+\delta(\sqrt{3}/2,1/2)$,\Becommente{ $B$,} $B+\delta(\sqrt{3}/2,-1/2)$ and 
   $A+\delta(-\sqrt{3}/2,-1/2)$,  $e_{1,\delta}:=Re_{0,\delta}+\vg_2$ and $e_{2,\delta}:=R^*e_{0,\delta}+\vg_2-\vg_1=R^*e_{1,\delta}+\vg_2$ where $R$ is the rotation defined in 
   \eqref{MatriceRotation}. In what follows, we will identify functions defined on $e_{j,\delta},\,j\in\{0,1,2\}$ in the following sense
   \begin{equation}\label{identify_fatedges}
    \begin{array}{l}
 u\in L^2(e_{0,\delta}),\,v\in L^2(e_{1,\delta}),\quad u=v\quad\Leftrightarrow \quad u({\bf x})=v(R{\bf x}+\vg_2), \; {\bf x}\in \mathcal{O},\\
 u\in L^2(e_{0,\delta}),\,v\in L^2(e_{2,\delta}),\quad u=v\quad\Leftrightarrow \quad u({\bf x})=v(R^*{\bf x}+\vg_2-\vg_1), \; {\bf x}\in \mathcal{O},
\\
 u\in L^2(e_{1,\delta}),\,v\in L^2(e_{2,\delta}),\quad u=v\quad\Leftrightarrow \quad u({\bf x})=v(R{\bf x}+\vg_2), \; {\bf x}\in \mathcal{O}.
    \end{array}
   \end{equation}
In other words, with this identification, we keep the parametrization from a A-point to a B-point, as in \eqref{orientedgeneratorEdges}.\vspace{-0.5cm}
\begin{figure}[h]
	\begin{center}
	 \includegraphics[width=8 cm]{./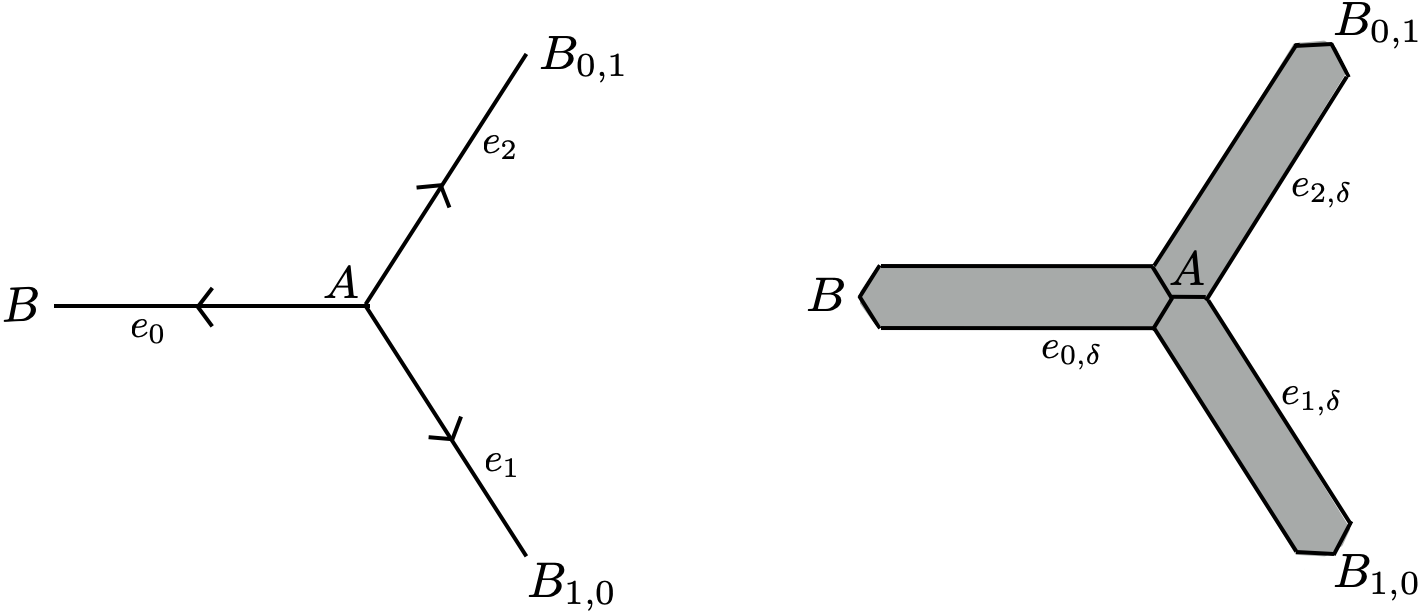}
	 \caption{ The periodicity cell $\cg$ and its three oriented edges (left); The periodicity cell ${\cal C}_\delta^\sharp$ and the three fattened edges (right)}
	 \label{fig:percell}
 	\end{center}
\end{figure}\vspace{-1cm}

\subsubsection{Zigzag {truncated} domains}\label{sec:DefinitionZigzagPerturbeddomain}

We want to study the existence of guided modes or edge modes when cutting the domain in the certain direction and possibly perturbing it along its boundary.  The cutting direction is in this paper the so-called zigzag direction, as in \cite{Nakada:1996,Fujita:1996,Plotnik:2014}: the $\vg_1$-direction or equivalently, the $\vg_2$-direction or equivalently  the vertical direction ${\bf e}_y=\vg_2-\vg_1$. Without loss of generality, we focus on this last case in the paper, see Figure~\ref{fig:lineicdefect}. Let us now describe the different perturbed configurations we shall focus on.\\

{\bf Case 1.} The first \Becommente{truncated} domain is the half-domain with a classical zigzag edge. It consists in cutting our domain at the abscissa $x = -L$ along the direction
\begin{equation}\label{eq:defvgz}
 \vg_2 -\vg_1  = \mathbf{e}_y.
\end{equation}
We obtain the domain $\Omega_{\delta}^{0}$ defined by
\begin{equation}\label{eq:defOmegaDelta0}
\Omega_{\delta}^{0}:=\Omega_\delta\cap\{x> -L\},
\end{equation}
and the corresponding truncated graph $\mathcal{G}_0$ given by
\begin{equation}\label{eq:defGraph0}
\mathcal{G}_0:=\mathcal{G}\cap\{x> -L\}.
\end{equation}
The edge is said to be zigzag because of the shape of the lateral edges of  $\mathcal{G}_0$ (namely the union of the edges $[A_{m-1,-m}, B_{m, -m}]$ and $[ B_{m, -m}, A_{m,-(m+1)}]$ for $m\in \Z$). 

{\bf Case 2.} The second \Becommente{truncated and perturbed} domain we consider is a perturbation of $\Omega_{\delta}^{0}$ obtained by modifying the width of those zigzag edges from $\delta$ to $\mu \delta$ (for a given positive parameter $\mu$). We obtain the domain $\Omega_{\delta,\mu}^{0}$ defined by
\begin{equation}\label{eq:defOmegaDelta0mu}
\Omega_{\delta,\mu}^{0}:= \left\{ \xg = (x, y) \in \R^2, d(\xg , \mathcal{G}_0) \leq d_\mu(\xg) \delta \right\},
\end{equation}
where the function $d_\mu : \R^2 \mapsto \R$ is defined by
\begin{equation}\label{eq:d_mu}
 d_\mu(x, y) = \begin{cases}
 \mu & \mbox{ if }  x < -\frac{L}{2}, \\
 1 &  \mbox{ otherwise. }  
 \end{cases} 
\end{equation}

{\bf Case 3.} Finally, in our third perturbation, we still cut the domain in the ${\bf e}_y$-direction but the cut location changes. For this, we introduce the parameter  $t \in [0, 2L]$ and we define
\begin{equation}
\Omega_{\delta}^{t}:=\Omega_\delta\cap\{x> \alpha(t)\},
\end{equation}
where
\begin{equation}\label{eq:defalphat}
\alpha(t) = \begin{cases}
\dsp -L + \frac{t}{2} & \text{ if } t \in [0, L],\\
\dsp -\frac{L}{2} + (t-L) & \text{ if } t  \in [L, 2L].\\
\end{cases} 
\end{equation}
The associated truncated graph $\mathcal{G}_t$ is defined as
 \begin{equation}\label{eq:defGrapht}
\mathcal{G}_t:=\mathcal{G}\cap\{x>  \alpha(t)\}.
\end{equation}
We remark that $\mathcal{G}_0$ (resp. $\Omega_{\delta}^{0}$) coïncides with $\mathcal{G}_{2L}$ (resp. $\Omega_{\delta}^{2L}$) up to a translation of vector $\vg_2$ or $\vg_1$.
Note that the structure of the edge is completely different if $t\in [0,L]$ or $t\in[L,2L]$. In Condensed Matter Physics literature, $\mathcal{G}_L$ is often called the "bearded zigzag edge" \cite{Fujita:1996,Nakada:1996}. Note that compared to the tight binding model where only two zigzag edges can be considered (ordinary and bearded), in our case, a family of zigzag edges, parametrized by $t$ is relevant.
\begin{figure}[h]
	\begin{center}
	 \includegraphics[width=\textwidth]{./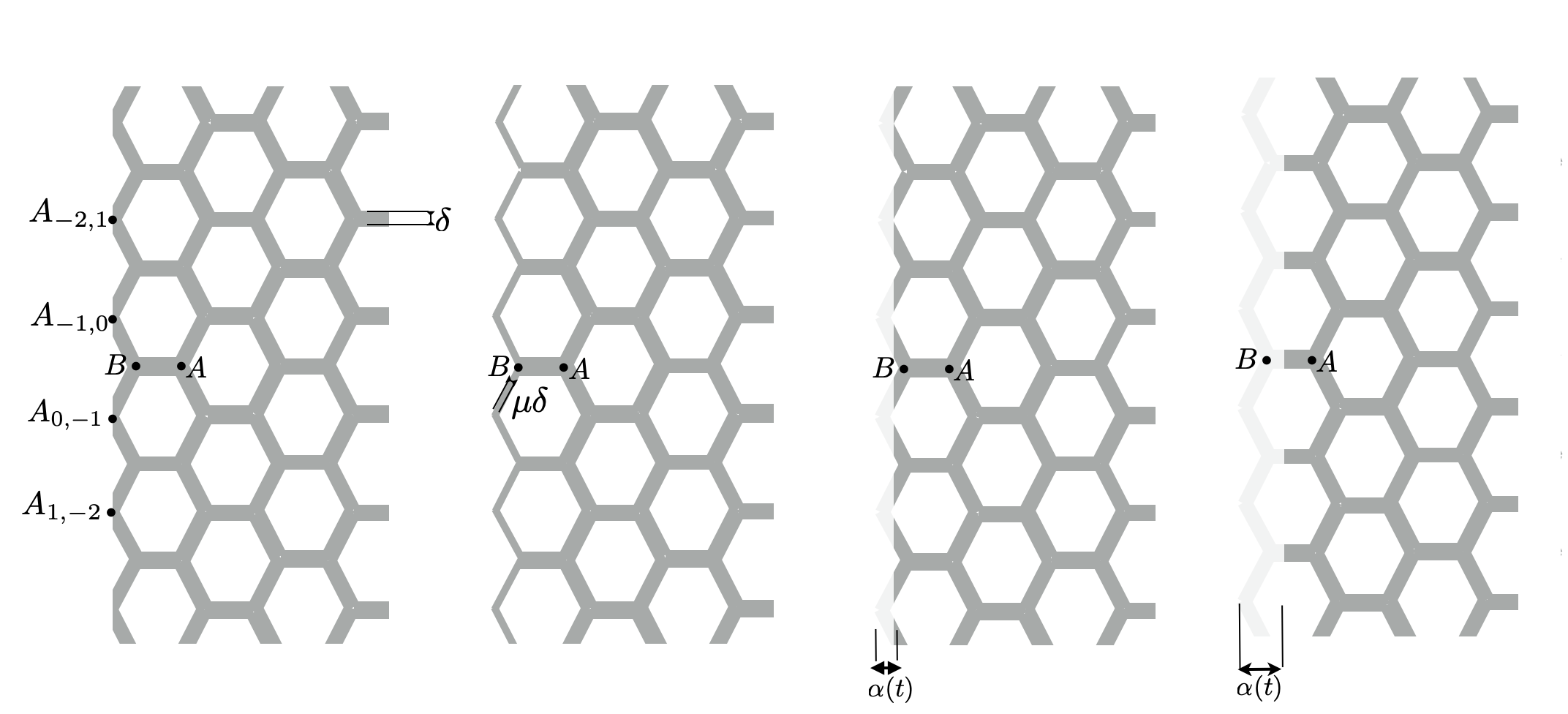}
	 \caption{The zigzag perturbed domains: (Case 1) the half-domain with a classical zigzag edge $\Omega_\delta^0$ (left), (Case 2) the perturbed half-domain $\Omega_{\delta,\mu}^{0}$ for $\mu<1$ (second figure) and (Case 3) the half-domain with a different cut position $\Omega_{\delta}^{t}$ for $t\in (0,L)$ (third figure) and $\Omega_{\delta}^{t}$ for $t\in (L,2L)$ (right). See \eqref{eq:defalphat} for the definition of $\alpha(t)$.}\vspace{-0.5cm}
	  	 \label{fig:lineicdefect}
 	\end{center}
\end{figure}

For the sake of brevity, the domains  $\Omega_{\delta,\mu}^{0}$ and $\Omega_{\delta}^{t}$ are denoted $\Omega_{\delta,\mu}^{t}$ in the rest of this section.
For any $t$ and $\mu$, the domain $\Omega_{\delta,\mu}^{t}$ is 1-periodic in the ${\bf e}_y$-direction. We denote $\widehat\Omega_{\delta,\mu}^{t}:=\Omega_{\delta,\mu}^{t}\cap\{ -1/2<y<1/2\}$ one of its period. 
Besides, we denote by  $\Gamma_t^\delta$ the "edge" part of $\partial \Omega_{\delta,\mu}^{t}$ located on the truncated interface: 
\begin{equation}\label{definitionGammat}
\Gamma_t^\delta = \partial \Omega_{\delta,\mu}^{t} \cap \{x = \alpha(t) \}.
\end{equation} 

\subsection{Mathematical formulation}\label{sec:mathFormulation}
We are interested in the existence of  guided modes or edge states, that is to say solutions of the homogeneous wave equation propagating along the edge (i.e the left boundary), (see e.g. \cite[Section 2]{Article_DFJV_Part1}). In other words, for a fixed wavenumber $\beta\in\RR$, we look for couples $(u_{\delta,\mu}^{t}(\beta),\lambda_{\delta,\mu}^{t}(\beta))\in H^1_\text{loc}(\Omega_{\delta,\mu}^{t})\times\RR^+$ such that
\begin{equation}\label{eq:guidedmodes}
\begin{cases}
	-\triangle u_{\delta,\mu}^{t}=\lambda_{\delta,\mu}^{t}\,u_{\delta,\mu}^{t}&\text{in}\;\Omega_{\delta,\mu}^{t},
	\\
	\partial_n u_{\delta,\mu}^{t}=0&\text{on}\;\partial\Omega_{\delta,\mu}^{t} \setminus \Gamma_t^\delta,\\
	\partial_n u_{\delta,\mu}^{t} = 0 \; \mbox{ or }  \; u_{\delta,\mu}^{t} = 0  & \mbox{ on }\Gamma_t^\delta,
	\end{cases}
\end{equation}
such that $u_{\delta,\mu}^{t}\in H^1(\widehat\Omega_{\delta,\mu}^{t})$ and $u_{\delta,\mu}^{t}$ is $\beta$-quasi-periodic in the ${\bf e}_y$-direction, which means
\begin{equation}\label{eq:guidedmodes2}
	\forall {\bf x}\in\Omega_{\delta,\mu}^{t},\quad u_{\delta,\mu}^{t}({\bf x}+{\bf e}_y)=e^{2\imath\pi\beta}\,u_{\delta,\mu}^{t}({\bf x}).
\end{equation}
 By periodicity, it is easy to see that it suffices to consider $\beta\in(-1/2,1/2]$. Moreover,  if $(u,\lambda)$ satisfies \eqref{eq:guidedmodes} with $u$ $\beta$-quasi-periodic then $(\overline{u},\lambda)$ satisfies also \eqref{eq:guidedmodes}, $\overline{u}$ being $(-\beta)$-quasi-periodic.Therefore, we only have to consider $\beta\in[0,1/2]$. 

 We emphasize (again) that imposing $\partial_n u_{\delta,\mu}^{t} = 0$ on $\Gamma^\delta_t$ yields to consider symmetric guided modes in the domain $\mathcal{T}^\delta$ obtained by attaching to  $\Omega_{\delta,\mu}^{t}$ its image by mirror symmetry while imposing $ u_{\delta,\mu}^{t} = 0$ on $\Gamma^\delta_t$ yields to consider antisymmetric ones.

 For all $\beta\in[0,1/2]$, this problem is linked to the discrete spectrum of the self-adjoint and non-negative operators ${N}_{\delta,\mu}^{t}(\beta)$  and  ${D}_{\delta,\mu}^{t}(\beta)$ defined as follows:
\begin{equation}
	\label{eq:opN_perturbe_def}
	\begin{array}{|l}
	{N}_{\delta,\mu}^{t}(\beta) :  u  \rightarrow -\triangle u,\\[5pt]
	 D({N}_{\delta,\mu}^{t}(\beta))=\{u\in H^1(\triangle,\widehat\Omega_{\delta,\mu}^{t})\cap  H^1_\text{\text{loc}}(\triangle,\Omega_{\delta,\mu}^{t}),\\[5pt]
   \hspace{2 cm} \partial_n u=0\text{ on }\partial\Omega_{\delta}^{\mu,t}, \quad
	 u\text{ satisfies \eqref{eq:guidedmodes2}}\},
	 \end{array}
\end{equation}
where $H^1(\triangle,\mathcal{O}):=\{u\in H^1(\mathcal{O}),\;\triangle u \in L^2(\mathcal{O})\}$, for any open subset $\mathcal{O}$ of $\R^2$ and
\begin{equation}
	\label{eq:opD_perturbe_def_D}
	\begin{array}{|l}
	{D}_{\delta,\mu}^{t}(\beta) : u  \rightarrow -\triangle u,\\[5pt]
	 D({D}_{\delta,\mu}^{t}(\beta))=\{u\in H^1(\triangle,\widehat\Omega_{\delta,\mu}^{t})\cap  H^1_\text{\text{loc}}(\triangle,\Omega_{\delta,\mu}^{t}),\\[5pt]
   \hspace{2 cm} \partial_n u=0\text{ on }\partial\Omega_{\delta}^{\mu,t} \setminus \Gamma_t^\delta, \quad  u=0\text{ on } \Gamma_t^\delta, \quad
	 u\text{ satisfies \eqref{eq:guidedmodes2}}\},
	 \end{array}
\end{equation}
Note that in the definition of the operators ${N}_{\delta,\mu}^{t}(\beta)$  or  ${D}_{\delta,\mu}^{t}(\beta)$, the only difference is in the boundary conditions on $\Gamma_t^\delta$ (homogeneous Neumann boundary conditions for ${N}_{\delta,\mu}^{t}(\beta)$ and homogeneous Dirichlet ones for ${D}_{\delta,\mu}^{t}(\beta)$). 
% \Becommente{
% \noindent In this work, we shall exhibit conditions on the perturbations that ensure existence of a discrete spectrum for the operators ${N}_{\delta,\mu}^{t}(\beta) $ or ${D}_{\delta,\mu}^{t}(\beta) $. Remark that we have performed a similar study for square lattices with a lineic perturbation \cite{Article_DFJV_Part1}: only the distance between two consecutive obstacle in a line is changed from $\delta$ to $\mu\delta$ (similarly to case 2). The condition on the perturbation for existence of guided  modes is simple: if $\mu\in(0,1)$, for $\delta$ small enough, the operator  exist guided modes for any $\beta$ which are dispersive. In this work, the result is different : there exists guided modes in Case 2 for any $\mu$, these guided modes being almost non dispersive. The differences might be due to the presence of Dirac points (see Definition~\ref{DefinitionDirac})  in the essential spectrum of the underlying operator. \\
% }

The essential spectrum of  both ${N}_{\delta,\mu}^{t}(\beta)$ and ${D}_{\delta,\mu}^{t}(\beta) $ is linked to the essential spectrum of the operator defined on the whole hexagonal periodic domain $\Omega_\delta$, namely
 \begin{equation}
 	\label{eq:opA_def}
 	A_{\delta}=-\triangle,\quad
 	 D(A_{\delta})=\{u\in H^1(\triangle,\Omega_\delta),\; \partial_n u|_{\partial\Omega_\delta}=0
 	 \}.
 \end{equation}
 \subsection{Main results}

 The first main result is, that for any $\delta$, the operator	$A_{\delta}$ has a certain number of Dirac points located at different frequencies (see figure~\ref{fig:dispersionSurf1} for a numerical illustration). Let us define, for all $n\in \N$,
 \begin{equation}\label{eq:lambda_n_star}
  \omega_n^*:= \frac{\pi}{2L}+\frac{n\pi}{L},\quad \mbox{ and } \quad \lambda_n^*:=(\omega_n^*)^2.
\end{equation}
 \begin{theorem}\label{th:Diracpoints_delta}
  There exists $\delta_0$ such that for all $\delta<\delta_0$, \Becommente{there exists $M_{\delta} \in \mathbb{N}$ such that} the  spectrum of the operator $A^{\delta}$ contains $M_\delta>0$ Dirac points located near $
    \lambda_n^*$ with $0\leq n\leq M_\delta$. \Becommente{Moreover, $\lim_{\delta \rightarrow 0}M_{\delta}=+\infty$}.
\end{theorem}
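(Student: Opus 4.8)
The plan is to transport the spectral analysis to the limiting quantum graph through a thin-domain convergence argument, and then to protect the resulting conical crossings by the honeycomb symmetry. First I would apply the Floquet--Bloch transform in the two periodicity directions $\vg_1,\vg_2$, decomposing $A_\delta$ into a family of fiber operators $A_\delta(\kg)$, with $\kg$ ranging over the Brillouin zone $\mathcal{B}=\R^2/(2\pi\Lambda^\ast)$, each acting on the cell $\mathcal{C}_\delta^\sharp$ with $\kg$-quasi-periodic conditions and Neumann conditions on the lateral boundary. The key geometric objects are the corners $K,K'$ of $\mathcal{B}$, where the structure factor $\eta(\kg):=1+\E^{\I\kg\cdot\vg_1}+\E^{\I\kg\cdot\vg_2}$ vanishes and at which the rotation $\mathcal{R}$ acts on the fiber.

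Next I would use the explicit spectrum of the limit operator $\mathcal{A}(\kg)$ -- the second-derivative operator on each edge of $\cg$ with Kirchhoff vertex conditions, whose dispersion relation is recalled in the introduction. Reducing $-u''=\omega^2u$ on each edge of common length $L$ by continuity and current conservation at the two vertices $A,B$ gives a $2\times2$ secular equation, so that $\omega^2\in\mathrm{spec}\,\mathcal{A}(\kg)$ exactly when $\sin(\omega L)=0$ (the flat Dirichlet branches, located at $\omega=m\pi/L$) or $\cos(\omega L)=\pm\tfrac{1}{3}|\eta(\kg)|$. At $K,K'$ one has $\eta=0$, so the two dispersive branches meet precisely at $\cos(\omega L)=0$, i.e. at $\omega=\omega_n^*$, which identifies the candidate Dirac energies $\lambda_n^*$; since the flat branches sit away from the $\omega_n^*$, only these two dispersive branches occur near each $\lambda_n^*$. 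Expanding $|\eta(\kg)|\simeq c\,|\kg-K|$ with $c\neq0$ shows the two branches open as a genuine cone, so the crossing is already a Dirac point for the graph.

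Then I would establish that, after the natural transverse rescaling of the fattened edges, $A_\delta(\kg)$ converges to $\mathcal{A}(\kg)$ in the norm-resolvent sense, uniformly in $\kg\in\mathcal{B}$ and with quantitative control on each spectral window around $\lambda_n^*$; this is the Neumann thin-graph limit (in the spirit of the works of Exner--Post and Kuchment--Zeng) adapted to the $\kg$-dependent fibers. Combined with the explicit graph spectrum, this traps, for $\delta$ small, exactly two eigenbranches of $A_\delta(\kg)$ in a neighbourhood of each $\lambda_n^*$ isolated from the rest of the spectrum. The final ingredient is symmetry: since $\Omega_\delta$ satisfies the honeycomb symmetry (Definition~\ref{def:honeycomb}), the fiber $A_\delta(K)$ commutes with the order-three rotation $\mathcal{R}$, and the combination of $\mathcal{R}$ with the inversion $\mathcal{S}$ -- which exchanges the A- and B-sublattices -- and with the antiunitary time-reversal symmetry (complex conjugation, equivalently $\beta\mapsto-\beta$) enforces the two trapped eigenvalues to coincide at $K$, on the two nontrivial characters $\E^{\pm2\I\pi/3}$ of $\mathcal{R}$. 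A Lyapunov--Schmidt reduction onto this two-dimensional eigenspace then yields an effective Dirac Hamiltonian $v_F\,\big[(k_1-K_1)\sigma_1+(k_2-K_2)\sigma_2\big]+o(|\kg-K|)$ with Fermi velocity $v_F=v_F(\delta)$ close to the nonzero graph value, so the crossing stays conical.

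The main obstacle I anticipate is quantitative and concerns the dependence on $n$. The energies $\lambda_n^*$ grow like $n^2$ while the spectral gaps separating the graph bands around them shrink, so the norm-resolvent estimate must be made uniform not only in $\kg$ but over a window whose isolation deteriorates as $n$ increases; this is precisely what forces $\delta$ to be taken smaller for larger $n$, producing a finite threshold $M_\delta$ for each fixed $\delta$ together with $M_\delta\to+\infty$ as $\delta\to0$. A secondary difficulty is to rule out that the perturbation destroys the conical structure -- by accidental extra multiplicity or a vanishing Fermi velocity -- which is controlled by showing $v_F(\delta)\to v_F(0)\neq0$ through the same convergence estimates.
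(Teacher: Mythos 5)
Your plan follows essentially the same route as the paper: Floquet--Bloch reduction, the explicit graph dispersion relation (Proposition~\ref{CharacSpectrumGraphHexa}), thin-domain spectral convergence of the fiber operators, the symmetry-protected double degeneracy at ${\bf K}$ (your pairing of the two nontrivial rotation characters by inversion composed with complex conjugation is exactly Proposition~\ref{sym_eigenvectors}), and a Lyapunov--Schmidt reduction to a $2\times 2$ effective Dirac system (this is Proposition~\ref{PropositionDiracgrapheEpais}, proved in Appendix~\ref{annex:proof_propDirac} with bilinear forms); the $n$-dependence of the threshold $\delta_0$, hence $M_\delta\to+\infty$, is also handled as you describe. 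One structural simplification you miss: the paper does not need norm-resolvent convergence uniformly over $\mathcal{B}$, only spectral convergence at the single fiber ${\bf K}$ (and ${\bf K}'$), because the conical expansion in $\kg$ is produced at \emph{fixed} $\delta$ by the Lyapunov--Schmidt/implicit-function argument; the limit $\delta\to 0$ only serves to verify the hypotheses of Proposition~\ref{PropositionDiracgrapheEpais}.

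The genuine gap is your last step, where the non-vanishing of the Fermi velocity is claimed to be ``controlled \ldots through the same convergence estimates.'' It is not. The Fermi velocity is the constant $v_\delta$ of \eqref{eq:v_delta}, $\int_{\mathcal{C}_\delta^\sharp}\nabla\phi_{\delta,1}\,\overline{\phi_{\delta,2}}=v_\delta\,(1,\imath)^T$, a bilinear functional of \emph{gradients} of eigenfunctions normalized in $L^2(\mathcal{C}_\delta^\sharp)$. Since the cell has measure $O(\delta)$, these eigenfunctions are of size $\delta^{-1/2}$, and $v_\delta$ is a delicate $O(1)$ quantity: eigenvalue or resolvent convergence of the thin fibers to the graph operator says nothing about its limit, and the candidate limit must itself be identified against a purely one-dimensional quantity on the graph, where the transverse direction has disappeared. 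This is precisely where the bulk of the paper's work lies: it builds an explicit quasi-mode \eqref{DefinitionPhideltaHat} from the graph eigenvector, proves it is $O(\sqrt{\delta})$-close in $H^1(\mathcal{C}_\delta^\sharp)$ to a true normalized eigenvector (Lemma~\ref{LemmeQuasiMode}, whose stability constant must be shown to be uniform in $\delta$, cf.\ Lemma~\ref{LemmeInversibilite}), computes $\int_{\mathcal{C}_\delta^\sharp}\varphi_\delta(S{\bf x})\cdot\nabla\varphi_\delta({\bf x})\,d{\bf x}$ explicitly (Lemma~\ref{LemmeLimiteQuasiMode}), and concludes $\lim_{\delta\to 0}|v_\delta|=\frac{\pi}{4L}(2n+1)\neq 0$ (Proposition~\ref{TheoremeLimitevdelta}). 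Without this eigenfunction-level asymptotic, nothing in your argument excludes $v_F(\delta)\to 0$, i.e.\ the collapse of the cone, so this step has to be supplied rather than asserted.
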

We deduce from the previous result the presence of 'gaps' (\emph{i.e.} intervals included in the complementary of the essential spectrum) in the essential spectrum of ${N}_{\delta,\mu}^{t}(\beta) $ and of ${D}_{\delta,\mu}^{t}(\beta)$), see figure~\ref{fig:VpPlateZigZag} for a numerical illustration of the essential spectrum with respect to $\beta$.
\begin{theorem}\label{th:ess_spec_beta_delta}
  For all $\beta\in[0,1/2]\setminus\{\frac{1}3\}$, there exists $\delta_0>0$ such that for all $\delta<\delta_0$, $\exists M_\delta \in \N$ such that for all $0\leq n\leq M_\delta$,  for any $t\in[0,2L]$,  for any $\mu>0$, there exists a gap $I^n_\delta(\beta)$ containing $\lambda_n^*$ in the spectrum of ${N}_{\delta,\mu}^{t}(\beta) $ and ${D}_{\delta,\mu}^{t}(\beta)$.
\end{theorem}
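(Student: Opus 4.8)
The plan is to reduce the statement to the determination of the essential spectrum of a \emph{bulk} operator at fixed quasi-momentum $\beta$, and then to exhibit a gap around $\lambda_n^*$ from the explicit dispersion relation of the limit quantum graph $\cG$. First I would show that the essential spectra of $N_{\delta,\mu}^{t}(\beta)$ and $D_{\delta,\mu}^{t}(\beta)$ are independent of the cut position $t$ and of the width parameter $\mu$, and coincide with the spectrum of the operator $A_\delta(\beta)$ obtained by restricting $A_\delta$ to functions that are $\beta$-quasi-periodic in the $\mathbf{e}_y$-direction on the \emph{non-truncated} strip $\{-1/2<y<1/2\}$. The key observation is that both the truncation along $\Gamma_t^\delta$ (with either boundary condition) and the modification of the rung widths from $\delta$ to $\mu\delta$ are confined to a bounded region in the $x$-variable, whereas the essential spectrum is governed by the geometry as $x\to+\infty$, where $\Omega_{\delta,\mu}^{t}$ coincides with $\Omega_\delta$. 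I would make this precise through a singular Weyl-sequence argument: any approximate eigensequence associated with a point of the essential spectrum can be translated towards $x=+\infty$ and cut off so as to be supported in $\{x>R\}$ for arbitrarily large $R$, where only the unperturbed periodic geometry is felt; conversely, every point of $\sigma(A_\delta(\beta))$ generates such a sequence. This yields $\sigma_{\mathrm{ess}}(N_{\delta,\mu}^{t}(\beta))=\sigma_{\mathrm{ess}}(D_{\delta,\mu}^{t}(\beta))=\sigma(A_\delta(\beta))$ for all $t$ and $\mu$, so that it suffices to produce a gap of $A_\delta(\beta)$ around $\lambda_n^*$.

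Since $A_\delta(\beta)$ is periodic in the remaining ($x$-)direction, Floquet--Bloch theory identifies its spectrum with the union, over the transverse quasi-momentum, of the dispersion curves; in particular it is purely essential. The heart of the argument is then to show that for $\beta\neq\tfrac13$ this union omits a neighbourhood of $\lambda_n^*$. I would carry this out on the limit graph $\cG$, whose $\beta$-quasi-periodic dispersion relation is explicit: a frequency $\omega$ belongs to the spectrum precisely when $|\cos(\omega L)|$ equals a structure factor of the form $\tfrac13\,|1+\E^{\I\theta_1}+\E^{\I\theta_2}|$, where fixing $\beta$ constrains $\theta_2-\theta_1=2\pi\beta$ and the transverse quasi-momentum moves the pair $(\theta_1,\theta_2)$ along the corresponding line of the torus (the isolated Dirichlet frequencies $\omega L\in\pi\Z$ are away from the $\omega_n^*$ and play no role). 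Because $\cos(\omega_n^* L)=0$, the frequency $\omega_n^*$ lies in the spectrum only where the structure factor vanishes, that is at the Dirac quasi-momentum $\theta_1=\tfrac{2\pi}{3},\,\theta_2=-\tfrac{2\pi}{3}$, which projects exactly onto $\beta=\tfrac13$. Hence for $\beta\in[0,1/2]\setminus\{\tfrac13\}$ the structure factor stays bounded below by a constant $a(\beta)>0$ along the whole line, so $|\cos(\omega L)|\ge a(\beta)$ on the spectrum and an open interval around $\lambda_n^*$ is a gap of the limit graph operator.

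Finally I would transfer this gap to the fattened domain for small $\delta$ by reusing the asymptotic analysis already developed for Theorem~\ref{th:Diracpoints_delta}: on compact frequency windows, and up to frequencies of order $M_\delta$, the $\beta$-quasi-periodic spectrum of $A_\delta(\beta)$ converges to that of $\cG$, so the graph gap around $\lambda_n^*$ survives as a gap $I_n^\delta(\beta)$ of $A_\delta(\beta)$ still containing $\lambda_n^*$ for every $0\le n\le M_\delta$.

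The main obstacle I anticipate is twofold. First, making the essential-spectrum reduction rigorous is delicate because the domain itself, and not merely a coefficient, is perturbed; one cannot subtract resolvents directly and must instead build the Weyl sequences geometrically, ensuring that the cut-off does not destroy the approximate-eigenvector property. Second, for fixed $\beta$ one must choose $\delta_0(\beta)$ small enough that the $O(\delta)$ spectral error stays below $a(\beta)$, which degenerates as $\beta\to\tfrac13$; keeping track of this quantitatively, while simultaneously letting $M_\delta\to\infty$, is the quantitative crux of the proof.
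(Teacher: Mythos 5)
Your proposal is correct and follows essentially the same route as the paper: reduce to the bulk problem at fixed $\beta$, use the explicit dispersion relation of the hexagonal quantum graph to observe that $\cos(\sqrt{\lambda_n^*}\,L)=0$ forces the structure factor $\tfrac{1}{9}\,|1+e^{2\imath\pi k}+e^{2\imath\pi(k+\beta)}|^2$ to vanish, which happens only at $\beta=\tfrac{1}{3}$, and then transfer the resulting graph gap to $\delta>0$ via the spectral-convergence results of Post/Kuchment type that the paper also invokes. The only, inessential, difference is the order of the two reductions and the tool used for the essential-spectrum stability: the paper first passes to the limit graph and removes the dependence on $t$ and $\mu$ by a compact-perturbation argument on the graph operators $\mathcal{D}^t_\mu(\beta)$, $\mathcal{N}^t_\mu(\beta)$ (its Proposition~\ref{prop:ess_spec_beta}), whereas you first remove the boundary perturbation at the thick level by a Weyl-sequence argument and only then let $\delta\to 0$ — both orderings rest on the same two ingredients.
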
 
We can then study, for any $\beta\in[0,1/2]\setminus\{\frac{1}3\}$ and any $\delta$ small enough, the existence of eigenvalues of the operator ${N}_{\delta,\mu}^{t}(\beta)$ and ${D}_{\delta,\mu}^{t}(\beta)$ in $I^n_\delta(\beta)$, see figure~\ref{fig:VpPlateZigZag} for a numerical illustration of the eigenvalue with respect to $\beta$ and figures~\ref{fig:VecteurPropreZigZag} and \ref{fig:VecteurPropreZigZagDirichlet} for illustrations of eigenvectors.
\begin{theorem}[Existence of guided modes of ${N}_{\delta,\mu}^{t}(\beta) $  and  ${D}_{\delta,\mu}^{t}(\beta) $ for Case 1 and Case 2]\label{th:guided_case12_delta}
  Let $t=0$ and $\mu>0$.\vspace{-0.1cm}
  \begin{itemize}
  \item For all $\beta\in(\frac{1}3,\frac{1}2)$, there exists $\delta_1\leq \delta_0$, where $\delta_0$ is the one of Theorem \ref{th:ess_spec_beta_delta},  such that for all $\delta<\delta_1$ and for all $0\leq n\leq M_\delta$, the operator ${N}_{\delta,\mu}^{t}(\beta)$, defined in \eqref{eq:opN_perturbe_def}, has an eigenvalue $\lambda_{\delta,\mu}^{n,N}(\beta)$ which is at first order independent of $\beta$ and $\mu$. More precisely, there exists a constant $C(n,\beta,\mu)$ which depends on $n$, $\beta$ and $\mu$ such that
    \[
     | \lambda_{\delta,\mu}^{n,N}(\beta) - \lambda_n^* |\leq  C({n,\beta,\mu})\,\sqrt{\delta}.
    \] 
     \item For all $\beta\in(0,\frac{1}3)$, there exists $\delta_1\leq \delta_0$, where $\delta_0$ is the one of Theorem \ref{th:ess_spec_beta_delta},  such that for all $\delta<\delta_1$ and for all $0\leq n\leq M_\delta$, the operator ${D}_{\delta,\mu}^{t}(\beta)$, defined in \eqref{eq:opN_perturbe_def}, has an eigenvalue $\lambda_{\delta,\mu}^{n,D}(\beta)$ which is at first order independent of $\beta$ and $\mu$. More precisely, there exists a constant $C(n,\beta,\mu)$ which depends on $n$, $\beta$ and $\mu$ such that
    \[
     | \lambda_{\delta,\mu,n}^{n,D}(\beta) - \lambda_n^* |\leq  C({n,\beta,\mu})\,\sqrt{\delta}.
    \] 
     \end{itemize}
\end{theorem}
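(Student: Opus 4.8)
The plan is to prove the statement by asymptotic analysis, reducing the two-dimensional eigenvalue problem to an explicitly solvable problem on the limit quantum graph. The first step is to establish the spectral convergence of the fattened-graph operators to graph operators as $\delta\to 0$. Concretely, I would show that $N_{\delta,\mu}^{0}(\beta)$ (resp. $D_{\delta,\mu}^{0}(\beta)$) converges, in a spectral sense valid inside the relevant gap, to a limit operator $N_0^0(\beta)$ (resp. $D_0^0(\beta)$) acting as $-d^2/ds^2$ on each edge of the truncated honeycomb graph $\mathcal{G}_0$, with Kirchhoff conditions at the interior vertices, $\beta$-quasiperiodicity in the $\mathbf{e}_y$-direction, and the boundary condition at the cut vertices dictated by the data on $\Gamma_0^\delta$: the natural (generalized-Neumann / Kirchhoff-type) condition for $N$ and the Dirichlet condition $u=0$ for $D$. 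The quantitative part of this step is crucial: for each simple eigenvalue $\lambda$ of the graph operator lying in a gap of the essential spectrum, the two-dimensional operator should possess an eigenvalue within $C\sqrt{\delta}$ of $\lambda$. Combined with the gap $I^n_\delta(\beta)$ provided by Theorem~\ref{th:ess_spec_beta_delta}, this both localizes the sought eigenvalue and guarantees it is genuinely discrete rather than embedded in the essential spectrum.

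The second step is the explicit analysis of the limit graph operator, which is where the value $\lambda_n^*$ and the dichotomy in $\beta$ arise. Applying the Floquet--Bloch transform in the $\mathbf{e}_y$-direction reduces the problem on $\mathcal{G}_0$ to a recursion, column by column, in the $x$-direction. On each edge of length $L$, a solution of $-u''=\omega^2 u$ is $u(s)=a\cos(\omega s)+b\sin(\omega s)$, and the Kirchhoff and quasiperiodicity relations at the A- and B-vertices can be encoded in a transfer matrix $T(\omega,\beta)$ acting across one period of hexagons. An edge mode corresponds to a solution of this recursion that is square-summable as $x\to+\infty$, i.e. generated by an eigenvector of $T(\omega,\beta)$ with eigenvalue $z$ of modulus $|z|<1$, together with the cut boundary condition. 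This produces a secular equation $F_N(\omega,\beta)=0$ (resp. $F_D(\omega,\beta)=0$) whose roots in the gap are the guided-mode frequencies.

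The third step is to evaluate this secular equation at $\omega=\omega_n^*$. Since $\omega_n^* L=(n+\tfrac12)\pi$, one has $\cos(\omega_n^* L)=0$ and $\sin(\omega_n^* L)=(-1)^n$, so on every edge the endpoint data collapse to the clean relations $u(L)=\pm b$ and $u'(L)=\mp\omega a$, i.e. the value at one end is proportional to the derivative at the other. This degeneracy collapses $T(\omega_n^*,\beta)$ to an explicit matrix whose spectrum and the associated decay condition $|z|<1$ can be read off directly. I expect the computation to show that $\omega_n^*$ solves $F_N(\cdot,\beta)=0$ with an admissible decaying mode exactly for $\beta\in(\tfrac13,\tfrac12)$, and solves $F_D(\cdot,\beta)=0$ exactly for $\beta\in(0,\tfrac13)$, the threshold $\beta=\tfrac13$ being the Dirac-point value at which the gap closes (hence its exclusion). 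Because at $\omega_n^*$ the endpoint values are tied to derivatives at the opposite end, the weights coming from the modified thickness $\mu\delta$ of the boundary edges (which enter the graph model only as a weighting of the Kirchhoff balance there) drop out of the leading-order secular condition; this yields the stated robustness, namely that $\lambda_n^*$ is the first-order eigenvalue independently of both $\beta$ and $\mu$, the $\mu$- and $\beta$-dependence being relegated to the $O(\sqrt\delta)$ remainder.

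The main obstacle is the first step, the quantitative $\sqrt{\delta}$ spectral convergence, and in particular the matching of the thin-domain eigenfunction to the graph eigenfunction in the genuinely two-dimensional vertex regions, where the quasi-one-dimensional reduction fails; this is the source of the $\sqrt\delta$ rate and must be carried out uniformly in the finitely many indices $0\le n\le M_\delta$ and locally uniformly in $\beta$ away from $\tfrac13$. A secondary difficulty, on the graph side, is to verify that the decaying mode produced by the transfer-matrix analysis genuinely lies in the operator domain (square-summability together with the Kirchhoff and cut conditions at $\omega_n^*$) for the whole prescribed interval of $\beta$ rather than at isolated values; establishing that $|z(\omega_n^*,\beta)|<1$ holds throughout $(\tfrac13,\tfrac12)$ (resp. $(0,\tfrac13)$) is the decisive sign computation on which the existence of the guided mode rests.
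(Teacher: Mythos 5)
Your proposal follows essentially the same route as the paper: identify the limit quantum-graph operators (with Kirchhoff conditions, the weight $\mu$ entering only the boundary Kirchhoff balance, and Dirichlet or Kirchhoff-type conditions at the cut), invoke quantitative $O(\sqrt{\delta})$ spectral convergence of the fattened-graph operators, and then solve the graph problem explicitly via a column-by-column transfer-matrix recursion whose eigenvalue moduli produce exactly the dichotomy $\beta\in(\tfrac13,\tfrac12)$ for Neumann versus $\beta\in(0,\tfrac13)$ for Dirichlet, using that $\cos(\omega_n^*L)=0$ decouples the recursion at $\lambda_n^*$. The paper carries out precisely these steps (citing the convergence results rather than reproving them), so your plan is a faithful match, including the observation that the $\mu$-weight only alters the boundary relation and hence not the existence of the eigenvalue $\lambda_n^*$ of the limit operator.
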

\Becommente{For the Case 3 ($\mu=1$), the point of view is different. Whereas in the previous result, for a fixed truncation $t=0$, we study the dispersion curves $\beta \mapsto \lambda_n(\beta)$, for a fixed $\beta$ and  for a fixed value $t$, we exhibit the  values of $t$ for which $\lambda$ is an eigenvalue of the operator ${D}_{\delta,\mu}^{t}(\beta)$  (resp. ${N}_{\delta,\mu}^{t}(\beta)$).}

\begin{theorem}[Existence of guided modes for Case 3]\label{th:guided_case3_delta}
  Let $t\in[0,2L]$, $\mu=1$ and $\beta\in(0,\frac{1}2)\setminus\{\frac{1}3\}$. Suppose that $\delta\leq \delta_0$, where $\delta_0$ is given in Theorem \ref{th:ess_spec_beta_delta}.\vspace{-0.1cm}
  \begin{itemize}
  \item  Let $\lambda\in I^n_\delta(\beta)\setminus\lambda_n^* $. There exists $\delta_1\leq \delta_0$, such that, for any $\delta< \delta_1$, there exist (at least) $(2n+1)$ values of $t$ (depending on $\beta$), $t^D_{\delta,1}(\beta), \ldots, t^D_{\delta,2n+1}(\beta)$ (resp. $t^N_{\delta,1}(\beta), \ldots, t^N_{\delta,2n+1}(\beta)$) such that $\lambda$ is an eigenvalue of the operator ${D}_{\delta,\mu}^{t}(\beta)$ (resp. ${N}_{\delta,\mu}^{t}(\beta)$).
   \item   Let $\lambda = \lambda_n^* $. There exists $\delta_1\leq \delta_0$ such that, for any $\delta< \delta_1$,  there exist (at least) $2n$ values of $t$ depending on $\beta$,  $t^D_{\delta,1}(\beta), \ldots, t^D_{\delta,2n}(\beta)$ (resp. $t^N_{\delta,1}(\beta), \ldots, t^N_{\delta,2n}(\beta)$) such that $\lambda$ is an eigenvalue of the operator ${D}_{\delta,\mu}^{t}(\beta)$ (resp. ${N}_{\delta,\mu}^{t}(\beta)$). Moreover,  for $J\in\{D,N\}$, there exist  2 pairs of $2n$ points  $(t^J_{\pm,1}$, $\cdots$,  $t^J_{\pm,n})$  such that
   $$
   \lim_{\delta \rightarrow 0} {t^J_{\delta,k}}(\beta)  = \begin{cases}
   t_{-,k}^J & \mbox{ if } \beta \in (0, 1/3), \\
   t_{+,k}^J & \mbox{ if } \beta \in (1/3, 1/2)
   \end{cases} 
   $$ 
  \end{itemize}
\end{theorem}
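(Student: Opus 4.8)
The plan is to follow the same two-step philosophy as for Theorems~\ref{th:ess_spec_beta_delta} and~\ref{th:guided_case12_delta}: transfer the eigenvalue question to the limit quantum graph as $\delta\to 0$, and then solve the spectral problem semi-explicitly on that graph, this time counting solutions in the cut parameter $t$ rather than tracking a dispersion curve $\beta\mapsto\lambda$. First I would invoke the asymptotic framework already developed for the previous theorems to replace, for $\delta$ small and $\lambda$ inside the gap $I^n_\delta(\beta)$, each operator ${N}_{\delta,1}^{t}(\beta)$ and ${D}_{\delta,1}^{t}(\beta)$ by its quantum-graph limit on $\mathcal{G}_t$: the operator $-\D^2/\D s^2$ on each edge with Kirchhoff conditions at the interior vertices and a Neumann (resp.\ Dirichlet) condition at the free end created by the cut $\{x=\alpha(t)\}$. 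The crucial output is a \emph{two-sided} statement: $\lambda=\omega^2$ is an eigenvalue of the $\delta$-operator if and only if a transcendental dispersion function $F^J_\delta(\omega,\beta,t)$ vanishes ($J\in\{N,D\}$), with $F^J_\delta\to F^J_0$ uniformly as $\delta\to 0$, $F^J_0$ being the dispersion function of the limit graph. Because the construction of $\alpha(t)$ makes $t$ equal to the arc length of the cut along the boundary zigzag (the factor $1/2$ on $[0,L]$ compensates the slope of the diagonal edges, the slope $1$ on $[L,2L]$ that of the horizontal edge), the cut location enters $F^J_0$ only through the length of the dangling edge, so $t\mapsto F^J_0(\omega,\beta,t)$ is piecewise real-analytic and governed by the single period $[0,2L]$.

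Next I would make $F^J_0$ explicit on $\mathcal{G}_t$. Using $\beta$-quasi-periodicity in the $\mathbf{e}_y$ direction, I reduce the problem to a recurrence in the $x$-direction and set up the $2\times 2$ transfer matrix between consecutive columns: on each edge of length $L$ the eigenfunction is a combination of $\cos(\omega s)$ and $\sin(\omega s)$, and the Kirchhoff conditions fix the matrix. Inside the gap its eigenvalues are $e^{\pm\kappa}$ with $\kappa=\kappa(\omega,\beta)>0$, and the square-integrable edge mode selects the decaying eigenvector. Matching this bulk mode to the boundary condition at the free end of the dangling edge produces the relation $F^J_0(\omega,\beta,t)=0$, which takes the schematic form ``$\tan(\omega\,t)=g^N(\omega,\beta)$'' for $J=N$ and ``$\cot(\omega\,t)=g^D(\omega,\beta)$'' for $J=D$, where $g^J$ is a bounded bulk quantity built from the decaying eigenvector of the transfer matrix.

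The counting then follows from a sweep argument. Fix $\beta\neq 1/3$ and $\lambda=\omega^2\in I^n_\delta(\beta)$. As $t$ runs over $[0,2L]$ the phase $\omega\,t$ sweeps an interval of length $2\omega L\approx 2\omega_n^* L=(2n+1)\pi$, using $\omega_n^* L=\tfrac{\pi}{2}+n\pi$. On each of the $(2n+1)$ monotone branches of $\tan$ (resp.\ $\cot$) the intermediate value theorem yields one crossing of the finite level $g^J$, giving the announced $(2n+1)$ values $t^J_{\delta,k}(\beta)$. At the Dirac frequency $\lambda=\lambda_n^*$ the transfer matrix becomes parabolic ($\kappa\to 0$, the two bulk modes merge), the bulk level $g^J$ degenerates to an endpoint value of the sweep, one crossing is lost, and exactly $2n$ solutions survive. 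The selection of the decaying eigenvector flips as $\beta$ crosses the Dirac quasimomentum $1/3$ (where the gap closes, which is why $\beta=1/3$ is excluded), so the limiting phases—hence $\lim_{\delta\to 0}t^J_{\delta,k}(\beta)$—split into the two families $t^J_{-,k}$ for $\beta\in(0,1/3)$ and $t^J_{+,k}$ for $\beta\in(1/3,1/2)$.

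Finally I would transport each crossing back to the $\delta$-operator: a simple zero $t_0$ of $F^J_0(\omega,\beta,\cdot)$ with $\partial_t F^J_0\neq 0$ persists, by the implicit function theorem and the uniform convergence $F^J_\delta\to F^J_0$, to a nearby zero $t^J_{\delta,k}(\beta)$ with $t^J_{\delta,k}(\beta)\to t_0$, which identifies the limit points $t^J_{\pm,k}$. The main obstacle is the rigorous zero-count: proving the number of solutions is \emph{exactly} $2n+1$ (resp.\ $2n$) requires controlling the monotonicity of the sweep across the junction $t=L$, where the dangling edge switches from diagonal to horizontal and the functional form of $g^J$ changes, and ruling out spurious crossings near the band edges of $I^n_\delta(\beta)$. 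The genuinely delicate part is the degenerate case $\lambda=\lambda_n^*$, where the decaying bulk mode and $F^J_0$ are only defined through a limiting (parabolic transfer matrix) procedure, and where the $\beta\lessgtr 1/3$ dichotomy for the limit points must be read off carefully from the eigenvector structure of the transfer matrix.
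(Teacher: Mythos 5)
Your overall route is the paper's route: pass to the limit graph, use the $\beta$-quasi-periodic vertex recurrence and its transfer matrix $M(\lambda,\beta)$, select the decaying eigenvector inside the gap (this is exactly the paper's characteristic function $\phi_{\omega,\beta}$), characterize eigenvalues of $\mathcal{D}_1^t(\beta)$ and $\mathcal{N}_1^t(\beta)$ by the vanishing of $\phi_{\omega,\beta}(t)$, resp.\ $\phi'_{\omega,\beta}(t)$ (Proposition~\ref{prop:lienavecleszeros}), count solutions in $t$, and transfer back to $\delta>0$. But your treatment of the second bullet, $\lambda=\lambda_n^*$, is wrong, and that is the only place where the counts $2n+1$ and $2n$ differ. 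At $\omega_n^*$ one has $\cos(\omega_n^*L)=0$, so by \eqref{definitionmu_1_mu_2} the transfer matrix is diagonal with eigenvalues $r_A=-\rho(\beta)\sqrt{2+2\cos 2\pi\beta}$ and $r_B=-\rho(\beta)/\sqrt{2+2\cos 2\pi\beta}$, whose moduli are distinct and bounded away from $1$ for $\beta\neq 1/3$: far from ``becoming parabolic'', the matrix is maximally hyperbolic there --- by \eqref{definitionr} and \eqref{eq:r_max} the decay rate $|r(\omega,\beta)|$ is smallest at mid-gap and degenerates to $1$ only at the band edges of $I^n(\beta)$, which are excluded. Consistently, the limit graph still has exactly $2n+1$ admissible cut positions at $\lambda_n^*$: Lemma~\ref{LemmeOmegaStar} computes $\phi_{\omega_n^*,\beta}$ explicitly (a pure sine or cosine) and exhibits $2n+1$ zeros of $\phi$ and $2n+1$ zeros of $\phi'$. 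The drop to $2n$ is not a limit-graph phenomenon at all: at $\lambda_n^*$ one of those $2n+1$ values of $t$ is a junction value ($t=0$ or $t=L$, according to $J$ and to which side of $1/3$ the wavenumber $\beta$ lies), and it is precisely near $t\in\{0,L,2L\}$ that the graph-to-thick-domain convergence cannot be invoked, since continuity of the $\delta$-spectrum with respect to $t$ is unclear there (this is the content of the remark following Proposition~\ref{prop:guided_case3}); hence only the $2n$ non-junction values transfer to $\delta>0$. Your mechanism both contradicts Lemma~\ref{LemmeOmegaStar} and would not yield the theorem.

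Two further steps are asserted rather than available. First, the transfer to $\delta>0$ via a thick-domain dispersion function $F^J_\delta$ converging uniformly to $F^J_0$ does not exist in this setting; what the convergence theory of Post--Kuchment provides is convergence of gap eigenvalues, locally in $t$ away from the junction values. The paper substitutes for your implicit-function persistence the strict monotonicity of the limit spectral flow $\omega\mapsto t^J_{n,q}(\omega)$ (Proposition~\ref{PropFlotSpectral_monotone}: decreasing for Dirichlet, increasing for Neumann), which guarantees that each limit curve crosses the level $\lambda$ transversally, so nearby $\delta$-eigenvalue curves still cross it. Second, your intermediate-value ``sweep'' does not give the count: away from $\omega_n^*$ the phase $2\omega L$ may differ from $(2n+1)\pi$ by almost $\pi$, so the sweep alone only guarantees $2n$ (and possibly up to $2n+2$) crossings; moreover the functional form of $\phi_{\omega,\beta}$ changes across $s=L$ and $\phi'_{\omega,\beta}$ jumps there, so there is no single monotone phase on $[0,2L]$. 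The rigorous count is the paper's three-step argument: the number of zeros is locally constant in $\omega$ on each half-gap because zeros can never reach $s\in\{0,L,2L\}$ (Lemma~\ref{LemmeConstant}), it equals $2n+1$ at the mid-gap frequency $\omega_n^*$ (Lemma~\ref{LemmeOmegaStar}), and the monotone curves $t^J_{n,q}(\omega)$ glue the two halves (Propositions~\ref{PropFlotSpectral} and~\ref{PropFlotSpectral_monotone}). You flagged both issues as ``obstacles'', but they are where the substance of the proof lies, and your proposed resolution of the $\lambda=\lambda_n^*$ case is incorrect.
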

\Becommente{
\begin{remark}
Theorem~\ref{th:guided_case3_delta} proves, for given $\lambda$ and $\delta$, the existence of values of $t$ such that $\lambda$ is an eigenvalue of the operator ${D}_{\delta,\mu}^{t}(\beta)$(or ${N}_{\delta,\mu}^{t}(\beta)$). We can  'invert' the relation, and consider the $2n+1$ curves $t \mapsto \lambda(t)$, see Figure~\ref{fig:FlotSpectraux} where those curves  (for ${N}_{\delta,\mu}^{t}(\beta)$) are represented. Finally for a given truncation $t$, we could represent the dispersion curves $\beta \mapsto \lambda_n(\beta)$ and exhibit almost flat curves, see Figure \ref{fig:FlatEigenvalues}. The results of Case 1 and 2 (see Theorem \ref{th:guided_case12_delta}) can be extended for other truncations.
\end{remark} 
}
  The three previous theorems are proved using a standard approach of asymptotic analysis. In a nutshell, we first identify the limit of the operators ${N}_{\delta,\mu}^{t}(\beta) $ (resp.  ${D}_{\delta,\mu}^{t}(\beta)$)  as $\delta$ tends to 0. Then, we make explicit computations of the spectrum of the limit operators. Standard results of~\cite{KuchmentQuantumgraphs1,Post:2006} (see also ~\cite{Article_DFJV_Part1} for an application to square graph-like domains) ensures the convergence of the spectrum of ${N}_{\delta,\mu}^{t}(\beta) $ (resp. ${D}_{\delta,\mu}^{t}(\beta)$) to the one of the limit operator.  
  \Becommente{
 \begin{remark} In that paper, we restrict ourselves to the Laplacian operator. The extension of the previous result to the operators of the form $-\rho^{-1} \Delta$ is an interesting question. The result of presence of Dirac points could be derived using  the analysis of~\cite{KuchmentPost} (for the limit operator with a potential) together with asymptotic analysis. However, for the edge states, the extension of the existence result is less clear and has to be investigated.
 \end{remark} 
 }
Theorem~\ref{th:Diracpoints_delta} is proven in Section~\ref{sec:spectrum_optot}, while Theorems~\ref{th:ess_spec_beta_delta}-\ref{th:guided_case12_delta}-\ref{th:guided_case3_delta} are proven in Section~\ref{sec_guidedmodes}.
 \section{Spectrum of the operator $A_\delta$ (proof of Theorem~\ref{th:Diracpoints_delta})}\label{sec:spectrum_optot}
 \subsection{Band structure of the spectrum and the hexagonal Brillouin zone}
The operator $A_\delta$ defined in \eqref{eq:opA_def}
 is self-adjoint and non negative. The Floquet-Bloch theory shows that the spectrum of this periodic elliptic operator is reduced to its essential spectrum which has a band structure \cite{Eastham:1973,Kuchment:1993,Reed:1972}. Let us recall this result.
 
For a fixed ${\bf k}=(k_x,k_y)\in\RR^2$, let us define the set of  locally $L^2$ functions which are ${\bf k}\cdot \vg_i$ quasi-periodic in the direction $\vg_i$ for $i\in\{1,2\}$
 \begin{equation}
 L^2_{\kg}(\Omega_\delta) = \left\{ f \in L^2_{loc}(\Omega_\delta) \mbox{ s.t. }  f(\cdot + \vg) = f e^{2\imath\pi \kg \cdot  \vg} \; \forall \vg \in \Lambda  \right\}.
 \end{equation}
It is easy to see that this space can be identified to $L^2({\cal C}_\delta^\sharp)$ through the ${\bf k}$-quasi-periodic extension operator ${E}_{\bf k}:L^2({\cal C}_\delta^\sharp)\rightarrow  L^2_{\kg}(\Omega_\delta)$ defined by
\begin{equation}\label{eq:extensionop}
	 \forall f\in L^2({\cal C}_\delta^\sharp),\; \forall {\bf x}\in {\cal C}_\delta^\sharp,\;\forall \vg \in \Lambda,\;\;[{E}_{\bf k}f]({\bf x} + \vg) = f({\bf x}) e^{2\imath\pi \kg \cdot  \vg} \; 
\end{equation}
and we have
\[
	f\in L^2_{\kg}(\Omega_\delta)\quad\Leftrightarrow\quad f={E}_{\bf k}[f|_{{\cal C}_\delta^\sharp}]
\]
 We  equip $ L^2_{\kg}(\Omega_\delta)$ with the scalar product of $L^2({\cal C}_\delta^\sharp)$ and the associated norm. 
Let us define the set of  locally $H^1$ functions which are ${\bf k}\cdot \vg_i$ quasi-periodic in the direction $\vg_i$ for $i\in\{1,2\}$
  \begin{equation}\label{eq:H1_k}
  H^1_{\kg}(\Omega_\delta) = \left\{ f \in L^2_{\kg}(\Omega_\delta) \mbox{ s.t. }   \nabla f \in L^2_{\kg}(\Omega_\delta)^2  \right\}.
  \end{equation}
  The space $H^1_{\kg}(\Omega_\delta)$ is a closed subspace of $H^1(\Omega_\delta)$ so we equip $H^1_{\kg}(\Omega_\delta)$ with the scalar product of $H^1({\cal C}_\delta^\sharp)$ and the associated norm. 
  Finally, let us introduce the space
  \begin{equation}
  H^1_{\kg}(\Omega_\delta,\Delta) = \left\{ f \in H^1_{\kg}(\Omega_\delta) \mbox{ s.t. }   \Delta f \in L^2_{\kg}(\Omega_\delta)  \right\},
  \end{equation}
 which, for the same reason than for the previous spaces, can be equipped with the scalar product of $H^1({\cal C}_\delta^\sharp,\Delta)$ and the associated norm.
We introduce now the 'reduced' operator $A_\delta(\kg)$ defined as follows
 \begin{equation}
 A_\delta(\kg) = - \Delta ,\quad  
 {D}(A_\delta(\kg))=\left\{  v \in H^1_{\kg}(\Omega_\delta,\Delta),\partial_n v = 0 \; \mbox{on} \; \partial \Omega_\delta  \right\}.
 \end{equation}
For any $\kg \in \R^2$, the operator $A_\delta(\kg)$ is self-adjoint, non negative, and has a compact resolvent. Consequently, its spectrum consists of an increasing sequence of non-negative eigenvalues $(\lambda_{\delta,n}(\kg))_{n\in \N^\ast}$ that  tends to $+\infty$ as $n$ tends to $+\infty$. The mappings ${\bf k}\mapsto \lambda_{\delta,n}(\kg)$ are called the dispersive surfaces, they are Lipschitz-continuous functions (which can be shown by using a min-max characterization of the eigenvalues).  By definition of the dual basis, we have
\[
	\forall \kg \in \R^2,\;\forall (m,n)\in\Z^2,\quad L^2_{\kg+m \vg_1^\ast + n \vg_2^\ast}(\Omega_\delta)= L^2_{\kg}(\Omega_\delta)
\]
a similar property holding also for $H^1_{\kg}(\Omega_\delta)$ and $H^1_{\kg}(\Omega_\delta,\Delta)$. This implies that
\[
	\forall \kg \in \R^2,\;\forall (m,n)\in\Z^2,\quad A(\kg + m \vg_1^\ast + n \vg_2^\ast) = A(\kg).
\]
Hence, it suffices to consider the vectors $\kg$ varying over a periodicity cell. A natural choice could be to consider the parallelogram $\{\kg\in \R^2, \kg=k_1\vg_1^\ast +k_2\vg_2^\ast,\; k_1,k_2\in (-1/2,1/2)\}$. But in order to take advantage of the rotation and symmetry property, a more common choice is the so-called  Brillouin zone $\mathcal{B}$ consisting, here, on a regular hexagon containing the points ${\bf k}\in\R^2$ such that $\kg+\Lambda^*$ is invariant by $R$ and that are closer to the origin (see Figure~\ref{fig:Brillouin}). 
\begin{figure}[htbp]
  \centering\vspace{-0.3cm}
  \includegraphics[width=3cm]{./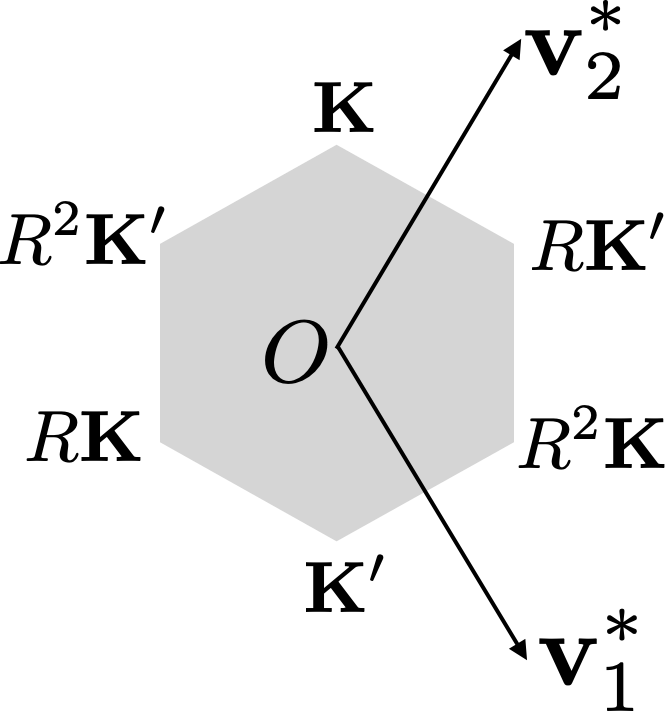}
  \caption{The hexagonal Brillouin zone $\mathcal{B}$ and its 6 vertices. }\label{fig:Brillouin}\vspace{-0.3cm}
\end{figure}
The 6 vertices delimiting the Brillouin zone are defined by
 \begin{equation}\label{eq:vertex_B}
 {\bf K}:= \frac{1}3(\vg_2^\ast-\vg_1^\ast),\; R{\bf K},\; R^2{\bf K},\; {\bf K}':= -{\bf K},\; R{\bf K}',\; R^2{\bf K}',
 \end{equation}
 Note that, since $R{\bf K} = {\bf K}-\vg_2^\ast$, $R^2{\bf K} = {\bf K}+\vg_1^\ast$,  $R{\bf K}' = {\bf K}'+\vg_2^\ast$ and $R^2{\bf K}' = {\bf K}'-\vg_1^\ast$, we have 
 \begin{equation}\label{eq:prop_vertex}
  \begin{array}{l}
     L^2_{{\bf K}^*}(\Omega_\delta)= L^2_{{\bf K}}(\Omega_\delta)\quad\text{and}\quad A({\bf K}^*)=A({\bf K})\;\;\text{for}\;{\bf K}^*\in\{R{\bf K},R^2{\bf K}\}, \\[3pt]
     L^2_{{\bf K}^*}(\Omega_\delta)= L^2_{{\bf K}'}(\Omega_\delta)\quad\text{and}\quad A({\bf K}^*)=A({\bf K}')\;\;\text{for}\;{\bf K}^*\in\{R{\bf K}',R^2{\bf K}'\},
  \end{array}
 \end{equation}
 and since ${\bf K}'=S\,{\bf K}$, we have
 \begin{equation}\label{eq:prop_vertex2}
  L^2_{{\bf K}'}(\Omega_\delta)= \mathcal{S}\,L^2_{{\bf K}}(\Omega_\delta)\quad\text{and}\quad A({\bf K}')=\mathcal{S}A({\bf K})\mathcal{S}.
\end{equation}
 \noindent Finally, the (essential) spectrum of $A_\delta$ is given by
 $$
 \sigma(A_\delta) = \bigcup_{\kg \in \mathcal{B}} \sigma(A_\delta(\kg)) = \bigcup_{\kg \in \mathcal{B}} \bigcup_{n \in \N} \lambda_{\delta,n}(\kg).
 $$ 
 \subsection{Orthogonal decomposition of $L^2_{{\bf K}}(\Omega_\delta)$}

 In order to give more information on the structure of the spectrum near the vertices of $\mathcal{B}$, we will use a particular decomposition of $L^2_{\kg}(\Omega_\delta)$ for ${\bf K}^*\in\{{\bf K},{\bf K}'\}$ (and by \eqref{eq:prop_vertex} this decomposition holds for the other vertices of $\mathcal{B}$). This decomposition is already used and proven in \cite{Fefferman:2012,Lee:2019,Cassier:2021}. This decomposition is linked to the rotation operator $\mathcal{R}$ defined in
   \eqref{eq:rotation_op} and its eigenspaces $L^2_s(\Omega_\delta)$, $s\in\{0,1,2\}$ defined in \eqref{eq:L2s} for $\mathcal{O}=\Omega_\delta$. Let us define the following spaces
   \begin{equation}
    \label{eq:L2s_K}
   \forall {\bf K}^*\in\{{\bf K},{\bf K}'\}, \; \forall s\in\{0,1,2\},\quad  L^2_{{\bf K}^*,s}(\Omega_\delta):= L^2_{{\bf K}^*}(\Omega_\delta)\cap L^2_s(\Omega_\delta).
   \end{equation}
   By using that $R^*e_{0,\delta}=e_{2,\delta}+\vg_1-\vg_2$, $R^*e_{1,\delta}=e_{0,\delta}+\vg_1-\vg_2$ and $R^*e_{2,\delta}=e_{1,\delta}+\vg_1-\vg_2$ and ${\bf K}\cdot(\vg_1-\vg_2)=-2/3$, we obtain the following characterization:
   \begin{equation}\label{eq:carac_L2sK}
    \begin{array}{lcl}
      u\in L^2_{{\bf K},0}(\Omega_\delta)\quad\Leftrightarrow\quad u={E}_{\bf K}[u|_{{\cal C}_\delta^\sharp}]\;\;\text{and}\; \;u\big|_{e_{0,\delta}}=e^{-2\imath \pi/3}\,u\big|_{e_{1,\delta}}=e^{2\imath \pi/3}\,u\big|_{e_{2,\delta}},\\[5pt]
      u\in L^2_{{\bf K},1}(\Omega_\delta)\quad \Leftrightarrow\quad u={E}_{\bf K}[u|_{{\cal C}_\delta^\sharp}]\;\;\text{and}\;\; u\big|_{e_{0,\delta}}=u\big|_{e_{1,\delta}}=u\big|_{e_{2,\delta}},\\[5pt]
      u\in L^2_{{\bf K},2}(\Omega_\delta)\quad \Leftrightarrow\quad u={E}_{\bf K}[u|_{{\cal C}_\delta^\sharp}]\;\;\text{and}\;\; u\big|_{e_{0,\delta}}=e^{2\imath \pi/3}\,u\big|_{e_{1,\delta}}=e^{-2\imath \pi/3}\,u\big|_{e_{2,\delta}},
    \end{array}
   \end{equation}
   where ${E}_{\bf K}$ is defined in \eqref{eq:extensionop} and where we have identified functions defined on $e_{i,\delta}$ using the identification \eqref{identify_fatedges}.
   \begin{lemma}\label{lem:decomp_L2omega}
	  For all ${\bf K}^*\in\{{\bf K},{\bf K}'\}$, the space $L^2_{{\bf K}^*}(\Omega_\delta)$ admits the following orthogonal decomposition: 
	 \begin{equation}\label{DecompositionL2Epais}
	 L^2_{{\bf K}^*}(\Omega_\delta) = L^2_{{\bf K}^*,0}(\Omega_\delta) \oplus L^2_{{\bf K}^*, 1}(\Omega_\delta) \oplus L^2_{{\bf K}^*,2}(\Omega_\delta),
	 \end{equation}
	 where the $\oplus$ sign stands for the orthogonality decomposition with respect to the scalar product on $L^2({\cal C}_\delta^\sharp)$.
	 \end{lemma}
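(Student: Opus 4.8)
The plan is to recognize \eqref{DecompositionL2Epais} as nothing but the spectral (eigenspace) decomposition of the order-three unitary operator $\mathcal{R}$, so that the whole content reduces to checking that $\mathcal{R}$ restricts to a \emph{unitary} operator on $L^2_{{\bf K}^*}(\Omega_\delta)$ for ${\bf K}^*\in\{{\bf K},{\bf K}'\}$. The first step is therefore to prove that $L^2_{{\bf K}^*}(\Omega_\delta)$ is $\mathcal{R}$-invariant. Given $u\in L^2_{{\bf K}^*}(\Omega_\delta)$ and $\vg\in\Lambda$, I would write $\mathcal{R}u({\bf x}+\vg)=u(R^*{\bf x}+R^*\vg)$ and use that $\Lambda$ is $R$-invariant (honeycomb symmetry) so that $R^*\vg\in\Lambda$; the quasi-periodicity of $u$ then produces the phase $e^{2\imath\pi\,{\bf K}^*\cdot R^*\vg}=e^{2\imath\pi\,(R{\bf K}^*)\cdot\vg}$, using orthogonality of $R$. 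Here the special choice of vertex is crucial: invoking $R{\bf K}={\bf K}-\vg_2^\ast$ (resp. $R{\bf K}'={\bf K}'+\vg_2^\ast$) from \eqref{eq:prop_vertex} together with $\vg_2^\ast\cdot\vg\in\Z$ for $\vg\in\Lambda$, this phase collapses to $e^{2\imath\pi\,{\bf K}^*\cdot\vg}$, showing $\mathcal{R}u\in L^2_{{\bf K}^*}(\Omega_\delta)$.

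Second, I would check that $\mathcal{R}$ preserves the inner product inherited from $L^2({\cal C}_\delta^\sharp)$. Since $R^*$ is measure preserving, the change of variable ${\bf y}=R^*{\bf x}$ gives $\int_{{\cal C}_\delta^\sharp}|\mathcal{R}u|^2=\int_{R^*{\cal C}_\delta^\sharp}|u|^2$; and the relations $R^*e_{0,\delta}=e_{2,\delta}+\vg_1-\vg_2$, $R^*e_{1,\delta}=e_{0,\delta}+\vg_1-\vg_2$, $R^*e_{2,\delta}=e_{1,\delta}+\vg_1-\vg_2$ (recorded just before \eqref{eq:carac_L2sK}) show that $R^*{\cal C}_\delta^\sharp={\cal C}_\delta^\sharp+(\vg_1-\vg_2)$ is a lattice translate of the cell. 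Because $|u|^2$ (and more generally $u\overline{v}$ for $u,v\in L^2_{{\bf K}^*}(\Omega_\delta)$) is $\Lambda$-periodic, the integral over this translate equals the integral over ${\cal C}_\delta^\sharp$, so $\mathcal{R}$ is a linear isometry; since $\mathcal{R}^3=\mathcal{I}$ it is invertible, hence unitary.

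With these two points the decomposition follows from the elementary spectral theory of a finite-order unitary operator. Setting $\omega:=e^{2\imath\pi/3}$, the minimal polynomial of $\mathcal{R}$ divides $X^3-1=(X-1)(X-\omega)(X-\omega^2)$, which has simple roots, so $\mathcal{R}$ is diagonalizable with eigenvalues in $\{1,\omega,\omega^2\}$ and eigenspaces exactly $L^2_{{\bf K}^*,s}(\Omega_\delta)$, $s\in\{0,1,2\}$. I would make this explicit through the spectral projectors $P_s:=\tfrac13\sum_{j=0}^{2}\omega^{-sj}\mathcal{R}^j$: one checks $\sum_{s}P_s=\mathcal{I}$ and $\mathcal{R}P_s=\omega^s P_s$ using $\mathcal{R}^3=\mathcal{I}$, so every $u$ splits as $u=\sum_s P_s u$ with $P_s u\in L^2_{{\bf K}^*,s}(\Omega_\delta)$ (it stays in $L^2_{{\bf K}^*}(\Omega_\delta)$ because $P_s$ is a polynomial in $\mathcal{R}$, which is invariant by the first step). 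Orthogonality of the summands is then immediate from unitarity: for $u\in L^2_{{\bf K}^*,s}(\Omega_\delta)$ and $v\in L^2_{{\bf K}^*,s'}(\Omega_\delta)$ with $s\neq s'$ one has $\langle u,v\rangle=\langle\mathcal{R}u,\mathcal{R}v\rangle=\omega^{\,s-s'}\langle u,v\rangle$, forcing $\langle u,v\rangle=0$.

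The routine linear algebra of the projectors $P_s$ is not where the difficulty lies; the genuine content, and the step I would treat most carefully, is the compatibility established in the first two paragraphs. The point is that ${\bf K}$ and ${\bf K}'$ are precisely the Bloch vectors fixed by $R$ modulo $\Lambda^\ast$, which is exactly what makes $L^2_{{\bf K}^*}(\Omega_\delta)$ invariant under $\mathcal{R}$ and would \emph{fail} for a generic $\kg$, together with the fact that the periodicity cell ${\cal C}_\delta^\sharp$ was designed so that $R^*$ maps it onto a lattice translate of itself, so that the cell inner product is rotation-invariant. Once these geometric facts are secured, the explicit form \eqref{eq:carac_L2sK} of the three eigenspaces can be recovered by applying $P_s$ edge by edge.
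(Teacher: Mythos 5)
Your proof is correct, and its skeleton coincides with the paper's: the paper also establishes that $\mathcal{R}$ leaves $L^2_{{\bf K}^*}(\Omega_\delta)$ invariant (by exactly your computation, using $R^*\vg\in\Lambda$, ${\bf K}^*\cdot R^*\vg=R{\bf K}^*\cdot\vg$ and $R{\bf K}^*\in{\bf K}^*+\Lambda^*$), and it obtains the splitting by writing out, for each $u$, the three averages $\tfrac13\sum_j \omega^{-sj}\mathcal{R}^j u$ — i.e.\ precisely your projectors $P_s$, just displayed concretely rather than invoked as spectral theory of a finite-order unitary. Where you genuinely diverge is the orthogonality step. The paper proves it by a hands-on computation: using the characterization \eqref{eq:carac_L2sK}, the restrictions of $u\overline{u'}$ to the three fattened edges $e_{j,\delta}$ differ by the phases $1$, $e^{2\imath\pi/3}$, $e^{-2\imath\pi/3}$, whose sum vanishes, so $\int_{{\cal C}_\delta^\sharp}u\overline{u'}=0$. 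You instead deduce orthogonality abstractly from the fact that $\mathcal{R}$ is an \emph{isometry} for the cell inner product, which costs you an extra geometric verification the paper never makes explicit — namely that $R^*{\cal C}_\delta^\sharp={\cal C}_\delta^\sharp+(\vg_1-\vg_2)$ is a lattice translate of the cell and that $u\overline{v}$ is $\Lambda$-periodic for quasi-periodic $u,v$, so the translated integral equals the cell integral. Both routes are sound; yours buys a cleaner, characterization-free argument (and, as you note, \eqref{eq:carac_L2sK} can then be recovered a posteriori by applying $P_s$ edge by edge), while the paper's edge-by-edge computation is shorter given that \eqref{eq:carac_L2sK} has already been recorded, and it reuses that characterization repeatedly in the sequel.
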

	 \begin{proof}
    Let us show \eqref{DecompositionL2Epais} for ${\bf K}^*={\bf K}$ and using \eqref{eq:prop_vertex2}, it is easy to deduce the result for ${\bf K}^*={\bf K}'$.
		 Since $1+e^{2 \imath\pi/3}+e^{-2\imath \pi/3}=0$, we have for all $u\in L^2(\Omega_\delta)$
		 \[ 	u=\frac{1}{3}(u+\mathcal{R}u+\mathcal{R}^2u)+\frac{1}{3}(u+e^{2 \imath\pi/3}\mathcal{R}u+e^{-2 \imath\pi/3}\mathcal{R}^2u)+\frac{1}{3}(u+e^{-2 \imath\pi/3}\mathcal{R}u+e^{2 \imath\pi/3}\mathcal{R}^2u).
		 \]
		 Since $\mathcal{R}^3=\mathcal{I}$, the first term of the right hand side is in $L^2_{ 0}(\Omega_\delta)$, the second one is in $L^2_{1}(\Omega_\delta)$ and the last one is in $L^2_{2}(\Omega_\delta)$. Moreover, if $u\in L^2_{{{\bf K}}}(\Omega_\delta)$ then 
		 \[
		 \forall {\bf x}\in \Omega_\delta,\;\vg\in\Lambda,\quad \mathcal{R}u({\bf x}+{\bf v})=u(R^*({\bf x}+{\bf v}))=u(R^*({\bf x}))e^{2\imath\pi R{\bf K}\cdot \vg}
		 \]
		 where we have used in the last equality that ${\bf v}\in\Lambda\,\Rightarrow \, R^*\vg \in\Lambda$ and ${\bf K}\cdot R^*\vg = R{\bf K}\cdot \vg$. Using \eqref{eq:prop_vertex}, we deduce that $\mathcal{R}u\in L^2_{{\bf K}}(\Omega_\delta)$.
		 Now, let us show that the decomposition is orthogonal with respect to the scalar product of $L^2({\cal C}_\delta^\sharp)$. Let $s\neq s'\in\{0,1,2\}$,  $u\in 	L^2_{{\bf K},s}(\Omega_\delta)$ and $u'\in L^2_{{\bf K},s'}(\Omega_\delta)$ then using \eqref{eq:carac_L2sK} we have \vspace{-0.2cm}
		 \[
		 	\int_{{\cal C}_\delta^\sharp}u\overline{u'}=\sum_{j=0}^2 \int_{e_{j,\delta}}u\overline{u'}=(1+e^{2\imath\pi/3}+ e^{-2\imath\pi/3})\int_{e_{0,\delta}}u\overline{u'}=0.\vspace{-0.1cm}
		 \]
		 \end{proof}\vspace{-0.3cm}
By noting that for ${\bf K}^*\in\{{\bf K},{\bf K}'\}$ and each $s\in\{0,1,2\}$, each space	$D(A_\delta({\bf K}^*))\cap L^2_{{\bf K}^*,s}(\Omega_\delta)$	is stable by the operator $A_\delta({\bf K}^*)$, by denoting
\begin{equation}\label{eq:A_s}
\forall {\bf K}^*\in\{{\bf K},{\bf K}'\},\;\forall s\in\{0,1,2\},\quad A_{\delta,s}({\bf K}^*):=A_\delta({\bf K}^*)\big|_{L^2_{{{\bf K}^*},s}(\Omega_\delta)},
\end{equation}
we deduce easily the following decomposition.
\begin{corollary}\label{cor:decomp_A}
For ${\bf K}^*\in\{{\bf K},{\bf K}'\}$, the operator $A_\delta({\bf K}^*)$ can be decomposed as follows
\begin{equation}\label{eq:decomp_A}
     A_\delta({\bf K}^*)=A_{\delta,0}({\bf K}^*)\oplus A_{\delta,1}({\bf K}^*) \oplus A_{\delta,2}({\bf K}^*),
\end{equation}
where $A_{\delta,s}({\bf K}^*)$ for $s\in\{0,1,2\}$  is defined in \eqref{eq:A_s}.
\end{corollary}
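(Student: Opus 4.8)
The plan is to establish \eqref{eq:decomp_A} by showing that the rotation operator $\mathcal{R}$ commutes with $A_\delta({\bf K}^*)$, so that the orthogonal Hilbert-space decomposition of Lemma~\ref{lem:decomp_L2omega} lifts to a decomposition of the operator itself. I would fix ${\bf K}^*={\bf K}$ throughout, the case ${\bf K}^*={\bf K}'$ following from \eqref{eq:prop_vertex2} exactly as at the start of the proof of Lemma~\ref{lem:decomp_L2omega}.

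First I would check that $\mathcal{R}$ maps the domain $D(A_\delta({\bf K}))$ into itself. This rests on three observations. The quasi-periodicity class is preserved: the proof of Lemma~\ref{lem:decomp_L2omega} already shows that $u\in L^2_{{\bf K}}(\Omega_\delta)$ implies $\mathcal{R}u\in L^2_{{\bf K}}(\Omega_\delta)$, and the same computation, together with the fact that $R^*$ is an isometry preserving $\Omega_\delta$ by the honeycomb symmetry (Definition~\ref{def:honeycomb}), gives $\mathcal{R}\,H^1_{{\bf K}}(\Omega_\delta,\Delta)\subset H^1_{{\bf K}}(\Omega_\delta,\Delta)$. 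Finally, the homogeneous Neumann condition is preserved: differentiating $(\mathcal{R}u)({\bf x})=u(R^*{\bf x})$ gives $\nabla(\mathcal{R}u)({\bf x})=R\,(\nabla u)(R^*{\bf x})$, and since $R^*$ maps $\partial\Omega_\delta$ onto itself and carries the outward normal at ${\bf x}$ to the outward normal at $R^*{\bf x}$, one obtains $\partial_n(\mathcal{R}u)({\bf x})=(\partial_n u)(R^*{\bf x})=0$ on $\partial\Omega_\delta$.

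Second, I would record the commutation relation $A_\delta({\bf K})\,\mathcal{R}=\mathcal{R}\,A_\delta({\bf K})$ on $D(A_\delta({\bf K}))$, which is merely the rotation invariance of the Laplacian: for the orthogonal matrix $R$ one has $\Delta(u\circ R^*)=(\Delta u)\circ R^*$, i.e. $-\Delta(\mathcal{R}u)=\mathcal{R}(-\Delta u)$. Combining the two facts, if $u\in D(A_\delta({\bf K}))\cap L^2_{{\bf K},s}(\Omega_\delta)$, then applying $\mathcal{R}$ and using the commutation relation shows that $A_\delta({\bf K})u$ is again an eigenvector of $\mathcal{R}$ for the same eigenvalue, hence $A_\delta({\bf K})u\in L^2_{{\bf K},s}(\Omega_\delta)$. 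Thus each $D(A_\delta({\bf K}))\cap L^2_{{\bf K},s}(\Omega_\delta)$ is stable under $A_\delta({\bf K})$, and since Lemma~\ref{lem:decomp_L2omega} provides the orthogonal splitting of the whole space into the three eigenspaces, the operator splits as the orthogonal direct sum \eqref{eq:decomp_A}, with $A_{\delta,s}({\bf K})$ given by \eqref{eq:A_s}.

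The only step requiring genuine care is the invariance of the Neumann condition under $\mathcal{R}$, namely verifying that $R^*$ preserves both $\partial\Omega_\delta$ and the direction of the outward normal; this is precisely where the honeycomb symmetry of $\Omega_\delta$ enters, everything else being a formal consequence of the rotation invariance of $-\Delta$ and the unitarity of $\mathcal{R}$. Once this is in place, the corollary follows, as the text preceding the statement already anticipates.
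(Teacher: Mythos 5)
Your proposal is correct and follows essentially the same route as the paper, which derives the corollary from the orthogonal decomposition of Lemma~\ref{lem:decomp_L2omega} together with the stability of each $D(A_\delta({\bf K}^*))\cap L^2_{{\bf K}^*,s}(\Omega_\delta)$ under $A_\delta({\bf K}^*)$ (a fact the paper states without detail, asserting the decomposition is then deduced \enquote{easily}). Your contribution is simply to make that stability explicit via the commutation of $\mathcal{R}$ with $A_\delta({\bf K}^*)$, including the preservation of the quasi-periodicity class, the $\Delta$-regularity and the Neumann condition, which is exactly the argument the paper leaves to the reader.
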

Let us now relate the eigenvalues and eigenvectors of $A_{\delta,1}({\bf K}^*)$ to the ones of $A_{\delta,2}({\bf K}^*)$. To do so, we use  the symmetry operator $\mathcal{S}$ defined in  \eqref{DefinitionSym} for $\mathcal{O}=\Omega_\delta$.
We can now state the following result.
 \begin{proposition}\label{sym_eigenvectors}
 Let ${\bf K}^*\in\{{\bf K},{\bf K}'\}$. $(\lambda_\delta, \phi_{\delta,1})$ is an eigenpair of $A_{\delta,1}({\bf K}^*)$ if and only if $(\lambda_\delta, \phi_{\delta,2}:=\overline{\mathcal{S}\phi_{\delta,1}})$ is an eigenpair of $A_{\delta,2}({\bf K}^*)$. Moreover
\begin{equation}
    \int_{\mathcal{C}_\delta^\sharp}\phi_{\delta,s}({\bf x})\,\overline{\nabla \phi_{\delta,s}({\bf x})}d{\bf x}=0\quad\text{for }\;s\in\{1,2\},
\end{equation}
and there exists $v_\delta$ such that 
\begin{equation}\label{eq:v_delta}
   \int_{\mathcal{C}_\delta^\sharp}\nabla \phi_{\delta,1}({\bf x})\,\overline{\phi_{\delta,2}({\bf x})}d{\bf x}=  {-} \int_{\mathcal{C}_\delta^\sharp}\phi_{\delta,1}({\bf x})\,\overline{\nabla \phi_{\delta,2}({\bf x})}d{\bf x}=v_\delta (1,\imath)^T.
\end{equation}
 \end{proposition}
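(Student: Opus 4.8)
The plan is to lean on three structural facts: the operator $-\Delta$ is a \emph{real} operator that commutes with every rigid motion; the domain $\Omega_\delta$ is globally invariant under $S:\mathbf{x}\mapsto-\mathbf{x}$ and under the rotation $R$ by the honeycomb symmetry; and consequently $\mathcal{S}$, $\mathcal{R}$ and complex conjugation each send Neumann eigenfunctions of $A_\delta(\mathbf{K}^*)$ to Neumann eigenfunctions with the same \emph{real} eigenvalue $\lambda_\delta$. The only delicate point will be bookkeeping of the two labels carried by $\phi_{\delta,s}$: its rotation character ($\mathcal{R}\phi_{\delta,1}=e^{2\imath\pi/3}\phi_{\delta,1}$, $\mathcal{R}\phi_{\delta,2}=e^{-2\imath\pi/3}\phi_{\delta,2}$) and its Floquet class $\mathbf{K}^*$.

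For the equivalence of eigenpairs I would set $\psi:=\overline{\mathcal{S}\phi_{\delta,1}}$, i.e. $\psi(\mathbf{x})=\overline{\phi_{\delta,1}(-\mathbf{x})}$, and verify the two covariances directly. Using $SR^*=R^*S$ one gets $\mathcal{R}\psi=e^{-2\imath\pi/3}\psi$, so $\psi\in L^2_2(\Omega_\delta)$; and since $S\vg=-\vg$ for $\vg\in\Lambda$, conjugation turns the $\mathbf{K}^*$-quasi-periodicity of $\phi_{\delta,1}$ back into $\mathbf{K}^*$-quasi-periodicity of $\psi$, whence $\psi\in L^2_{\mathbf{K}^*,2}(\Omega_\delta)$. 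Because $-\Delta$ commutes with the isometry $S$ and with conjugation, and $S$ preserves $\partial\Omega_\delta$ together with the normal derivative, one checks $\psi\in D(A_{\delta,2}(\mathbf{K}^*))$ and $-\Delta\psi=\lambda_\delta\psi$. As $\phi\mapsto\overline{\mathcal{S}\phi}$ is an involution ($S^2=I$), this gives the stated ``if and only if''.

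For the two integral identities I would argue by rotation covariance of the $\mathbb{C}^2$-valued integrands, which sidesteps all boundary terms. The point is that $F_s:=\phi_{\delta,s}\overline{\nabla\phi_{\delta,s}}$, $G:=\nabla\phi_{\delta,1}\,\overline{\phi_{\delta,2}}$ and $H:=\phi_{\delta,1}\,\overline{\nabla\phi_{\delta,2}}$ are all $\Lambda$-periodic (the two Floquet phases cancel), so their integral over $\mathcal{C}_\delta^\sharp$ equals that over any fundamental domain, in particular over $R^*\mathcal{C}_\delta^\sharp$ (recall $R^*\Lambda=\Lambda$ and $R^*\Omega_\delta=\Omega_\delta$). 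Differentiating $\phi_{\delta,s}(R^*\mathbf{x})=e^{2\imath\pi s/3}\phi_{\delta,s}(\mathbf{x})$ yields $(\nabla\phi_{\delta,s})(R^*\mathbf{x})=e^{2\imath\pi s/3}R^*\nabla\phi_{\delta,s}(\mathbf{x})$, from which $F_s(R^*\mathbf{x})=R^*F_s(\mathbf{x})$ and $G(R^*\mathbf{x})=e^{-2\imath\pi/3}R^*G(\mathbf{x})$ (likewise for $H$). Integrating and changing variables gives $\int F_s=R^*\!\int F_s$, forcing $\int F_s=0$ since a $2\pi/3$ rotation has no nonzero invariant vector; and $R^*\!\int G=e^{2\imath\pi/3}\int G$. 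Since $(1,\imath)^T$ is exactly the eigenvector of the real matrix $R^*$ for the eigenvalue $e^{2\imath\pi/3}$, we conclude $\int_{\mathcal{C}_\delta^\sharp}\nabla\phi_{\delta,1}\,\overline{\phi_{\delta,2}}=v_\delta\,(1,\imath)^T$, and $\int H$ is also a multiple of $(1,\imath)^T$.

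It remains to identify the two multiples, i.e. to prove $\int G=-\int H$. Here I would use the first assertion, inserting $\phi_{\delta,2}=\overline{\mathcal{S}\phi_{\delta,1}}$, so that $\overline{\phi_{\delta,2}(\mathbf{x})}=\phi_{\delta,1}(-\mathbf{x})$ and $\overline{\nabla\phi_{\delta,2}(\mathbf{x})}=-(\nabla\phi_{\delta,1})(-\mathbf{x})$. The change of variable $\mathbf{x}\mapsto-\mathbf{x}$, combined with the $\Lambda$-periodicity of $\phi_{\delta,1}(-\mathbf{x})\nabla\phi_{\delta,1}(\mathbf{x})$ and the fact that $S\mathcal{C}_\delta^\sharp$ is again a fundamental domain, turns $\int H$ into $-\int G$. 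I expect the main obstacle to be purely bookkeeping: pinning down the rotation phases and the orientation of $(1,\imath)^T$ exactly (a stray sign or conjugate would flip $e^{\pm2\imath\pi/3}$ and spoil the identification), and confirming that each change of variables is genuinely an integral of a $\Lambda$-periodic integrand over a genuine fundamental domain. An alternative for $\int G+\int H=0$ is to write the sum as $\int_{\mathcal{C}_\delta^\sharp}\nabla(\phi_{\delta,1}\overline{\phi_{\delta,2}})$ and argue the boundary contributions cancel, but the symmetry route is cleaner since it avoids the Neumann walls entirely.
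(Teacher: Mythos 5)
Your proposal is correct and takes essentially the same route as the paper's own proof: the involution $\phi\mapsto\overline{\mathcal{S}\phi}$ (using that $S$ reverses the lattice phase, which conjugation restores, and swaps the rotation characters) for the eigenpair equivalence; the change of variables ${\bf x}\mapsto R{\bf x}$ together with $\Lambda$-periodicity of the integrands to deduce $R^*\int F_s=\int F_s$ (hence zero) and $R^*\int G=e^{2\imath\pi/3}\int G$ (hence collinear to $(1,\imath)^T$); and the substitution ${\bf x}\mapsto -{\bf x}$ with $\phi_{\delta,2}=\overline{\mathcal{S}\phi_{\delta,1}}$ for the sign relation. The only cosmetic difference is that you phrase the rotation step via covariance of the integrand over fundamental domains, whereas the paper performs the same change of variables directly inside the integral.
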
\vspace{-0.5cm}
 \begin{proof}
  Let us show the result for ${\bf K}^*={\bf K}$ and using \eqref{eq:prop_vertex2}, it is easy to deduce it for ${\bf K}^*={\bf K}'$.
First, by using \eqref{eq:prop_vertex2}, $u \in L^2_{{\bf K}}(\Omega_\delta)\;\Leftrightarrow\;\mathcal{S} u \in L^2_{{\bf K}'}(\Omega_\delta)\;\Leftrightarrow\; \overline{\mathcal{S} u} \in L^2_{{\bf K}}(\Omega_\delta) $. Moreover, $u \in L^2_{1}(\Omega_\delta)\;\Leftrightarrow\;\mathcal{S} u \in L^2_{1}(\Omega_\delta)\;\Leftrightarrow\; \overline{\mathcal{S} u} \in L^2_{2}(\Omega_\delta) $. Since, the operator $A_{\delta}({\bf K})$ commutes with $\mathcal{S}$, we deduce that $\phi_{\delta,1}$ is an eigenvector of $A_{\delta,1}({\bf K})$ associated with the eigenvalue $\lambda_\delta$ if and only if $\phi_{\delta,2}:=\overline{\mathcal{S}\phi_{\delta,1}}$ is an eigenvector of $A_{\delta,2}({\bf K})$ associated with the same eigenvalue.

Let $s\in\{1,2\}$, by applying the change of variable ${\bf x}\mapsto R{\bf x}$, we obtain
\begin{eqnarray*}
\mathbb{U}_s:=\int_{\mathcal{C}_\delta^\sharp}\phi_{\delta,s}({\bf x})\,\overline{\nabla \phi_{\delta,s}({\bf x})}\,d{\bf x}&=&\int_{R\mathcal{C}_\delta^\sharp}\phi_{\delta,s}(R^*{\bf x})\,\overline{[\nabla \phi_{\delta,s}](R^*{\bf x})}\,d{\bf x}\quad(\text{since}\,|\text{det}R|=1\,)\\
&=&\int_{R\mathcal{C}_\delta^\sharp}\phi_{\delta,s}({\bf x})\,{R}^*\overline{[\nabla \phi_{\delta,s}]({\bf x})}\,d{\bf x}\quad\text{since $\phi_{\delta,s},\,\nabla \phi_{\delta,s} \in \,L^2_s(\Omega_\delta)$}\\
&=&{R}^*\int_{\mathcal{C}_\delta^\sharp}\phi_{\delta,s}({\bf x})\,\overline{[\nabla \phi_{\delta,s}]({\bf x})}\,d{\bf x}\quad\text{since $\phi_{\delta,s}\in \,H^1_{{\bf K}}(\Omega_\delta)$}.
\end{eqnarray*}
The vector $\mathbb{U}_s$ satisfies then ${R}\mathbb{U}_s=\mathbb{U}_s$ and since $1$ is not an eigenvalue of ${R}$, we deduce that $\mathbb{U}_s=0.$
Since $ \phi_{\delta,2}=\overline{\mathcal{S}\phi_{\delta,1}}$ and $ \phi_{\delta,1}=\overline{\mathcal{S}\phi_{\delta,2}}$, we have
\[
 \int_{\mathcal{C}_\delta^\sharp}\phi_{\delta,1}({\bf x})\,\overline{\nabla \phi_{\delta,2}({\bf x})}d{\bf x}=-\int_{S\mathcal{C}_\delta^\sharp}\overline{\phi_{\delta,2}({\bf x})}\,{\nabla \phi_{\delta,1}({\bf x})}d{\bf x}=-\int_{\mathcal{C}_\delta^\sharp}\overline{\phi_{\delta,2}({\bf x})}\,{\nabla \phi_{\delta,1}({\bf x})}d{\bf x},
\]
where we have used that $\phi_{\delta,1}$ and $\phi_{\delta,2}$ are in $H^1_{{\bf K}}(\Omega_\delta)$ and $Se_{0,\delta}=e_{0,\delta}+\vg_1-\vg_2$, $Se_{1,\delta}=e_{1,\delta}-\vg_2$ and $Se_{2,\delta}=e_{2,\delta}+\vg_1-2\vg_2$. Moreover, by 
applying the change of variable ${\bf x}\mapsto R{\bf x}$ and by using similar arguments than for the computation of $\mathbb{U}_s$, we obtain 
\begin{eqnarray*}
\mathbb{V}:=\int_{\mathcal{C}_\delta^\sharp}\phi_{\delta,1}({\bf x})\,\overline{\nabla \phi_{\delta,2}({\bf x})}\,d{\bf x}&=&\int_{R\mathcal{C}_\delta^\sharp}\phi_{\delta,1}(R^*{\bf x})\,\overline{[\nabla \phi_{\delta,2}](R^*{\bf x})}\,d{\bf x}\\
&=&\int_{R\mathcal{C}_\delta^\sharp}e^{2 \imath \pi/3}\phi_{\delta,1}({\bf x})\,{R}^*e^{-4\imath \pi/3}\overline{[\nabla \phi_{\delta,2}]({\bf x})}\,d{\bf x},\\
&=&{R}^*e^{-2 \imath \pi/3}\int_{\mathcal{C}_\delta^\sharp}\phi_{\delta,1}({\bf x})\,\overline{[\nabla \phi_{\delta,2}]({\bf x})}\,d{\bf x}.
\end{eqnarray*}
The vector $\mathbb{V}$ satisfies ${R}\mathbb{V}=e^{-2 \imath \pi /3}\mathbb{V}$, which means that it is collinear to $(1,\imath)^T$. 
 \end{proof}\vspace{-0.5cm}
   \begin{remark}\label{RemAutreDefinitionvdelta} Naturally, since $\phi_{\delta, 2}({\bf x}) = \overline{\phi_{\delta, 1}(S{\bf x})}$, $v_\delta$ can equivalently be defined as
  $$
  v_\delta (1,\imath)^T= \int_{\mathcal{C}_\delta^\sharp}\phi_{\delta,1}(S{\bf x})\,\nabla (\phi_{\delta,1}({\bf x}))d{\bf x}
   $$
  \end{remark}\vspace{-0.3cm}
We investigate now the existence of Dirac points in the spectrum of $A_\delta$ in the neighborhood of ${\bf K}^*\in\{{\bf K},{\bf K}'\}$, for $\delta$ small enough.
Let us first recall the definition of Dirac points.
\begin{definition}[Dirac points]\label{DefinitionDirac}
The pair $({\bf K}^*,\lambda^*)\in \mathcal{B}\times\R^+$ is a Dirac point if there exists $n\in\N$ such that $\kg\mapsto \lambda_{n,\delta}(\kg)$ and $\kg\mapsto \lambda_{n+1,\delta}(\kg)$ satisfies 
\begin{itemize}
    \item $\lambda^*=\lambda_{n,\delta}({\bf K}^*)=\lambda_{n+1,\delta}({\bf K}^*)$ is an eigenvalue of multiplicity 2 of $A_\delta({\bf K}^*)$;
    \item there exists a constant $\alpha^*>0$ such that 
\begin{equation}\label{eq:Dirac points}
\begin{array}{|l}
\lambda_{n,\delta}(\kg)=\lambda^*-\alpha^*|\kg-{\bf K}^*| + o(\|\kg-{\bf K}^*\|)\\
\lambda_{n+1,\delta}(\kg)=\lambda^*+\alpha^*|\kg-{\bf K}^*| + o(\|\kg-{\bf K}^*\|)
\end{array}\end{equation}
\end{itemize}
\end{definition}
The following result, which is an analogue of Theorem 4.1 of \cite{Fefferman:2012} and Theorem 2 of \cite{Lee:2019}, provide sufficient conditions of existence of Dirac Points in our context.
\begin{proposition}\label{PropositionDiracgrapheEpais}
Let $\delta>0$ and ${\bf K}^*\in\{{\bf K},{\bf K}'\}$. Let $\lambda_\delta$ \Becommente{be}  an eigenvalue of multiplicity 1 of $A_{\delta,1}({\bf K}^*)$ and $\phi_{\delta,1}$ be an associated eigenvector such that $\|\phi_{\delta,1} \|_{L^2(\mathcal{C}^\sharp_\delta)}=1$. 
Suppose that $\lambda_\delta$ is not an eigenvalue of $A_{\delta,0}({\bf K}^*)$ and that $v_\delta$ defined in \eqref{eq:v_delta} does not vanish. Then, $A_\delta$ admits a Dirac point in the neighborhood of ${\bf K}^*$ with $\alpha^*=4\pi v_\delta$ in \eqref{eq:Dirac points}.
\end{proposition}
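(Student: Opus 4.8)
The plan is to reduce, via the orthogonal decomposition of Corollary~\ref{cor:decomp_A} and a degenerate Lyapunov--Schmidt argument, the behaviour of the Bloch family $\kg\mapsto A_\delta(\kg)$ near ${\bf K}^*$ to a $2\times2$ effective matrix whose off-diagonal (``Dirac'') structure is dictated by the symmetries recorded in Proposition~\ref{sym_eigenvectors}, following the strategy of \cite{Fefferman:2012,Lee:2019}. The first point is to check that $\lambda_\delta$ is an eigenvalue of $A_\delta({\bf K}^*)$ of multiplicity exactly $2$. By Proposition~\ref{sym_eigenvectors}, simplicity of $(\lambda_\delta,\phi_{\delta,1})$ for $A_{\delta,1}({\bf K}^*)$ forces $(\lambda_\delta,\phi_{\delta,2}=\overline{\mathcal{S}\phi_{\delta,1}})$ to be a simple eigenpair of $A_{\delta,2}({\bf K}^*)$; together with the hypothesis $\lambda_\delta\notin\sigma(A_{\delta,0}({\bf K}^*))$ and the decomposition \eqref{eq:decomp_A}, the eigenspace of $A_\delta({\bf K}^*)$ at $\lambda_\delta$ is exactly $E_\delta:=\mathrm{span}\{\phi_{\delta,1},\phi_{\delta,2}\}$. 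After relabelling the dispersion surfaces this gives $\lambda_\delta=\lambda_{n,\delta}({\bf K}^*)=\lambda_{n+1,\delta}({\bf K}^*)$, the first item of Definition~\ref{DefinitionDirac}.

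Next, to treat $\kg$ close to ${\bf K}^*$ I would set $\boldsymbol{\kappa}=\kg-{\bf K}^*$ and conjugate by the gauge factor $e^{2\imath\pi\boldsymbol{\kappa}\cdot{\bf x}}$, which is a unitary map from $L^2_{\kg}(\Omega_\delta)$ onto the fixed space $L^2_{{\bf K}^*}(\Omega_\delta)$. Under it $A_\delta(\kg)$ becomes the analytic family
\begin{equation*}
H_\delta(\boldsymbol{\kappa})=A_\delta({\bf K}^*)-4\imath\pi\,\boldsymbol{\kappa}\cdot\nabla+4\pi^2|\boldsymbol{\kappa}|^2,
\end{equation*}
with the same domain, so the perturbation $W(\boldsymbol{\kappa})=H_\delta(\boldsymbol{\kappa})-A_\delta({\bf K}^*)$ is $O(|\boldsymbol{\kappa}|)$. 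Let $P$ be the orthogonal projection onto $E_\delta$ and $Q=I-P$. Since $\lambda_\delta$ is isolated and $Q\big(A_\delta({\bf K}^*)-\lambda\big)Q$ is boundedly invertible on $\mathrm{ran}\,Q$ for $(\boldsymbol{\kappa},\lambda)$ near $(0,\lambda_\delta)$, I would solve the $Q$-part of $H_\delta(\boldsymbol{\kappa})\psi=\lambda\psi$ for $Q\psi$ in terms of $P\psi$ (with an analytic, $O(|\boldsymbol{\kappa}|)$ solution operator) and substitute back. This reduces the eigenvalue problem to the scalar condition $\det\big(\mathcal{M}_\delta(\boldsymbol{\kappa},\lambda)-\lambda I\big)=0$ for a Hermitian $2\times2$ matrix $\mathcal{M}_\delta$, analytic in $(\boldsymbol{\kappa},\lambda)$, whose leading part in the basis $(\phi_{\delta,1},\phi_{\delta,2})$ has entries $\big(\mathcal{M}_\delta\big)_{jl}=\lambda_\delta\delta_{jl}-4\imath\pi\,\boldsymbol{\kappa}\cdot\int_{\mathcal{C}_\delta^\sharp}\nabla\phi_{\delta,l}\,\overline{\phi_{\delta,j}}+O(|\boldsymbol{\kappa}|^2)$.

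I would then insert the symmetry information of Proposition~\ref{sym_eigenvectors}. The diagonal integrals $\int_{\mathcal{C}_\delta^\sharp}\nabla\phi_{\delta,s}\,\overline{\phi_{\delta,s}}$ are the complex conjugates of $\int_{\mathcal{C}_\delta^\sharp}\phi_{\delta,s}\,\overline{\nabla\phi_{\delta,s}}=0$, hence vanish, while the off-diagonal ones are fixed by \eqref{eq:v_delta}: $\int_{\mathcal{C}_\delta^\sharp}\nabla\phi_{\delta,1}\,\overline{\phi_{\delta,2}}=v_\delta(1,\imath)^T$ and, by conjugation, $\int_{\mathcal{C}_\delta^\sharp}\nabla\phi_{\delta,2}\,\overline{\phi_{\delta,1}}=-\overline{v_\delta}(1,-\imath)^T$. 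Writing $\boldsymbol{\kappa}=(\kappa_1,\kappa_2)$, the leading matrix is therefore purely off-diagonal with entries proportional to $\kappa_1\pm\imath\kappa_2$, and its eigenvalues are $\pm4\pi|v_\delta|\,|\boldsymbol{\kappa}|$. Solving the determinant equation with the $O(|\boldsymbol{\kappa}|^2)$ remainder then yields
\begin{equation*}
\lambda_{n,\delta}(\kg)=\lambda_\delta-4\pi|v_\delta|\,|\boldsymbol{\kappa}|+o(|\boldsymbol{\kappa}|),\qquad
\lambda_{n+1,\delta}(\kg)=\lambda_\delta+4\pi|v_\delta|\,|\boldsymbol{\kappa}|+o(|\boldsymbol{\kappa}|),
\end{equation*}
which is exactly \eqref{eq:Dirac points} with $\alpha^*=4\pi|v_\delta|$; a constant phase rotation of $\phi_{\delta,1}$ makes $v_\delta$ real and positive, so $\alpha^*=4\pi v_\delta$.

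The main obstacle is that the two branches are only Lipschitz, not analytic, at $\boldsymbol{\kappa}=0$ because of the $|\boldsymbol{\kappa}|$; classical analytic perturbation theory does not apply directly, and it is precisely the reduction to the exact scalar determinant equation that makes the conical asymptotics rigorous. The decisive structural input is the hypothesis $v_\delta\neq0$: combined with the rotation- and $\mathcal{S}$-symmetry (which force the vanishing diagonal and the $(1,\pm\imath)$ off-diagonal structure) it guarantees that the slope $\alpha^*$ is strictly positive, so that the crossing is genuinely conical rather than tangential or avoided. The remaining technical work is to establish uniform control of the $O(|\boldsymbol{\kappa}|^2)$ remainder and to extract the two branches from the determinant equation.
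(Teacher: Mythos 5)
Your overall strategy --- identify the multiplicity-two eigenspace via Corollary~\ref{cor:decomp_A} and Proposition~\ref{sym_eigenvectors}, perform a Lyapunov--Schmidt/Schur reduction to a $2\times 2$ effective matrix, and read off the conical branches from the symmetry-imposed structure $v_\delta(1,\imath)^T$ of the first-order off-diagonal terms --- is the same as the paper's, and your symmetry computations (vanishing diagonal entries, off-diagonal entries proportional to $\kappa_1\pm\imath\kappa_2$, slope $4\pi|v_\delta|$) are correct. However, there is a genuine gap at the point where you conjugate by the gauge factor and assert that $H_\delta(\boldsymbol{\kappa})=A_\delta({\bf K}^*)-4\imath\pi\,\boldsymbol{\kappa}\cdot\nabla+4\pi^2|\boldsymbol{\kappa}|^2$ has ``the same domain'' as $A_\delta({\bf K}^*)$. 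This is false in the present setting: $\Omega_\delta$ has a boundary, and the Neumann condition $\partial_n u=0$ defining $D(A_\delta(\kg))$ transforms under the gauge map into $(\nabla+2\imath\pi\boldsymbol{\kappa})v\cdot n=0$, i.e.\ $\partial_n v+2\imath\pi(\boldsymbol{\kappa}\cdot n)\,v=0$ on $\partial\Omega_\delta$. Hence the domain of the gauged operator moves with $\boldsymbol{\kappa}$; worse, the differential expression $-(\nabla+2\imath\pi\boldsymbol{\kappa})^2$ restricted to the fixed domain $\{\partial_n v=0\}$ is not even self-adjoint, since the boundary term $\int_{\partial\Omega_\delta}2\imath\pi(\boldsymbol{\kappa}\cdot n)\,u\,\overline{v}$ produced by integration by parts does not vanish. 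Consequently the decomposition $H_\delta(\boldsymbol{\kappa})=A_\delta({\bf K}^*)+W(\boldsymbol{\kappa})$ with $W(\boldsymbol{\kappa})$ a small perturbation on a fixed operator domain, and the ensuing split of the eigenvalue equation into $P$- and $Q$-parts (note that $P\psi$ lies in $D(A_\delta({\bf K}^*))$ while $\psi$ lies in $D(H_\delta(\boldsymbol{\kappa}))$, so $Q\psi$ lies in neither), does not reproduce the Bloch eigenvalue problem for $A_\delta(\kg)$. This is precisely the point where the present problem differs from the boundaryless Schr\"odinger setting of Fefferman--Weinstein and Lee--Thorp that you are following, and the paper states explicitly that it replaces their operator formalism by a bilinear-form one for exactly this reason.

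The repair, which is the route the paper takes, is to run the identical reduction at the level of sesquilinear forms: the form domain $H^1_{\mathbf{0}}(\Omega_\delta)$ is independent of $\boldsymbol{\kappa}$ (Neumann conditions being natural), the form decomposes exactly as $a^\delta(\boldsymbol{\xi}+{\bf K}^*)=a^\delta({\bf K}^*)+2\imath\pi\boldsymbol{\xi}\cdot{\bf b}^\delta({\bf K}^*)+4\pi^2\|\boldsymbol{\xi}\|^2\, i_{L^2}$ with no remainder, and the Fredholm/Schur-complement argument is carried out on the Riesz representatives of these forms in $H^1$. Once this is done your remaining steps go through, but note that your last step is not quite routine either: the $O(|\boldsymbol{\kappa}|^2)$ remainder in the effective $2\times 2$ matrix itself depends on the unknown eigenvalue shift $R(\boldsymbol{\kappa})$ (through the inverted Schur block), so extracting the two branches from the determinant equation requires the implicit-function-theorem argument of the paper (writing $R=\pm 4\pi v_\delta\|\boldsymbol{\xi}\|(1+\eta_\pm)$ and solving for $\eta_\pm$), not merely a uniform bound on the remainder.
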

The proof of Proposition~\ref{PropositionDiracgrapheEpais} is given in Annex \ref{annex:proof_propDirac}. We have adapted the one of \cite{Fefferman:2012,Lee:2019} replacing an operator formalism with a bilinear form one, which is necessary for our problem in order to take into account the boundary conditions.{The demonstration of Theorem \ref{th:Diracpoints_delta} finally only consists in verifying  assumptions of Proposition \ref{PropositionDiracgrapheEpais}. This is done using asymptotic arguments that require first to identify the limit operator.} 
% we show that for $\delta$ small enough, the  spectrum of the operator $A^{\delta}$ contains a certain number of Dirac points located near the  limit ones.  It suffices to show that the assumptions of Proposition \ref{PropositionDiracgrapheEpais} are satisfied. This will be done by an asymptotic analysis (see ...\commente{A completer}).

\subsection{The limit graph and the associated limit operator}
\subsubsection{Definition of the limit operator and convergence properties}

As $\delta$ tends to $0$, the domain $\Omega_\delta$ tends to the periodic quantum graph $\cG$ (see Definition~\eqref{DefintionGrapheG} and Figure~\ref{fig:omega}). In order to introduce the formal limit of the operator $A_\delta$, let us introduce the functional spaces (remind that $\mathcal{E}$ is the set of the edges of $\cG$, see~\eqref{DefinitionSetofEdges})
 $$
 L^2(\cG) = \{ u,\quad u \in L^2(e) \;\;\forall e \in \mathcal{E},\quad \|u\|_{L^2(\cG)}^2:= \sum_{e \in \mathcal{E}} \| u \|_{L^2(e)}^2 < + \infty\},
 $$ 
 $$
 H^1(\cG) = \{ u \in \mathcal{C}(\cG), \quad u \in H^1(e) \;\;\forall e \in \mathcal{E},\quad  \|u\|_{H^1(\cG)}^2:= \sum_{e \in \mathcal{E}} \| u \|_{H^1(e)}^2 < + \infty\},
 $$ 
   $$H^2(\cG)=\{u\in H^1(\cG),\quad u\in H^2(e)\;\;\forall e\in \mathcal{E},\quad \sum_{e \in \mathcal{E}} \| u \|_{H^2(e)}^2 < + \infty\}$$
 where $\mathcal{C}(\cG)$ denotes the set of continuous functions defined on $\cG$.
 Here and in what follows $u'$ (resp. $u''$) denotes the function defined on the graph $\cG$ by taking the derivative (resp. the second derivative) of the restriction of $u$ on each edge $e$ with respect to the local variable $t$, introduced in the parametrization of the edges described in \eqref{eq:param}, see also Figure~\ref{fig:percell}. 
 The domain of the limit operator $\cA$ is given by
 \begin{equation}\label{eq:Domaine_A_graph}
 {D}(\cA) = \{ u \in H^2(\cG), \; \sum_{e \in \mathcal{E}(M) } u\big|_{e}'(M) = 0 \;\; \forall M \in \mathcal{V} \},
 \end{equation}
 where $\mathcal{E}(M)$  stands for the set  of edges adjacent to the vertex $M$. The condition
 \begin{equation}\label{eq:Kirch}
 \sum_{e\in \mathcal{E}(M) } u\big|_{e}'(M)=0,
 \end{equation}
is the so-called Kirchhoff Law \cite{KuchmentQuantumSurvey,KuchmentQuantumgraphs1}. We define finally the limit operator $\cA$ as follows
 \begin{equation}\label{eq:op_graph}
 \forall u\in {D}(\cA),\quad [\cA u]|_{e} = - \partial_t^2[u|_{e}] \quad \forall e \in \mathcal{E}.  
 \end{equation}
 See \cite{KuchmentQuantumgraphs1,AdrienRapportRecherche,Adrien} for more details on this derivation.
As previously, the spectrum of $\cA$ can be characterized by using the Floquet-Bloch theory.  Let us introduce the space
 \begin{equation}
 L^2_{\kg}(\cG) = \left\{ f,\quad f \in L^2(e) \;\;\forall e \in \mathcal{E},\quad  f(\cdot + \vg) = f e^{2\imath\pi \kg \cdot  \vg} \;\; \forall \vg \in \Lambda  \right\},
 \end{equation}
which can be identified to $L^2(\cG^\sharp)$ through a ${\bf k}$-quasi-periodic extension operator $\mathcal{E}_{\bf k}:L^2(\cG^\sharp)\rightarrow L^2_{\kg}(\cG)$ defined by
 \begin{equation}\label{eq:extensionop_1D}
 	 \forall f\in L^2(\cG^\sharp),\quad \forall {\bf x}\in \cG^\sharp,\quad[\mathcal{E}_{\bf k}f]({\bf x} + \vg) = f({\bf x}) e^{2\imath\pi \kg \cdot  \vg} \;\; \forall \vg \in \Lambda,
 \end{equation}
 and we have
 \[
 	f\in L^2_{\kg}(\cG)\quad\Leftrightarrow\quad f=\mathcal{E}_{\bf k}[f|_{\cG^\sharp}].
 \]
  We  equip $ L^2_{\kg}(\cG)$ with the scalar product of $L^2(\cG^\sharp)$ and the associated norm. 
 Let us define the set of  locally $H^1$ functions which are ${\bf k}\cdot \vg_i$ quasi-periodic in the direction $\vg_i$ for $i\in\{1,2\}$
   \begin{equation}
   H^1_{\kg}(\cG) = \left\{ u \in \mathcal{C}(\cG),\quad u \in L^2_{\kg}(\cG),\quad u' \in L^2_{\kg}(\cG) \right\}.
   \end{equation}
   The space $H^1_{\kg}(\cG)$ is a closed subspace of $H^1(\cG)$ so we equip $H^1_{\kg}(\cG)$ with the scalar product of $H^1(\cG^\sharp)$ and its associated norm.

For any $\kg\in\R^2$, the reduced operator $\cA(\kg)$ is defined  
\begin{equation}\label{eq:op_k_graph}
 \begin{array}{|l}
\dsp  {D}(\cA(\kg)) = \{ u \in H^1_{\kg}(\cG),\quad  u'' \in L^2_\kg(\cG), \;   \sum_{e \in \mathcal{E}(M) } u\big|_{e}'(M) = 0\; \; \forall M \in \mathcal{V} \},\\[9pt] \dsp\forall u\in {D}(\cA(\kg)),\; [\cA(\kg) u]\big|_{e} = - \partial_t^2[u\big|_{e}] \quad \forall e\in \mathcal{E}.
 \end{array}
\end{equation}
One can show that for any $\kg \in \R^2$, $\cA(\kg)$ is self-adjoint non negative with compact resolvent. As a result, its spectrum consists of an increasing sequence of  non negative eigenvalues $(\lambda_n(\kg))_{n\in \N}$ that tends to $+\infty$ as $n$ tends to $+\infty$.  The mappings $\kg \mapsto \lambda_n(\kg)$ are Lipschitz continuous functions and are the dispersive surfaces of the operator $\cA$. Again,  it suffices to consider that $\kg$ varies over the Brillouin zone $
\mathcal{B}$. The spectrum of the operator $\cA$ is then given by
 $$
 \sigma(\cA) = \bigcup_{\kg \in \mathcal{B}} \sigma(\cA(\kg)) = \bigcup_{\kg \in \mathcal{B}} \bigcup_{n \in \N} [\lambda_{n}(\kg)].
 $$ 
Actually, the spectrum of the operator $\cA(\kg)$ can be computed explicitly. This was done in  \cite[Lemma 3.1-Lemma 3.5]{KuchmentPost}. This enables to establish the conical behaviour of some dispersive surfaces in the vicinity of the vertices of the hexagonal Brillouin zone. For the sake of completeness, we repeat  the main steps of the proof.
 \begin{proposition}\label{CharacSpectrumGraphHexa}For all $\kg\in\mathcal{B}$, we have
	 \begin{equation}\label{eq:disp_rel}
	 	\lambda \in \sigma(\cA(\kg)), \,\lambda\geq 0\quad\Leftrightarrow\quad \sqrt{\lambda} L \in \N^\ast\pi\;\text{or}\; \cos^2 \sqrt{\lambda}  L = \frac{1}{9} | 1 +  e^{2\imath\pi \vg_1\cdot\kg} + e^{2\imath\pi\vg_2\cdot\kg}|^2  .
	 \end{equation}
 Moreover, for ${\bf K}^*\in\{{\bf K},{\bf K}'\}$, we have that for all $n\in\N$ $({\bf K}^*,\lambda_n^*$ where $\lambda_n^*$ is given in \eqref{eq:lambda_n_star},  is a Dirac point. More precisely, for ${\bf K}^*\in\{{\bf K},{\bf K}'\}$ and for all $n\in\N$
 \begin{equation}\label{eq:Dirac_graph}
 	\forall \kg\in\mathcal{B},\quad\begin{array}{|l}
\lambda_{3n}(\kg) = \displaystyle \lambda_n^*  - \alpha_n  \|\kg-{\bf K}^*\| + {O}(\| \kg-{\bf K}^* \|^2), \\[5pt]
	
\lambda_{3n+1}(\kg) =\displaystyle \lambda_n^*   +  \alpha_n  \|\kg-{\bf K}^*\| + {O}(\| \kg-{\bf K}^* \|^2),\\[5pt]
		\end{array}. 
 \end{equation}
 with $\alpha_n =  (2n+1)\pi^2/L$.
 \end{proposition}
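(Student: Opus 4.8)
The plan is to diagonalize $\cA(\kg)$ by reducing it, through the Kirchhoff conditions \eqref{eq:Kirch}, to a finite-dimensional problem at the two vertices $A,B$ of the periodicity cell $\cg$, and then to read off the conical crossings from the resulting secular equation. First I would solve the edge equation: writing $\lambda=\omega^2$ and assuming for the moment that $\sin(\omega L)\neq 0$, every solution of $-u''=\omega^2 u$ on an edge joining an $A$-vertex (local coordinate $t=0$) to a $B$-vertex ($t=L$) is the unique combination of $\sin(\omega t)$ and $\sin(\omega(L-t))$ matching the prescribed endpoint values. Since $u\in H^1(\cG)\subset\mathcal{C}(\cG)$ is continuous and $\kg$-quasi-periodic, all vertex values are fixed by the two numbers $a:=u(A)$ and $b:=u(B)$ together with the phases $e^{2\imath\pi\kg\cdot\vg_j}$ (for instance the value at $B_{1,0}$ equals $b\,e^{2\imath\pi\vg_1\cdot\kg}$). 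Differentiating these edge solutions and substituting into the Kirchhoff law at $A$ and at $B$ yields the homogeneous system
\[
\begin{pmatrix} -3\cos(\omega L) & \eta(\kg)\\ \overline{\eta(\kg)} & -3\cos(\omega L)\end{pmatrix}\begin{pmatrix} a\\ b\end{pmatrix}=0, \qquad \eta(\kg):=1+e^{2\imath\pi\vg_1\cdot\kg}+e^{2\imath\pi\vg_2\cdot\kg}.
\]
A nontrivial solution exists exactly when the determinant vanishes, i.e. $\cos^2(\omega L)=\tfrac19|\eta(\kg)|^2$, which is the second alternative in \eqref{eq:disp_rel}. The excluded case $\sin(\omega L)=0$, i.e. $\sqrt{\lambda}L\in\NN^\ast\pi$, must be handled separately: there the sine modes that vanish at every vertex can be combined (with vanishing Kirchhoff sum) into eigenfunctions present for every $\kg$, producing the flat bands that account for the first alternative of \eqref{eq:disp_rel}.

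For the Dirac points I would first evaluate $\eta$ at ${\bf K}$. Using the dual-basis relations $\mathbf{K}\cdot\vg_1=-\tfrac13$ and $\mathbf{K}\cdot\vg_2=\tfrac13$ gives $\eta(\mathbf{K})=1+e^{-2\imath\pi/3}+e^{2\imath\pi/3}=0$; by \eqref{eq:prop_vertex2} the same holds at $\mathbf{K}'$. Hence at $\mathbf{K}^*\in\{\mathbf{K},\mathbf{K}'\}$ the secular equation forces $\cos^2(\omega L)=0$, so $\omega=\omega_n^*=(2n+1)\pi/(2L)$ and $\lambda=\lambda_n^*$; moreover the $2\times2$ matrix is then identically zero, so both $a$ and $b$ are free and $\lambda_n^*$ is a double eigenvalue of $\cA(\mathbf{K}^*)$.

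Finally I would linearize. Taylor expanding $\eta$ about $\mathbf{K}$ and using $\eta(\mathbf{K})=0$ gives, writing $\kg=\mathbf{K}+\boldsymbol{\kappa}$, the first-order expansion $\eta(\mathbf{K}+\boldsymbol{\kappa})=\imath\sqrt3\,\pi\,(-\kappa_x+\imath\kappa_y)+O(|\boldsymbol{\kappa}|^2)$, whence $|\eta(\mathbf{K}+\boldsymbol{\kappa})|=\sqrt3\,\pi\,|\boldsymbol{\kappa}|+O(|\boldsymbol{\kappa}|^2)$, which is manifestly isotropic at leading order. Taking square roots in the secular equation, $|\cos(\omega L)|=\tfrac13|\eta(\kg)|$, and expanding $\cos(\omega L)$ near $\omega_n^*$ (where it vanishes to first order, $\cos(\omega L)\simeq -(-1)^n L(\omega-\omega_n^*)$) produces the two branches $\omega=\omega_n^*\pm|\eta(\kg)|/(3L)$. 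Passing to $\lambda=\omega^2$ and using $\delta\lambda\simeq 2\omega_n^*\,\delta\omega$ yields $\lambda=\lambda_n^*\pm 2\omega_n^*\,|\eta|/(3L)+O(|\boldsymbol{\kappa}|^2)=\lambda_n^*\pm\alpha_n\,\|\kg-\mathbf{K}^*\|+O(\|\kg-\mathbf{K}^*\|^2)$ with $\alpha_n=2\omega_n^*\sqrt3\,\pi/(3L)=(2n+1)\pi^2/L$, using $L=1/\sqrt3$; this is precisely \eqref{eq:Dirac_graph}. The identification of these two branches with $\lambda_{3n}$ and $\lambda_{3n+1}$ follows by ordering all solutions of \eqref{eq:disp_rel} and counting: on each interval $\omega L\in[n\pi,(n+1)\pi]$ the dispersive pair near $\omega L=n\pi+\pi/2$ together with the neighbouring flat band supplies exactly the triple $\lambda_{3n},\lambda_{3n+1},\lambda_{3n+2}$.

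The main obstacle is this last step: passing from the smooth equation $\cos^2(\omega L)=\tfrac19|\eta|^2$ to two genuine branches requires taking a modulus, and it is precisely the non-smoothness of $|\eta(\kg)|=\sqrt3\,\pi\,\|\kg-\mathbf{K}^*\|+\dots$ at $\mathbf{K}^*$ that creates the conical (rather than smooth, locally quadratic) touching. Care is therefore needed to justify that the remainders are genuinely quadratic and uniform in the direction of $\boldsymbol{\kappa}$, and to fix the exact constant $\alpha_n$. The isotropy of the leading term—which also follows a priori from the invariance of $\cA(\kg)$ under the order-three rotation fixing $\mathbf{K}^*$—is exactly what guarantees a true cone rather than a pair of surfaces touching along a lower-dimensional set.
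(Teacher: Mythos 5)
Your proposal is correct and follows essentially the same route as the paper's proof in Appendix~A: reduction via the edge solutions and Kirchhoff conditions to a $2\times 2$ system in the vertex values $(u(A),u(B))$, whose determinant gives the secular equation; separate treatment of the flat bands $\sqrt{\lambda}L\in\N^\ast\pi$ via sine modes with coefficients summing to zero; vanishing of $\eta(\kg)=1+e^{2\imath\pi\vg_1\cdot\kg}+e^{2\imath\pi\vg_2\cdot\kg}$ at ${\bf K}^*$; and expansion of the secular equation near $\omega_n^*$ to produce the cone. The only cosmetic difference is that you expand $\eta$ itself to first order (getting $|\eta|=\sqrt{3}\,\pi\|\kg-{\bf K}^*\|+O(\|\kg-{\bf K}^*\|^2)$), whereas the paper expands $f=\tfrac19|\eta|^2$ to second order through its Hessian at the minimum and then uses $\arccos$; both computations yield the same coefficient $\alpha_n=(2n+1)\pi^2/L$.
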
\vspace{-0.3cm}
   To prove the existence of Dirac point, we could reproduce the analysis made in Proposition~\ref{sym_eigenvectors}. But it is possible, to use direct computation as it is done in ~\cite{KuchmentPost} and reproduced in Appendix \ref{annex:proof_propquantum}.

 \subsubsection{Symmetry properties of the eigenvalues and eigenvectors\label{SectionSymmetryGraph}}

 As for $L^2_{{\bf K}^*}(\Omega_\delta)$, we can introduce  a particular decomposition of $L^2_{{\bf K}^*}(\cG)$ for ${{\bf K}^*}\in\{{\bf K},{\bf K}'\}$ linked to the rotation operator $\mathcal{R}$ defined in \eqref{eq:rotation_op} with $\mathcal{O}=\cG$. 
 As in \eqref{eq:L2s_K}, let us define the spaces
   \begin{equation}
    \label{eq:L2s_K_graph}
   \forall {\bf K}^*\in\{{\bf K},{\bf K}'\}, \; \forall s\in\{0,1,2\},\quad  L^2_{{\bf K}^*,s}(\cG):= L^2_{{\bf K}^*}(\cG)\cap L^2_s(\cG),
   \end{equation}
   where $L^2_s(\cG)$, $s\in\{0,1,2\}$ are defined in \eqref{eq:L2s} for $\mathcal{O}=\cG$. As in \eqref{eq:carac_L2sK}, we have the following characterization
   \begin{equation}\label{eq:carac_L2sK_graph}
    \begin{array}{lcl}
      u\in L^2_{{\bf K},0}(\cG)\quad \Leftrightarrow \quad u=\mathcal{E}_{\bf K} [u|_{\cG^\sharp}]\;\;\text{and}\; \;u\big|_{e_{0}}=e^{-2\imath \pi/3}\,u\big|_{e_{1}}=e^{2\imath \pi/3}\,u\big|_{e_{2}},\\[5pt]
      u\in L^2_{{\bf K},1}(\cG)\quad \Leftrightarrow\quad u=\mathcal{E}_{\bf K} [u|_{\cG^\sharp}]\;\;\text{and}\; \; u\big|_{e_{0}}=u\big|_{e_{1}}=u\big|_{e_{2}},\\[5pt]
      u\in L^2_{{\bf K},2}(\cG)\quad \Leftrightarrow \quad u=\mathcal{E}_{\bf K} [u|_{\cG^\sharp}]\;\;\text{and} \;\; u\big|_{e_{0}}=e^{2\imath \pi/3}\,u\big|_{e_{1}}=e^{-2\imath \pi/3}\,u\big|_{e_{2}},
    \end{array}
   \end{equation}
   where $ \mathcal{E}_{\bf K}$ is defined in \eqref{eq:extensionop_1D} and where we have identified functions defined on the edges $e_i$ using the parametrization \eqref{orientedgeneratorEdges}.
 We have the following decomposition, which could be proven as Lemma \ref{lem:decomp_L2omega}.
 \begin{lemma}
 For all ${\bf K}^*\in\{{\bf K},{\bf K}'\}$, the space $L^2_{{\bf K}^*}(\cG)$ admits the following orthogonal decomposition 
 \begin{equation}
L^2_{{\bf K}^*}(\cG) =L^2_{{\bf K}^*,0}(\cG)\oplus L^2_{{\bf K}^*,1}(\cG) \oplus L^2_{{\bf K}^*,2}(\cG),
 \end{equation} 
 where $\oplus$ sign stands for the orthogonal decomposition with respect to the scalar product on $L^2(\cG^\sharp)$.
 \end{lemma}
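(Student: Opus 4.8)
The plan is to mirror line by line the argument of Lemma~\ref{lem:decomp_L2omega}, since the only structural change is that the fattened cell ${\cal C}_\delta^\sharp$ is replaced by the graph cell $\cG^\sharp=\overline{e_0}\cup\overline{e_1}\cup\overline{e_2}$ and the $L^2$ integrals now run over the one-dimensional edges rather than over two-dimensional polygons. As there, I would first establish the decomposition for ${\bf K}^*={\bf K}$ and then transfer it to ${\bf K}^*={\bf K}'$ using the graph analogue of the symmetry relation~\eqref{eq:prop_vertex2}, namely that $\mathcal{S}$ maps $L^2_{\bf K}(\cG)$ onto $L^2_{{\bf K}'}(\cG)$ while interchanging the rotation eigenspaces $L^2_1(\cG)$ and $L^2_2(\cG)$.

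For the existence of the splitting I would invoke the three spectral projectors attached to $\mathcal{R}$. Since $\mathcal{R}^3=\mathcal{I}$ and $1+e^{2\imath\pi/3}+e^{-2\imath\pi/3}=0$, every $u\in L^2(\cG)$ can be written as
\begin{equation*}
u=\tfrac{1}{3}\big(u+\mathcal{R}u+\mathcal{R}^2u\big)+\tfrac{1}{3}\big(u+e^{2\imath\pi/3}\mathcal{R}u+e^{-2\imath\pi/3}\mathcal{R}^2u\big)+\tfrac{1}{3}\big(u+e^{-2\imath\pi/3}\mathcal{R}u+e^{2\imath\pi/3}\mathcal{R}^2u\big),
\end{equation*}
and the three summands lie respectively in $L^2_0(\cG)$, $L^2_1(\cG)$ and $L^2_2(\cG)$. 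The only point requiring attention is that these projectors preserve the quasi-periodicity class, i.e. that $u\in L^2_{\bf K}(\cG)$ forces $\mathcal{R}u\in L^2_{\bf K}(\cG)$; this follows exactly as in Lemma~\ref{lem:decomp_L2omega} from $R^*\Lambda=\Lambda$, the identity ${\bf K}\cdot R^*\vg=R{\bf K}\cdot\vg$, and the relation $R{\bf K}={\bf K}-\vg_2^\ast$ which gives $L^2_{R{\bf K}}(\cG)=L^2_{\bf K}(\cG)$ (the graph counterpart of~\eqref{eq:prop_vertex}). Intersecting each summand with $L^2_{\bf K}(\cG)$ then places it in the corresponding $L^2_{{\bf K},s}(\cG)$.

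For orthogonality with respect to the scalar product of $L^2(\cG^\sharp)$, I would take $s\neq s'$ in $\{0,1,2\}$, together with $u\in L^2_{{\bf K},s}(\cG)$ and $u'\in L^2_{{\bf K},s'}(\cG)$, and use the characterization~\eqref{eq:carac_L2sK_graph} expressing the restrictions $u|_{e_1},u|_{e_2}$ as fixed phases times $u|_{e_0}$ (under the edge identifications induced by~\eqref{orientedgeneratorEdges}). Writing $\int_{\cG^\sharp}u\overline{u'}=\sum_{j=0}^{2}\int_{e_j}u\overline{u'}$ and factoring out $\int_{e_0}u\overline{u'}$, the accumulated phases collapse to the prefactor $1+e^{2\imath\pi/3}+e^{-2\imath\pi/3}=0$, exactly as in the computation carried out for $\Omega_\delta$.

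I do not expect any genuine obstacle: the proof is a dimension-independent transcription of Lemma~\ref{lem:decomp_L2omega}, with $e_{j,\delta}$ replaced by $e_j$ and areas by lengths. The single item deserving a moment's care is the invariance $\mathcal{R}\,L^2_{\bf K}(\cG)\subset L^2_{\bf K}(\cG)$, which is precisely where the arithmetic of ${\bf K}$ on the reciprocal lattice enters; every remaining step is a purely algebraic identity.
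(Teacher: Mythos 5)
Your proposal is correct and is exactly what the paper intends: the paper itself gives no separate proof for this lemma, stating only that it ``could be proven as Lemma~\ref{lem:decomp_L2omega}'', and your line-by-line transcription (spectral projectors of $\mathcal{R}$, invariance of $L^2_{\bf K}(\cG)$ under $\mathcal{R}$ via $R^*\Lambda=\Lambda$ and ${\bf K}\cdot R^*\vg=R{\bf K}\cdot\vg$, orthogonality from $1+e^{2\imath\pi/3}+e^{-2\imath\pi/3}=0$ edge by edge, and transfer to ${\bf K}'$ by the symmetry $\mathcal{S}$) is precisely that argument adapted to the graph. No gaps.
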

Since, for ${\bf K}^*\in\{{\bf K},{\bf K}'\}$ and $s\in\{0,1,2\}$, each space $D(\cA({\bf K}^*))\cap L^2_{{\bf K}^*,s}(\cG)$ is stable by the operator $\cA({\bf K}^*)$, we can introduce
\begin{equation}
    \label{eq:A_s_graph}
    \forall\,{\bf K}^* \in\{{\bf K},{\bf K}'\},\;\forall s\in\{0,1,2\},\quad \cA_s({\bf K}^*)=\cA({\bf K}^*)\big|_{L^2_{{\bf K}^*,s}(\cG)},
\end{equation}
and deduce the following decomposition.
\begin{corollary}
\label{cor:decomp_A_graph}
For ${\bf K}^*\in\{{\bf K},{\bf K}'\}$, the operator $\cA({\bf K}^*)$ can be decomposed as follows
\begin{equation}
  \label{eq:decomp_A_graph}  
  \cA({\bf K}^*)=\cA_0({\bf K}^*)\oplus\cA_1({\bf K}^*)\oplus\cA_2({\bf K}^*),
\end{equation}
where $\cA_s({\bf K}^*)$ for $s\in\{0,1,2\}$ is defined in \eqref{eq:A_s_graph}.
\end{corollary}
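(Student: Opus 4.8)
The plan is to transcribe, \emph{mutatis mutandis}, the argument already used to obtain Corollary~\ref{cor:decomp_A} in the fattened-domain setting, with the fat edges $e_{j,\delta}$ replaced by the graph edges $e_j$ and the extension operator $E_{\bf K}$ replaced by its one-dimensional counterpart $\mathcal{E}_{\bf K}$. The decomposition \eqref{eq:decomp_A_graph} will then be an immediate consequence of the orthogonal decomposition of $L^2_{{\bf K}^*}(\cG)$ stated in the preceding Lemma, \emph{provided} I establish that each summand $L^2_{{\bf K}^*,s}(\cG)$ is stable under $\cA({\bf K}^*)$. Since $L^2_{{\bf K}^*,s}(\cG)=L^2_{{\bf K}^*}(\cG)\cap L^2_s(\cG)$ and $L^2_s(\cG)$ is the $s$-th eigenspace of the rotation operator $\mathcal{R}$, this reduces to showing that $\cA({\bf K}^*)$ commutes with $\mathcal{R}$ on its domain, after which the parts $\cA_s({\bf K}^*)$ of \eqref{eq:A_s_graph} are well defined and \eqref{eq:decomp_A_graph} follows.

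The core of the proof is therefore to check that $\mathcal{R}$ maps $D(\cA({\bf K}^*))$ into itself and intertwines the second-derivative operator. There are three points to verify. First, $\mathcal{R}$ preserves the ${\bf K}$-quasi-periodicity: this is proven exactly as in the proof of Lemma~\ref{lem:decomp_L2omega}, using that $\vg\in\Lambda\Rightarrow R^*\vg\in\Lambda$, that ${\bf K}\cdot R^*\vg = R{\bf K}\cdot\vg$, and that $R{\bf K}$ differs from ${\bf K}$ by an element of $\Lambda^\ast$ (compare \eqref{eq:prop_vertex}), so that $L^2_{R{\bf K}}(\cG)=L^2_{{\bf K}}(\cG)$. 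Second, $\mathcal{R}$ preserves continuity at the vertices and the $H^2$-regularity on each edge, because $R$ is a rigid isometry that sends $\cG$ onto itself and permutes the edges while preserving their arc-length parametrization \eqref{orientedgeneratorEdges}; in particular, since the restriction $-\partial_t^2$ is computed in the local variable $t$ and $R$ leaves arc-length unchanged, the operator $-\partial_t^2$ commutes with the change of variable ${\bf x}\mapsto R^*{\bf x}$ on each edge, giving $\cA({\bf K}^*)\mathcal{R}=\mathcal{R}\,\cA({\bf K}^*)$. Third, $\mathcal{R}$ preserves the Kirchhoff condition \eqref{eq:Kirch}: $R$ sends each vertex $M\in\mathcal{V}$ to a vertex $R^*M\in\mathcal{V}$ and identifies $\mathcal{E}(M)$ with $\mathcal{E}(R^*M)$ compatibly with the local parametrization, so the sum of the outgoing derivatives is simply relabelled and stays zero.

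I expect the only genuinely delicate point to be the bookkeeping in this last step: one must confirm that the cyclic permutation of the generator edges induced by $R$ (the graph analogue of the relations $R^*e_{0,\delta}=e_{2,\delta}+\vg_1-\vg_2$ etc.\ used for the fat case) is compatible with the orientation convention fixed in \eqref{orientedgeneratorEdges}, so that outgoing derivatives are transported to outgoing derivatives and no sign is lost in the Kirchhoff sum. Once the $\mathcal{R}$-invariance of $D(\cA({\bf K}^*))$ and the commutation are in hand, no further idea is needed: each $L^2_{{\bf K}^*,s}(\cG)$ is $\cA({\bf K}^*)$-stable, and the orthogonal direct-sum decomposition \eqref{eq:decomp_A_graph} follows at once from the preceding Lemma, exactly as in Corollary~\ref{cor:decomp_A}.
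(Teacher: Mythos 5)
Your proposal is correct and takes essentially the same route as the paper: the paper obtains the corollary from the orthogonal decomposition of $L^2_{{\bf K}^*}(\cG)$ (the preceding Lemma) together with the stability of each $D(\cA({\bf K}^*))\cap L^2_{{\bf K}^*,s}(\cG)$ under $\cA({\bf K}^*)$, which is exactly what you establish, reducing stability (as in Corollary~\ref{cor:decomp_A} for the fattened domain) to the commutation of $\cA({\bf K}^*)$ with $\mathcal{R}$. The paper leaves these verifications implicit, whereas you spell them out, and your bookkeeping of the edge permutation, the A-to-B orientation, and the Kirchhoff sums is consistent with the paper's conventions.
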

We can finally relate, for ${\bf K}^*\in\{{\bf K},{\bf K}'\}$,  the eigenvalues and eigenvectors of $\cA_1({\bf K}^*)$ to the ones of $\cA_2({\bf K}^*)$ for ${\bf K}^*\in\{{\bf K},{\bf K}'\}$. To do so, we use the symmetry operator ${\mathcal{S}}$ defined in \eqref{symmetry} for $\mathcal{O}=\cG$ and
  the following result holds using similar arguments than in Proposition \ref{sym_eigenvectors}.
 \begin{lemma} Let ${\bf K}^*\in\{{\bf K},{\bf K}'\}$. Then, $(\lambda^2, \phi)$ is an eigenpair of $\cA_{1}({\bf K}^*)$ if and only if $(\lambda^2, \overline{{\mathcal{S}}\phi})$ is an eigenpair of $\cA_{2}({\bf K}^*)$.
 \end{lemma}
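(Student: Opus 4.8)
The plan is to reproduce, at the level of the quantum graph, the strategy used in the proof of Proposition~\ref{sym_eigenvectors}, replacing the fattened cell $\mathcal{C}_\delta^\sharp$ by $\cG^\sharp$ and the volumic Laplacian by the edgewise operator $-\partial_t^2$. The whole argument amounts to showing that the antilinear map $T:=\overline{\mathcal{S}\,\cdot}$ is an involution of $L^2_{{\bf K}^*}(\cG)$ that exchanges the rotation eigenspaces $L^2_{{\bf K}^*,1}(\cG)$ and $L^2_{{\bf K}^*,2}(\cG)$ and commutes with $\cA({\bf K}^*)$. As usual I would treat ${\bf K}^*={\bf K}$ in detail and recover ${\bf K}^*={\bf K}'$ from the graph analogue of \eqref{eq:prop_vertex2}.

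First I would track how $\mathcal{S}$ and complex conjugation interact with quasi-periodicity. A ${\bf k}$-quasi-periodic function becomes $(-{\bf k})$-quasi-periodic after conjugation, and since $\mathcal{S}$ turns ${\bf k}$-quasi-periodicity into $(-{\bf k})$-quasi-periodicity (because $S=-\mathrm{Id}$ maps $\vg\in\Lambda$ to $-\vg$ and ${\bf K}'=S{\bf K}=-{\bf K}$), the composite $T$ sends $L^2_{{\bf K}}(\cG)$ back into itself, i.e. $u\in L^2_{{\bf K}}(\cG)\Leftrightarrow \mathcal{S}u\in L^2_{{\bf K}'}(\cG)\Leftrightarrow \overline{\mathcal{S}u}\in L^2_{{\bf K}}(\cG)$, exactly as in the opening lines of the proof of Proposition~\ref{sym_eigenvectors}. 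Next I would use the characterization \eqref{eq:carac_L2sK_graph}: because $S=-\mathrm{Id}$ commutes with the rotation $R$, the operators $\mathcal{S}$ and $\mathcal{R}$ commute, so $\mathcal{S}$ preserves each $L^2_s(\cG)$; conjugation instead conjugates the eigenvalue $e^{2\imath\pi s/3}$ of $\mathcal{R}$ and hence exchanges $L^2_1(\cG)$ and $L^2_2(\cG)$. Combining the two statements gives $T:L^2_{{\bf K},1}(\cG)\to L^2_{{\bf K},2}(\cG)$, and $T^2=\mathcal{I}$ follows from $S^2=\mathcal{I}$.

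Finally I would work with the unrestricted operator $\cA$ on $\cG$, namely the edgewise $-\partial_t^2$ subject to the Kirchhoff law \eqref{eq:Kirch}. This operator is real and purely geometric, so it commutes with conjugation and with $\mathcal{S}$; consequently $T\cA=\cA T$. Since $T$ preserves both the quasi-periodicity class $L^2_{{\bf K}}(\cG)$ and the domain $D(\cA)$, it restricts to an antilinear involution from $D(\cA({\bf K}))\cap L^2_{{\bf K},1}(\cG)$ onto $D(\cA({\bf K}))\cap L^2_{{\bf K},2}(\cG)$ that commutes with $\cA({\bf K})$. Then, if $\cA_1({\bf K})\phi=\lambda^2\phi$, one gets $\cA({\bf K})\bigl(\overline{\mathcal{S}\phi}\bigr)=\overline{\mathcal{S}\bigl(\cA({\bf K})\phi\bigr)}=\lambda^2\,\overline{\mathcal{S}\phi}$ (using that $\lambda^2$ is real and $T$ is antilinear), so $\overline{\mathcal{S}\phi}\in L^2_{{\bf K},2}(\cG)$ is an eigenvector of $\cA_2({\bf K})$ for the same eigenvalue; the converse follows by applying $T$ once more and invoking $T^2=\mathcal{I}$.

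The only genuinely delicate point, and the step I expect to require the most care, is the bookkeeping behind the claim that $-\partial_t^2$ commutes with $\mathcal{S}$ on the graph: under $S$ an edge of $\cG$ is mapped to another edge but its orientation (the $A$-to-$B$ parametrization fixed in \eqref{orientedgeneratorEdges}) may be reversed, and one must check that the identifications are compatible. This is, however, pure bookkeeping once one observes that $-\partial_t^2$ is even in the arc-length variable, so orientation reversal is invisible to it, and that the first-derivative sign change cancels in the homogeneous Kirchhoff condition \eqref{eq:Kirch} at the reflected vertices. As announced in the statement, the conclusion then follows by arguments entirely parallel to those of Proposition~\ref{sym_eigenvectors}.
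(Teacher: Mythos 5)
Your proof is correct and takes essentially the same route as the paper, which disposes of this lemma by invoking ``similar arguments'' to Proposition~\ref{sym_eigenvectors}: precisely the three facts you establish (the action of $\mathcal{S}$ and conjugation on quasi-periodicity via ${\bf K}'=-{\bf K}$, the exchange of the rotation eigenspaces $L^2_{{\bf K},1}$ and $L^2_{{\bf K},2}$ under $\overline{\mathcal{S}\,\cdot}$, and the commutation of this antilinear involution with the operator). Your additional bookkeeping on the orientation reversal of the $A$-to-$B$ parametrization under $S$ is a graph-specific point the paper leaves implicit, and you resolve it correctly since $-\partial_t^2$ and the homogeneous Kirchhoff condition are insensitive to the reparametrization $t\mapsto L-t$.
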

 \begin{remark}\label{CritereDiracGraphe}
 We could have reproduced the analysis made in Proposition~\ref{sym_eigenvectors} and Proposition~\ref{PropositionDiracgrapheEpais} to prove the existence of Dirac points. In that context, denoting by $\phi_1$ one eigenvector of $\mathcal{A}_1({\bf K}^*)$, and by $\tau_0$ (resp. $\tau_1$ and $\tau_2$) the normalized tangent vector to $e_0$ oriented from the vertex $A$ to the vertex $B$ (resp. from $A$ to $B_{1,0}$ and from $A$ to $B_{0,1}$, see Fig~\ref{fig:percell}), it would yield to prove that there exists a complex number  $v_\mathcal{G}\neq 0$ such that 
  $$
  \sum_{i=0}^2 \tau_i \int_{e_i} \phi_1(s) \phi_1'(L-s) ds = v_\mathcal{G} (1, \imath)^t.
  $$ 
  Moreover, we could additionally verify   that $4 \pi | v_\mathcal{G}|$ coincides with $\alpha_n$ defined in~\eqref{eq:Dirac_graph}.
 \end{remark}\vspace{-0.3cm}
  It is worth noticing that we can link the eigenpairs of $\cA_{0}({\bf K})$ and $\cA_{1}({\bf K})$ to the one of 1d-Laplacian operators on the interval $(0, L)$. Similar results could be obtained for $\cA_{2}({\bf K})$ and $\cA_{i}({\bf K}'),\,i\in\{0,1,2\}$ but they are not used in the following so we omit them. 
  \begin{proposition}\label{prop : OperateurReduitGraph}
  \begin{itemize}
 \item If $(\lambda, \phi)$ is an eigenpair of $\cA_{1}({\bf K})$ then $(\lambda, \phi_0 := \phi|_{{e_0}})$ is an eigenpair of the  1d Laplacian operator $\mathcal{A}_{ND}:\mathcal{D}(\mathcal{A}_{ND}) \subset L^2(0,L) \rightarrow L^2(0,L)$ with Neumann boundary condition on $t=0$ and Dirichlet on $t=L$
 \begin{equation}\label{eigenOned1}
  \mathcal{D}(\mathcal{A}_{ND})  = \left\{ u  \in H^2(0,L), \; u'(0)= 0, u(L) =0 \right\}\;,\quad
  \mathcal{A}_{ND} u = - u''.
 \end{equation}
 Reciprocally, let $(\lambda, \phi_0)$ be an eigenpair of $\mathcal{A}_{ND}$ and introduce ${\phi} $ defined by
 \begin{equation}\label{DefinitionWidetildephi1}
{\phi}  = \mathcal{E}_{\bf K}( \hat{\phi})   \quad \text{where}\quad \hat{\phi}|_{e_0} = \hat{\phi}|_{e_1} =\hat{\phi}|_{e_2}=\phi_0,
 \end{equation}
 where $ \mathcal{E}_{\bf K}$ is defined in \eqref{eq:extensionop_1D} and where we have identified functions defined on the edges $e_i$ to 1D functions using the parametrization \eqref{orientedgeneratorEdges}.
 Then $(\lambda, \phi)$ is an eigenpair of $\cA_{1}({\bf K})$. 
% \item If $(\lambda, \phi)$ is an eigenpair of $\cA_{2}({\bf K})$ then $(\lambda, \phi_0 := \phi|_{{e_0}})$ is an eigenpair of the  1d Laplacian operator $\mathcal{A}_{DN}:\mathcal{D}(\mathcal{A}_{DN}) \subset L^2(0,L) \rightarrow L^2(0,L)$ with Dirichlet boundary condition on $t=0$ and Neumann on $t=L$
% \begin{equation}\label{eigenOned2}
%  \mathcal{D}(\mathcal{A}_{DN})  = \left\{ u  \in H^2(0,L), \; u(0)= 0, u'(L) =0 \right\}\,,\quad 
%  \mathcal{A}_{DN} u = - u''.
% \end{equation}
% Reciprocally, let $(\lambda, \phi_0)$ be an eigenpair of $\mathcal{A}_{DN}$ and introduce ${\phi} $ defined by
% \begin{equation}\label{DefinitionWidetildephi2}
%{\phi}  = \mathcal{E}_{\bf K}( \hat{\phi})  \quad \hat{\phi}|_{e_0} = e^{2\imath \pi/3}\,\hat{\phi}|_{e_1} =e^{-2\imath \pi/3}\,\hat{\phi}|_{e_2}=\phi_0
% \end{equation}
% then $(\lambda, \phi)$ is an eigenpair of $\cA_{2}({\bf K})$. 
 \item If $(\lambda, \phi)$ is an eigenpair of $\cA_0({\bf K})$ then, $(\lambda, \phi_0 := \phi|_{e_0})$ is an eigenpair of the 1d Dirichlet Laplacian operator $\mathcal{A}_{D}:\mathcal{D}(\mathcal{A}_{D}) \subset L^2(0,L) \rightarrow L^2(0,L)$:
 \begin{equation}\label{eigenOned0}
    \mathcal{D}(\mathcal{A}_{D})  = \left\{ u  \in H^2(0,L), \; u(0)=  u(L) =0 \right\}\;,\quad 
    \mathcal{A}_{D} u = - u''.
   \end{equation}
 Reciprocally,  let $(\lambda, \phi)$ be an eigenpair of $\mathcal{A}_{D}$ and introduce ${\phi} $ defined by
 \begin{equation}\label{DefinitionWidetildephi0}
{\phi}  = \mathcal{E}_{\bf K}( \hat{\phi})  \quad \text{where}\quad \hat{\phi}|_{e_0} = e^{-2\imath \pi/3}\,\hat{\phi}|_{e_1} =e^{2\imath \pi/3}\,\hat{\phi}|_{e_2}=\phi_0.
 \end{equation}
 Then $(\lambda, \phi)$ is an eigenpair of $\cA_{0}({\bf K})$. 
 \end{itemize} 
  \end{proposition}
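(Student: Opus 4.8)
The plan is to translate the vertex conditions defining $D(\cA({\bf K}))$ into ordinary boundary conditions on the interval $(0,L)$ for each symmetry class, so that the reduced operators $\cA_s({\bf K})$ become one–dimensional. Fix $u\in D(\cA_s({\bf K}))$ and set $\phi_j:=u|_{e_j}$, viewed through the parametrization \eqref{orientedgeneratorEdges} as functions on $(0,L)$ with $t=0$ at the $A$–vertex and $t=L$ at a $B$–vertex; each satisfies $-\phi_j''=\lambda\phi_j$. Within one periodicity cell only the two vertices $A=A_{0,0}$ and $B=B_{0,0}$ are relevant, so it suffices to impose continuity together with the Kirchhoff law \eqref{eq:Kirch} at those two points.

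At $A$ the three generating edges $e_0,e_1,e_2$ all emanate at $t=0$, so continuity reads $\phi_0(0)=\phi_1(0)=\phi_2(0)$ and the Kirchhoff law reads $\phi_0'(0)+\phi_1'(0)+\phi_2'(0)=0$. The vertex $B=B_{0,0}$, on the other hand, is shared by $e_0$ and by the \emph{translated} edges $e_1-\vg_1$ and $e_2-\vg_2$, all arriving at $t=L$. Using the quasi-periodicity $u(\cdot-\vg)=e^{-2\imath\pi{\bf K}\cdot\vg}u(\cdot)$ together with ${\bf K}\cdot\vg_1=-\tfrac13$ and ${\bf K}\cdot\vg_2=\tfrac13$, the traces of $u$ at $B$ coming from those two edges are $e^{2\imath\pi/3}\phi_1(L)$ and $e^{-2\imath\pi/3}\phi_2(L)$. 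Hence continuity at $B$ becomes $\phi_0(L)=e^{2\imath\pi/3}\phi_1(L)=e^{-2\imath\pi/3}\phi_2(L)$, and the Kirchhoff law becomes (up to a global sign) $\phi_0'(L)+e^{2\imath\pi/3}\phi_1'(L)+e^{-2\imath\pi/3}\phi_2'(L)=0$.

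It then remains to feed in the symmetry relations of \eqref{eq:carac_L2sK_graph} and to use $1+e^{2\imath\pi/3}+e^{-2\imath\pi/3}=0$. For $s=1$ one has $\phi_0=\phi_1=\phi_2=:\phi$: continuity at $A$ and the Kirchhoff law at $B$ become trivial, the Kirchhoff law at $A$ collapses to $3\phi'(0)=0$, and continuity at $B$ forces $\phi(L)=0$; thus $\phi=\phi|_{e_0}$ solves the Neumann–Dirichlet problem $\mathcal{A}_{ND}$ of \eqref{eigenOned1}. For $s=0$ one substitutes $\phi_1=e^{2\imath\pi/3}\phi_0$ and $\phi_2=e^{-2\imath\pi/3}\phi_0$: both Kirchhoff laws become automatic, while continuity at $A$ forces $\phi_0(0)=0$ and continuity at $B$ forces $\phi_0(L)=0$, so $\phi_0$ solves the Dirichlet problem $\mathcal{A}_{D}$ of \eqref{eigenOned0}.

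The converse implications are obtained by reading these equivalences backwards: starting from an eigenpair of $\mathcal{A}_{ND}$ (resp. $\mathcal{A}_{D}$), the function built through the symmetric extension \eqref{DefinitionWidetildephi1} (resp. \eqref{DefinitionWidetildephi0}) belongs to $L^2_{{\bf K},1}(\cG)$ (resp. $L^2_{{\bf K},0}(\cG)$) by construction and is edgewise $H^2$ with $u''\in L^2_{\bf K}(\cG)$; the boundary conditions $\phi'(0)=0,\ \phi(L)=0$ (resp. $\phi_0(0)=\phi_0(L)=0$) are precisely what makes the continuity and Kirchhoff conditions at $A$ and $B$ hold, so the extension lies in $D(\cA({\bf K}))\cap L^2_{{\bf K},s}(\cG)=D(\cA_s({\bf K}))$ and is an eigenfunction for $\lambda$. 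The only genuinely delicate step is the bookkeeping at $B$: one must correctly identify which translated copies of $e_1$ and $e_2$ meet $B_{0,0}$ and attach the right quasi-periodic phases $e^{\pm2\imath\pi/3}$; once this is done, every remaining simplification is an elementary consequence of $1+e^{2\imath\pi/3}+e^{-2\imath\pi/3}=0$.
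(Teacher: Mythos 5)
Your proof is correct and follows essentially the same route as the paper: translating the continuity and Kirchhoff conditions at the two vertices of a fundamental cell, via the ${\bf K}$-quasi-periodicity phases $e^{\pm 2\imath\pi/3}$ and the characterization \eqref{eq:carac_L2sK_graph}, into the 1D boundary conditions of $\mathcal{A}_{ND}$ and $\mathcal{A}_{D}$, and reading the argument backwards for the converse. The only cosmetic differences are that you work at $B_{0,0}$ with the translated edges $e_1-\vg_1$, $e_2-\vg_2$ (the paper uses $B_{1,0}$), and that you spell out the $s=0$ case which the paper dismisses as ``similar''.
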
\vspace{-0.5cm}
% \begin{figure}[htbp]
%     \centering
%     \includegraphics[width=0.7\textwidth, trim=1cm 1cm 1cm 1cm, clip]{./DessinGrapheAnnotation.png}
%     \caption{Representation of the Periodic graph}
%     \label{fig:PeriodicGraphAnnotation}
% \end{figure}
\begin{proof}
  We show only the first result, since the other one can be proven similarly.
 Assume that $(\lambda, \phi)$ is an eigenpair of $\cA_{1}({\bf K})$ and let $\phi_0:=\phi|_{e_0}$. By definition of the operator $\cA$  defined on the graph, we have $\phi_0\in H^2(0,L)$ and $-\phi_0''=\lambda\phi_0$ on $(0,L)$. Using the characterization \eqref{eq:carac_L2sK_graph} of $L^2_{{\bf K},1}(\cG)$, 
    writing the Kirchhoff condition at the vertex $A$ gives
    $\phi_0'(0) =0,$
    while the the continuity condition in $B_{1,0}$ and the ${\bf K}-$ quasi-periodicity leads to
    $$
\phi_0(L) =  e^{2 \imath  \pi {\bf K} \cdot \mathbf{v_1}} \phi_0(L),
    $$ 
    and consequently
    $
    \phi_0(L)=0.
    $ 
   Reciprocally, assume that $(\lambda, \phi_0)$ is an eigenpair for $\mathcal{A}_{ND}$ and  let us show that $(\lambda, {\phi})$ defined by \eqref{DefinitionWidetildephi1} is an eigenpair for $\cA_{1}({\bf K})$. First, it is clear that $\phi\in L^2_{\bf K}(\cG)$ and $-{\phi}'' = \lambda {\phi}$ on all the edges of the graph $\mathcal{G}$. Then, we can verify that ${\phi}$ is continuous at each vertex  of the graph: indeed, it is easily seen that  ${\phi}(B_{m, n})=0$  and  ${\phi}(A_{m, n})=e^{2 \imath  \pi {\bf K} .(m \mathbf{v_1} + n \mathbf{v_2})} \phi_0(1)$ for any $(m,n) \in \Z^2$.  Besides, the Kirchhoff conditions are satisfied on the $A_{m,n}$'s 
   $$
   \sum_{i=0}^3 {\phi}|_{e_i}'(A_{ m,n}) = (e^{2 \imath \pi {\bf K}. (m\mathbf{v_1} + n\mathbf{v_2})} ) \sum_{i=0}^3 \phi_0'(0) = 0,
   $$
   and on the $B_{m,n}$'s
   \begin{multline*}
   \sum_{i=0}^3 {\phi}|_{e_i}'(B_{m, n}) =  - e^{2 \imath \pi {\bf K}\cdot (m\mathbf{v_1} + n\mathbf{v_2}) } (1 + e^{-2 \imath \pi {\bf K} \cdot \mathbf{v_1} } + e^{-2 \imath \pi {\bf K}\cdot \mathbf{v_2} })   \phi'(L) = 0.
   \end{multline*}
   We have then that $\phi\in D(\cA({\bf K}))$. From \eqref{eq:carac_L2sK_graph}, we deduce that $\phi\in L^2_{{\bf K},1}$ which finishes the proof. 
\end{proof}
We deduce from the previous proposition the spectrum of $\mathcal{A}_s(K)$ for $s\in\{0,1,2\}$.
\begin{corollary}\label{lemma:operateurreduit}
The spectrum of $\mathcal{A}_{0}({\bf K})$ consists of the set of simple eigenvalues $\{
  ({n \pi}/{L})^2,\; n \in \N^\ast\}$
while the spectrum of $\mathcal{A}_1({\bf K})$ consists of the simple eigenvalues $\{ \lambda_n^*,\; n \in \N\}$ $\lambda_n^*$ being defined in \eqref{eq:lambda_n_star}.
\end{corollary}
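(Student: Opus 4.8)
The plan is to read off the corollary directly from Proposition~\ref{prop : OperateurReduitGraph}. That proposition establishes, for each of $\cA_0({\bf K})$ and $\cA_1({\bf K})$, a bijection between its eigenpairs and those of an explicit one-dimensional Laplacian on $(0,L)$: the mixed Neumann--Dirichlet operator $\mathcal{A}_{ND}$ of \eqref{eigenOned1} for $s=1$, and the Dirichlet operator $\mathcal{A}_{D}$ of \eqref{eigenOned0} for $s=0$. Since the restriction map $\phi \mapsto \phi|_{e_0}$ and the extension maps \eqref{DefinitionWidetildephi1}--\eqref{DefinitionWidetildephi0} (through $\mathcal{E}_{\bf K}$) are mutually inverse \emph{linear} isomorphisms between the corresponding eigenspaces, the spectra coincide as sets and, moreover, geometric multiplicities are preserved. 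Recalling that $\cA({\bf K})$ (hence each reduced block $\cA_s({\bf K})$) has compact resolvent, so that its spectrum is purely discrete, it therefore suffices to compute the two scalar Sturm--Liouville spectra and to check that each eigenvalue is simple.

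For $\cA_1({\bf K})$ I would solve $-u''=\lambda u$ on $(0,L)$ with $u'(0)=0$ and $u(L)=0$. Setting $\lambda=\omega^2$ with $\omega\ge 0$, the Neumann condition at $t=0$ forces $u(t)=A\cos(\omega t)$, and the Dirichlet condition at $t=L$ gives $\cos(\omega L)=0$, i.e. $\omega L\in \tfrac{\pi}{2}+\pi\mathbb{N}$. Hence $\omega=\tfrac{\pi}{2L}+\tfrac{n\pi}{L}=\omega_n^*$ and $\lambda=\lambda_n^*$ for $n\in\mathbb{N}$, in agreement with \eqref{eq:lambda_n_star}; the value $\lambda=0$ yields only $u\equiv 0$ and is discarded. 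Each eigenspace is spanned by the single function $\cos(\omega_n^* t)$, so every such eigenvalue is simple. Transporting these eigenpairs back to the graph via \eqref{DefinitionWidetildephi1} gives the spectrum of $\cA_1({\bf K})$ as $\{\lambda_n^*,\ n\in\mathbb{N}\}$, all simple.

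For $\cA_0({\bf K})$ I would solve $-u''=\lambda u$ on $(0,L)$ with $u(0)=u(L)=0$. Now $u(0)=0$ forces $u(t)=A\sin(\omega t)$, and $u(L)=0$ gives $\sin(\omega L)=0$, i.e. $\omega L\in \pi\mathbb{N}^\ast$, whence $\lambda=(n\pi/L)^2$ for $n\in\mathbb{N}^\ast$, with one-dimensional eigenspaces spanned by $\sin(n\pi t/L)$ and $\lambda=0$ again excluded. Pulling these back through \eqref{DefinitionWidetildephi0} yields the spectrum of $\cA_0({\bf K})$ as the set of simple eigenvalues $\{(n\pi/L)^2,\ n\in\mathbb{N}^\ast\}$. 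There is essentially no obstacle beyond these elementary constant-coefficient ODE computations once Proposition~\ref{prop : OperateurReduitGraph} is granted; the only point deserving a line of care is that the eigenpair correspondence is a genuine linear bijection of eigenspaces, so that the one-dimensionality of the interval eigenspaces transfers verbatim to simplicity of the eigenvalues of $\cA_0({\bf K})$ and $\cA_1({\bf K})$.
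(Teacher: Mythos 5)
Your proposal is correct and follows exactly the paper's route: the corollary is stated there as an immediate deduction from Proposition~\ref{prop : OperateurReduitGraph}, and your proof simply fills in the standard Sturm--Liouville computations for $\mathcal{A}_{ND}$ and $\mathcal{A}_{D}$ together with the (correct) observation that the restriction/extension maps are linear bijections of eigenspaces, so simplicity transfers.
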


\subsection{Proof of Theorem \ref{th:Diracpoints_delta} and Proposition \ref{PropositionDiracgrapheEpais}}

We show the result for ${\bf K}^*={\bf K}$, the result for ${\bf K}^*={\bf K}'$ can be obtained by using \eqref{eq:prop_vertex2}.

 General results of \cite{PostBook,KuchmentQuantumgraphs1,KuchmentPost} prove the convergence of the eigenvalues of $A_\delta(\kg)$ (respectively $A_{\delta,i}(\kg)$, $i \in \{0,1,2\}$) towards the eigenvalues of  $\cA(\kg)$ (resp. $\cA_{i}(\kg),\,i \in \{0,1,2\}$).  In particular, those results together with the analysis of the previous subsection show that, for any $n\in \mathbb{N}$, there exists $\delta_0>0$ such that for any $\delta< \delta_0$, the operator $A_{\delta,1}({\bf K})$ has a simple eigenvalue $\lambda_{n,\delta}^*$ that tends, as $\delta$ goes to $0$, to $\lambda_n^*$ given in Corollary \ref{lemma:operateurreduit}. Moreover,  there exist two positive constants $C_1$  and $C_2$, depending on $\delta_0$ and $n$ such that 
 $$
 | \lambda_{n,\delta}^* - \lambda_n^* | \leq C_1 \sqrt{\delta},
 $$ 
and 
\begin{equation}\label{DistanceCSpectre}
  \inf_{\lambda \in \sigma(A_{\delta,1}({\bf K})) \setminus \{ \lambda_{n,\delta} \}} | \lambda - \lambda_{n,\delta}^* | > C_2,\quad\text{and}\quad \inf_{\lambda \in \sigma(A_{\delta,0}({\bf K}))} | \lambda - \lambda_{n,\delta}^* | > C_2.
\end{equation}
To show Proposition \ref{PropositionDiracgrapheEpais} and then Theorem \ref{th:Diracpoints_delta}, it suffices now to show that $v_\delta$ defined in \eqref{eq:v_delta} does not vanish for $\delta$ small enough. This is stated in the following proposition.
 \begin{proposition}\label{TheoremeLimitevdelta}
 Let $(\lambda_{\delta},\phi_{\delta})$ be an eigenpair of $A_{\delta,1}({\bf K})$ and let $v_\delta$ be defined by \eqref{eq:v_delta} in  Proposition~\ref{sym_eigenvectors} replacing $\phi_{\delta,1}$ by $\phi_\delta$ and $\phi_{\delta,2}$ by $\overline{\mathcal{S}\phi_\delta}$ (see also Remark \ref{RemAutreDefinitionvdelta}). 
%\begin{equation}
%\lim_{\delta \rightarrow 0}  \left| %\int_{\mathcal{C}_\delta^\sharp} %\varphi_\delta(-{\bf x}) \cdot %\nabla\varphi_\delta({\bf x}) d{\bf x} \right| % =  v_\delta (1, \imath)^t 
%\end{equation}
Then
$$
\lim_{\delta \rightarrow 0} |v_\delta| = \frac{\pi}{4L} (2n+1).
$$ 
 \end{proposition}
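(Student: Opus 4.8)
The plan is to pass to the quantum-graph limit in the expression for $v_\delta$ and then to evaluate the resulting graph integral explicitly. I would start from the form given in Remark~\ref{RemAutreDefinitionvdelta},
\[
v_\delta\,(1,\imath)^T=\int_{\mathcal{C}_\delta^\sharp}\phi_\delta(S{\bf x})\,\nabla\phi_\delta({\bf x})\,d{\bf x},\qquad \|\phi_\delta\|_{L^2(\mathcal{C}_\delta^\sharp)}=1 .
\]
Since $\lambda_{n,\delta}^*$ is a simple eigenvalue of $A_{\delta,1}({\bf K})$ that stays isolated from the rest of the spectrum uniformly in $\delta$ (see \eqref{DistanceCSpectre}), the convergence results of \cite{Post:2006,PostBook,KuchmentQuantumgraphs1,KuchmentPost} upgrade from eigenvalues to the associated spectral projectors; this yields a normalized graph eigenfunction $\phi\in D(\cA_1({\bf K}))$ of $\lambda_n^*$ such that, after the standard transverse identification, $\phi_\delta$ converges to $\phi$ in energy. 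By Proposition~\ref{prop : OperateurReduitGraph} and Corollary~\ref{lemma:operateurreduit}, $\phi$ is the $s=1$ symmetry mode built from a single profile $\phi_0(t)=A\cos(\omega_n^* t)$ on each edge (with $\omega_n^*$ as in \eqref{eq:lambda_n_star}), the common-profile structure $\phi|_{e_0}=\phi|_{e_1}=\phi|_{e_2}$ being forced by \eqref{eq:carac_L2sK}.

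Next I would decompose $\mathcal{C}_\delta^\sharp=e_{0,\delta}\cup e_{1,\delta}\cup e_{2,\delta}$ and use thin-structure asymptotics on each fattened edge: because $\lambda_n^*=O(1)$, only the constant transverse mode is excited, so $\phi_\delta$ is transversally flat with longitudinal profile $\phi_0$ and tangential derivative $\phi_0'$, while $\partial_n\phi_\delta$ is of lower order by the Neumann condition. Thus, up to the cross-sectional width factor, each edge contributes $\tau_i\int_0^L \phi_\delta(S\cdot)\,\phi_0'(s)\,ds$, where $\tau_i$ is the unit tangent to $e_i$ from $A$ to $B$. The factor $\phi_\delta(S{\bf x})$ is handled through the action of $S$ on the fattened edges computed in the proof of Proposition~\ref{sym_eigenvectors} (namely $Se_{0,\delta}=e_{0,\delta}+\vg_1-\vg_2$, $Se_{1,\delta}=e_{1,\delta}-\vg_2$, $Se_{2,\delta}=e_{2,\delta}+\vg_1-2\vg_2$), together with the quasi-periodicity and ${\bf K}\cdot(\vg_1-\vg_2)=-2/3$: it turns $\phi_\delta(S{\bf x})$ into $\sigma_i\,\phi_0(L-s)$ with $\sigma_0=e^{2\imath\pi/3}$, $\sigma_1=e^{-2\imath\pi/3}$, $\sigma_2=1$. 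The width factor and the amplitude $A$ enter $v_\delta$ only through the combination fixed by $\|\phi_\delta\|_{L^2(\mathcal{C}_\delta^\sharp)}=1$, and they cancel exactly because $v_\delta$ is bilinear of the right homogeneity (one factor $\phi_\delta$, one gradient, both of size $\delta^{-1/2}$, against measure $O(\delta)$); the limit is therefore finite and equals the graph quantity of Remark~\ref{CritereDiracGraphe}.

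It remains to evaluate
\[
\lim_{\delta\to0}v_\delta\,(1,\imath)^T=\Big(\sum_{i=0}^{2}\sigma_i\tau_i\Big)\,K,\qquad K=\int_0^L\phi_0(L-s)\,\phi_0'(s)\,ds .
\]
Using $\cos(\omega_n^* L)=0$ and $\sin(\omega_n^* L)=(-1)^n$ one gets $\phi_0(L-s)=(-1)^nA\sin(\omega_n^* s)$, hence $K=-\tfrac12(-1)^nA^2\omega_n^* L$. The key algebraic point — which must be handled with care — is that $\sum_i\sigma_i\tau_i$ has to be treated as a vector of $\CC^2$ and not collapsed to a scalar: a direct computation gives $\sum_{i=0}^2\sigma_i\tau_i=\tfrac32 e^{-\imath\pi/3}(1,\imath)^T$, which is collinear to $(1,\imath)^T$ and nonzero. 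Carrying out the cancellation of $A^2$ and the width against the normalization leaves a value proportional to $\omega_n^*$, and I obtain $\lim_{\delta\to0}|v_\delta|=\tfrac12\omega_n^*=\frac{\pi}{4L}(2n+1)$, in agreement with the relation $4\pi|v_{\mathcal G}|=\alpha_n=(2n+1)\pi^2/L$ recorded in Remark~\ref{CritereDiracGraphe} and Proposition~\ref{CharacSpectrumGraphHexa}.

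The hard part is the rigorous passage to the limit in the gradient term. It requires convergence of $\phi_\delta$ not merely in $L^2(\mathcal{C}_\delta^\sharp)$ but in a tangential-$H^1$ (energy) sense, a quantitative estimate showing that the transverse component of $\nabla\phi_\delta$ contributes $o(1)$ to $v_\delta$, and control of the junction regions near the vertices, whose measure is $O(\delta)$. This is precisely the regime covered by the thin branched domain convergence machinery of \cite{Post:2006,PostBook,KuchmentPost} (and underlying the $\sqrt{\delta}$ rate already used for the eigenvalues); once that convergence is secured, the remaining steps are the phase bookkeeping of the $\sigma_i$'s and the elementary trigonometric integral $K$.
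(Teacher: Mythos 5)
Your proposal is correct, and its computational core coincides exactly with the paper's: the paper also reduces $v_\delta$ to a graph quantity by replacing $\phi_\delta$ with a transversally constant, edge-profile approximation, and its Lemma~\ref{LemmeLimiteQuasiMode} is precisely your edge-by-edge computation --- the same phases $e^{2\imath\pi/3}$, $e^{-2\imath\pi/3}$, $1$ coming from the action of $S$ on the three fattened edges, the same tangent sum $\tau_0 e^{2\imath\pi/3}+\tau_1 e^{-2\imath\pi/3}+\tau_2=\tfrac34(1-\imath\sqrt3)(1,\imath)^T$ (your $\tfrac32 e^{-\imath\pi/3}(1,\imath)^T$ is the same number), the same integral $I_n=\int_0^L\phi_0(L-t)\phi_0'(t)\,dt$, and the same cancellation of amplitude and width against $\|\phi_\delta\|_{L^2(\mathcal{C}_\delta^\sharp)}=1$. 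Where you genuinely differ is in how the approximation step is justified. You invoke the abstract thin-domain spectral convergence machinery (projector convergence through identification operators) and flag the tangential-$H^1$ convergence, the $o(1)$ transverse contribution and the junction control as ``covered by'' it; the paper instead makes this step self-contained and quantitative. It constructs the quasi-mode $\hat\varphi_\delta$ explicitly in \eqref{DefinitionPhideltaHat} (profile $\phi_0\circ g$ on the shrunken edges, the constant $1/\sqrt L$ on the $A$-junction, $0$ on the $B$-junctions), verifies the residual estimate \eqref{Quasi-modes}, and proves in Lemma~\ref{LemmeQuasiMode} that a true normalized eigenvector satisfies $\|\varphi_\delta-\phi_\delta\|_{H^1(\mathcal{C}_\delta^\sharp)}\le C\sqrt\delta$, via a Fredholm-type stability estimate (Lemma~\ref{LemmeInversibilite}) whose constant is uniform in $\delta$ thanks to the spectral gap \eqref{DistanceCSpectre}. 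That $H^1$ bound is exactly what is needed, and nothing weaker suffices: the bilinear form $(u,v)\mapsto\int_{\mathcal{C}_\delta^\sharp}u(S\cdot)\cdot\nabla v$ is only controlled by $\|u\|_{L^2}\|v\|_{H^1}$, and the normalized eigenfunctions have amplitude $\delta^{-1/2}$, so $L^2$ or projector-norm convergence alone does not close the argument; on your route you would still have to verify that the cited results yield identification-operator convergence in the energy norm with this uniformity, which is the real content and is what the paper's Step 1 supplies by hand. The paper's choice buys an explicit $O(\sqrt\delta)$ rate (reused in its Step 3 bound $|\mathcal{R}^\delta|\le D\sqrt\delta$, and consistent with the $\sqrt\delta$ estimates of Theorem~\ref{th:guided_case12_delta}); your choice buys brevity, at the price of a citation that must be matched carefully to the normalization and to the specific bilinear form at hand.
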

For the proof of this proposition, let $\lambda_{\delta}$ be an eigenvalue of $A_{\delta,1}({\bf K})$ and let $\lambda$ be the limit of $\lambda_\delta$ when $\delta$ goes to $0$. We know that $\lambda$ is a simple eigenvalue of $\cA_{1}({\bf K})$. The proof of Proposition~\ref{TheoremeLimitevdelta} results from three main steps. 
\begin{enumerate}
    \item[{\bf Step 1.}] We first construct, in Lemma~\ref{LemmeQuasiMode}, a quasi-mode  $\varphi_\delta$, namely an explicit approximation of an eigenvector of $A_{\delta,1}({\bf K})$ associated with $\lambda_{\delta}$. More precisely, we construct $\varphi_\delta\in H^1_{\bf K}(\Omega_\delta)\cap L^2_1(\Omega_\delta)$ such that there exist $\delta_0>0$ and a constant $C>0$ such that for all $\delta<\delta_0$ and for all $\varphi\in H^1_{\bf K}(\Omega_\delta)\cap L^2_1(\Omega_\delta)$
    \begin{equation}\label{Quasi-modes}
      \int_{\mathcal{C}_\delta^\sharp} \big(\nabla {\varphi}_\delta({\bf x}) \cdot \overline{\nabla \varphi({\bf x})} d{\bf x} - \lambda_{\delta} \,{\varphi}_\delta({\bf x})\overline{\nabla \varphi({\bf x})}\Big) d{\bf x} \leq {C} \sqrt{\delta} \left\| {\varphi}_\delta \right\|_{H^1(\mathcal{C}_\delta^\sharp)}  \left\| \varphi \right\|_{H^1(\mathcal{C}_\delta^\sharp)}.
      \end{equation} 
      We deduce, in Lemma \ref{LemmeQuasiMode}, that $\varphi_\delta$ is an approximation of a certain eigenvector $\phi_\delta$ (see ) which is used for the computation of $v_\delta$.
    \item[{\bf Step 2.}] We evaluate in Lemma~\ref{LemmeLimiteQuasiMode} the quantity
    \begin{equation}\label{quantity_quasimode}
       \int_{\mathcal{C}_\delta^\sharp} \varphi_\delta(S{\bf x}) \cdot \nabla\varphi_\delta({\bf x}) d{\bf x}. 
    \end{equation}
    \item[{\bf Step 3.}] We deduce Proposition~\ref{TheoremeLimitevdelta} from the previous two results.
\end{enumerate} 
 \paragraph{{\bf Step 1.} Construction of the quasi-mode $\varphi_\delta$} 
 
 We know that $\lambda$, the limit of $\lambda_\delta$ when $\delta$ goes to $0$, is a simple eigenvalue of $\cA_{1}({\bf K})$ and, by Proposition \ref{prop : OperateurReduitGraph}, also a simple eigenvalue of $\cA_{ND}$. There exists an $n\in\N$ such that $\lambda=\lambda_n^\star$ and an associated eigenvector is given by 
 $$
 \phi_0(t) = \frac{1}{\sqrt{L}}cos(\sqrt{\lambda} t).
 $$ 
 We can deduce from \eqref{DefinitionWidetildephi1} an eigenvector $\phi$ of $\cA_{1}({\bf K})$. It is natural to construct the quasi-mode $\varphi_\delta$ from $\phi$. To do so, we have to 'extend' $\phi$ on the periodicity cell $\mathcal{C}_\delta^\sharp$. For that, we  decompose $\mathcal{C}_\delta^\sharp$ into four junction regions (denoted $J^A_\delta, J^B_{i,\delta}, i\in \{0,1,2\}$) and three shrunken edges ($\tilde{e}_{i,\delta}, i\in \{0,1,2\}$) (see Figure~ \ref{fig:periodicitycell}) defined by
 \begin{figure}[h]
	\begin{center}
 \includegraphics[width=0.8\textwidth, trim=0cm 0cm 0cm 0cm, clip]
 {./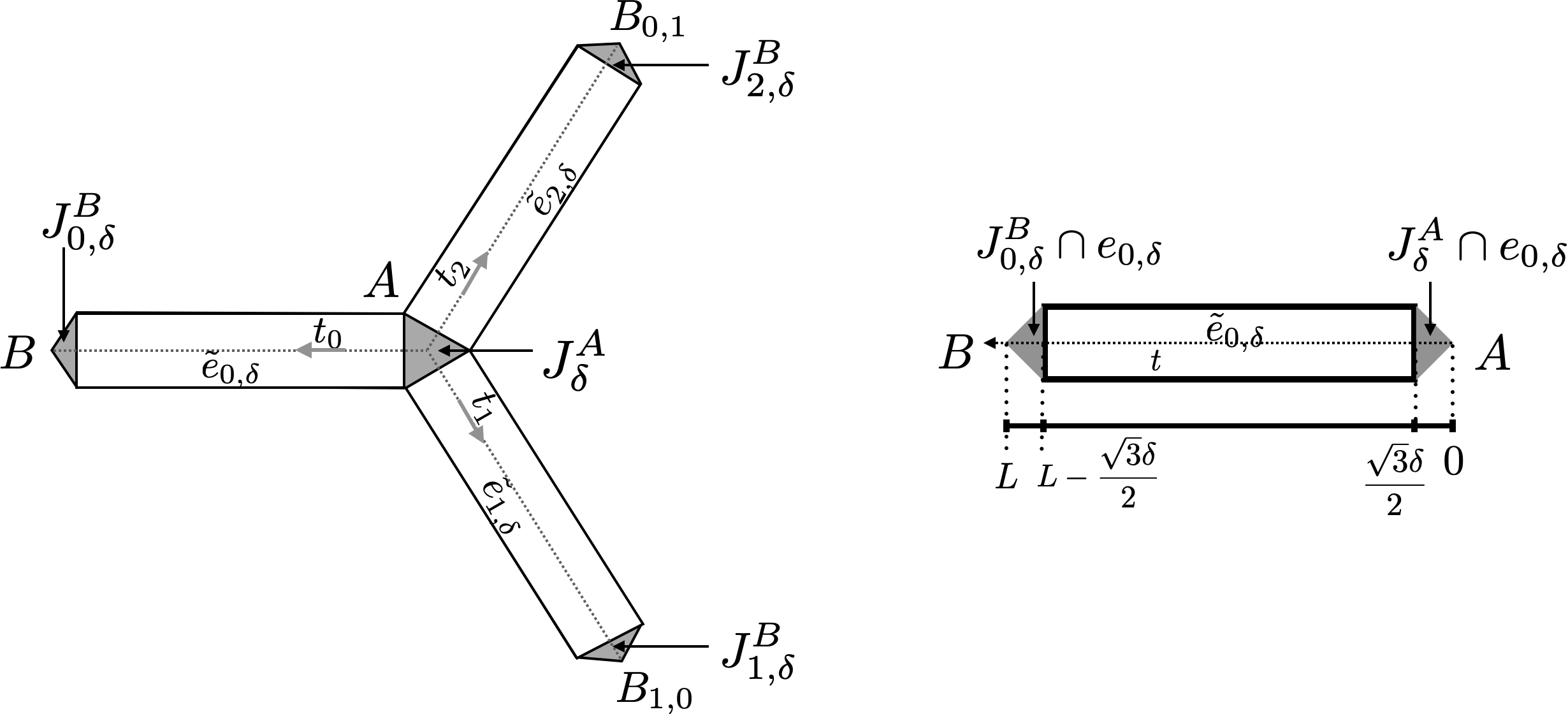}
	 \caption{Decomposition of the periodicity cell ${\cal C}_\delta^\sharp$ into three shrunken edges $\tilde{e}_{i,\delta}, i\in \{0,1,2\}$ and four junction regions $J^A_\delta, J^B_{i,\delta}, i\in \{0,1,2\}$.}
	 \label{fig:periodicitycell}
 	\end{center}\vspace{-0.8cm}
\end{figure}
\begin{equation*}
  \begin{array}{l}
\dsp \tilde{e}_{i,\delta} = {e}_{i,\delta} \cap \left\{ {\sqrt{3}\delta}/{2}  < t_i({\bf x}) < L -{\sqrt{3}\delta}/{2} \right\} \quad i\in \{0,1,2\},\\\dsp 
J^A_\delta = \bigcup_{i\in {0,1,2}} {e}_{i,\delta} \cap \left\{ t_i({\bf x}) < {\sqrt{3}\delta}/{2} \right\},\\
J^B_{i,\delta} = {e}_{i,\delta} \cap \left\{ t_i({\bf x}) > L - {\sqrt{3}\delta}/{2} \right\}, \; i\in \{0,1,2\},
  \end{array}
\end{equation*}
where $t_i$ is a local 'longitudinal' coordinate on each edge ${e}_{i,\delta},$  defined by:
 $
t_i({\bf x}) :=  \| P_i({\bf x}) - A \|_2, \; \xg \in  {e}_{i,\delta},  \; i \in \left\{0,1, 2\right\},
$
where $ P_0(x)$ (resp. $P_1$, $P_2$)  denotes the orthogonal projection of ${\bf x}$ on $[AB]$ (resp. $[AB_{1,0}]$, $[AB_{0,1}]$). 
  Let us also introduce the linear function $g$ that stretches $[0, L]$ into $[{\sqrt{3}\delta}/{2}, L-{\sqrt{3}\delta}/{2}]$ which is given by
 $
 g(s) = L  (s   - { \sqrt{3}\delta}/{2})(L -\sqrt{3} \delta )^{-1}. $ 
 We can now define the function $\hat{\varphi}_\delta\in H^1({\mathcal{C}}_\delta^\sharp)$ as 
\begin{equation}\label{DefinitionPhideltaHat}
\hat{\varphi}_\delta({\bf x}) = 
\begin{cases}
\phi_0(g(t_i({\bf x})), & {\bf x} \in \tilde{e}_{i,\delta} \quad i \in \{0,1,2\},  \\
1/\sqrt{L},  & {\bf x} \in J_\delta^A, \\
0, & {\bf x} \in J^B_{i,\delta} \quad i \in \{0,1,2\}.
\end{cases} 
\end{equation}
Using the same computation than in \cite[Section 5]{Article_DFJV_Part1}, we can show that  that there exist $\delta_0>0$ and $C>0$, such that, for any $\delta <\delta_0 $, for any $\varphi \in H^1(\mathcal{C}_\delta^\sharp)$
 \begin{equation}\label{EstimationElizaveta}
 \int_{\mathcal{C}_\delta^\sharp} \nabla \hat{\varphi}_\delta({\bf x}) \cdot \overline{\nabla \varphi({\bf x})}d{\bf x} - \lambda_n \int_{\mathcal{C}_\delta^\sharp} \hat{\varphi}_\delta({\bf x})\overline{\nabla \varphi({\bf x})} d{\bf x} \leq C \sqrt{\delta} \left\|\hat{\varphi}_\delta \right\|_{H^1(\mathcal{C}_\delta^\sharp)} \left\| \varphi \right\|_{H^1(\mathcal{C}_\delta^\sharp)}.
 \end{equation}
A direct calculation (see Lemma~\ref{LemmeEstimationNorme} in Appendix~\ref{AppendixTechnique}) shows that 
\begin{equation}\label{Normphi}
 \left\| \hat{\varphi}_\delta\right\|_{L^2(\mathcal{C}_\delta^\sharp)} = \frac{\sqrt{3 \delta}}{\sqrt{2}}   + O(\delta^{3/2}) \quad  \left\| \hat{\varphi}_\delta\right\|_{H^1(\mathcal{C}_\delta^\sharp)} = \frac{\sqrt{3 \delta}}{\sqrt{2}} \sqrt{ 1 + \lambda_n}  + O(\delta^{3/2}),
\end{equation}
so that 
\begin{equation}\label{DefinitionPhiDelta}
{\varphi}_\delta = {\hat{\varphi}_\delta}/{\left\| \hat{\varphi}_\delta\right\|_{L^2(\mathcal{C}_\delta^\sharp)} } ,
\end{equation}
 satisfies \eqref{Quasi-modes}.
It results from the previous equality that $\ {\varphi}_\delta$ is actually an approximation of an eigenvector associated with $\lambda_{n,\delta}$, as stated by the following Lemma proved in Appendix~\ref{AppendixTechnique} . 
\begin{lemma}\label{LemmeQuasiMode} There exists a normalized eigenmode $\phi_\delta$ of $A_{\delta,1}({\bf K})$ associated with the eigenvalue $\lambda_{\delta}$ such that
\begin{equation}\label{DefinitionReste}
\left\|  {\varphi}_\delta- {\phi}_\delta  \right\|_{H^1(\mathcal{C}_\delta^\sharp)}  \leq C \sqrt{\delta} \quad \mbox{ and } \quad \| \phi_\delta \|_{L^2(\mathcal{C}_\delta^\sharp)} = 1. 
\end{equation}
\end{lemma}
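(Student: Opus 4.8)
This is a quasi-mode/spectral-approximation statement, made slightly delicate by the fact that the residual \eqref{Quasi-modes} is controlled in the form ($H^1$) norm rather than in $L^2$. Throughout I write $a_\delta(u,v):=\int_{\mathcal{C}_\delta^\sharp}\nabla u\cdot\overline{\nabla v}$ and $(u,v):=\int_{\mathcal{C}_\delta^\sharp}u\,\overline v$ for the sesquilinear form and scalar product generating $A_{\delta,1}({\bf K})$ on $H^1_{\bf K}(\Omega_\delta)\cap L^2_1(\Omega_\delta)$, and I read \eqref{Quasi-modes} as the estimate $|a_\delta(\varphi_\delta,\varphi)-\lambda_\delta(\varphi_\delta,\varphi)|\le C\sqrt\delta\,\|\varphi_\delta\|_{H^1(\mathcal{C}_\delta^\sharp)}\|\varphi\|_{H^1(\mathcal{C}_\delta^\sharp)}$ valid for all admissible $\varphi$. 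First I would record that the quasi-mode has uniformly bounded energy: testing \eqref{Quasi-modes} with $\varphi=\varphi_\delta$ and using $\|\varphi_\delta\|_{H^1}^2=1+a_\delta(\varphi_\delta,\varphi_\delta)$ together with $\|\varphi_\delta\|_{L^2}=1$ (from \eqref{DefinitionPhiDelta}) gives $\|\varphi_\delta\|_{H^1}^2(1-C\sqrt\delta)\le 1+\lambda_\delta$, hence $\|\varphi_\delta\|_{H^1}\le C'$ for $\delta$ small, since $\lambda_\delta$ stays bounded (it converges to $\lambda_n^*$). I then let $P_\delta$ be the $L^2$-orthogonal spectral projector of $A_{\delta,1}({\bf K})$ onto the (one-dimensional, by simplicity) eigenspace of $\lambda_\delta$, and decompose $\varphi_\delta=w+r$ with $w:=P_\delta\varphi_\delta$ and $r:=(I-P_\delta)\varphi_\delta$. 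The goal is to prove $\|r\|_{H^1}\le C\sqrt\delta$; the desired eigenvector is then $\phi_\delta:=w/\|w\|_{L^2}$.

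\textbf{Main estimate on $r$.} The obstacle is precisely that a naive term-by-term bound, testing \eqref{Quasi-modes} against each eigenvector $\psi_{\delta,j}$, produces a factor $\|\psi_{\delta,j}\|_{H^1}=\sqrt{1+\lambda_{\delta,j}}$ which, summed against the $2$D growth of the $\lambda_{\delta,j}$, diverges; so one cannot control $\|r\|_{L^2}$ this way. I would instead split $r=r_++r_-$ along the spectral subspaces of $A_{\delta,1}({\bf K})$ lying above and below $\lambda_\delta$; these are mutually orthogonal for both $a_\delta$ and $(\cdot,\cdot)$, and they are admissible test functions since $\varphi_\delta$ lies in the form domain. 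Because $w$ is an eigenvector one has $a_\delta(w,r_\pm)=\lambda_\delta(w,r_\pm)=0$ and $(\varphi_\delta,r_\pm)=\|r_\pm\|_{L^2}^2$, so testing \eqref{Quasi-modes} against $r_\pm$ yields the identity $\rho_\pm:=a_\delta(r_\pm,r_\pm)-\lambda_\delta\|r_\pm\|_{L^2}^2$ with $|\rho_\pm|\le C\sqrt\delta\,\|\varphi_\delta\|_{H^1}\|r_\pm\|_{H^1}$. On $r_+$ the spectral gap of \eqref{DistanceCSpectre} gives $\rho_+=\sum_{+}(\lambda_{\delta,j}-\lambda_\delta)|c_j|^2\ge C_2\|r_+\|_{L^2}^2\ge 0$, whence $\|r_+\|_{L^2}^2\le\rho_+/C_2$ and $\|r_+\|_{H^1}^2=(1+\lambda_\delta)\|r_+\|_{L^2}^2+\rho_+\le\big(1+(1+\lambda_\delta)/C_2\big)\rho_+$; chaining with the bound on $\rho_+$ gives $\|r_+\|_{H^1}\le C\sqrt\delta$. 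On $r_-$ every eigenvalue satisfies $\lambda_{\delta,j}\le\lambda_\delta$, so $a_\delta(r_-,r_-)\le\lambda_\delta\|r_-\|_{L^2}^2$ and thus $\|r_-\|_{H^1}^2\le(1+\lambda_\delta)\|r_-\|_{L^2}^2$ automatically (no dimension counting needed), while $-\rho_-=\lambda_\delta\|r_-\|_{L^2}^2-a_\delta(r_-,r_-)\ge C_2\|r_-\|_{L^2}^2$ gives $\|r_-\|_{L^2}^2\le|\rho_-|/C_2$; the same chaining gives $\|r_-\|_{H^1}\le C\sqrt\delta$. By orthogonality $\|r\|_{H^1}^2=\|r_+\|_{H^1}^2+\|r_-\|_{H^1}^2\le C\delta$.

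\textbf{Conclusion.} From $\|r\|_{L^2}^2\le\|r\|_{H^1}^2\le C\delta$ and $\|w\|_{L^2}^2=1-\|r\|_{L^2}^2$ we get $\|w\|_{L^2}=1+O(\delta)$, so $w\neq 0$ for $\delta$ small and $\phi_\delta:=w/\|w\|_{L^2}$ is a normalized eigenvector of $A_{\delta,1}({\bf K})$ associated with $\lambda_\delta$. Finally the triangle inequality gives $\|\varphi_\delta-\phi_\delta\|_{H^1}\le\|r\|_{H^1}+|1-\|w\|_{L^2}^{-1}|\,\|w\|_{H^1}$; the first term is $O(\sqrt\delta)$, and since $\|w\|_{H^1}^2=(1+\lambda_\delta)\|w\|_{L^2}^2$ is bounded and $|1-\|w\|_{L^2}^{-1}|=O(\delta)$, the second is $O(\delta)$, so $\|\varphi_\delta-\phi_\delta\|_{H^1}\le C\sqrt\delta$, as claimed. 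The genuinely delicate point is the one flagged above: because \eqref{Quasi-modes} lives in the $H^1$ duality, the classical $L^2$ resolvent argument is unavailable, and it is the separation into the coercive high part $r_+$ and the automatically energy-bounded low part $r_-$, combined with the uniform gap $C_2$ and the uniform bound on $\lambda_\delta$, that keeps every constant independent of $\delta$.
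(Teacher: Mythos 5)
Your proof is correct, and it shares the paper's overall strategy --- split $\varphi_\delta$ into its component along the (simple) eigenspace of $\lambda_\delta$ plus a remainder $r=(I-P_\delta)\varphi_\delta$, and bound $\|r\|_{H^1}$ using the $\delta$-uniform spectral gap \eqref{DistanceCSpectre} --- but the key technical step is carried out by a genuinely different route. The paper writes the variational equation satisfied by $r_\delta=\varphi_\delta-(\varphi_\delta,\tilde\phi_\delta)_{L^2}\tilde\phi_\delta$ and invokes a separate stability result (Lemma~\ref{LemmeInversibilite}): existence and uniqueness via the Fredholm alternative on the orthogonal complement of the eigenvector, and a $\delta$-independent constant via the resolvent bound $\|(A_\delta^r-\lambda_{n,\delta})^{-1}\|\le 2/C_2$. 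You avoid any abstract inversion: you split $r=r_++r_-$ along the spectral subspaces above and below $\lambda_\delta$ and test the quasi-mode estimate against $r_\pm$ themselves, exploiting that the gap makes the form $a_\delta-\lambda_\delta$ coercive on the high part and negative-definite (by at least $C_2$ in $L^2$) on the low part. This buys you something the paper leaves implicit: Lemma~\ref{LemmeInversibilite} asserts an $H^1$ bound on $v_\delta$ in terms of the $(H^1)'$ norm of $L_\delta$, but its written proof only establishes an $L^2\to L^2$ resolvent estimate; upgrading that to the stated $H^1$ estimate with a $\delta$-independent constant requires precisely the kind of form identities you make explicit, namely $\|r_\pm\|_{H^1}^2=(1+\lambda_\delta)\|r_\pm\|_{L^2}^2+\rho_\pm$ combined with the gap. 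Your argument is therefore more self-contained on this point, and your worry about the naive eigenvector-by-eigenvector expansion (divergence of $\sum_j(1+\lambda_{\delta,j})$) is exactly the reason one must test against the whole spectral blocks rather than individual modes. Two minor remarks: the uniform bound on $\|\varphi_\delta\|_{H^1}$ that you derive by testing with $\varphi_\delta$ is also directly available from \eqref{Normphi}; and your normalization $\phi_\delta=P_\delta\varphi_\delta/\|P_\delta\varphi_\delta\|_{L^2}$ automatically selects the correct phase of the eigenvector, which the paper must fix by hand through the factor $e^{\imath\theta_\delta}$.
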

\paragraph{{\bf Step 2. Evaluation of the quantity given in \eqref{quantity_quasimode}}}
\begin{lemma}\label{LemmeLimiteQuasiMode}
   We have
\begin{equation*}
    \int_{\mathcal{C}_\delta^\sharp} \varphi_\delta(S{\bf{x}}) \cdot {\nabla\varphi_\delta}({\bf{x}}) d{\bf{x} }  = I_n
     \frac{1 - \imath \sqrt{3}}{2} \, \left( 1 , \imath \right)^T + { O(\sqrt{\delta})},\quad\text{where}\; I_n=\frac{\sqrt{\lambda_n}\sin(\sqrt{\lambda_n}L)}{2},
\end{equation*}  
 $S$ being the symmetry transformation defined in \eqref{DefinitionSym}.
\end{lemma}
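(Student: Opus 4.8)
The plan is to exploit the fact that, by construction \eqref{DefinitionPhideltaHat}, the quasi-mode $\varphi_\delta$ is \emph{transverse-constant} on each rectangular shrunken edge $\tilde{e}_{i,\delta}$ and is either constant (on $J^A_\delta$) or identically zero (on the $J^B_{i,\delta}$) on the junction regions. Hence $\nabla\varphi_\delta$ vanishes on all four junctions, and the only contributions to $\int_{\mathcal{C}_\delta^\sharp}\varphi_\delta(S{\bf x})\,\nabla\varphi_\delta({\bf x})\,d{\bf x}$ come from the three shrunken edges. On $\tilde{e}_{i,\delta}$ I would use the longitudinal arclength $t_i$ and the unit tangent from $A$ to the opposite $B$-vertex, namely $\tau_0=(-1,0)$, $\tau_1=(\tfrac12,-\tfrac{\sqrt3}2)$, $\tau_2=(\tfrac12,\tfrac{\sqrt3}2)$; since $t_i$ is an arclength, $\nabla\varphi_\delta=\|\hat\varphi_\delta\|_{L^2}^{-1}\,\phi_0'(g(t_i))\,g'(t_i)\,\tau_i$ there. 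As $\tilde{e}_{i,\delta}$ is an exact rectangle of transverse width $\delta$, the transverse integration produces a factor $\delta$, so each edge contributes $\delta\,\|\hat\varphi_\delta\|_{L^2}^{-2}$ times a one-dimensional integral along $\tau_i$.

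The core step is to identify $\varphi_\delta(S{\bf x})$ on each edge. Using the relations $Se_{0,\delta}=e_{0,\delta}+\vg_1-\vg_2$, $Se_{1,\delta}=e_{1,\delta}-\vg_2$, $Se_{2,\delta}=e_{2,\delta}+\vg_1-2\vg_2$ from the proof of Proposition~\ref{sym_eigenvectors}, together with the fact that $S$ swaps the $A$- and $B$-endpoints (so the longitudinal coordinate becomes $L-t_i$), I would invoke the ${\bf K}$-quasi-periodicity of $\varphi_\delta$ and the identification \eqref{identify_fatedges}--\eqref{eq:carac_L2sK} to write, on $\tilde{e}_{i,\delta}$, $\varphi_\delta(S{\bf x})=\|\hat\varphi_\delta\|_{L^2}^{-1}\,e^{2\imath\pi{\bf K}\cdot\vg^{(i)}}\,\phi_0(g(L-t_i))$, where $\vg^{(0)}=\vg_1-\vg_2$, $\vg^{(1)}=-\vg_2$, $\vg^{(2)}=\vg_1-2\vg_2$. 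Evaluating ${\bf K}\cdot\vg^{(0)}=-\tfrac23$, ${\bf K}\cdot\vg^{(1)}=-\tfrac13$, ${\bf K}\cdot\vg^{(2)}=-1$ yields the three phases $e^{2\imath\pi/3}$, $e^{-2\imath\pi/3}$, $1$. One checks that when ${\bf x}$ remains in the shrunken part so does $S{\bf x}$, so only the edge profile $\phi_0$ enters.

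Collecting the contributions, the integral equals, to leading order,
\[
\int_{\mathcal{C}_\delta^\sharp}\varphi_\delta(S{\bf x})\,\nabla\varphi_\delta({\bf x})\,d{\bf x}
= \frac{\delta}{\|\hat\varphi_\delta\|_{L^2}^2}\left(\sum_{i=0}^{2} e^{2\imath\pi{\bf K}\cdot\vg^{(i)}}\,\tau_i\right)\int_0^L\phi_0(L-t)\,\phi_0'(t)\,dt + O(\sqrt\delta),
\]
where the remainder absorbs the deviation of $g$ from the identity and of $\|\hat\varphi_\delta\|_{L^2}^2$ from $\tfrac32\delta$. A product-to-sum computation with $\phi_0(t)=L^{-1/2}\cos(\sqrt{\lambda_n}\,t)$ gives $\int_0^L\phi_0(L-t)\phi_0'(t)\,dt=-\tfrac12\sqrt{\lambda_n}\sin(\sqrt{\lambda_n}L)$, whose modulus is exactly $|I_n|$. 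A direct evaluation of the vector sum gives $\sum_i e^{2\imath\pi{\bf K}\cdot\vg^{(i)}}\tau_i=\tfrac32\,\tfrac{1-\imath\sqrt3}2\,(1,\imath)^T$; in particular its second component is $\imath$ times its first, which is what forces the $(1,\imath)^T$ direction (consistent with the collinearity already established in Proposition~\ref{sym_eigenvectors}). Finally $\delta/\|\hat\varphi_\delta\|_{L^2}^2=\tfrac23+O(\delta)$ by \eqref{Normphi}, and the numerical factors $\tfrac23$, $\tfrac32$ and $I_n$ combine to reproduce $I_n\,\tfrac{1-\imath\sqrt3}2\,(1,\imath)^T$, the global sign being fixed by the orientation conventions.

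I expect the main obstacle to be the geometric bookkeeping of the second step: pinning down, for each fattened edge, the correct translation vector $\vg^{(i)}$ and the orientation reversal induced by $S$ (hence the correct phase), and verifying that the junction regions contribute nothing. By contrast, the trigonometric integral, the evaluation of the tangent-weighted sum, and the control of the $O(\sqrt\delta)$ remainder (the errors from $g$ and from the normalization are in fact $O(\delta)$) are routine once the reduction to one dimension is in place.
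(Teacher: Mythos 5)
Your proof is correct and follows essentially the same route as the paper's: discard the junction regions (where $\nabla\varphi_\delta$ vanishes), reduce each rectangular shrunken edge to a one-dimensional integral weighted by the tangent $\tau_i$ and the quasi-periodicity phase $e^{2\imath\pi{\bf K}\cdot\vg^{(i)}}$ coming from $S\tilde{e}_{i,\delta}=\tilde{e}_{i,\delta}+\vg^{(i)}$, evaluate the weighted sum $\tau_0e^{2\imath\pi/3}+\tau_1e^{-2\imath\pi/3}+\tau_2=\tfrac34(1-\imath\sqrt3)(1,\imath)^T$, and normalize using \eqref{Normphi}. Your sign bookkeeping is in fact the more careful one: with $\phi_0(t)=L^{-1/2}\cos(\sqrt{\lambda_n}\,t)$ one indeed gets $\int_0^L\phi_0(L-t)\phi_0'(t)\,dt=-\tfrac12\sqrt{\lambda_n}\sin(\sqrt{\lambda_n}L)=-I_n$, whereas the paper's proof asserts this integral equals $+I_n$ — a harmless sign slip (only $|v_\delta|$ is used in Step 3 and in Proposition~\ref{TheoremeLimitevdelta}), so your observation that the modulus is what matters, with the overall sign tied to orientation conventions, is exactly right.
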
\vspace{-0.5cm}
\begin{proof}
First, let us remark that $I_n$ is nothing else but
\[
  I_n=\int_0^L \phi_0(L-t)\phi_0'(t)\,dt.\]
Since ${\varphi}_\delta$ is constant in each junction region, and since ${\varphi}_\delta=\hat{\varphi}_\delta/\|\hat{\varphi}_\delta \|_{L^2({\mathcal{C}}_\delta)}$ where $\hat{\varphi}_\delta$ is given by \eqref{DefinitionPhideltaHat} , we have
\begin{equation}
  \label{eq:etap1}
  \int_{\mathcal{C}_\delta^\sharp} \varphi_\delta (S{\bf x}) \cdot \nabla{\varphi}_\delta ({\bf x}) d{\bf x} = \frac{1}{\|\hat{\varphi}_\delta \|^2_{L^2({\mathcal{C}}_\delta)}}\;\sum_{i=0}^2 \int_{\tilde{e}_{i,\delta}} \hat{\varphi}_\delta (S{\bf x}) \cdot \nabla{\hat\varphi}_\delta ({\bf x})d{\bf x}.
\end{equation}
Since $\hat{\varphi}_\delta$ in each $\tilde{e}_{i,\delta}$ depends only on the longitudinal variable, we have
\begin{equation*}
  \nabla{\hat\varphi}_\delta ({\bf x}) = \tau_i\, \phi_0'(g(t_i({\bf x})))\, g'(t_i({\bf x})),\quad{\bf x}\in \tilde{e}_{i,\delta},\;i\in\{0,1,2\},
\end{equation*}
with
\begin{equation*}
  \tau_0=(-1, 0)^T,\;\tau_1=(1/2,- \sqrt{3}/{2})^T,\; \tau_2=(1/2 ,\sqrt{3}/{2})^T.
\end{equation*}
Since $S \tilde{e}_{0,\delta}=\tilde{e}_{0,\delta}+\vg_1-\vg_2$, $S\tilde{e}_{1,\delta}=\tilde{e}_{1,\delta}-\vg_2$ and $S\tilde{e}_{2,\delta}=\tilde{e}_{2,\delta}\vg_1-2\vg_2$ , we have
\begin{equation*}
  \hat{\varphi}_\delta (S\cdot)|_{\tilde{e}_{0,\delta}} = \hat{\varphi}_\delta |_{\tilde{e}_{0,\delta}} \,e^{2\imath\pi/3}\;,\hat{\varphi}_\delta (S\cdot)|_{\tilde{e}_{1,\delta}} = \hat{\varphi}_\delta |_{\tilde{e}_{1,\delta}} \,e^{-2\imath\pi/3}\;\text{and}\; \hat{\varphi}_\delta(S\cdot)|_{\tilde{e}_{2,\delta}} = \hat{\varphi}_\delta |_{\tilde{e}_{2,\delta}}.
\end{equation*}
Gathering the last three equalities, we obtain
\begin{equation}\label{EstimationIntegralePhi0}
  \int_{\mathcal{C}_\delta^\sharp} \hat{\varphi}_\delta (S{\bf x}) \cdot \nabla{\hat\varphi}_\delta ({\bf x})d{\bf x}
  = (\tau_0 e^{2\imath\pi/3}+\tau_1 e^{-2\imath\pi/3}+\tau_2)\delta I_n,
\end{equation}
where we have used that (note that $g(L - t) = L - g(t)$)
\[
  \int_{\frac{ \sqrt{3}\delta}{2}}^{L -\frac{\sqrt{3}\delta}{2} }   u( L-g( t)) u'(g(t)) g'(t) dt = I_n.
  \]
% Because $g(L - t) = L - g(t)$, we obtain by change of variable
A direct calculation shows that 
\begin{equation}\label{CalculSomme}
    \tau_0 e^{\frac{2 i \pi}{3} }+ \tau_1 e^{-\frac{2 i \pi}{3} }  + \tau_2 =\frac{3}{4} (1 - \imath \sqrt{3})  (1,\imath)^T.
\end{equation}
Collecting \eqref{eq:etap1}, \eqref{Normphi}, \eqref{EstimationIntegralePhi0} and \eqref{CalculSomme} ends the proof.
\end{proof}.\vspace{-1cm}
\paragraph{{\bf Step 3. Proof of Proposition ~\ref{TheoremeLimitevdelta}}}
 It remains to evaluate  $v_\delta$. We have
\begin{equation}\label{Decoupage}
    \int_{\mathcal{C}_\delta^\sharp} \phi_\delta(S{\bf x}) \cdot \nabla\phi_\delta({\bf x}) d{\bf x}   =  \int_{\mathcal{C}_\delta^\sharp} {\varphi}_\delta (S{\bf x}) \cdot \nabla{\varphi}_\delta ({\bf x}) d{\bf x} + \mathcal{R}^\delta,
\end{equation}
where, introducing $e_\delta = \phi_\delta - {\varphi}_\delta$, 
$$
\mathcal{R}^\delta = \int_{\mathcal{C}_\delta^\sharp}{\varphi}_\delta (S{\bf x}) \cdot \nabla e_\delta ({\bf x}) d{\bf x} + \int_{\mathcal{C}_\delta^\sharp}e_\delta (S{\bf x}) \cdot \nabla {\varphi}_\delta ({\bf x}) d{\bf x} + \int_{\mathcal{C}_\delta^\sharp} e_\delta (S{\bf x}) \cdot \nabla e_\delta ({\bf x}) d{\bf x}. 
$$
We directly deduce from Lemma~\ref{LemmeQuasiMode} that
$
    \left| \mathcal{R}^\delta\right|  \leq D\sqrt{\delta}.
$
and Lemma~\ref{LemmeLimiteQuasiMode} together with~\eqref{Normphi} give that
$$
 \int_{\mathcal{C}_\delta^\sharp} \varphi_\delta(-{\bf x}) \cdot \nabla\varphi_\delta({\bf x}) d{\bf x}  = 
     \frac{1 - \imath \sqrt{3}}2 I_n  (1,\imath)^T + O(\sqrt{\delta}).
$$
As a result, by the definition of $v_\delta$ (Proposition~\ref{sym_eigenvectors} and Remark~\ref{RemAutreDefinitionvdelta}), we obtain
$$
\lim_{\delta \rightarrow 0} |v_\delta| = \frac{1}{2}  \left( \frac{\pi}{2L} + n \frac{\pi}{L} \right).
$$ 	
\subsection{Numerical illustrations}\label{sec:numericalResultSpectreEssentiel}

We end this section by  numerical illustrations of Theorem~\ref{th:Diracpoints_delta}. In the following, the discretization is made using a  $P_1$ finite element method. The computation of the essential spectrum is standard since it only requires to solve a linear eigenvalue problem in a periodicity cell (with $\mathbf{k}$ quasi-periodic conditions on the lateral boundaries). \\

 In the case $\delta = 0.05$, the first three dispersion surfaces are represented on Figure~\ref{fig:dispersionSurf1}-(left).  We note that the third dispersion surface, located around the value $\lambda= (\frac{\pi}{L})^2$ is almost flat. This is due to the fact that $\lambda =  (\frac{\pi}{L})^2 \in \Sigma_D$ belongs to the essential spectrum of $\mathcal{A}(\mathbf{k})$ for any $\mathbf{k} \in \R^2$ (see Proposition~\ref{CharacSpectrumGraphHexa}). Additionally, as expected, conical points appear at the intersection between the first two dispersion surfaces (black circles on Figure~\ref{fig:dispersionSurf1}). 

\begin{figure}[htbp]
	\begin{center}\vspace{-0.3cm}
 \includegraphics[width=0.45\textwidth, trim=0cm 0cm 0cm 0cm, clip]{./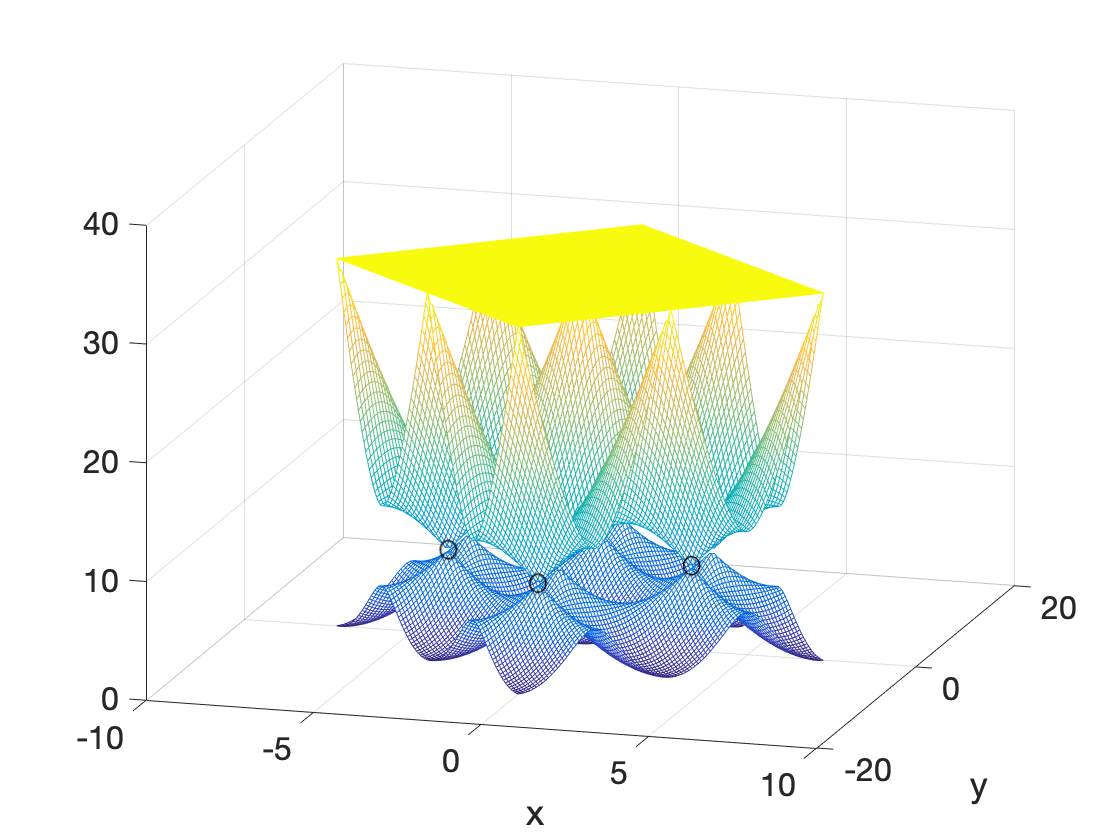}\hspace{1cm}
  \includegraphics[width=0.45\textwidth, trim=0cm 0cm 0cm 0cm, clip]{./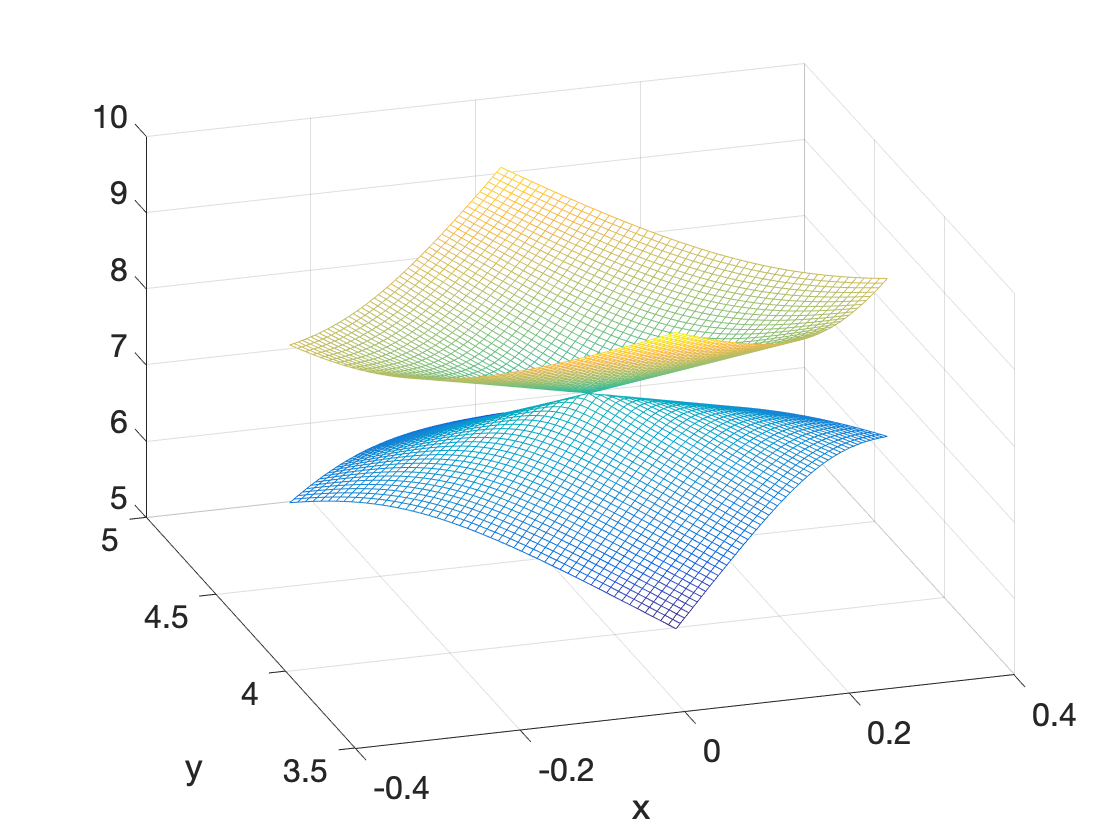}
  \caption{The first three dispersion surfaces for $\delta =0.05/\sqrt{3}$ (left); zoom  on the first two dispersion surfaces around $\mathbf{K} = \frac{2 \pi}{3} (\mathbf{v_2}^\ast - \mathbf{v_1}^\ast)$ (right)}
	 \label{fig:dispersionSurf1}
 	\end{center}\vspace{-0.5cm}
\end{figure}

\noindent Figure~\ref{fig:dispersionSurf1}-(right) is a zoom  around the quasi-momentum  $\mathbf{K} = \frac{2 \pi}{3} (\mathbf{v_2}^\ast - \mathbf{v_1}^\ast)$. Because $\mathbf{K}$ is a vertex of  the Brillouin zone,  a conical point is expected between the first two dispersion surfaces by Theorem \ref{th:Diracpoints_delta}. However, numerically, we observe a small gap between the first two eigenvalues: this is due to the fact that our mesh  does not respect the honeycomb symmetry.  Nevertheless, the size of the gap  goes to zero as $h$ goes to $0$ (as $h^2$ more precisely), as displayed in the Figure~\ref{fig:FigureGap1}-(left).  We point out that using a discretization respecting the honeycomb symmetry would preserve the conical point (in practice mesh generators do not produce meshes respecting this symmetry, though). Finally, we  evaluate (by a  naive first order finite difference) the coefficient $\alpha^\ast$ defined in~\eqref{eq:Dirac points}.  The results are displayed on Figure~\ref{fig:FigureGap1}-(right). As predicted by the theory (see Proposition~\ref{CharacSpectrumGraphHexa}),  $\alpha^\ast$ tends to  $\alpha_1 = {\pi^2}/{L}$.

\begin{figure}[htbp]
	\begin{center}\vspace{-0.5cm}
 \includegraphics[width=0.9\textwidth, trim=0cm 2cm 0cm 5cm, clip]{./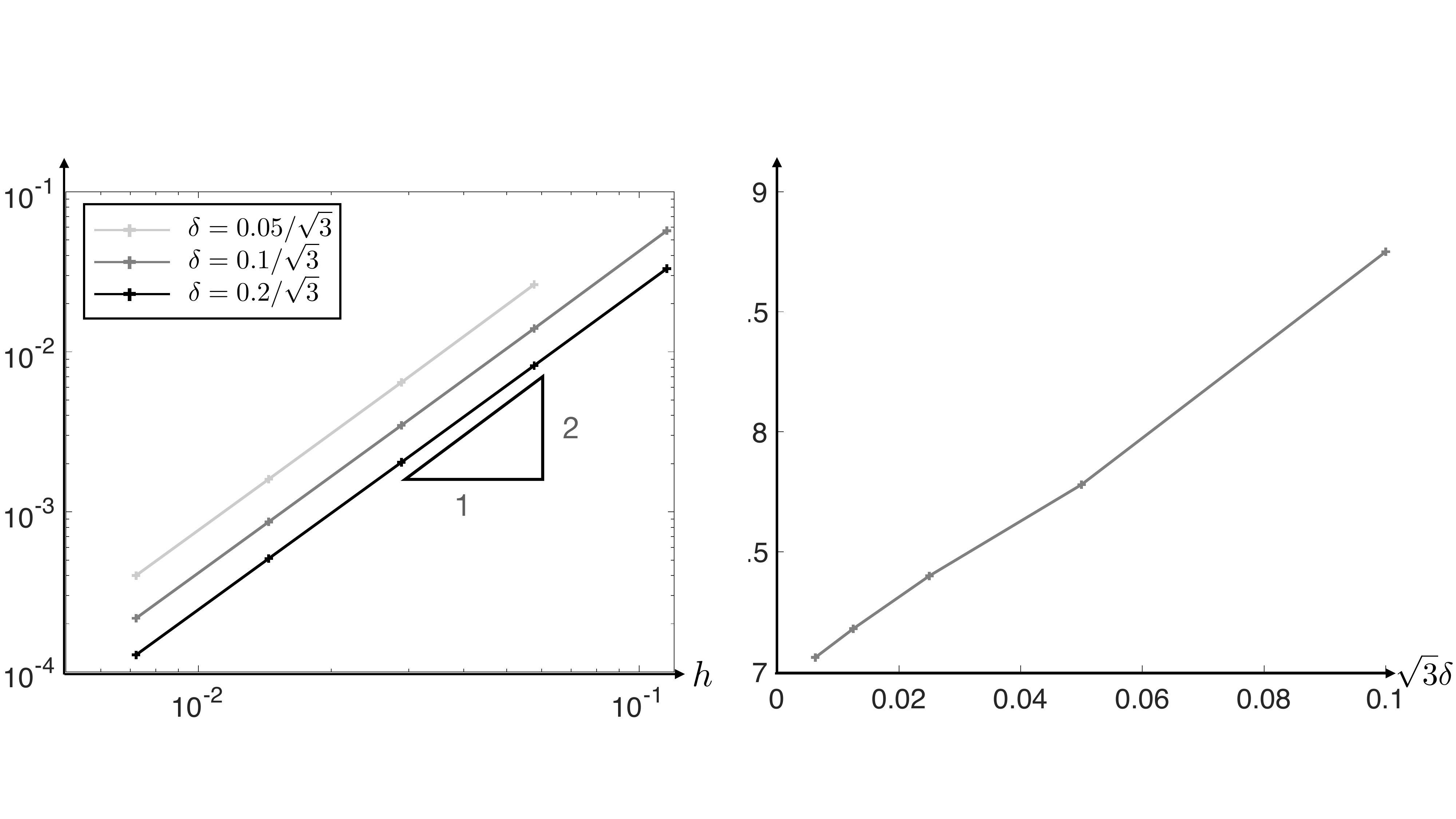}
  \caption{Evolution of the distance $|\lambda_2^{h, \delta}- \lambda_1^{h, \delta}|$ between the first two computed eigenvalues with respect to $h$ for $\mathbf{k} =\mathbf{K}$ for different values of $\delta$ (left). Evolution of $\alpha^\ast$ with respect to $\delta$ (right).}
	 \label{fig:FigureGap1}
 	\end{center}\vspace{-0.5cm}
\end{figure}

The numerical computations suggest that the conical point persists even for $\delta$ large. In Figure~\ref{fig:FigureDiracOfDelta} (left), we have estimated the frequency associated with the conical point for different values of $\delta$ :  for $\delta=0.2/\sqrt{3}$, and by Figure ~\ref{fig:FigureGap1} (left), the Dirac point seems to be still present.  Besides, as predicted by the theory, the accuracy of our limit  graph model is of order $\delta$: indeed, we observe on  Figure~\ref{fig:FigureDiracOfDelta}  that  $|\lambda^\delta-\lambda^\ast_0|/\lambda^\ast_0=  O(\delta)$ ($\lambda^\ast_0 = ({\pi}/{2L})^2$ being the frequency of the Dirac point for the limit graph model).

\begin{figure}[htbp]
	\begin{center}\vspace{-0.3cm}
  \includegraphics[width=0.5\textwidth]{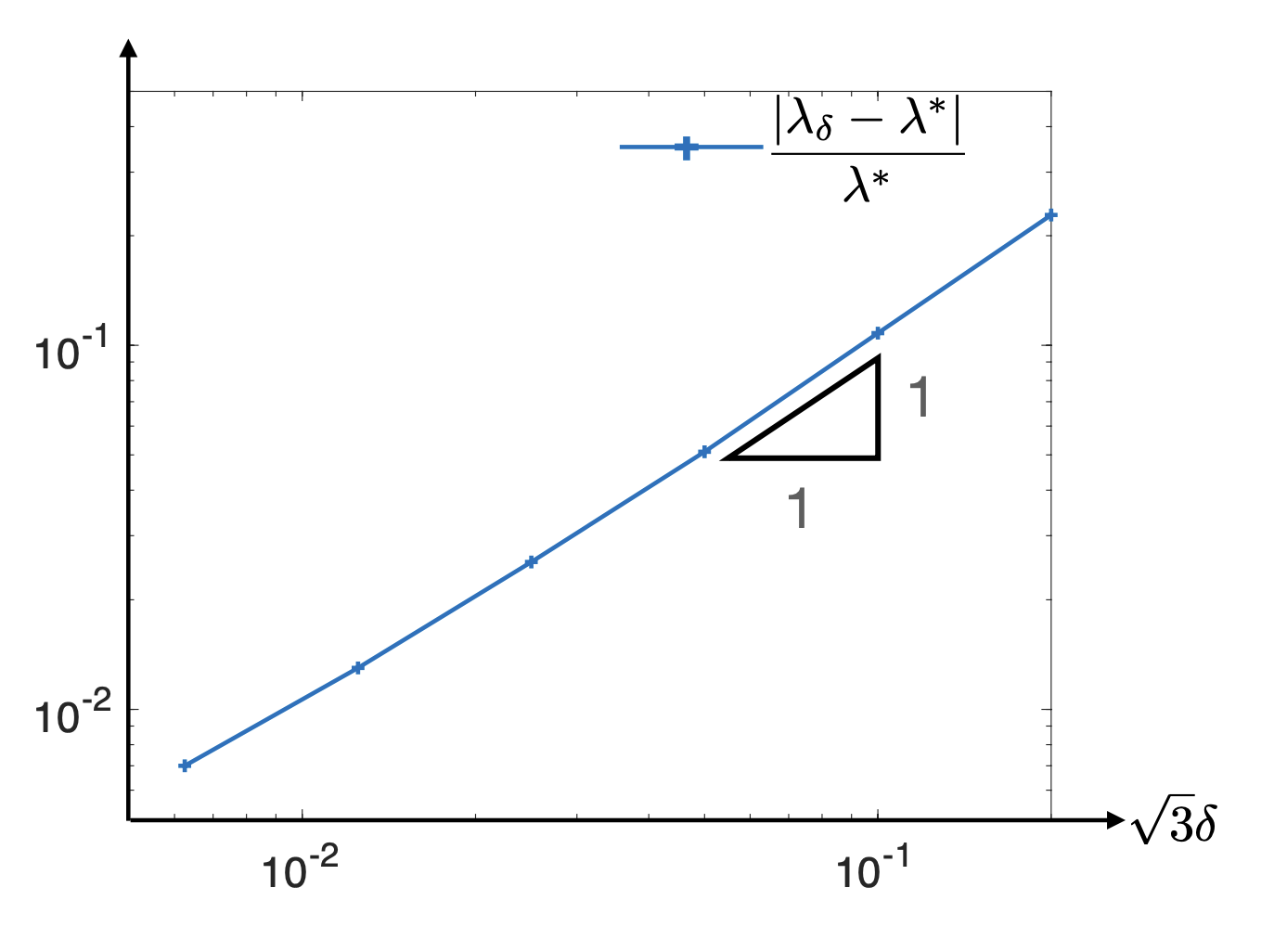}
   \caption{ 
  Log-log plot of  $\delta\mapsto|\lambda^\delta-\lambda^\ast_0|/\lambda^\ast_0$.  }
	 \label{fig:FigureDiracOfDelta}
 	\end{center}\vspace{-0.5cm}
\end{figure}

	\section{Guided modes in the perturbed geometries}\label{sec_guidedmodes}
  Following the theory of \cite{Post:2006, PostBook} and previous works on square lattices \cite{RR,Article_DFJV_Part1}, we know that existence of guided modes or equivalently eigenvalues/bound states for the operators $N_{\delta,\mu}^t(\beta)$ and $D_{\delta,\mu}^t(\beta)$ for any $\beta$ can be deduced for $\delta$ small enough, from the existence of eigenvalues/bound states for a certain limit operator defined in a limit graph that we are going to define. 

\subsection{Definition of the limit graphs and the corresponding limit operators}
\subsubsection{Limit geometry and associated functional spaces}
As $\delta$ tends to $0$, the domains $\Omega_{\delta,\mu}^0$ for $\mu>0$ and $\Omega_\delta^t$ tend respectively to the domains $\mathcal{G}_0$ defined in \eqref{eq:defGraph0} and $\mathcal{G}_t$ defined in \eqref{eq:defGrapht}. For $t\in\{0,L,2L\}$, let $\mathcal{E}_t$ be the set of the edges which are included in $\mathcal{G}_t$ and $\mathcal{V}_t$ be the set of the vertices of $\mathcal{G}$ that are included in $\mathcal{G}_t$: 
\begin{equation}
  \forall t\in\{0,L,2L\},\;\mathcal{E}_t:=\{\, e\in \mathcal{E}, e\subset \mathcal{G}_t\,\}, \quad \mathcal{V}_t:=\{\, M\in \mathcal{V}, M\in \mathcal{G}_t\,\}.
\end{equation}
The graph $\mathcal{G}_t$ contains, for $t\in[0,L]$ the union of the edges in $\mathcal{E}_{L}$, and for $t\in[L,2L]$ the union of the edges in $\mathcal{E}_{2L}$ and also a set of truncated edges denoted $\mathcal{E}^T_t$ :
\begin{equation}
    \forall t\in[0,L],\; \mathcal{G}_t = \bigcup \{\overline{e}, \;e\in \mathcal{E}_{L}\cup\mathcal{E}^T_t\},\quad \forall t\in[L,2L],\; \mathcal{G}_t = \bigcup \{\overline{e}, \;e\in \mathcal{E}_{2L}\cup\mathcal{E}^T_t\},
\end{equation}
where 
\begin{multline}\label{eq:param_edgetruncated}
  \mathcal{E}_t^T:=\begin{cases}\{\, e_{j,t}^T-\vg_1+\Z (\vg_1-\vg_2),j\in\{1,2\}\,\},\;\forall t\in [0,L]\\
     e_{0,t}^T+\Z (\vg_1-\vg_2),\;\forall t\in [L,2L]
  \end{cases},\\
  \text{with}\quad \begin{array}{|l}
    e_{0,t}^T:=\{{\bf x}\in\R^2, \text{s.t.}\;{\bf x}=A (1-s/L)+ Bs/L,\; s\in(0,2L-t)\}, \\
    e_{1,t}^T:=\{{\bf x}\in\R^2, \text{s.t.}\;{\bf x}=A (1-s/L)+B_{1,0} s/L,\; s\in(t,L)\},\\
    e_{2,t}^T:=\{{\bf x}\in\R^2, \text{s.t.}\;{\bf x}=A (1-s/L)+B_{0,1} s/L,\; s\in(t,L)\},
  \end{array}
\end{multline}
For any $t$, the domains $\cG_t$ are 1-periodic in the ${\bf e}_y$-direction. We denote $\widehat{\cG}_t:=\cG\cap\{-\Becommente{L}/2\leq y\leq L/2\}$. 
Let us denote the sets $\widehat{\mathcal{E}}_t$, $\widehat{\mathcal{E}}_t^T$ respectively the set of edges and truncated edges included in $\widehat{\cG}_t$. Let us finally introduce
\[
 \partial{\mathcal{G}_t}:=\overline{\mathcal{G}_t}\cap\{{\bf x}=\alpha(t)\}.%=\begin{cases}  \{C_{j,t}+m(\vg_1-\vg_2),\;j\in\{1,2\},\;m\in\Z\}\\
 %\{C_{0,t}+m(\vg_1-\vg_2),\;m\in\Z\}\end{cases},\text{with}\; C_{j,t}\in \widehat{\cG}_t.
\]
In this section, we are interested in guided modes {(along the ${\bf e}_y$-direction)} for a fixed wavenumber $\beta$, {\it i.e.} $\beta-$quasi-periodic functions in the ${\bf e}_y$-direction (see Section~\ref{sec:mathFormulation}):
\begin{equation}
  \label{QP-graph} \forall t\in[0,2L],\;\forall {\bf x}\in\mathcal{G}_t,\;u({\bf x}+{\bf e}_y)=e^{2\imath\pi\beta}u({\bf x}).
\end{equation} 
Let us introduce the functional spaces 
$$
L^2(\widehat{\cG}_t) = \{\; u,\quad u \in L^2(e) \;\;\forall e \in{\mathcal{E}}_t, \quad \|u\|_{L^2(\widehat{\cG}_t)}^2:=\sum_{e \in \widehat{\mathcal{E}}_t} \| u \|_{L^2(e)}^2 < \infty\;\},
$$ 
and for all $\beta$,
$$
H^1_\beta(\widehat{\cG}_t) = \{ u \in \mathcal{C}(\widehat{\cG}_t)\text{ satisfies \eqref{QP-graph}},\quad u \in H^1(e)\;\;\forall e \in{\mathcal{E}}_t, \quad \|u\|_{H^1_\beta(\widehat{\cG}_t)}^2:=\sum_{e \in \widehat{\mathcal{E}}_t} \| u \|_{H^1(e)}^2 <  \infty\},
$$ 
$$H^2_\beta(\widehat{\cG}_t)=\{u\in H^1_\beta(\widehat{\cG}_t),\quad u\in H^2(e)\;\;\forall e\in \mathcal{E}_t,\quad \sum_{e \in \widehat{\mathcal{E}}_t} \| u \|_{H^2(e)}^2 <  \infty\}.$$
where $\mathcal{C}(\widehat{\cG}_t)$ denotes the set of continuous functions defined on $\widehat{\cG}_t$.

\subsubsection{Limit operators}
Let us now define the limit operators associated with the three cases defined in Section~\ref{sub:geometry}-\ref{sec:DefinitionZigzagPerturbeddomain}.
\vspace{-0.5cm}

\paragraph{\bf Case 1 and Case 3.} 
For all $\beta$, the limit operator of the operator $D_{\delta,\mu}^t(\beta)$ (resp. $N_{\delta,\mu}^t(\beta)$) for $t\in[0,2L]$ and $\mu=1$ is $\mathcal{D}_1^t(\beta)$ (resp. $\mathcal{N}_1^t(\beta)$) where these operators are defined as follows 
\begin{equation}\label{eq:A0_graph_dir}
  \begin{array}{|l}
\dsp {D}(\mathcal{D}_1^t(\beta)) = \{ u \in H^2_\beta(\widehat{\cG}_t),\; \sum_{e \in \mathcal{E}_t(M) } [u\big|_{e}]'(M) = 0, \; \forall M \in \mathcal{V}_t,\; u=0\;\text{on}\;\partial\cG_t \},\\[3pt]
\dsp \forall u\in {D}(\mathcal{D}_1^t(\beta)),\quad [\mathcal{D}_1^t(\beta) u]\big|_{e} = - [u\big|_{e}]'' \quad \forall e \in \mathcal{E}_t,  
  \end{array}
\end{equation}
and
\begin{equation}\label{eq:A0_graph_neu}
  \begin{array}{|l}
\dsp {D}(\mathcal{N}_1^t(\beta)) = \{ u \in H^2_\beta(\widehat{\cG}_t),\; \sum_{e \in \mathcal{E}_t(M) } [u\big|_{e}]'(M) = 0, \; \forall M \in \mathcal{V}_t,\; u'=0\;\text{on}\;\partial\cG_t \},\\[3pt]
\dsp \forall u\in {D}(\mathcal{N}_1^t(\beta)),\quad [\mathcal{N}_1^t(\beta) u]\big|_{e} = - [u\big|_{e}]'' \quad \forall e \in \mathcal{E}_t,  
  \end{array}
\end{equation}
where $\mathcal{E}_t(M)$  stands for the set  of edges lying in  $\mathcal{G}_t$ adjacent to the vertex $M$ and $[u\big|_{e}]'$ (resp. $[u\big|_{e}]''$) is the derivative (resp. the second derivative) of  the restriction of $u$ on each edge $e$ with respect to the local variable introduced in the parametrization of the edges given in \eqref{eq:param} and \eqref{eq:param_edgetruncated}. \vspace{-0.5cm}

\paragraph{\bf Case 2.}
{Here, we introduce a weight function $w^\mu:\mathcal{E}_0\;\rightarrow \; \R^+$ that is equal to $\mu$ on the "perturbed edges", namely the edges in $\mathcal{E}_0$ that are not in $\mathcal{E}_L$}:
\begin{equation}
  \label{eq:w_mu}
w_\mu(e)=1,\;\forall e\in\mathcal{E}_L,\quad w_\mu(e)=\mu,\;\forall e\in\mathcal{E}_0\setminus\mathcal{E}_L.
\end{equation}
This weight function mimics in the graph the function $d_\mu$ defined in \eqref{eq:d_mu} { for the thick graph-like domain $\Omega_{\delta,\mu}^{0}$.}
We can then introduce the functional space
$$
L^2(\widehat{\cG}_0;\mu) = \{ u, u\big|_{e} \in L^2(e) \;\forall e \in \mathcal{E}_0, \quad \|u\|_{L^2(\widehat{\cG}_0;\mu)}^2:=\sum_{e \in \widehat{\mathcal{E}}_0} w_\mu(e)\| u \|_{L^2(e)}^2 < + \infty\},
$$ 
% $$
% H^1_{\beta}(\widehat{\cG}_0;\mu) = \{ u \in \mathcal{C}(\cG_0)\text{ satisfies \eqref{QP-graph}}, u \in  L^2(\widehat{\cG}_0;\mu),  u'\in L^2_\mu(\cG_0)\},
% $$ 
% and $$H^2_\mu(\cG_0)=\{u\in H^1_\mu(\cG_0),\; u\in H^2(e)\;\forall e\in \mathcal{E}_0\}$$
and for all $\beta$, the limit operator associated to the operator $D_{\delta,\mu}^t(\beta)$ (resp. $N_{\delta,\mu}^t(\beta)$) for $t=0$ and $\mu>0$ is $\mathcal{D}^0_\mu(\beta)$ (resp. $\mathcal{N}^0_\mu(\beta)$) defined as follows: 
\begin{equation}\label{eq:A0mu_graph_dir}
  \begin{array}{|l}
\dsp {D}(\mathcal{D}^0_\mu(\beta)) = \{ u \in H^2_\beta(\cG_0), \; \sum_{e \in \mathcal{E}_0(M) } w_\mu(e)[u|_{e}]'(M) = 0, \; \forall M \in \mathcal{V}_0,\; u=0\;\text{on}\;\partial\cG_t \},\\[3pt]
\dsp \forall u\in {D}(\mathcal{D}^0_\mu(\beta)),\quad [\mathcal{D}^0_\mu(\beta) u]|_{e} = - [u|_{e}]'' \quad \forall e \in \mathcal{E}_0,
  \end{array}
\end{equation}
and
\begin{equation}\label{eq:A0mu_graph_neu}
  \begin{array}{|l}
\dsp {D}(\mathcal{N}^0_\mu(\beta)) = \{ u \in H^2_\beta(\cG_0), \; \sum_{e \in \mathcal{E}_0(M) } w_\mu(e)[u|_{e}]'(M) = 0, \; \forall M \in \mathcal{V}_0,\; u'=0\;\text{on}\;\partial\cG_t \},\\[3pt]
\dsp \forall u\in {D}(\mathcal{N}^0_\mu(\beta)),\quad [\mathcal{N}^0_\mu(\beta) u]|_{e} = - [u|_{e}]'' \quad \forall e \in \mathcal{E}_0.  
  \end{array}
\end{equation}
Note that the Kirchhoff's conditions have changed for all the vertices belonging to the "perturbed" edges, which correspond to the vertices $\{A_{-m,m-1},\;m\in\Z\}\cup \{B_{-m,m},\;m\in\Z\}$.

% {\bf Case 3.}
% For all $\beta$, the limit operator associated to the operator $A_{\delta,\mu}^t(\beta)$ for $t\in[0,2L]$ and $\mu=1$ is the operator $\cA^t(\beta)$, which is defined as follows 
% \begin{equation}\label{eq:At_graph}
%   \begin{array}{|l}
% \dsp {D}(\cA^t(\beta)) = \{ u \in H^2_\beta(\cG_t), \sum_{e \in \mathcal{E}_t(M) } [u|_{e}]'(M) = 0, \; \forall M \in \mathcal{V}_t,\; [u|_{e}]'(t)=0\;\forall e\in \mathcal{E}^T_t \},\\[3pt]
% \dsp \forall u\in {D}(\cA^t(\beta)),\quad [\cA^t(\beta) u]|_{e} = - [u|_{e}]'' \quad \forall e \in \mathcal{E}_t.  
%   \end{array}
% \end{equation}
%{\color{magenta}
%For the sake of simplicity, for all $\beta$,  for $t\in[0,2L]$ and $\mu>0$, we use the {\color{magenta} generic} notation $\mathcal{A}_\mu^t(\beta)$ for either  $\mathcal{N}_\mu^t(\beta)$ or $\mathcal{D}%_\mu^t(\beta)$),which corresponds to the operator $\mathcal{N}^t$ for $t\in[0,2L]$ and $\mu=1$ and the operator $\cA_\mu$ for $t=0$ and $\mu>0$. 
%}

\subsubsection{Properties of the spectrum of the limit $\mathcal{D}^t_\mu(\beta)$ and $\mathcal{N}^t_\mu(\beta)$ }
General results from \cite{PostBook,RR,Article_DFJV_Part1} show that the theorems \ref{th:ess_spec_beta_delta}, \ref{th:guided_case12_delta} and \ref{th:guided_case3_delta} can be directly deduced from the following results.
The first one deals with the location of the essential spectrum of $\mathcal{D}^t_\mu(\beta)$ and $\mathcal{N}^t_\mu(\beta)$ for any $t\in[0,2L]$, $\mu>0$ and for any $\beta\in[0,1/2)$. 
%\begin{proposition}\label{prop:ess_spec_beta}
%  For all $\beta\in[0,1/2)\setminus\{\frac{1}3\}$, for all $n$, there exists an interval $I^n(\beta)$ containing $\lambda_n^*$, defined in \eqref{eq:lambda_n_star}, which is one of the gaps of $\mathcal{D}^t_\mu(\beta)$ and $\mathcal{N}^t_\mu(\beta)$ for any $t\in[0,2L]$ and $\mu>0$.
%\end{proposition}
\begin{proposition}\label{prop:ess_spec_beta}
  For all $\beta\in[0,1/2)\setminus\{\frac{1}3\}$, for any $t\in[0,2L]$, for any $\mu>0$, for any $n\in \N$, the spectrum of $\mathcal{D}^t_\mu(\beta)$ and  $\mathcal{N}^t_\mu(\beta)$ has a gap $I^n(\beta)$ (independent of $\mu$ and $t$) that contains $\lambda_n^*$, where $\lambda_n^*$ is defined in~\eqref{eq:lambda_n_star}.
\end{proposition}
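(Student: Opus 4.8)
The plan is to reduce the statement to a purely ``bulk'' computation. Since $\mathcal{D}^t_\mu(\beta)$ and $\mathcal{N}^t_\mu(\beta)$ are obtained from the periodic graph operator $\cA$ by cutting along $\{x=\alpha(t)\}$ and altering the Kirchhoff weights only on the edges touching the interface, the truncation and the $\mu$-perturbation are localized in the $x$-direction and $1$-periodic in the $\mathbf{e}_y$-direction. By the general theory invoked above \cite{PostBook,RR,Article_DFJV_Part1} (standard decoupling / Weyl-sequence arguments for truncated periodic operators), the essential spectrum of both $\mathcal{D}^t_\mu(\beta)$ and $\mathcal{N}^t_\mu(\beta)$ coincides, for every $t\in[0,2L]$ and every $\mu>0$, with the spectrum $\Sigma(\beta)$ of the full (untruncated) $\beta$-quasi-periodic strip operator; in particular it is independent of $t$ and $\mu$. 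Interpreting ``gap'' as in the introduction (an interval disjoint from the essential spectrum), it then suffices to prove that $\Sigma(\beta)$ omits a whole interval around $\lambda_n^*$ whenever $\beta\in[0,1/2)\setminus\{1/3\}$, and to take $I^n(\beta)$ to be the connected component of $\R\setminus\Sigma(\beta)$ containing $\lambda_n^*$.

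Second, I would compute $\Sigma(\beta)$ explicitly by Floquet decomposition in the remaining direction of periodicity. A $\beta$-quasi-periodic function in $\mathbf{e}_y=\vg_2-\vg_1$ corresponds to the fiber of quasi-momenta $\kg$ with $\vg_2\cdot\kg-\vg_1\cdot\kg=\beta$, so that $\Sigma(\beta)=\bigcup\{\sigma(\cA(\kg)):\vg_2\cdot\kg-\vg_1\cdot\kg=\beta\}$. Writing $\phi_1:=2\pi\,\vg_1\cdot\kg$ for the free transverse phase and $\phi_2:=\phi_1+2\pi\beta$, the dispersion relation of Proposition~\ref{CharacSpectrumGraphHexa} reads
\[
  \lambda\in\Sigma(\beta)\quad\Longleftrightarrow\quad \sqrt{\lambda}\,L\in\N^\ast\pi
  \;\;\text{or}\;\; \cos^2(\sqrt{\lambda}\,L)\in \tfrac{1}{9}\big[(1-2|\cos\pi\beta|)^2,\,(1+2|\cos\pi\beta|)^2\big],
\]
the interval being exactly the range of $\tfrac19\,|1+e^{\imath\phi_1}(1+e^{2\imath\pi\beta})|^2$ as $\phi_1$ runs over $[0,2\pi)$, using $|1+e^{2\imath\pi\beta}|=2|\cos\pi\beta|$.

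Third, I would locate $\lambda_n^*$ relative to this set. Since $\sqrt{\lambda_n^*}\,L=\omega_n^*L=(2n+1)\pi/2$, we have $\cos^2(\sqrt{\lambda_n^*}\,L)=0$ and $\sqrt{\lambda_n^*}\,L\notin\N^\ast\pi$, so $\lambda_n^*$ lies on no flat band. Moreover $0$ belongs to $\tfrac19[(1-2|\cos\pi\beta|)^2,(1+2|\cos\pi\beta|)^2]$ if and only if $(1-2|\cos\pi\beta|)^2=0$, i.e.\ $|\cos\pi\beta|=1/2$, which for $\beta\in[0,1/2)$ occurs only at $\beta=1/3$. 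Hence, for $\beta\neq1/3$, $\lambda_n^*\notin\Sigma(\beta)$. Since $\lambda\mapsto\cos^2(\sqrt{\lambda}\,L)$ is continuous and vanishes at $\lambda_n^*$ while the lower threshold $\tfrac19(1-2|\cos\pi\beta|)^2$ is strictly positive, there is an open interval $I^n(\beta)\ni\lambda_n^*$ on which $\cos^2(\sqrt{\lambda}\,L)<\tfrac19(1-2|\cos\pi\beta|)^2$ and which avoids the flat bands (the points $\N^\ast\pi$ in the variable $\sqrt{\lambda}\,L$ all lie at distance $\geq\pi/2$ from $(2n+1)\pi/2$). This $I^n(\beta)$ is disjoint from $\Sigma(\beta)=\sigma_{\mathrm{ess}}(\mathcal{D}^t_\mu(\beta))=\sigma_{\mathrm{ess}}(\mathcal{N}^t_\mu(\beta))$ independently of $t$ and $\mu$, which is the claim.

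The only genuinely delicate point is Step~1: justifying that cutting the graph and modifying the weights near the edge leaves the essential spectrum equal to the bulk fiber spectrum $\Sigma(\beta)$, \emph{uniformly} in $t$ and $\mu$. The $\mu$-perturbation is not compactly supported, as it acts on infinitely many edges along the interface, so the argument must exploit the $\mathbf{e}_y$-periodicity to reduce to a half-line problem in $x$ and then invoke the standard fact that a half-line periodic operator shares the essential spectrum of the whole-line one. Once this reduction is granted from \cite{PostBook,RR,Article_DFJV_Part1}, Steps~2 and~3 are elementary manipulations of the explicit dispersion relation.
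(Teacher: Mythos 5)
Your proposal is correct and follows essentially the same route as the paper: fiber at fixed $\beta$, identify the essential spectrum of the truncated/weighted operators with the bulk union $\bigcup_{k_2-k_1=\beta}\sigma(\mathcal{A}(\mathbf{k}))$ (the paper also gets the $t$- and $\mu$-independence from a compact-perturbation argument at the fiber level, where the perturbation touches only finitely many edges of the strip), and then observe that $\cos^2(\sqrt{\lambda_n^*}\,L)=0$ while $\tfrac19\,|1+e^{2\imath\pi k}+e^{2\imath\pi(k+\beta)}|^2$ vanishes for some $k$ only when $\beta=1/3$. The only cosmetic difference is that you construct the gap interval explicitly from the range $\tfrac19[(1-2|\cos\pi\beta|)^2,(1+2|\cos\pi\beta|)^2]$, whereas the paper simply invokes closedness of the essential spectrum (and records the explicit interval later, when defining $a(\beta)$ and $\tilde{I}^n(\beta)$).
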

%We emphasize that the gap $I^n(\beta)$ is an interval . Moreover, it is the same for $\mathcal{D}^t_\mu(\beta)$ and  $\mathcal{N}^t_\mu(\beta)$.
The next results prove the existence of eigenvalues for the operators in the cases 1, 2 and 3.
\begin{proposition}[Existence of guided modes for Case 1 and Case 2.]\label{prop:guided_case12}
  Let $t=0$ and $\mu>0$. For all $\beta\in({1}/3,{1}/2)$, for all $n$, $\lambda_n^*$ is an eigenvalue of the operator $\mathcal{N}_{\mu}^0(\beta)$. { For all $\beta\in[0,{1}/3)$, for all $n$, $\lambda_n^*$ is an eigenvalue of the operator $\mathcal{D}_{\mu}^0(\beta)$.}
\end{proposition}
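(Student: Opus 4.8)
The plan is to exploit the fact that the frequencies $\lambda_n^*$ are exactly those for which $\cos(\omega_n^* L)=0$, which collapses the quantum-graph eigenproblem on $\cG_0$ into the discrete (tight-binding) model on the honeycomb lattice at the Dirac energy. First I would note that on any edge, parametrized from its $A$-endpoint ($t=0$) to its $B$-endpoint ($t=L$), a solution of $-u''=\lambda_n^* u$ is determined by its two nodal values; since $\cos(\omega_n^* L)=0$ and $\sin(\omega_n^* L)=\pm 1$, an elementary computation shows that the derivative of $u$ at one endpoint equals $\pm\,\omega_n^*$ times the value of $u$ at the \emph{opposite} endpoint. Denoting by $a_M$ and $b_M$ the (continuous) nodal values at the $A$- and $B$-points, the Kirchhoff condition at an interior $A$-point then reduces to $b_{B_{k,l}}+b_{B_{k+1,l}}+b_{B_{k,l+1}}=0$, and at an interior $B$-point to $a_{A_{k,l}}+a_{A_{k-1,l}}+a_{A_{k,l-1}}=0$. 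Hence, at $\lambda=\lambda_n^*$, building a guided mode amounts to finding a nonzero zero-energy state of the discrete half-plane honeycomb operator that is $\beta$-quasiperiodic in $\mathbf{e}_y$ and decays as $x\to+\infty$, so as to be genuinely square-integrable on the half-strip $\widehat{\cG}_0$ and thus an eigenvalue inside the gap $I^n(\beta)$ of Proposition~\ref{prop:ess_spec_beta}. This is precisely the classical zigzag/bearded edge-state dichotomy of the tight-binding model.

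For $\mathcal{D}_\mu^0(\beta)$, the condition $u=0$ at the cut $A$-points (those with $k+l=-1$) forces $a=0$ there; substituting into the interior $B$-point relations and inducting on $k+l$ yields $a\equiv 0$ on the whole half-lattice, so only the $B$-values survive, subject to $b_{B_{k,l}}+b_{B_{k+1,l}}+b_{B_{k,l+1}}=0$ at every interior $A$-point ($k+l\ge 0$). Using the $\mathbf{e}_y$-quasiperiodicity I would set $b_{B_{k,l}}=e^{2\imath\pi\beta l}\,c_{k+l}$, which telescopes the relation into the scalar recurrence $c_p+(1+e^{2\imath\pi\beta})c_{p+1}=0$ for $p=k+l\ge 0$, i.e. $c_p=r^p c_0$ with $r=-(1+e^{2\imath\pi\beta})^{-1}$. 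This is square-summable in $p$ (equivalently in $x$) exactly when $|1+e^{2\imath\pi\beta}|>1$, i.e. $2+2\cos 2\pi\beta>1$, i.e. $\beta\in[0,\tfrac13)$, matching the stated range. Rebuilding $u$ edge by edge from these nodal values produces a continuous, $\beta$-quasiperiodic, $L^2$ eigenfunction of $\mathcal{D}_\mu^0(\beta)$ at $\lambda_n^*$.

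For $\mathcal{N}_\mu^0(\beta)$ I would seek the dual mode with $b\equiv 0$: then $u$ vanishes at every $B$-point, the interior $A$-point relations hold trivially, and the Neumann condition $u'=0$ at the cut $A$-points is automatic, since the derivative there is $\pm\omega_n^*$ times a vanishing $B$-value. The only surviving equations are $a_{A_{k,l}}+a_{A_{k-1,l}}+a_{A_{k,l-1}}=0$, now involving the boundary $A$-values as free data. The same ansatz $a_{A_{k,l}}=e^{2\imath\pi\beta l}\,d_{k+l}$ gives $d_p+(1+e^{-2\imath\pi\beta})d_{p-1}=0$, hence geometric decay with ratio $s=-(1+e^{-2\imath\pi\beta})$, which is square-summable iff $|1+e^{-2\imath\pi\beta}|<1$, i.e. $\cos 2\pi\beta<-\tfrac12$, i.e. $\beta\in(\tfrac13,\tfrac12)$ in the relevant window, again matching the statement.

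The weight $\mu$ of Case~2 only rescales the Kirchhoff coefficients on the perturbed edges, so it modifies only the \emph{first} step of the recurrence (the relation linking the boundary datum to $c_0$, resp.\ $d_0$, acquires a factor $\mu$); since the asymptotic ratio $r$ (resp.\ $s$) is governed by the \emph{bulk} relations, which are unweighted, the decay—and therefore the conclusion—is unchanged for every $\mu>0$, and in the Dirichlet case $\mu$ is invisible because $a\equiv 0$. The step requiring the most care is the reduction itself: one must keep track precisely of which vertices are interior (carrying a Kirchhoff relation) and which are boundary cut $A$-points (carrying the Dirichlet/Neumann condition but no Kirchhoff relation), verify that the nodal reconstruction is continuous at every vertex, and confirm that the recurrence runs \emph{forward} from the edge into the bulk so that decay at $+\infty$—rather than growth—is the selected solution in exactly the asserted $\beta$-ranges; the boundary value then remains a free parameter, guaranteeing a nontrivial eigenfunction.
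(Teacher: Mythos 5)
Your proposal is correct and is essentially the paper's own argument, merely organized in the opposite order: the paper first reduces by $\beta$-quasi-periodicity to the vertex sequences $(u_n,v_n)$ and the transfer matrix \eqref{eq:recurrenceEquation}, which at $\lambda=\lambda_n^*$ becomes diagonal with eigenvalues $r_A,r_B$ given in \eqref{definitionmu_1_mu_2}, whereas you first exploit the sublattice (chirality) decoupling of the Kirchhoff conditions at $\lambda_n^*$ and then impose quasi-periodicity -- both routes produce the same scalar geometric recurrences, the same $L^2$ selection ($B$-values surviving for Dirichlet when $\beta\in[0,1/3)$, $A$-values surviving for Neumann when $\beta\in(1/3,1/2)$, exactly as in Remark~\ref{rem:chirality}), and the same observation that the weight $\mu$ only enters the first step of the recurrence. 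So this is the paper's proof in substance, and it is complete.
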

\begin{proposition}[Existence of guided modes for Case 3.]\label{prop:guided_case3}
  Let $t\in[0,2L)$ and $\mu=1$. For all $\beta\in[0,1/2)\setminus\{{1}/3\}$, let $I^n(\beta)$ be the gap of $\mathcal{D}_{1}^t(\beta)$ (resp. $\mathcal{N}_{1}^t(\beta)$) containing $\lambda_n^*$. Then, for all $n$, for all $\lambda\in I^n(\beta)$,  there exist $(2n+1)$ values of $t$ , $t_1^D(\beta), \ldots, t_{2n+1}^D(\beta)$ (resp.  $t_1^N(\beta), \ldots, t_{2n+1}^N(\beta)$) such that $\lambda$ is an eigenvalue of the operator $\mathcal{D}_{1}^t(\beta)$ (resp. $\mathcal{N}_{1}^t(\beta)$). Moreover,  for $\lambda=\lambda_n^\ast$, $t_1^D(\beta), \ldots, t_{2n+1}^D(\beta)$ (resp.  $t_1^N(\beta), \ldots, t_{2n+1}^N(\beta)$) are independent of $\beta$  on $[0,1/3)$ and $(1/3, 1/2)$. 
\end{proposition}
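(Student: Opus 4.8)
The plan is to reduce the eigenvalue problem on the cut quantum graph to a single scalar equation in the cut parameter $t$ and then count its solutions. Throughout set $\omega=\sqrt\lambda$, $c:=\cos(\omega L)$ and $\gamma:=2\cos(\pi\beta)$. Since $\lambda\in I^n(\beta)$ lies in a gap it is not of the form $(k\pi/L)^2$, so $\sin(\omega L)\neq 0$ and on every edge the solution of $-u''=\lambda u$ is determined by its two endpoint values. Writing the Kirchhoff conditions of \eqref{eq:A0_graph_dir}--\eqref{eq:A0_graph_neu} in terms of the vertex values, and using the $\beta$-quasiperiodic ansatz $u(A_{m,n})=a_p\,e^{\imath\pi\beta q}$, $u(B_{m,n})=b_p\,e^{\imath\pi\beta q}$ with $p=m+n$, $q=n-m$, I would obtain the discrete honeycomb recurrences
\[
3c\,a_p=b_p+\gamma\,b_{p+1},\qquad 3c\,b_p=a_p+\gamma\,a_{p-1},
\]
which is exactly the tight-binding model underlying the dispersion relation \eqref{eq:disp_rel}. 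Seeking $a_p=A\,z^p$, $b_p=A\rho\,z^p$ gives $z+z^{-1}=(9c^2-1-\gamma^2)/\gamma$ and $\rho=3c/(1+\gamma z)$. By Proposition~\ref{prop:ess_spec_beta} the value $\lambda$ is in a gap, which is precisely the statement that $z+z^{-1}\notin[-2,2]$; hence $z$ is real with $|z|\neq 1$ and there is a unique bulk mode decaying as $p\to+\infty$, the one with $|z|<1$. One checks that $\beta=\tfrac13$, the excluded value, is exactly where $z+z^{-1}=-2$ at $\lambda=\lambda_n^*$, so that the mode ceases to decay — this explains why $\beta=\tfrac13$ must be removed. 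Because $c,\gamma,z,\rho$ are real, the quantity $R$ below is real, which is what makes real solutions possible.

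Next I would insert the truncation. For $t\in[0,L]$ the cut removes one column of $A$-vertices and leaves, per period, a single boundary $B$-vertex carrying the full edge $e_0$ and two truncated edges of common length $\ell(t)=L-t$; for $t\in[L,2L]$ it leaves a single boundary $A$-vertex carrying two full edges and one truncated edge of length $\ell(t)=2L-t$. On a truncated edge of length $\ell$ the Dirichlet (resp. Neumann) condition at the cut forces the solution to be $\propto\sin(\omega s)$ (resp. $\propto\cos(\omega s)$), so the edge contributes an impedance $-\omega\cot(\omega\ell)$ (resp. $+\omega\tan(\omega\ell)$) to the Kirchhoff sum at the boundary vertex. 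Combining this with the decaying mode and dividing out the common amplitude, the eigenvalue condition collapses to a single scalar equation
\[
\cot\bigl(\omega\,\ell(t)\bigr)=R(\lambda,\beta)\quad(\text{Dirichlet}),\qquad
\tan\bigl(\omega\,\ell(t)\bigr)=-R(\lambda,\beta)\quad(\text{Neumann}),
\]
where $R$ is an explicit rational expression in $c,\gamma,z,\rho$ (with one form on $[0,L]$ and another on $[L,2L]$). Thus $\lambda$ is an eigenvalue of $\mathcal{D}_1^t(\beta)$ (resp. $\mathcal{N}_1^t(\beta)$) if and only if $t$ solves this equation.

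Finally I would count the zeros. As $t$ runs through $[0,2L)$ the argument $\omega\,\ell(t)$ sweeps the interval $(0,\omega L)$ twice, and $\omega L$ is close to $(n+\tfrac12)\pi$ since $\lambda$ is near $\lambda_n^*=(\omega_n^*)^2$ from \eqref{eq:lambda_n_star}. As $\cot$ and $\tan$ are $\pi$-periodic and strictly monotone on each branch, each horizontal level is met once per branch; a careful bookkeeping of the number of complete branches in $(0,\omega L)$, of the partial branch at the top, and of the sign of $R$ (which depends on $\gamma=2\cos\pi\beta$, hence on whether $\beta\lessgtr\tfrac13$) yields exactly $2n+1$ solutions by the intermediate value theorem. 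For the distinguished energy $\lambda=\lambda_n^*$ one has $c=0$ and $\rho=0$, so $R=0$ and the secular equation degenerates to $\omega_n^*\ell(t)\in\tfrac\pi2+\pi\Z$ (Dirichlet) or $\omega_n^*\ell(t)\in\pi\Z$ (Neumann), together with the truncated-edge resonances $\sin(\omega_n^*\ell(t))=0$; all of these fix $\ell(t)$, and hence $t$, through quantities depending only on $n$ and $\omega_n^*$, not on $\beta$. The component of $[0,\tfrac12]\setminus\{\tfrac13\}$ containing $\beta$ only selects which boundary condition and which sublattice supports the mode, consistent with Proposition~\ref{prop:guided_case12}, which is why the $t^{J}_j$ are constant on $(0,\tfrac13)$ and on $(\tfrac13,\tfrac12)$.

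The main obstacle is the counting in the last step: making the branch-count rigorous requires locating $\omega L$ relative to $(n+\tfrac12)\pi$ throughout the gap $I^n(\beta)$, controlling the partial top branch, and tracking the sign of $R$ across the two regimes $[0,L]$ and $[L,2L]$ (whose targets $R$ differ) and across $\beta=\tfrac13$. The degenerate energy $\lambda=\lambda_n^*$ is especially delicate, since the decaying mode then lives on a single sublattice and additional solutions enter through truncated-edge resonances, so the bookkeeping producing $2n+1$ values and their $\beta$-independence must be carried out separately there.
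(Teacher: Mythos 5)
Your setup is sound, but it is essentially a repackaging of what the paper already does before the hard part begins: your decaying bulk mode with $|z|<1$ is the paper's $\mu(\omega,\beta)^{n}\mathbf{e}(\omega,\beta)$, and matching it to the impedance of the truncated edges is exactly what the paper encodes in the characteristic function $\phi_{\omega,\beta}$ and Proposition~\ref{prop:lienavecleszeros} ($\lambda$ is an eigenvalue iff $\phi_{\omega,\beta}(t)=0$, resp.\ $\phi_{\omega,\beta}'(t)=0$). The genuine gap is the step you yourself call ``the main obstacle'': the count. Per sweep of $t$ (over $[0,L]$ and over $[L,2L]$) the complete branches of $\cot$ or $\tan$ in $(0,\omega L)$ give $n$ solutions each, hence $2n$ in total; the final $+1$ must come from exactly one of the two partial top branches. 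Whether a partial branch is hit requires comparing $\cot(\omega L)$ (resp.\ $\tan(\omega L)$) with the target $R$, and $R$ is a \emph{different} expression in the two regimes (one bulk edge plus two truncated edges at a $B$-vertex for $t\in[0,L]$, versus two bulk edges plus one truncated edge at an $A$-vertex for $t\in[L,2L]$). You never formulate, let alone prove, the inequality asserting that exactly one of these two comparisons succeeds, uniformly over the gap and on both sides of $\beta=1/3$; without it the count could be $2n$ or $2n+2$. The paper avoids this comparison altogether by a homotopy in $\omega$: the number of zeros of $\phi_{\omega,\beta}$ (resp.\ $\phi_{\omega,\beta}'$) is locally constant on each half-gap $\tilde{I}_n^{\pm}(\beta)$ because zeros cannot cross $s=0,L,2L$ (Lemma~\ref{LemmeConstant}); the count is made explicitly at the single anchor frequency $\omega=\omega_n^*$, where the characteristic function degenerates to $\pm\sin(\omega_n^* s)$ or a piecewise cosine, yielding $2n+1$ and the $\beta$-independence on $(0,1/3)$ and $(1/3,1/2)$ (Lemma~\ref{LemmeOmegaStar}); and the count is propagated through the gap by monotonicity of the curves $\omega\mapsto t^{I}_{n,q}(\omega)$ (Proposition~\ref{PropFlotSpectral_monotone}). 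That anchored-homotopy idea is what your sketch lacks, and it is the entire content of the proof.

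Two further points undercut the reduction as you state it. First, for $t\in(0,L)$ the boundary $B$-vertex carries \emph{two} truncated edges, so ``$\lambda$ is an eigenvalue iff $t$ solves the scalar equation'' is false: whenever $\sin(\omega\ell(t))=0$ (Dirichlet) or $\cos(\omega\ell(t))=0$ (Neumann), the antisymmetric combination ($+$ on one truncated edge, $-$ on the other, zero in the bulk) is continuous, satisfies Kirchhoff at the $B$-vertex and the condition at the cut, hence is an eigenfunction; such resonance modes occur at \emph{every} $\lambda$ in the gap, not only at $\lambda_n^*$ as your sketch suggests. For a pure existence statement this undercounting is not fatal, but these resonances sit exactly at the endpoints of your $\cot$/$\tan$ branches, so the bookkeeping you defer is more delicate than you indicate. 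Second, your treatment of the degenerate energy is not quite right as written: at $\lambda=\lambda_n^*$ one has $1+\gamma z=0$ precisely when $\beta<1/3$, so $\rho=3c/(1+\gamma z)$ is an indeterminate $0/0$ there (this is why the paper needs the continuity extension of $\mathbf{e}(\omega,\beta)$ in Lemma~\ref{lemma:extensionContinuity}), and for $\beta>1/3$ the Dirichlet eigenvalues with $t\in(0,L)$ are exactly the resonance modes, not solutions of $\cot(\omega\ell(t))=R$. So the case $\lambda=\lambda_n^*$, which in the paper's scheme is the easy explicit anchor, is in your scheme an exceptional case requiring its own argument that you have not supplied.
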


{The previous proposition is illustrated  in Figures~\ref{DessinBetaPiSurTrois} in the case of the operator $\mathcal{N}_{1}^t(\beta)$ for $\beta = 1/6$  and $\beta = 5/12$. In both cases, we see the existence of $2n +1$ spectral flows  through the gap $I^n(\beta)$.  Note that the blue points, representing eigenvalues independent of $\beta$, are not the same for $\beta=1/6$ ($\beta \in [0,1/3)$) and  for $\beta=5/12$ ($\beta \in (1/3, 1/2)$).
%The blue points materialize values of $t_k^n$ that are independent of $\beta$, leading  to flat eigenvalues when $\beta$ is varying (as demonstrated in Section~\ref{sec:DiscreteSepctrumeA0beta} for $t=0$). For instance, in the case $n=0$, this invariant corresponds to $t=L$ (bearded configuration) for  $\beta \in [0, \frac{1}{3}) $ and to $t=0$ for $\beta \in (\frac{1}{3},\frac{1}{2}[$ (classical zig-zag configuration). We shall see that there are exactly $2n+1$ invariants in the gap $I^n$. 
\begin{figure}[htbp]
         \begin{center}
               \includegraphics[width=0.45\textwidth,trim=17cm 5cm 17cm 4.5cm, clip]{./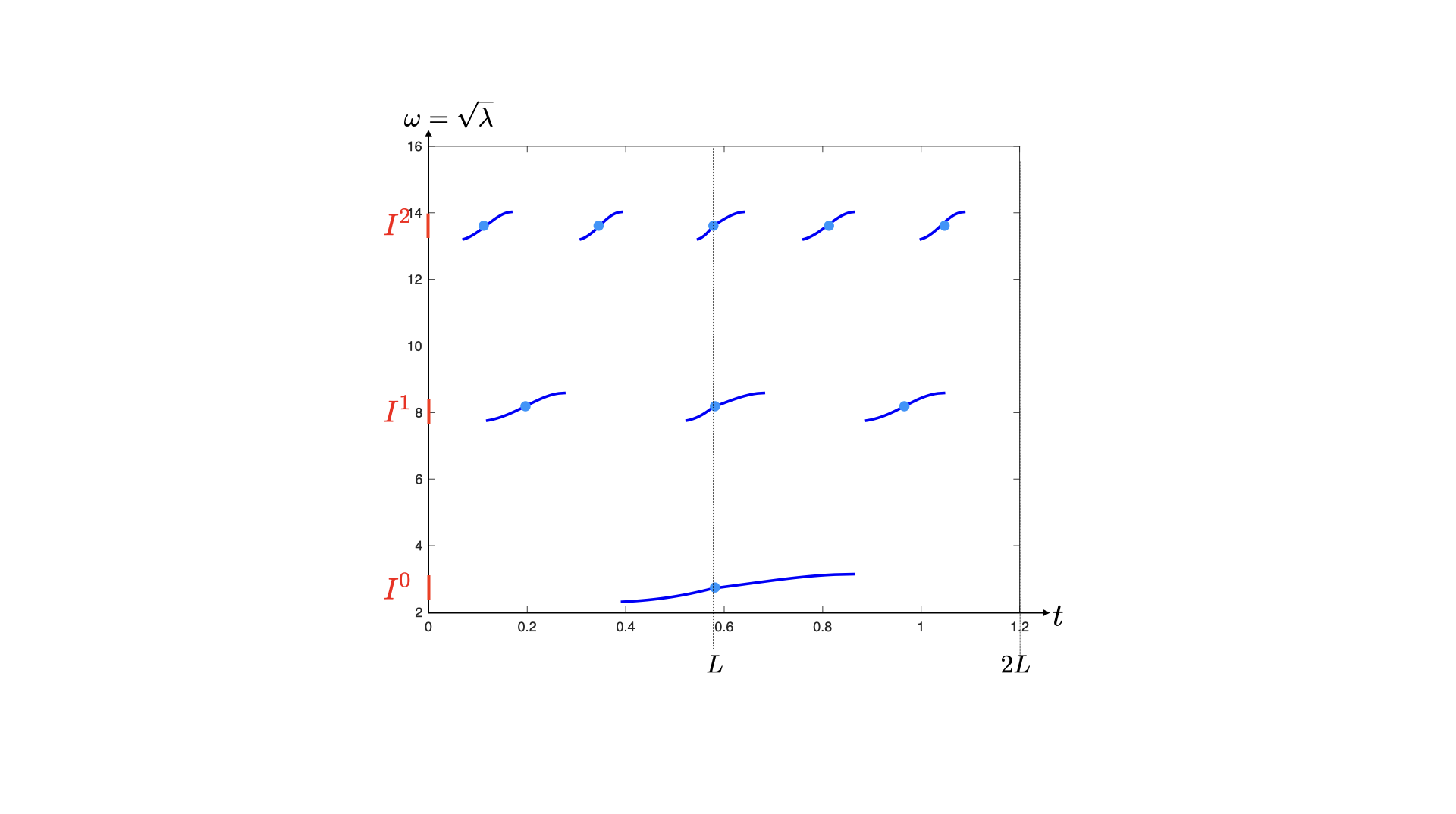}
             	 \includegraphics[width=0.47\textwidth,trim=17cm 5.3cm 17cm 4.5cm, clip]{./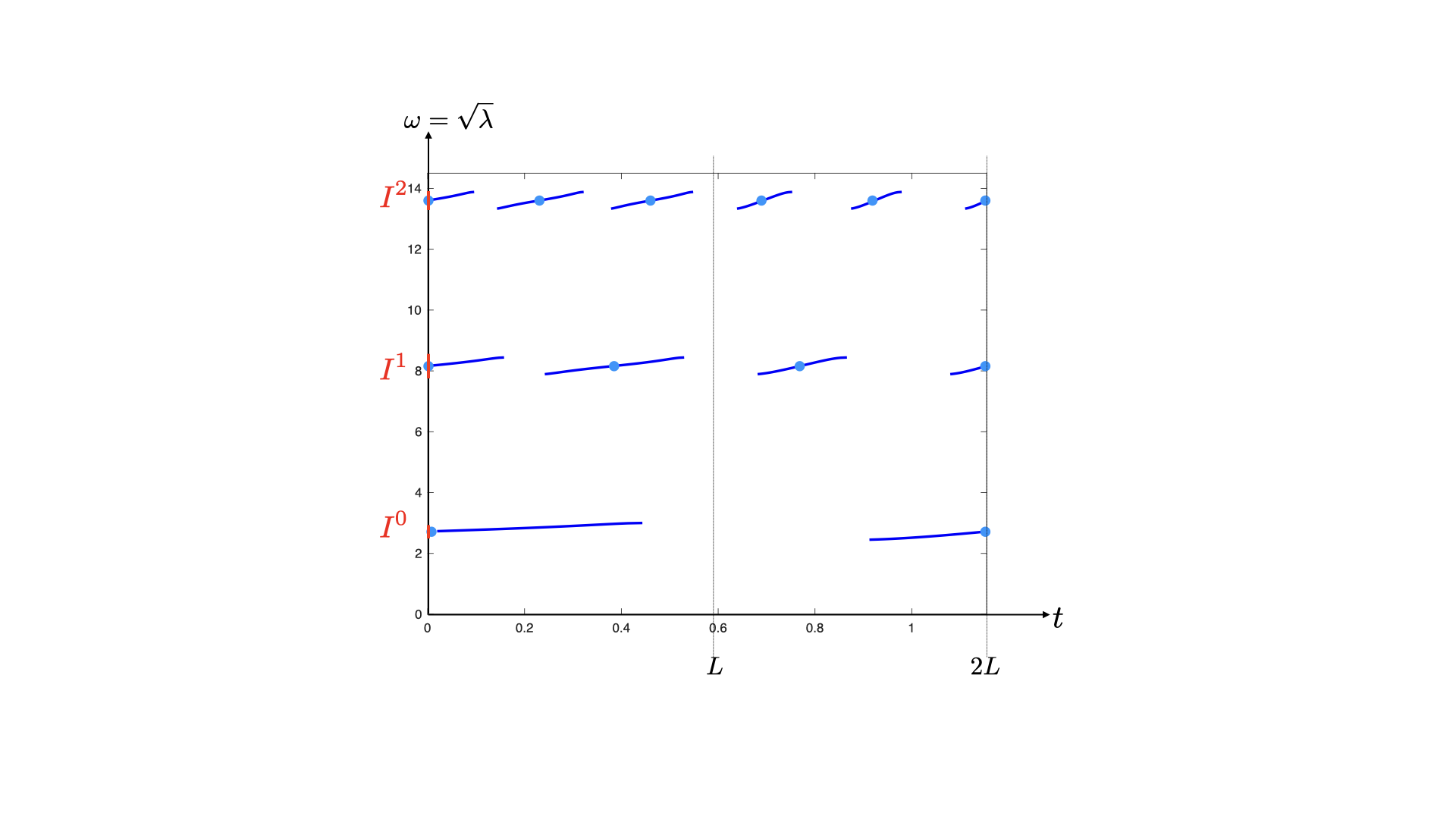}
              
                \caption{ For all $t\in[0,2L)$, representation of the first three eigenvalues of $\mathcal{N}_1^t(\beta)$ for $\beta = 1/6$ (left) and  $\beta = 5/12$ (right). The blue points stand for eigenvalues independent of $\beta$.}
                \label{DessinBetaPiSurTrois}
                \end{center}\vspace{-0.5cm}
 \end{figure}
\begin{remark}
The statement of Theorem~\ref{th:guided_case3_delta} differs from the one of Proposition~\ref{prop:guided_case3}, in Theorem~\ref{th:guided_case3_delta}, the frequency $\lambda=\lambda_n^\ast$ being treated independently. The difficulty comes from the vicinity of  the junctions, namely around $t=0$, $t=L$ and $t=2L$  (see e.g~\cite{henrot2006extremum}[Section 2.3.4]). Indeed, it is not clear wether the dependence of  the eigenvalues of $N_{\delta,1}^t(\beta)$ (or $D_{\delta,1}^t(\beta)$) is continuous with respect to $t$, around $t=0$, $t=L$ and $t=2L$. It turns out that in the limit configuration, if $\lambda \in \sigma( \mathcal{N}_{1}^t(\beta))$  for $ t\in \{0, 2L,L\}$  then $\lambda=\lambda_n^\ast$, which explains why $\lambda=\lambda_n^\ast$ has to be studied separately.  In addition, we shall see in Lemma~\ref{LemmeOmegaStar} that  $ \lambda_n^\ast \in \sigma( \mathcal{N}_{1}^t(\beta))$ for $t=0$ if and only if $\beta\in(0,1/3)$ and $ \lambda_n^\ast \in \sigma( \mathcal{N}_{1}^t(\beta))$ for $t=L$ if and only if $\beta\in(1/3,1/2)$. We then only have to exclude $t=0$ or $t=L$, when applying the asymptotic results for $\lambda=\lambda_n^\ast$. It then leads to the existence of at least  $2n$ points such that $ \lambda^\ast\in  \sigma(N_{\delta,1}^t(\beta)) $, instead of $2n+1$ for $\lambda\neq \lambda_n^\ast$.
\end{remark}\vspace{-0.3cm}
\subsection{Proof of Proposition \ref{prop:ess_spec_beta}}In this subsection $\mathcal{A}^t_\mu(\beta)$ stands for the operators $\mathcal{D}^t_\mu(\beta)$ and $\mathcal{N}^t_\mu(\beta)$. 
	For all $\beta\in[0,1/2]$, the essential spectrum of $\mathcal{A}^t_\mu(\beta)$, denoted $\sigma_{\text{ess}} \left(\mathcal{A}^t_\mu(\beta)\right)$,  is independent of $t$ (since $\mathcal{A}^t_\mu(\beta)$ is a compact perturbation of $\mathcal{A}^0_1(\beta)$, \cite[Chapter 9]{BirmanSolomjakBook}, Prop 1. in~\cite{Article_DFJV_Part1}). Moreover, it can be easily deduced from the essential spectrum of the family of reduced operators $\{\mathcal{A}({\bf k}),\;{\bf k}\in \mathcal{B}\}$ introduced in \eqref{eq:op_k_graph} that
	\begin{equation}\label{eq:firstCharacterizationEssentialSpectrumAtbeta}
	\sigma_{\text{ess}} \left(\mathcal{A}^t_\mu(\beta)\right) = \bigcup_{\{\mathbf{k} = k_1 \mathbf{v}_1^\ast +  k_2\mathbf{v}_2^\ast \in \R^2 \, \text{s.t} \,  k_2 - k_1 = \beta \}}\sigma(\mathcal{A}(\mathbf{k}))=\bigcup_{\{\mathbf{k} \in \R \mathbf{e}_x +\beta \mathbf{v}_2^\ast \}}\sigma(\mathcal{A}(\mathbf{k})). 
	\end{equation}
  where we have used in the last equality that $\mathbf{v}_1^\ast -\mathbf{v}_2^\ast$ is collinear to $\mathbf{e}_x$. Using the proof of Proposition \ref{CharacSpectrumGraphHexa}, we deduce that
  \begin{equation}\label{NpisurL}
    \Sigma_D:=\{ (n\frac{\pi}L)^2,\;n\in\N\}\subset\sigma_{\text{ess}} \left(\mathcal{A}^t_\mu(\beta)\right),
  \end{equation}
    and
    \[
      \forall \lambda\notin\Sigma_D,\;\lambda\in \sigma_{\text{ess}} \left(\mathcal{A}^t_\mu(\beta)\right) \quad\Leftrightarrow\quad \exists k, \; \cos^2 \sqrt{\lambda} L = \frac{1}{9} | 1 +  e^{2\imath\pi k} + e^{2\imath\pi(k+\beta)}|^2.
      \]
      This allows to prove Proposition \ref{prop:ess_spec_beta}. Indeed, since $\cos(\sqrt{\lambda_n^* }L)=0$ for all $n$, for all $\beta\in[0,1/2)$, as explained in \eqref{DispersionRelation0} we have
      \[
        | 1 +  e^{2\imath\pi k} + e^{2\imath\pi(k+\beta)}|^2=0\quad \text{only if}\;\beta=1/3\;\text{for}\;k=1/3.
        \]
         This implies that for $\beta\in[0,1/2)\setminus\{1/3\}$, for all $n$, $\lambda_n^*$ is not in $\sigma_{\text{ess}} \left(\mathcal{A}^t_\mu(\beta)\right)$. Since $\sigma_{\text{ess}} \left(\mathcal{A}^t_\mu(\beta)\right)$ is closed, there exists for all $n$, an interval $I^n(\beta)$ containing $\lambda_n^*$ which is included in a gap of $\mathcal{A}^t_\mu(\beta)$.

	\subsection{Proof of Proposition \ref{prop:guided_case12} - Case 1}\label{sec:DiscreteSepctrumeA0beta}

  We suppose here that $t=0$ and $\mu=1$. {We make the proof for the operator $ \mathcal{N}_1^0(\beta)$ and indicate shortly the modification for  $ \mathcal{D}_1^0(\beta)$ at the end of this section.} 
Assume that $\lambda  \in\sigma_d\left( \mathcal{D}_1^0(\beta)\right)$ and let us denote by $u$ an associated eigenvector. By \eqref{NpisurL}, $\lambda \notin \Sigma_D$. As explained in the proof of Proposition \ref{CharacSpectrumGraphHexa}, it is sufficient to know $u$ at each vertex of the truncated graph $\mathcal{G}_0$ to know it everywhere. Indeed, since $- u'' = \lambda u $ on each edge of the graph, using the parametrization \eqref{eq:param}, we have
\begin{equation}\label{eq:solutionFonda3}
u\big|_{e}(s) = u\big|_{e}(0) \frac{\sin(\sqrt{\lambda} (L-s))}{\sin(\sqrt{\lambda} L)} +  u\big|_{e}(L) \frac{\sin(\sqrt{\lambda} s)}{\sin(\sqrt{\lambda} L)}.
\end{equation}
Moreover, in view of the $\beta$-quasi-periodicity condition \eqref{QP-graph}, we have
\begin{equation}\label{eq:QP_vertex}
  \forall p\in\Z,\quad\begin{array}{ll}\forall n\geq -1,& u(A_{-p,n+p})=e^{2\imath\pi\beta p} u(A_{0,n}),\\[3pt]
    \forall n\geq 0, & u(B_{-p,n+p})=e^{2\imath\pi\beta p} u(B_{0,n}),
  \end{array}\end{equation}
so that it is enough to compute $u$ at the points $A_{0,n}$ for $n\geq -1$ and $B_{0,n}$ for $n\geq 0$. Let us denote
$$
\forall n\geq -1,\; u_n := u(A_{0,n}) \quad \text{and}\quad  \forall n\geq 0,\; v_n := u(B_{0,n}).
$$	
The Kirchhoff conditions at the points $\{B_{0,n},\;n\geq 0\}$ give
\begin{equation}\label{eq:KirchoffCondionB0n}
  \forall n\geq 0,\quad - 3 v_n \cos(\sqrt{\lambda}L) +  (1 + e^{2\pi\imath\beta}) u_{n-1}  + u_{n} =0,
\end{equation}
while the Kirchhoff conditions at the points $\{A_{0,n}),\;n\geq 0\} $ lead to 
\begin{equation}\label{eq:KirchoffCondionA0n}
  \forall n \geq 0,\quad - 3 u_n \cos(\sqrt{\lambda}L) +  (1 + e^{- 2\pi\imath \beta}) v_{n+1}  + v_{n} =0.
\end{equation}
Finally, the Kirchhoff condition at $A_{0,-1}$ give
\begin{equation}\label{eq:KirchoffCondionA0-1}
  \forall n \geq 0,\quad - 3 u_{-1} \cos(\sqrt{\lambda}L) +  (1 + e^{- 2\pi\imath \beta}) v_{0} =0.
\end{equation}
\noindent Suppose first that $\beta = 1/2$, then \eqref{eq:KirchoffCondionB0n}-\eqref{eq:KirchoffCondionA0n} rewrite as
\begin{equation}\label{eq:KirchoffPi}
\left[
\begin{array}{cc}
- 3  \cos \sqrt{\lambda} L & 1 \\
1 & - 3  \cos \sqrt{\lambda} L  
\end{array}
\right]
\left[
\begin{array}{c}
u_{n} \\
v_{n}
\end{array}
\right] = \left[
\begin{array}{c}
0 \\
0
\end{array}
\right] \quad \forall n \in \N.
\end{equation}
and \eqref{eq:KirchoffCondionA0-1} rewrites as
$$
- 3 u_{-1} \cos(\sqrt{\lambda}L)=0. 
$$ 
When $\lambda=\lambda_n^*$,  $\cos(\sqrt{\lambda_n^*}L)=0$ so 
$u_n =v_n=0$ for any $n\in \N$ and $u_{-1}$ can take any value.  Therefore $\lambda_n\in \sigma_d\left( \mathcal{D}_1^0(\pi)\right)$. We remark that in this case, the eigenvector $u$ is compactly supported on the edges of $\mathcal{E}_0\setminus\mathcal{E}_t$.  \\

 Suppose now that $\beta \neq 1/2$.  The Kirchhoff conditions~(\ref{eq:KirchoffCondionB0n},\ref{eq:KirchoffCondionA0n})  lead to the  recurrence equation
\begin{equation}\label{eq:recurrenceEquation}
\left[
\begin{array}{c}
u_{n+1} \\
v_{n+1}
\end{array}
\right]= M(\lambda, \beta) \left[
\begin{array}{c}
u_{n} \\
v_{n}
\end{array}
\right] \quad \forall\, n\geq 0,
\end{equation}
where
\begin{equation}\label{eq:reccurence}
M(\lambda, \beta) 
% =  \left[
% \begin{array}{cc}
%  1 & -3 \cos \sqrt{\lambda} L \\
% 0  & (e^{- \imath \beta} +1)
% \end{array}
% \right]^{-1}  \left[
% \begin{array}{cc}
%  -(e^{\imath \beta} +1) & 0 \\
% 3 \cos \sqrt{\lambda} L & -1
% \end{array}
% \right]  
:= \rho(\beta) M_r(\lambda, \beta),
\end{equation}
with $\rho(\beta) $ a complex number of modulus equal to $1$ given by
\begin{equation}\label{DefintionUbeta}
\rho(\beta) :=\frac{e^{\imath\beta} + 1}{\sqrt{2 + 2 \cos 2\pi\beta}},
\end{equation}
and $ M_r$  a matrix of determinant equal to $1$ given by
\begin{equation}\label{eq:definitionMr}
 \;   M_r(\lambda, \beta) =\left[
\begin{array}{cc}
\displaystyle \frac{9\cos^2 \sqrt{\lambda} L - 2 - 2 \cos 2\pi \beta }{\sqrt{2 + 2 \cos 2\pi \beta}} & -\displaystyle\frac{3 \cos \sqrt{\lambda} L }{\sqrt{2 + 2 \cos 2\pi \beta}}\\
\displaystyle\frac{3 \cos \sqrt{\lambda} L }{\sqrt{2 + 2 \cos 2\pi \beta}} &\displaystyle -  \frac{1}{\sqrt{2 + 2 \cos 2\pi \beta} }
\end{array}
\right].
\end{equation}
When $\lambda = \lambda_n^*$, the matrix $M(\lambda_n^*,\beta)$ becomes diagonal (the equations~\eqref{eq:KirchoffCondionB0n} and~\eqref{eq:KirchoffCondionA0n}  are uncoupled), and we obtain
$$
M(\lambda_n^*,\beta) = \rho(\beta)\left[
\begin{array}{cc}
\displaystyle  -\sqrt{2 + 2 \cos 2\pi \beta} &0 \\
0 &\displaystyle -  \frac{1}{\sqrt{2 + 2 \cos 2\pi \beta} }
\end{array}\right]. 
$$ 
The two eigenvalues of $M(\lambda_n,\beta)$ are then given by
\begin{equation}\label{definitionmu_1_mu_2}
r_A =   - \rho(\beta) \sqrt{2 + 2 \cos 2\pi \beta}  \quad \mbox{ and } r_B =  -  \frac{\rho(\beta)}{\sqrt{2 + 2 \cos 2\pi \beta} },
\end{equation}
and $\{u_n,\;n\in\N\}$ and $\{v_n,\;n\in\N\}$ follow a geometrical progression:
\begin{equation}\label{eq:suite_geom}
\forall n\in\N,\quad u_n =  (r_A)^{n} u_0, \quad\text{and}\quad  v_n =  (r_B)^{n} v_0.
\end{equation}
Since $|\rho(\beta)|=1$, note that 
$$
\begin{array}{l}
\dsp |r_A| < 1 \quad  \Leftrightarrow  \quad |r_B| > 1 \quad \Leftrightarrow  \quad \beta \in ({1}/{3},  {1}/{2} ),\\[3pt]
\dsp |r_A| > 1 \quad  \Leftrightarrow  \quad |r_B| < 1 \quad \Leftrightarrow  \quad \beta \in [0,{1}/{3}).
\end{array}
$$
Since the eigenvector $u$ has to be in $L^2(\mathcal{G}^0)$, the sequences $\{u_n,\;n \in\N \}$ and  $\{v_n,\;n \in\N \}$ cannot be exponentially increasing as $n$ goes to $+\infty$.  We deduce that 
\begin{equation}\label{eq:suite_geom2}
\dsp \beta \in ({1}/{3},  {1}/{2} ) \quad \Rightarrow \quad v_n = 0 \quad \forall n\geq 0,\quad\text{and}\quad
\dsp \beta \in [0,{1}/{3})\;\quad\Rightarrow \quad u_n = 0 \quad \forall n\geq 0.
\end{equation}
Finally, \eqref{eq:KirchoffCondionB0n}  for  $n=0$ (which is not taken into account in~\eqref{eq:recurrenceEquation}), and  \eqref{eq:KirchoffCondionA0-1} gives respectively when $\lambda=\lambda_n^*$
\begin{equation}\label{eq:CI}
u_{-1} =(r_A)^{-1} u_0\quad\text{and} \quad v_0=0. 
\end{equation}
 As a result, 
 \begin{equation}\label{eq:concl_spectred}
\dsp\beta \in ({1}/{3},  {1}/{2} ]\quad \Rightarrow \quad \lambda_n^*\in \sigma_d\left( \mathcal{N}_1^0(\beta)\right) \quad\text{and}\quad
\dsp \beta \in [0,{1}/{3})\quad\Rightarrow \quad \lambda_n^*\notin \sigma_d\left( \mathcal{N}_1^0(\beta)\right) .
\end{equation}

For the operator $\mathcal{D}_1^0(\beta)$, the relation ~\eqref{eq:KirchoffCondionA0-1} has to be replaced with
$
u_{-1} =0.$ 
Therefore, the same analysis leads to
 \begin{equation}\label{eq:concl_spectredD}
%\begin{array}{l}
\dsp\beta \in ({1}/{3},  {1}/{2} ]\quad \Rightarrow \quad \lambda_n^*\notin \sigma_d\left( \mathcal{D}_1^0(\beta)\right) \quad\text{and}\quad
\quad \dsp \beta \in [0,{1}/{3})\quad\Rightarrow \quad \lambda_n^*\in \sigma_d\left( \mathcal{D}_1^0(\beta)\right) .
%\end{array}
\end{equation}\vspace{-0.7cm}
\begin{remark}
		For the operator $\mathcal{N}_1^0(\beta)$, we  recover a well-known result of condensed Matter Physics  (see e.g.~\cite{kohmoto2007zero,Nakada:1996}). Indeed, the recurrence equations~\eqref{eq:KirchoffCondionB0n} and~\eqref{eq:KirchoffCondionA0n} are entirely similar to the  recurrence equation for the SSH model~\cite{SSH1979} or the tight-binding model for the graphene given by 
		$$
		\begin{cases}
		E  b_n + J' a_n + J a_{n+1} = 0  \\
		 E  a_n + {J'} b_{n} + \overline{J} b_{n-1} = 0  
		\end{cases}  \; \text{with }\; E = -3 \cos(\sqrt{\lambda L}), \quad  J = (1 + e^{\imath  2\pi  \beta}),  \quad J' = 1 .
		$$  
		For that model, the presence of 'flat' eigenmodes in the zigzag case is well-known: it is a direct consequence of the chirality of the associated discrete hamiltonian (see~\cite{shapiro2020bulk,Coutant:2022} and references therein). 
		This result is also known as a famous simple example of the Bulk-Edge correspondance as the presence of 'flat' eigenmodes  can be linked to non trivial values for topological invariants.  The literature being extremely vast and beyond the scope of this work, we refer for instance to~\cite{hatsugai1993chern, graf2013bulk, shapiro2020bulk},  proofs in the one-dimensional continuous are available  in~\cite{xiaoZhangChan2014} and \cite{linZhang2022mathematical},  two dimensional results may be found in~\cite{drouot2019bulk}. 
  \end{remark}
  \begin{remark}\label{rem:chirality}
For all $\beta \in ({1}/{3}, 1/2 )$, the eigenvectors of $\mathcal{N}_1^0(\beta)$  associated to $\lambda_n^*$ vanish at the $B-$points for while, for all $\beta \in  [0,\frac{1}{3})$, the eigenvectors of  $\mathcal{D}_1^0(\beta)$ associated to $\lambda_n^*$ vanish at the $A-$points . 
\end{remark}\vspace{-0.3cm}
	\subsection{Proof of Proposition \ref{prop:guided_case12} - Case 2}
	
    Here,  we suppose that $t=0$ and $\mu>0$. Suppose that $\lambda\in\sigma_d(\mathcal{N}^0_\mu(\beta))$ and $u$ is an eigenvector. We follow exactly the same proof as in Section \ref{sec:DiscreteSepctrumeA0beta} and use the same notation. By definition \eqref{eq:A0mu_graph_neu} of $\mathcal{N}^0_\mu(\beta)$, the only difference with Section \ref{sec:DiscreteSepctrumeA0beta} is the Kirchhoff condition at $B_{0,0}$. Namely, the conditions \eqref{eq:KirchoffCondionA0n} and \eqref{eq:KirchoffCondionA0-1} are still satisfied whereas \eqref{eq:KirchoffCondionB0n} is replaced by
  \[
    \begin{array}{l}
    \forall n\geq 1,\quad - 3 v_n \cos(\sqrt{\lambda}L) +  (1 + e^{2\pi\imath\beta}) u_{n-1}  + u_{n} =0,\\[3pt] \quad - (1+2\mu) v_0 \cos(\sqrt{\lambda}L) +  \mu(1 + e^{2\pi\imath\beta}) u_{-1}  + u_{0} =0.
    \end{array}
  \]
	This implies that \eqref{eq:suite_geom} and \eqref{eq:suite_geom2} still hold and \eqref{eq:CI} is similar replacing $r_A$ by $r_A\mu$. We conclude that \eqref{eq:concl_spectred} is still true for any $\mu>0$.  The analysis for $\mathcal{D}^0_\mu(\beta)$ is absolutely the same.
\begin{remark}\label{rem:different_mu}
Note that taking two different values of $\mu$ on the edges $[A_{0,-1} B_{0,0}]$ and $[A_{-1,0} B_{0,0}]$ leads exactly to the same result (see Figure \ref{fig:VecteurPropreZigZagDirichlet}-right).
    \end{remark}

	\subsection{Proof of Proposition \ref{prop:guided_case3}}
    
  We suppose now that $t>0$ and $\mu=1$. 
   The idea of the proof of Proposition \ref{prop:guided_case3} comes from \cite{Gontier:2020} where the author establishes a link between the edge states and the zeros (for Dirichlet boundary conditions at the edge) or the zeros of the derivative (for Neumann boundary conditions) of a particular solution of the differential equation, that we call the characteristic function of the graph. Here are the steps of the proof.
   \begin{enumerate}
    \item We first introduce in Section \ref{sub:step1} the so-called characteristic function of the graph denoted $\phi_{\omega,\beta}$ defined in the whole graph $\mathcal{G}_0$ which  is in $H^2(e)$ for each edge $e\in\mathcal{E}_0$ and in $\mathcal{C}(\widehat{G}_t)$ for all $t>0$. (we will see in particular that it is not continuous at the vertices $\{A_{m-1,-m},\,m\in\Z\}$ of the left boundary of $\widehat{G}_0$), is $\beta$-quasi-periodic (see \eqref{QP-graph}), satisfies the Kirchhoff conditions at each vertices of $\mathcal{G}_t$ for $t>0$ (but not at the vertices $\{A_{m-1,-m},\,m\in\Z\}$ of the left boundary of $\widehat{G}_0$) and satisfies finally on each edge 
    \[
      -[\phi_{\omega,\beta}\big|_{e}]''=\omega^2 [\phi_{\omega,\beta}\big|_{e}],\quad \forall e\in\mathcal{E}_0.
    \]
    \item We show in Proposition \ref{prop:lienavecleszeros} that $\lambda=\omega^2$ is an eigenvalue of $\mathcal{D}_\mu^t(\beta)$  (resp. $\mathcal{N}_\mu^t(\beta)$) if and only if $\phi_{\omega,\beta}$ (resp. the derivative of $\phi_{\omega,\beta}$) vanishes at $s=t$ on the edges of $\mathcal{E}_0\setminus \mathcal{E}_L$ if $0<t<L$ and on the edges of $\mathcal{E}_L\setminus \mathcal{E}_{2L}$ if $L<t<2L$.
    \item We investigate the zeros of $\phi_{\omega,\beta}$ and of its derivative in Proposition \ref{PropFlotSpectral}.
   \end{enumerate}
\subsubsection{Definition of the characteristic function}\label{sub:step1}
Let us begin this section by another characterization  of the essential spectrum of $\mathcal{A}^t(\beta)=\mathcal{D}^t_1(\beta)$ or $\mathcal{N}^t_1(\beta)$. Suppose $\lambda\in\sigma_\text{ess}(\mathcal{A}^t(\beta))\setminus\Sigma_D$ where $\Sigma_D$ is defined in \eqref{NpisurL} then there exists $u$ such that $\mathcal{A}^t(\beta)u=\lambda u$. This function $u$ satisfies \eqref{eq:solutionFonda3} on each edge of the graph, the $\beta$-quasi-periodicity condition \eqref{eq:QP_vertex} and the Kirchhoff conditions (\ref{eq:KirchoffCondionB0n},\ref{eq:KirchoffCondionA0n}) for $n$ large enough (using the same notation than in Section \ref{sec:DiscreteSepctrumeA0beta}). This leads when $\beta\neq 1/2$ and for $n$ large enough to the same recurrence equation \eqref{eq:recurrenceEquation} and the expressions \eqref{eq:suite_geom} 
where $r_A$ and $r_B$ are the eigenvalues of $M_r(\lambda,\beta)$ ,defined in \eqref{eq:definitionMr}, and then solutions of 
\begin{equation}\label{eq:definitionEquationRecurrente}
  r^2 - g_\beta(\lambda)r + 1 = 0,
  \end{equation}
  where for all $\beta\in[0,1/2)$
  \begin{equation}\label{DefinitionGbeta}
   g_\beta:\lambda\in\R^+ \mapsto  tr(M_r(\lambda, \beta))  = \frac{9 \cos^2(\sqrt{\lambda} L) -3 - 2 \cos 2\pi\beta}{\sqrt{2 + 2 \cos 2\pi\beta} } ,
    \end{equation}
  We show easily that $\lambda\in\sigma_\text{ess}(\mathcal{A}^t(\beta))\setminus\Sigma_D$ if and only if $|r_A|=|r_B|=1$ .
   In other words, for all $\beta\in[0,1/2)$ we have
$$
\lambda\in\sigma_{\text{ess}}(\mathcal{A}^t(\beta)) \quad\Leftrightarrow\quad \lambda\in \Sigma_D\;\;\text{or}\;\; |g_\beta(\lambda)|\leq 2.
$$
If $\beta=1/2$, we have
$$
\sigma_{\text{ess}}(\mathcal{A}^t(\beta)) = \Sigma_D \cup \Big\{  \Big( \frac{1}{L} \big( \arccos(\pm \frac{1}{3} )+ 2 k \pi \big) \Big)^2, \quad k\in \N \Big\}.
$$ 
 It turns out that the previous characterization provides us with an explicit definition of the gap $I_n(\beta)$ of $\mathcal{A}^t(\beta)$.  For the sake of simplicity, for all $\beta\in[0,1/2)$, we introduce $\tilde{g}_\beta:\omega\mapsto g_\beta(\omega^2)$ where $g$ is given in \eqref{DefinitionGbeta}. This function is periodic of period $\pi/L$, is strictly decreasing on $[0, {\pi}/{2L}]$ and it satisfies 
 \begin{equation}\label{eq:sym_gbeta}
  \tilde{g}_\beta(\omega)=\tilde{g}_\beta(\frac{\pi}L-\omega),\quad \omega\in [0,\frac{\pi}{2L}].
 \end{equation}
 We deduce in particular that $\tilde{g}_\beta$ is strictly increasing on $[ {\pi}/{2L}, {\pi}/{L}]$. Moreover, for any $\beta \in (0,1/2)\setminus\{1/3\} $ not equal to $1/3$, one can prove that 
$\tilde{g}_\beta(\omega) < -2 $ on the set  
$$\tilde{I}^0(\beta) = ]\frac{a(\beta)}L,\frac{\pi}L-\frac{a(\beta)}L[\quad\text{with}\;
a(\beta) = \arccos\frac{(-2 \sqrt{2 + 2 \cos 2\pi\beta} + 3 + 2 \cos 2\pi\beta )^{1/2}} {3}.
$$ 
In the sequel, we denote
\begin{equation}\label{DefintionGn}
  \tilde{I}^n(\beta) := \tilde{I}^0(\beta) +\frac{ n\pi}{L} \qquad n \in \mathbb{N}.
\end{equation}
We remark that $\omega_n^* \in \tilde{I}^n(\beta)$, $\omega_n^*$ being defined in \eqref{eq:lambda_n_star}. Consequently,  the gap $I^n(\beta)$ (in the spectrum of ${\cal A}^t(\beta)$) containing $\lambda_n^*$ is indeed given by  
$$ I^n (\beta)= \left] \left(\frac{a(\beta)}L + \frac{n\pi}{L}\right)^2, \left(\frac{\pi}L-\frac{a(\beta)}L + \frac{n\pi}{L}\right)^2\right[. $$ 
Since the value $\omega_n^*$ plays an important role in the sequel,
we separate $\tilde{I}^n(\beta)$ (resp. $I^n(\beta)$)  into two parts as follow:
$$\tilde{I}^0_-(\beta) = ]\frac{a(\beta)}L , \frac{\pi}{2L}[,  \quad \tilde{I}^0_+(\beta)= ] \frac{\pi}{2L} , \frac{\pi}{L}-\frac{a(\beta)}L [,\quad \tilde{I}^n_\pm(\beta) = \tilde{I}^0_\pm(\beta) +  \frac{n\pi}{L} \; n\in \mathbb{N},
$$
and
$$
I^n_\pm(\beta) =  \{\lambda \geq 0, \text{such that } \sqrt{\lambda} \in \tilde{I}^n_\pm(\beta)\}, \quad n\in \N.
$$
For $\beta  \notin \left\{ {1}/{3}, 1/2 \right\}$, if $\omega \in \tilde{I}^n(\beta)$, $ \tilde{g}_\beta(\omega)<-2$, and the recurrence equation~\eqref{eq:definitionEquationRecurrente} (written for  $\omega = \sqrt{\lambda}$)
has a real root $r(\omega,\beta)$ of modulus strictly smaller than one given by
\begin{equation}\label{definitionr}
r(\omega, \beta) = \frac{g_\beta(\omega)  + \sqrt{g_\beta(\omega)^2-4} }{2} .
\end{equation}
For $\beta  \notin \left\{ {1}/{3}, 1/2 \right\}$, $ r(\omega, \beta) $ is negative when $\omega\in\tilde{I}^n(\beta)$ and is equal to $-1$ for $\omega=a(\beta)/L+n\pi/L$ and $\pi/L-a(\beta)/L+ n\pi/L$. By \eqref{eq:sym_gbeta}, we deduce that
\begin{equation}\label{eq:sym_rbeta}
  r(\omega,\beta)=r(\frac{\pi}L+n\frac{\pi}L-\omega,\beta),\quad \omega\in \tilde{I}^n_-(\beta),
 \end{equation}
and by a direct differentiation of \eqref{definitionr}, we show that $\omega\mapsto r(\omega, \beta)$ is strictly increasing on $\tilde{I}^n_-(\beta)$ and then  strictly decreasing on $\tilde{I}^n_+(\beta)$. Its maximum value is attained at $\omega_n^*$ and is equal to
\begin{equation}\label{eq:r_max}
  \forall \omega\in\tilde{I}^n(\beta),\quad r(\omega,\beta)\leq r(\omega_n^*,\beta)\equiv\begin{cases}  \dsp - \frac{1}{\sqrt{2 + 2 \cos \beta }}&\text{if } \beta \in[0,1/3),\\
  - \sqrt{2 + 2 \cos \beta }&\text{if } \beta \in(1/3,1/2).
\end{cases}
\end{equation}
 \begin{figure}[htbp]
         \begin{center}\vspace{-0.5cm}
               \includegraphics[width=0.8\textwidth,trim=0cm 0cm 0cm 0cm, clip]{./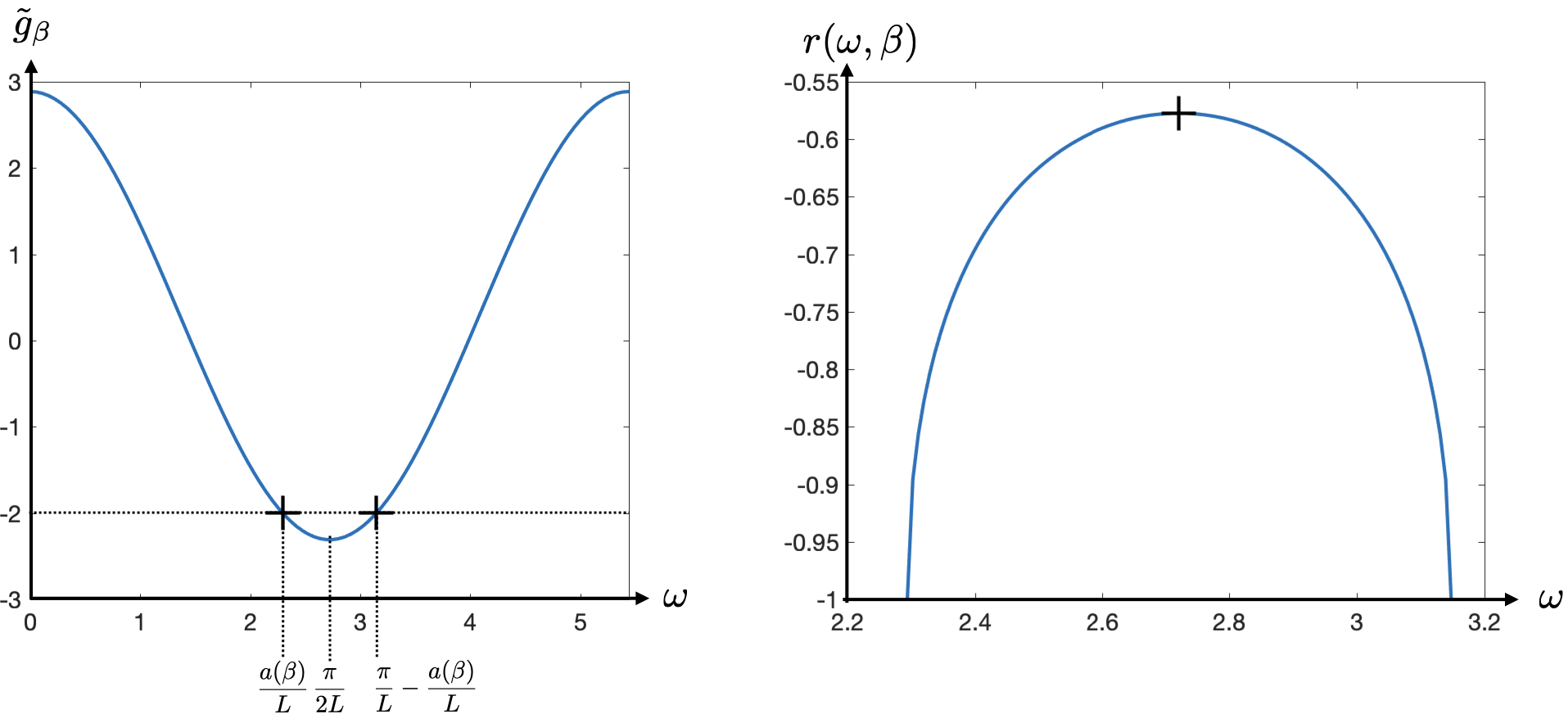}
                \caption{ Representation of $\tilde{g}_\beta$ (left) and $r(\omega, \beta) $  (right)  for $\beta = 1/3$ when $\omega \in \tilde{I}_0(\beta)$.}
                \label{Dessin2}
                \end{center}\vspace{-0.5cm}
 \end{figure}
\begin{remark}
 The gaps $I^n(\beta)$ are not the only gaps in the spectrum of ${\cal A}^t(\beta)$ but they are important in our study because they contain $\lambda_n^\ast$ defined in \eqref{eq:lambda_n_star}.
   \end{remark} 
   For $\omega \in \tilde{I}_n(\beta)$. The matrix $M(\lambda =\omega^2, \beta)$  defined in~\eqref{eq:reccurence}-\eqref{eq:definitionMr} has a unique eigenvalue $\mu(\omega, \beta)$ of modulus smaller than $1$, which is given by 
   \begin{equation}\label{eq:mubetaomega}
   \mu( \omega,\beta)  = \rho(\beta) r(\omega, \beta),
   \end{equation}
   $\rho(\beta)$ being the complex number of modulus one defined in~\eqref{DefintionUbeta}. 
   Let $\mathbf{e}(\omega,\beta) $ be the associated unit eigenvector defined (see~\eqref{eq:definitionMr}) as follows: $ \forall \beta\in[0,1/2)\setminus\{1/3\},\;\forall \omega\in \tilde{I}^n_-(\beta)$
   \begin{multline}\label{eq:definitioner1}
     \mathbf{e}(\omega,\beta) =\mathbf{e}(\frac{\pi}L+n\frac{\pi}L-\omega,\beta)=\frac{1}{   n(\omega,\beta) } \left[ \begin{matrix} 1 + r(\omega, \beta) \sqrt{2 + 2\cos(\beta)} \\ 3 \cos(\omega L)   \end{matrix} \right],\\
     \text{with}\quad n(\omega,\beta) = \left( 9 \cos^2(\omega L) + \left(1+ r(\omega, \beta) \sqrt{2 + 2\cos(\beta)}\right)^2\right)^{1/2}.
      \end{multline}
   By \eqref{eq:r_max}, we have that  $n(\omega,\beta)\neq 0$ for all $\beta$ and $\omega\neq\sqrt{\lambda_n^*}$ and $n(\sqrt{\lambda_n^*},\beta)=0$ only if $\beta\in[0,1/3[$. However, we show in the following lemma that for any  $\beta\notin\{1/3,1/2\}$, $\omega\mapsto\mathbf{e}(\omega,\beta)$ can be extended as a continuous and differentiable function on $\tilde{I}^n(\beta)$.
   \begin{lemma}\label{lemma:extensionContinuity}
   Let $\beta\notin\{1/3,1/2\}$. The function $\omega \rightarrow \mathbf{e}(\omega,\beta)$ can be continuously extended  at the point $\sqrt{\lambda_n^*}$ as follows:
   $$
\mathbf{e}(\omega_n,\beta) := \lim_{\omega \rightarrow \sqrt{\lambda_n^*}}   \mathbf{e}(\omega,\beta)  = \dsp \left[ \begin{matrix} 0 \\  1 \end{matrix} \right] \;\text{if }   \beta \in[0,{1}/{3})\quad\text{and}\quad
  \left[ \begin{matrix} 1 \\ 0 \end{matrix} \right]\;\text{if }  \beta \in({1}/{3},1/2).
   $$ 
   Moreover, the extended function $\omega \mapsto \mathbf{e}(\omega,\beta)$ is continuously differentiable on $\tilde{I}^n(\beta)$.
   \end{lemma}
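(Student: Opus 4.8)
The plan is to read the limit off the fact that the matrix $M_r(\omega^2,\beta)$ becomes diagonal at $\omega=\omega_n^\ast$, and to get the $C^1$ regularity from the persistent separation of its two eigenvalues. Writing $c=\sqrt{2+2\cos 2\pi\beta}$, the off-diagonal entries of $M_r(\lambda,\beta)$ in \eqref{eq:definitionMr} are proportional to $\cos\sqrt{\lambda}\,L$, which vanishes at $\lambda=\lambda_n^\ast$ because $\cos(\omega_n^\ast L)=0$. Hence $M_r(\lambda_n^\ast,\beta)=\mathrm{diag}(-c,-1/c)$, and the eigenvector belonging to the eigenvalue of modulus $<1$ is $(0,1)^T$ when $c>1$ (i.e. $\beta\in[0,1/3)$) and $(1,0)^T$ when $c<1$ (i.e. $\beta\in(1/3,1/2)$), in agreement with \eqref{eq:r_max}. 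This already pins down the two candidate limit vectors; it remains to show that the eigenvector branch $\omega\mapsto\mathbf e(\omega,\beta)$ is regular through $\omega_n^\ast$ and converges to them.

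For the regularity I would argue by separation of eigenvalues. Since $\beta\notin\{1/3,1/2\}$ forces $c\neq1$, the discriminant $\tilde g_\beta(\omega)^2-4$ of \eqref{eq:definitionEquationRecurrente} equals $(c-1/c)^2>0$ at $\omega_n^\ast$ (with $\tilde g_\beta$ as in \eqref{DefinitionGbeta}), hence stays bounded below on a neighborhood of $\omega_n^\ast$. Thus the two roots $r(\omega,\beta)$ and $1/r(\omega,\beta)$ remain real, simple and uniformly separated there, so $r(\omega,\beta)$ is a simple isolated eigenvalue of $M_r$; its Riesz spectral projection depends analytically on $\omega$, and one may choose a unit eigenvector $\mathbf e(\omega,\beta)$ that is analytic (in particular $C^1$) on a full neighborhood of $\omega_n^\ast$, with value equal to the eigenvector of the diagonal matrix above. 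This is the branch the lemma refers to, and it gives both the stated limit and the $C^1$ claim at once.

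For $\beta\in(1/3,1/2)$ the same conclusion is visible directly from \eqref{eq:definitioner1}: there $n(\omega_n^\ast,\beta)=|1+r(\omega_n^\ast,\beta)c|=1-c^2\neq0$, so the formula is manifestly smooth at $\omega_n^\ast$ with value $(1,0)^T$. The delicate case is $\beta\in[0,1/3)$, where $1+r(\omega_n^\ast,\beta)c=0$ and $\cos(\omega_n^\ast L)=0$ simultaneously, so both the numerator and $n(\omega,\beta)$ in \eqref{eq:definitioner1} vanish and the expression is a $0/0$ indeterminate form. To compute the limit explicitly I would use the local symmetry \eqref{eq:sym_rbeta}, or equivalently $\tilde g_\beta'(\omega_n^\ast)=0$ (since $\tilde g_\beta'\propto\cos(\omega L)\sin(\omega L)$): differentiating $r^2-\tilde g_\beta r+1=0$ gives $r'=\tilde g_\beta'\,r/(2r-\tilde g_\beta)$, and with $2r-\tilde g_\beta=c-1/c\neq0$ at $\omega_n^\ast$ one gets $r'(\omega_n^\ast,\beta)=0$. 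Hence $1+r(\omega,\beta)c$ vanishes to second order while $\cos(\omega L)$ vanishes only to first order, the quotient of the two components of \eqref{eq:definitioner1} tends to $0$, and $\mathbf e(\omega,\beta)\to(0,1)^T$.

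The main obstacle is precisely this $0/0$ case and, more subtly, the sign bookkeeping required to upgrade continuity to differentiability. Because $\cos(\omega L)$ changes sign across $\omega_n^\ast$, the naive unit normalization in \eqref{eq:definitioner1} produces a $|\omega-\omega_n^\ast|$-type corner in the first component; one must flip the overall sign of the representative across $\omega_n^\ast$ — which is exactly what the symmetry relation folded into \eqref{eq:definitioner1} is meant to encode — to obtain a genuinely differentiable branch. The spectral-projection argument of the second paragraph is the cleanest way to certify that such a smooth choice of unit eigenvector exists, and I would use it to close the differentiability statement, reserving the explicit $r'(\omega_n^\ast,\beta)=0$ computation for pinning down the limiting vector.
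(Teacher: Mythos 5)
Your proposal is correct, and it reaches the lemma by a genuinely different route than the paper. The paper stays entirely inside the explicit formula \eqref{eq:definitioner1}: for $\beta\in[1/3,1/2)$ it only remarks that the normalization $n(\sqrt{\lambda_n^\ast},\beta)$ does not vanish, and for $\beta\in[0,1/3)$ it expands $\sqrt{g_\beta^2-4}$ and then $r(\omega,\beta)$ in powers of $\cos^2(\omega L)$ (see \eqref{Asymptog2moins4}--\eqref{Asymptoromega}; the coefficient in \eqref{Asymptoromega} is in fact slightly off, harmlessly), concluding that $1+r(\omega,\beta)\sqrt{2+2\cos2\pi\beta}=O(\cos^2(\omega L))$ while $n(\omega,\beta)\sim 3|\cos(\omega L)|$, and finally checks differentiability via the one-sided difference quotient at $\sqrt{\lambda_n^\ast}$. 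You instead read the limit off the diagonal matrix $M_r(\lambda_n^\ast,\beta)=\mathrm{diag}(-c,-1/c)$ with $c=\sqrt{2+2\cos2\pi\beta}$, get regularity from the strict eigenvalue separation $g_\beta(\lambda_n^\ast)^2-4=(c-1/c)^2>0$ together with analyticity of the Riesz projection, and replace the expansion by the identity $r'=\tilde g_\beta'\,r/(2r-\tilde g_\beta)$, which gives $r'(\omega_n^\ast,\beta)=0$ because $\tilde g_\beta'(\omega_n^\ast)=0$ and $2r-\tilde g_\beta=\sqrt{\tilde g_\beta^2-4}\neq0$. The two arguments agree on the value of the limit; yours is less computational and delivers an analytic eigenvector branch rather than bare $C^1$.

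Where your proposal genuinely outperforms the paper is on the $C^1$ claim itself, and your last paragraph pinpoints the crux. Since $r$ and $n$ depend on $\omega$ only through $\cos^2(\omega L)$ while $\cos(\omega L)$ is odd about $\omega_n^\ast$, the raw formula in \eqref{eq:definitioner1} has, for $\beta\in[0,1/3)$, a first component of size $\mathrm{const}\cdot|\omega-\omega_n^\ast|$ and a second component with a sign jump; the even reflection $\mathbf e(\omega,\beta)=\mathbf e\bigl(\tfrac{\pi}{L}+\tfrac{n\pi}{L}-\omega,\beta\bigr)$ written into \eqref{eq:definitioner1} removes the jump but keeps the corner (an even function with nonzero one-sided slope is not differentiable), and on $\tilde I^n_+(\beta)$ it even ceases to be an eigenvector of $M_r(\omega^2,\beta)$, because eigenvectors at the reflected point must be conjugated by $\mathrm{diag}(1,-1)$ to become eigenvectors at $\omega$ (the off-diagonal entries of $M_r$ are odd about $\omega_n^\ast$). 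The paper's proof, which computes only $\lim_{h\to0}h^{-1}\bigl(\mathbf e(\sqrt{\lambda_n^\ast}-h,\beta)-\mathbf e(\sqrt{\lambda_n^\ast},\beta)\bigr)$ from one side, does not detect this. Your reading --- flip the overall sign of the representative across $\omega_n^\ast$ (legitimate, as eigenvectors are defined up to a scalar, and invisible downstream since the Kirchhoff data and zero sets of $\phi_{\omega,\beta}$ are sign-insensitive) and certify the smooth branch by the spectral-projection argument --- is precisely what makes the lemma true as stated; note only that the symmetry relation as literally written encodes the even extension, not this sign flip, so your argument should be viewed as correcting the definition rather than paraphrasing it. On this point your proof is the more reliable of the two.
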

   \begin{proof}
    Since $n(\sqrt{\lambda_n^*},\beta)$ does not vanish if $\beta\in[1/3,1/2[$, it is easy to obtain this result for $\beta\in[1/3,1/2[$. Let us now suppose that $\beta\in[0,1/3[$. By definition \eqref{eq:definitioner1}, it suffices to consider $\omega\in \tilde{I}^n_-(\beta)$.
  By a direct computation and 
% $$
% g^2(\omega)-4  =\frac{(1 + 2 \cos \beta)^2}{2 + 2 \cos \beta}  \left( 1 - \frac{18 (3 +2 \cos \beta) }{(1 + 2 \cos \beta)^2} \cos^2(\omega L)+  \frac{81 }{(1 + 2 \cos \beta)^2} \cos^4(\omega L )\right)
% $$ 
 since $1+ 2 \cos \beta >0$, we have
\begin{equation}\label{Asymptog2moins4}
\sqrt{g^2(\omega)-4 } =  \frac{(1 + 2 \cos \beta) }{\sqrt{2 + 2\cos \beta}}  - \frac{9 (3 +2 \cos \beta)  }{(1 + 2 \cos \beta)\sqrt{2 + 2\cos \beta}} \cos^2(\omega L ) + O( \cos^4(\omega L ) ).
\end{equation}
Therefore, in view of the definition~\eqref{definitionr} of $r(\omega, \beta) $, we have 
\begin{equation}\label{Asymptoromega}
r(\omega, \beta) =  - \frac{1}{ \sqrt{2 + 2\cos \beta} } -    \frac{9 (3 +2 \cos \beta) \cos^2(\omega L ) }{\sqrt{2 + 2\cos \beta}(1 + 2 \cos \beta)}  + O( \cos^4(\omega L ) ),
\end{equation}
and since $\omega\in \tilde{I}^n_-(\beta)$, we have
% $$
% 1 + r(\omega, \beta)\sqrt{2 + 2\cos \beta}  = -    \frac{9 (3 +2 \cos \beta)  \cos^2(\omega L ) }{(1 + 2 \cos \beta)}  + O( \cos^4(\omega L ) ).
% $$
% As a result
$$
\left( 9 \cos^2(\omega L) + \left(1+ r(\omega, \beta) \sqrt{2 + 2\cos(\beta)}\right)^2 \right)^{1/2}=  3 \cos(\omega L)  +  O( \cos^2(\omega L ) ),
$$ 
which leads to the desired result. For the continuity of the derivative, it suffices to show that 
$$
\lim_{h\rightarrow 0} \frac{\mathbf{e}(\sqrt{\lambda_n^*} -h,\beta) - \mathbf{e}(\sqrt{\lambda_n^*},\beta)}{h}\quad\text{exists.}
$$ 
The previous computations show that
$$
\mathbf{e}(\sqrt{\lambda_n^*} -h,\beta)\sim  \left[ \begin{matrix} \mathcal{O}(h) \\ 1 +\mathcal{O}(h^2) \end{matrix} \right],\quad\text{as}\; h\rightarrow 0,
$$
which allows to conclude.
\end{proof}
% \begin{figure}[htbp]
%          \begin{center}
%                \includegraphics[width=0.8\textwidth,trim=5cm 10cm 20cm 4cm, clip]{./GrapheG0Tilde.png}

%                 \caption{ Representation of truncated graph $\tilde{\mathcal{G}}_0$}.
%                 \label{DessinG0tilde}
%                 \end{center}
%  \end{figure}
Let us now define the characteristic function $\phi_{\omega,\beta}$ defined on the graph $\mathcal{G}_0$ for all $\beta\in(0,1/2)\setminus\{1/3\}$ and $\omega\in \tilde{I}^n(\beta)$ for all n. We define the function $\phi_{\omega, \beta}$  as follows: $\phi_{\omega, \beta}$ is in $H^2(e)$ for each edge $e\in\mathcal{E}_0$ and in $\mathcal{C}(\widehat{\cG}_t)$ for all $t>0$ and 
\[
  -[\phi_{\omega,\beta}\big|_{e}]''=\omega^2 [\phi_{\omega,\beta}\big|_{e}],\quad \forall e\in\mathcal{E}_0.
\]
with
\begin{equation}\label{eq:phib_n}
  \left[
 \begin{array}{c}
 \phi_{\omega, \beta}(A_{0,n})\\
 \phi_{\omega, \beta}(B_{0,n})
 \end{array}
 \right] := \mu(\beta, \omega)^{n} \mathbf{e}(\omega,\beta),\quad\forall n\in\N,
 \end{equation}
 $\phi_{\omega, \beta} $ is $\beta$-quasi-periodic (see \eqref{QP-graph} for the definition). 
By definition of $\mu(\omega,\beta)$ (see \eqref{eq:mubetaomega}) and ${\bf e}(\omega,\beta)$ (see \eqref{eq:definitioner1}), $\phi_{\omega, \beta} $ satisfies the Kirchhoff conditions at all the vertices of $\mathcal{G}_{2L}$. We impose finally that $\phi_{\omega,\beta}$ satisfies the Kirchhoff condition at $B_{0,0}$ and that
\[
  \phi_{\omega,\beta}\big|_{e_1-\vg_2}=\phi_{\omega,\beta}\big|_{e_2-\vg_1}.\]
This implies in particular that
 \begin{equation}\label{eq:phib_B00}
  \lim_{s\rightarrow0}\phi_{\omega,\beta}\big|_{e_1-\vg_2}(s) = \lim_{s\rightarrow0}\phi_{\omega,\beta}\big|_{e_2-\vg_1}(s) = \frac{1}{2} \left( 3 \cos(\omega L) \phi_{\omega, \beta}(B_{0,0}) - \phi_{\omega, \beta}(A_{0,0}) \right). 
 \end{equation}
 Let us make some remarks on this characteristic function.
 By \eqref{eq:phib_n} and since $|\mu(\beta,\omega)|<1$, we can show easily that $\phi_{\omega,\beta}\in H^1(\triangle,\widehat{\Omega}^t_\mu)$.
 Since $\phi_{\omega,\beta}$ is $\beta$-quasi-periodic and satisfies the previous relation, we easily notice that it is not continuous at the vertices $\{A_{m, -(m+1)},\,m\in\Z\}$. 

 For our purpose (which is the existence of edge states), it is sufficient to study $\phi_{\omega, \beta}$ on the edges $e_1-\vg_2$ and $e_0$. We then consider the continuous function
 \[
  \tilde{\phi}_{\omega,\beta}(s)=\begin{cases} \phi_{\omega,\beta}\big|_{e_1-\vg_2}(s)&s\in[0,L],\\
    \phi_{\omega,\beta}\big|_{e_0}(2L-s)&s\in[L,2L].
  \end{cases}
    \]
 To simplify, we abusively use the notation  $\phi_{\omega, \beta}$ for $\tilde{\phi}_{\omega,\beta}$. We can then give an explicit expression of $\phi_{\omega, \beta}$ deduced from~\eqref{eq:solutionFonda3}-\eqref{eq:phib_n}. Denoting by
 \begin{equation}\label{eq:values_phi}
  \begin{array}{l}
 u_0(\omega,\beta) = \phi_{\omega, \beta}(2L)\; (=\phi_{\omega,\beta}(A_{0,0})), \quad  v_0(\omega,\beta) =  \phi_{\omega, \beta}(L) \;(= \phi_{\omega, \beta}(B_{0,0})), 
 \\[3pt]\dsp v_0'(\omega,\beta)= \phi_{\omega, \beta}'(L^+)\equiv \frac{\omega}{\sin (\omega L)} \left(- v_0(\omega,\beta)  \cos(\omega L) + u_0(\omega,\beta) \right).
  \end{array}
 \end{equation}
 we have
 \begin{equation}\label{eq:definitionExplicitephiprime1}
 \phi_{\omega, \beta}(s)  =     v_0(\omega,\beta) \cos(\omega (L-s)) - \frac{v_0'(\omega,\beta)}{2\omega} \sin(\omega(L-s)), \;s\in[0,L],
 \end{equation}
 and
  \begin{equation}\label{eq:definitionExplicitephiprime2}
  \phi_{\omega, \beta}(s) =  \frac{1}{\sin(\omega L)} \left(  v_0(\omega,\beta)    \sin(\omega (2L-s ) + u_0(\omega,\beta)    \sin(\omega (s-L) \right),\; s \in [L, 2L].
  \end{equation} 
 We point out that, in general, $ \phi_{\omega, \beta}'$ is not continuous at $s =L$.
\subsubsection{Link between the discrete spectrum of the operators $\mathcal{D}_\mu^t(\beta)$ and $\mathcal{N}_\mu^t(\beta)$ and the characteristic function}
It is easy to see that if $ \phi_{\omega, \beta}(t)=0$ (resp. $ \phi_{\omega, \beta}'(t)=0$), then $\omega^2 \in \sigma_d(\mathcal{D}^t_\mu(\beta))$ (resp. $\omega^2 \in \sigma_d(\mathcal{N}^t_\mu(\beta))$), the associated eigenvector being  $ \phi_{\omega, \beta}\big|_{\mathcal{G}_t}$. It turns out that the converse statement is also true as stated by the following proposition.
\begin{proposition} \label{prop:lienavecleszeros}
  Let $t \in [0, 2L)$, $\beta \in (0, 1/2)\setminus\{ 1/3\}$ and $\omega\in \tilde{I}^n(\beta)$ for all $n$. Then
  $$
 \omega^2 \in \sigma_d({\cal D}^t_\mu(\beta))   \quad \Leftrightarrow   \quad \phi_{\omega, \beta}(t)=0\qquad\text{
and}\qquad
 \omega^2 \in \sigma_d({\cal N}^t_\mu(\beta))   \quad \Leftrightarrow   \quad \phi'_{\omega, \beta}(t)=0.
$$ 
\end{proposition}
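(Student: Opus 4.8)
The two ``$\Leftarrow$'' implications are exactly the observation recorded just before the statement, so I would spend only a line on them: if $\phi_{\omega,\beta}(t)=0$ (resp. $\phi_{\omega,\beta}'(t)=0$), then $\phi_{\omega,\beta}|_{\mathcal G_t}$ solves the edge-wise eigenvalue equation, is $\beta$-quasi-periodic, lies in $L^2(\widehat{\mathcal G}_t)$ because $|\mu(\omega,\beta)|<1$, and is continuous and Kirchhoff at every vertex of $\mathcal G_t$ (for $t>0$ these are the interior vertices together with the $B_{0,0}$-type boundary vertices, at all of which $\phi_{\omega,\beta}$ is Kirchhoff by construction; its discontinuities sit only at the excluded vertices $A_{m,-(m+1)}$), while realising the Dirichlet (resp. Neumann) condition at $s=t$. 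The real content is the converse, and my plan is to prove the stronger statement that \emph{every} eigenvector $u$ of $\mathcal D_\mu^t(\beta)$ (resp. $\mathcal N_\mu^t(\beta)$) associated with $\omega^2$ is proportional to $\phi_{\omega,\beta}|_{\mathcal G_t}$.

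First I would reconstruct $u$ from its vertex data. Since $\omega\in\tilde I^n(\beta)$ lies in a gap avoiding $\Sigma_D$, we have $\sin(\omega L)\neq0$, so on every edge $u$ is given by~\eqref{eq:solutionFonda3} in terms of its endpoint values $u_n=u(A_{0,n})$ and $v_n=u(B_{0,n})$. Continuity, the interior Kirchhoff conditions and the quasi-periodicity~\eqref{QP-graph} force $(u_n,v_n)_{n\geq0}$ to obey the recurrence~\eqref{eq:recurrenceEquation} driven by $M(\omega^2,\beta)$. Square-integrability on $\widehat{\mathcal G}_t$ forces the sequence to decay as $n\to+\infty$; as $M$ has a single eigenvalue $\mu(\omega,\beta)$ of modulus $<1$ (the other of modulus $1/|r(\omega,\beta)|>1$), the decaying solutions form a line and $(u_n,v_n)=c\,\mu(\omega,\beta)^n\,\mathbf e(\omega,\beta)$ for some $c\in\CC$. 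This is precisely $c$ times the vertex data~\eqref{eq:phib_n} of $\phi_{\omega,\beta}$, so~\eqref{eq:solutionFonda3} gives $u=c\,\phi_{\omega,\beta}$ on every edge of $\mathcal G_t$ joining two interior vertices.

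It then remains to propagate this equality onto the truncated edges and read off the boundary condition. For $t\in(0,L)$ the cut deletes the vertices $A_{m,-(m+1)}$ and leaves, at the boundary vertex $B_{0,0}$, two dangling edges; on each, $u$ is the solution with value $v_0$ at $B_{0,0}$ and the prescribed value at $s=t$, so the two carry the \emph{same} function and $u$ inherits the seam relation imposed on $\phi_{\omega,\beta}$. Writing the Kirchhoff balance for $u$ at $B_{0,0}$ and using $u=c\,\phi_{\omega,\beta}$ along $e_0$, I would identify the common outgoing derivative of $u$ on the dangling edges with $c$ times that of $\phi_{\omega,\beta}$, since $\phi_{\omega,\beta}$ satisfies the identical Kirchhoff relation at $B_{0,0}$. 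Hence $u/c$ and $\phi_{\omega,\beta}$ share both Cauchy data at $B_{0,0}$ on each dangling edge, so they coincide there by uniqueness for $-w''=\omega^2w$, and the boundary condition on $u$ at $s=t$ becomes $\phi_{\omega,\beta}(t)=0$ for $\mathcal D_\mu^t(\beta)$ and $\phi_{\omega,\beta}'(t)=0$ for $\mathcal N_\mu^t(\beta)$. The case $t\in(L,2L)$ runs identically, except that the single dangling $e_0$-edge now hangs from $A_{0,0}$ and the Kirchhoff balance is written there, the seam step being vacuous.

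I expect the junction step to be the main obstacle: the argument hinges on the fact that the edges dangling from $B_{0,0}$ (or $A_{0,0}$) carry one and the same function, so that the \emph{scalar} Kirchhoff identity pins down their common Cauchy datum; this is exactly where the operator's boundary condition enters, and it is the single place where the Dirichlet and Neumann proofs diverge, only at the last line. A secondary point to treat carefully is the degenerate frequency $\omega=\omega_n^*$, at which $\cos(\omega L)=0$ and the explicit formula~\eqref{eq:definitioner1} for $\mathbf e(\omega,\beta)$ is singular; there I would use its continuous extension from Lemma~\ref{lemma:extensionContinuity}, after which every step above is unchanged. The remaining ingredients, namely the recurrence, the dichotomy of eigenvalue moduli, and ODE uniqueness, are routine.
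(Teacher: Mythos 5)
Your ``$\Leftarrow$'' direction, the reconstruction of $u$ from its vertex data, and the bulk step (the transfer recurrence plus the modulus dichotomy forcing $(u_n,v_n)=c\,\mu(\omega,\beta)^n\mathbf{e}(\omega,\beta)$) are correct and coincide with the paper's proof; for $t\in(L,2L)$ your Cauchy-data argument on the single truncated edge is the paper's argument essentially verbatim. The gap is in the seam step for $t\in(0,L)$, which you yourself single out as the crux. You assert that the two edges dangling from $B_{0,0}$ ``carry the same function'' because on each of them $u$ solves $-w''=\omega^2 w$ with value $v_0$ at $s=L$ and the prescribed datum at $s=t$. But that two-point problem determines $w$ uniquely only when it is non-degenerate: for the Dirichlet datum uniqueness fails exactly when $\sin(\omega(L-t))=0$, for the Neumann datum exactly when $\cos(\omega(L-t))=0$, since $s\mapsto\sin(\omega(L-s))$ then solves the homogeneous problem. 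Such $t\in(0,L)$ do occur for $\omega\in\tilde{I}^n(\beta)$: in the Dirichlet case take $t=L-k\pi/\omega$, $1\le k\le n$, for any $n\ge1$; in the Neumann case already $n=0$ works, with $t=L-\pi/(2\omega)$ for $\omega\in\tilde{I}^0_+(\beta)$. At such a pair $(t,\omega)$ your stronger claim (every eigenvector is a multiple of $\phi_{\omega,\beta}|_{\mathcal{G}_t}$) is false: the Kirchhoff condition at $B_{0,0}$ constrains only the \emph{sum} of the two dangling derivatives, so the function equal to $+\sin(\omega(L-s))$ on one dangling edge, to $-\sin(\omega(L-s))$ on the other, and to $0$ on every remaining edge (extended $\beta$-quasi-periodically) lies in the domain of $\mathcal{D}_1^t(\beta)$ at the Dirichlet-degenerate $t$ (resp. of $\mathcal{N}_1^t(\beta)$ at the Neumann-degenerate $t$) and is an eigenvector for $\omega^2$, yet it is not proportional to $\phi_{\omega,\beta}$, which does not vanish on the bulk. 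Moreover, \eqref{eq:definitionExplicitephiprime1} gives $\phi_{\omega,\beta}(t)=v_0(\omega,\beta)\cos(\omega(L-t))\neq0$ at these $t$ when $\omega\neq\omega_n^*$ (and similarly $\phi_{\omega,\beta}'(t)\neq0$ in the Neumann case), so these antisymmetric modes are invisible to the zeros of $\phi_{\omega,\beta}$; this is a structural obstruction, not the harmless degeneracy at $\omega=\omega_n^*$ that Lemma~\ref{lemma:extensionContinuity} resolves.

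This is exactly where the paper's proof takes a more cautious route. It never claims uniqueness at the seam: it keeps the antisymmetric degree of freedom explicit by writing $u|_{e_{1,t}^T-\mathbf{v}_2}'(L)=\theta v_0'$ and $u|_{e_{2,t}^T-\mathbf{v}_1}'(L)=-(1+\theta)v_0'$ (see \eqref{eq:quOblicEdge}--\eqref{eq:quOblicEdge2}), derives from the boundary condition at $s=t$ the scalar relation $(1+2\theta)\,\frac{v_0'}{\omega}\sin(\omega(L-t))=0$ in \eqref{PreuveCaracterisationphibetaomega1}, and then argues case by case: only when $v_0'=0$ or $\theta=-1/2$ does $u$ coincide with a multiple of $\phi_{\omega,\beta}$, while the degenerate case $\sin(\omega(L-t))=0$ must be treated separately (and it is the delicate case there as well, since the eigenvector is then not proportional to $\phi_{\omega,\beta}$). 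So your instinct that the junction step is the main obstacle was right, but the one-line uniqueness argument you propose for it is precisely the step that fails; a correct proof has to confront the $\theta$-freedom your outline suppresses, and in particular cannot go through the stronger proportionality statement.
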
 \vspace{-0.3cm}
\begin{proof}
  Let us show the first statement, the proof of the second one being similar.  We first assume  that $t \in (L, 2L)$. Let $ \omega^2 \in \sigma_d({\cal D}^t_\mu(\beta))$ and $u$ an associated eigenvector. As in Section~\ref{sec:DiscreteSepctrumeA0beta},  let 
$$\forall\, n\geq 0,\;u_n =u(A_{0,n}) \quad  \text{ and }\quad \forall \,n\geq 1,\; v_n =u(B_{0, n}).$$
Then, writing the Kirchhoff conditions at the nodes $B_{0,n}$ and $A_{0,n}$ (for $n\geq 1$) leads~\eqref{eq:KirchoffCondionB0n}-\eqref{eq:KirchoffCondionA0n}  for any $n\geq 1$.  In other words, the recurrence equation~\eqref{eq:recurrenceEquation} is valid for any $n\geq 2$. The eigenvector $u$ is in $ L^2(\mathcal{G}_0)$,  it cannot be exponentially increasing. As a result,  there exists a complex number $\alpha$ such that, for any $n\geq 1$,
$$
\left[
\begin{array}{l}
u_n \\
v_n 
\end{array}
\right] = \alpha \mu( \omega,\beta)^{n} \mathbf{e}(\omega,\beta). $$ 
It remains to defined $u$ on the edge $e_2$ and on the truncated one $e_{0,t}^T$, where $e_{0,t}^T$ is defined in \eqref{eq:param_edgetruncated}. The Kirchhoff condition~\eqref{eq:KirchoffCondionB0n} at the node $B_{0,1}$  gives 
$$
u_{0} = \frac{-u_1 + 3 v_1 \cos(\omega L)}{1 + e^{2\imath \pi\beta}},
$$ 
which defines $u$ on  $e_2$. The expression \eqref{eq:solutionFonda3} of $u$ on $e_2$ and $e_1$ (by quasi-periodicity $u(B_{1,0})=e^{2\imath\pi\beta} u(B_{0,1}))$ and the Kirchhoff condition at the point $A_{0,0}$ provides the value  $u_0'$ of the derivative of $u|_{e_{0,t}^T}$ at $s=0$ ( corresponding to the node $A_{0,0}$):
\begin{equation*}
u_0' = \frac{\omega}{\sin(\omega L)} ( - 2 u_0 \cos(\omega L) + v_1 e^{2\imath\pi \beta}) .
\end{equation*}
Then, by the Cauchy Lipschitz theorem, $u$ is defined uniquely on  $e_{0,t}^T$ from the data $u_0$ and $u_0'$. By definition of $\phi_{\omega,\beta}$, we shall see that $u$ and $\phi_{\omega, \beta}$ coincide, up to the multiplicative constant $\alpha$, on $e_{0,t}^T$. Consequently, $u|_{e_{0,t}^T}(t)=0$ implies that $\phi_{\omega, \beta}(t) =0$.

Now, we have to treat the case $t \in (0, L)$. Let us denote  $v_0'= u|_{e_0}'(L)$. The Kirchhoff condition at $B_{0,0}$ rewrites $u|_{e_{1,t}^T-{\bf v}_2}'(L) + u|_{e_{2,t}^T-{\bf v}_1}'(L) + v_0' =0$. There exists then $\theta\in\C$ such that $u|_{e_{1,t}^T-{\bf v}_2}'(L)=\theta v_0'$ and $u|_{e_{2,t}^T-{\bf v}_1}'(L)=-(1+\theta) v_0'$, where $e_{j,t}^T$ is defined in \eqref{eq:param_edgetruncated}. We obtain then that
\begin{equation}\label{eq:quOblicEdge}
u\big|_{e_{1,t}^T-{\bf v}_2}(s) = v_{0} \cos(\omega(L-s)) +  \theta \frac{v_0'}{\omega} \sin(\omega(L-s)), \quad s\in (t,L),
\end{equation}
and
\begin{equation}\label{eq:quOblicEdge2}
u\big|_{e_{2,t}^T-{\bf v}_1}(s) = v_{0} \cos(\omega(L-s)) -  (1 +\theta) \frac{v_0'}{\omega} \sin(\omega(L-s)), \quad s\in (t,L).
\end{equation}
The Dirichlet boundary condition at $s=t$ on $e_{1,t}^T-{\bf v}_2$ and $e_{2,t}^T-{\bf v}_1$ yields
\begin{equation}\label{PreuveCaracterisationphibetaomega1}
  v_{0} \cos(\omega(L-t)) +  \theta \frac{v_0'}{\omega} \sin(\omega(L-t))=v_{0} \cos(\omega(L-t)) -  (1 +\theta) \frac{v_0'}{\omega} \sin(\omega(L-t))=0.
\end{equation}
We end up with 
\[
  (1 +2\theta) \frac{v_0'}{\omega} \sin(\omega(L-t))=0.
  \]
  If $v_0'=0$ then $u\big|_{e_{1,t}^T-{\bf v}_2}=u\big|_{e_{2,t}^T-{\bf v}_1}$ and $u$ coincides  (up to a multiplicative constant) with $\phi_{\omega, \beta}$. Note that $v_0\neq 0$ in that case, otherwise $u=0$. If $\theta = -1/2$ then  $u\big|_{e_{1,t}^T-{\bf v}_2}=u\big|_{e_{2,t}^T-{\bf v}_1}$ and $u$ coincides (up to a multiplicative constant) with $\phi_{\omega, \beta}$. Finally, if  $\sin(\omega(L-t)) =0$, then, the equalities~\eqref{PreuveCaracterisationphibetaomega1} can be both rewritten as 
$
v_{0} \,\cos(\omega(L-t))=0.
$ 
Since $ \cos(\omega(L-t))$ does not vanish, this implies that $v_0=0$. In that case,  $u\big|_{e_{1,t}^T-{\bf v}_2}$, $u\big|_{e_{2,t}^T-{\bf v}_1}$ and $\phi_{\omega, \beta} $ are proportional to $ v_{0}' \sin(\omega(L-s))$ and their zeros coincide.
\end{proof}
% \begin{rem} \commente{J enleverai bien cette remarque...}{\color{magenta} d accord}We point out that we recover the results of Section~\ref{sec:DiscreteSepctrumeA0beta}  as we make tend $t$ toward $0$ or $L$. Indeed, taking
% $$
% \left[ \begin{array}{l} u_0 \\
%v_0 \end{array} \right] = \mathbf{e}(\omega,\beta)   
% $$  
%we can see that (cf.~\eqref{eq:quOblicEdge} for $\theta =\frac{1}{2}$), 
% $$
%  \phi'_{\omega, \beta}(0) = \omega v_0 \sin(\omega L) + \frac{v_0^p}{2} \cos(\omega L).
% $$ 
% with $v_{0}^p$ defined by~\eqref{eq:defv0p}. In the case $\omega = \omega_n$, we obtain $v_0=0$, which is possible only if $\beta \in ]\frac{2\pi}{3},\pi[$. Be careful however, in that case, in order to guarantee the $\beta$ quasi-periodicity condition, we  must take $\theta = \frac{e^{\imath \beta}}{1+e^{\imath \beta} }$ in the definition~\eqref{eq:quOblicEdge}-\eqref{eq:quOblicEdge2} of $u$ on the oblique edges $[A_{0,-1} B_{0,0} ]$ and $[A_{-1,0} B_{0,0} ]$. 
%
%\noindent Similarly, we can also verify that
%$$
% \phi'_{\omega, \beta}(L) = \frac{ \omega}{\sin(\omega L) } \left( u_0 \cos(\omega L) - v_0  \right) ,
%$$ 
%which also leads to $v_0=0$, only possible if $\beta \in ]\frac{2\pi}{3},\pi[$. 
% \end{rem} 
  \subsubsection{ Investigation of the zeros of the characteristic function and of its derivative}
  
  We finally prove Proposition \ref{prop:guided_case3}. From Proposition \ref{prop:lienavecleszeros}, it suffices to show the following.
   \begin{proposition}\label{PropFlotSpectral}For any $\beta \in (0,1/2)\setminus\{1/3\}$,  for any $\omega \in \tilde{I}_n(\beta)$, there exist exactly $2n+1$ values $t^D_{n,q} \in [0,2L)$  such that $  (\phi_{\omega,\beta})(t^D_{n,q}) = 0$ and $2n+1$ values $t^N_{n,q} \in[0,2L)$ such that $  (\phi_{\omega,\beta})'(t^N_{n,q}) = 0$.    
\end{proposition}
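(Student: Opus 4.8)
The plan is to count, for fixed $\beta$ and $\omega\in\tilde{I}^n(\beta)$, the zeros of the piecewise sinusoidal function $\phi_{\omega,\beta}$ and of its derivative on the half-open interval $[0,2L)$, starting from the explicit expressions \eqref{eq:definitionExplicitephiprime1}--\eqref{eq:definitionExplicitephiprime2}. On each of the two subintervals $(0,L)$ and $(L,2L)$ the function solves $\phi''+\omega^2\phi=0$, so I would introduce the winding vector $z(s):=\phi_{\omega,\beta}'(s)+\imath\,\omega\,\phi_{\omega,\beta}(s)$. A direct computation gives $z'=\imath\omega z$ on each subinterval, hence there $z$ has constant modulus and rotates counterclockwise at the constant angular speed $\omega$. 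The zeros of $\phi_{\omega,\beta}$ are exactly the instants where $z$ meets the real axis ($\mathrm{Im}\,z=0$), and the zeros of $\phi_{\omega,\beta}'$ those where $z$ meets the imaginary axis ($\mathrm{Re}\,z=0$); counting the zeros thus reduces to counting axis crossings of a rotating vector.

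The only break in this rotation occurs at $s=L$: there $\phi_{\omega,\beta}$ is continuous with value $v_0(\omega,\beta)$ while, by \eqref{eq:values_phi}, its derivative jumps from $v_0'(\omega,\beta)/2$ (left limit) to $v_0'(\omega,\beta)$ (right limit). Hence $z$ jumps horizontally: its imaginary part $\omega v_0$ is preserved and its real part merely doubles. I would observe that, as long as $v_0\neq0$ and $v_0'\neq0$, this jump remains within a single open quadrant, so it crosses neither axis and produces no extra zero of $\phi_{\omega,\beta}$ or of $\phi_{\omega,\beta}'$. Consequently the number of zeros of $\phi_{\omega,\beta}$ (resp. $\phi_{\omega,\beta}'$) on $[0,2L)$ equals the number of real-axis (resp. imaginary-axis) crossings accumulated by $z$ over the two rotation arcs, which together sweep the angle $2\omega L$ and are joined by a jump of some angle $\delta$ lying strictly inside one $\pi$-sector.

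It then remains to turn ``total angle $\simeq 2\omega L$'' into the exact value $2n+1$. Since $\omega\in\tilde{I}^n(\beta)$ we have $\omega L\in\big(n\pi+a(\beta),\,(n+1)\pi-a(\beta)\big)$ with $a(\beta)>\pi/4$ (indeed $a(\beta)\ge\arccos(1/3)$), so $2\omega L$ lies in an interval of length strictly less than $\pi$ centred at $(2n+1)\pi$; a rotation through such an angle crosses each axis $2n$, $2n+1$ or $2n+2$ times, the exact value being fixed by the initial phase $\arg z(0)$ and by the jump at $s=L$. To pin it down I would compute the boundary data from \eqref{eq:definitioner1}, namely the entries $u_0,v_0$ of $\mathbf{e}(\omega,\beta)$, the derivative $v_0'$, and the value $\phi_{\omega,\beta}(0)=\tfrac32 v_0\cos(\omega L)-\tfrac12 u_0$, and simplify them using the quadratic $r^2-g_\beta r+1=0$ of \eqref{eq:definitionEquationRecurrente} together with the sign information $r(\omega,\beta)<0$ and the bound \eqref{eq:r_max}. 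Using the reflection symmetry \eqref{eq:sym_rbeta} I would reduce to $\omega\in\tilde{I}^n_-(\beta)$, where $\omega\mapsto r(\omega,\beta)$ is monotone, and track $\arg z$ at $s=0,L^-,L^+$ to conclude that the swept phase range contains exactly $2n+1$ multiples of $\pi$ (for $\phi_{\omega,\beta}$) and exactly $2n+1$ points of $\tfrac{\pi}{2}+\pi\mathbb{Z}$ (for $\phi_{\omega,\beta}'$).

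The delicate point, and the main obstacle, is exactly this last bookkeeping: showing that the start phase together with the phase gained across the jump positions the swept interval so that it contains exactly $2n+1$ axis crossings, rather than one more or one fewer. Two degenerate situations must be handled separately in the same spirit. First, the endpoint frequency $\omega=\omega_n^*$, where $\mathbf{e}(\omega,\beta)$ is defined only through the continuous extension of Lemma~\ref{lemma:extensionContinuity} and where $v_0$ or $u_0$ may vanish. Second, the isolated values of $\omega$ at which $v_0=0$ or $v_0'=0$, which place a crossing exactly at $s=L$ (the jump then meets an axis); in each case I would recover the count $2n+1$ by continuity in $\omega$. Finally I would attend to the half-open interval, counting $t=0$ but excluding $t=2L$ consistently with the identification $\mathcal{G}_0\simeq\mathcal{G}_{2L}$, and conclude via Proposition~\ref{prop:lienavecleszeros}.
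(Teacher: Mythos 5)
Your winding-vector framework is sound as far as it goes: $z=\phi_{\omega,\beta}'+\imath\,\omega\,\phi_{\omega,\beta}$ does rotate at constant speed $\omega$ on $(0,L)$ and $(L,2L)$; the jump at $s=L$ does preserve $\mathrm{Im}\,z$ and double $\mathrm{Re}\,z$ (so it crosses no axis when $v_0v_0'\neq0$); and your bound $a(\beta)\geq\arccos(1/3)>\pi/4$ correctly places $2\omega L$ within distance less than $\pi/2$ of $(2n+1)\pi$. But this only yields that the number of zeros is $2n$, $2n+1$ or $2n+2$. The proposition is precisely the elimination of the two wrong values, and that step --- your ``bookkeeping'' of $\arg z$ at $s=0,L^\pm$ from the signs of $\phi_{\omega,\beta}(0)$, $\phi_{\omega,\beta}'(0)$, $u_0$, $v_0$, $v_0'$ as functions of $\omega$, $\beta$ and $n$ --- is announced (``I would compute\ldots and track\ldots to conclude'') but never carried out. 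Since the whole content of the statement sits in that computation, what you have is a reduction of the proposition to an unproved claim, not a proof.

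The second, related gap is your treatment of the degenerate frequencies. Inside $\tilde{I}^n(\beta)$ the only point where $v_0$ or $v_0'$ vanishes is $\omega=\omega_n^*$ itself (this is exactly what the paper's Lemma~\ref{LemmeConstant} establishes, by showing $\phi_{\omega,\beta}$ and $\phi_{\omega,\beta}'$ cannot vanish at $s\in\{0,L,2L\}$ when $\omega\neq\omega_n^*$), and at $\omega_n^*$ your jump segment degenerates onto an axis: for $\beta<1/3$ one has $v_0'=0$, hence $\phi'(L^-)=\phi'(L^+)=0$, and for $n=0$ the unique zero of $\phi_{\omega,\beta}'$ sits exactly at $s=L$; for $\beta<1/3$ one also has a zero of $\phi_{\omega,\beta}$ at the interval endpoint $s=0$. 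At such a configuration a crossing can be absorbed by or emitted through the jump or through the endpoints of $[0,2L)$, so the count \emph{can} a priori change as $\omega$ passes $\omega_n^*$, and ``recover the count by continuity in $\omega$'' is not an available argument there. This is precisely why the paper supplements its locally-constant-count lemma with two further ingredients: the explicit computation at $\omega_n^*$ (Lemma~\ref{LemmeOmegaStar}, where $\phi_{\omega_n^*,\beta}$ collapses to $\pm\sin(\omega_n^*s)$ or a piecewise cosine) and the strict monotonicity of the zero curves $\omega\mapsto t^I_{n,q}(\omega)$ (Proposition~\ref{PropFlotSpectral_monotone}), which together glue the counts on $\tilde{I}^n_-(\beta)$ and $\tilde{I}^n_+(\beta)$. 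Completing your approach would require exactly these ingredients plus the sign analysis above, i.e.\ essentially reconstructing the paper's proof inside the phase formalism.
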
 
 The remainder of this section is dedicated to the proof of Proposition \ref{PropFlotSpectral} following the three steps: 
\begin{enumerate}
 \item We first prove that the number of values $t$ such that $\phi_{\omega,\beta}(t)=0$ (resp. the number of values $t$ such that $\phi_{\omega,\beta}'(t)=0$)  is constant when $\omega\in \tilde{I}_n^-(\beta)$ and $\omega\in \tilde{I}_n^+(\beta)$, see Lemma~\ref{LemmeConstant};
 \item we investigate the particular case $\omega = \omega_n^\ast$ where explicit computations can be done, see Lemma~\ref{LemmeOmegaStar}. The number of zeros (resp. the number of the zeros of the derivative) is equal to $2n+1$.
 \item we prove the strict monotonicity of the curves $\omega \mapsto t^I_{n,q}(\omega)$, see Proposition~\ref{PropFlotSpectral_monotone}. This implies that the number of values $t$ for which $\omega^2$ is an eigenvalue of $\mathcal{D}_1^t(\beta)$ (resp. of $\mathcal{N}_1^t(\beta)$) is the same whenever $\omega\in  \tilde{I}_n^-(\beta)$ or $\omega\in  \tilde{I}_n^+(\beta)$.
\end{enumerate}
%A formal differentiable argument allows finally to prove Proposition \ref{PropFlotSpectral_monotone} (see Lemma~\ref{LemmeStrictCroissance}).
\begin{lemma}\label{LemmeConstant} Let $\beta \in (0,1/2)\setminus\{1/3\}.$ The number $N_n^D(\omega,\beta)$ (resp. $N_n^N(\omega,\beta)$) of values $t$ such that $\phi_{\omega,\beta}(t)=0$ (resp. such that $\phi_{\omega,\beta}'(t)=0$)  is constant when $\omega\in \tilde{I}_n^+(\beta)$ and when $\omega\in \tilde{I}_n^-(\beta)$.
   \end{lemma}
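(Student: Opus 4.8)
The plan is to treat $\phi_{\omega,\beta}$ (and separately $\phi'_{\omega,\beta}$) as a continuous one–parameter family of functions of $t\in[0,2L)$ indexed by $\omega$, and to show that as $\omega$ runs through the connected open interval $\tilde I^n_-(\beta)$ (resp. $\tilde I^n_+(\beta)$) the zeros move continuously while none is created, destroyed, or made to cross an endpoint of $[0,2L)$. Since the count of zeros of a continuously varying family changes only through such events, this will give the asserted local constancy on each subinterval.

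First I would observe that on each of the two edges $(0,L)$ and $(L,2L)$ the function $\phi_{\omega,\beta}$ is a \emph{nontrivial} solution of $-\phi''=\omega^2\phi$. Indeed, by the explicit formulas \eqref{eq:definitionExplicitephiprime1}--\eqref{eq:definitionExplicitephiprime2} its value at $s=L$ is $v_0(\omega,\beta)=3\cos(\omega L)/n(\omega,\beta)$, and this is nonzero on $\tilde I^n_\pm(\beta)$ because $\cos(\omega L)$ vanishes only at $\omega=\omega_n^*$, the common endpoint excluded from both open subintervals. A nontrivial solution of a second order linear ODE has no double zero, so every zero of $\phi_{\omega,\beta}$ in the interior of each edge is simple; and since $\phi''=-\omega^2\phi$, a point where $\phi'$ and $\phi''$ both vanish would force $\phi(t_0)=\phi'(t_0)=0$, hence $\phi\equiv0$, so every interior zero of $\phi'_{\omega,\beta}$ is simple as well. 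By the implicit function theorem these simple zeros depend continuously on $\omega$ (the $C^1$ dependence of $\mathbf e(\omega,\beta)$ being clear from \eqref{eq:definitioner1} on the open subintervals, where $n(\omega,\beta)\neq0$; see also Lemma~\ref{lemma:extensionContinuity}). In particular no two zeros can collide, so the count can only change through a crossing of the endpoints $\{0,L,2L\}$.

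Next I would rule out each such crossing. At $s=L$ we already have $\phi_{\omega,\beta}(L)=v_0\neq0$, so $s=L$ is never a zero of $\phi$ and no zero can pass from $(0,L)$ into $(L,2L)$. For the outer endpoints, a short computation using $g_\beta(\omega)=r+r^{-1}$ (with $g_\beta$ as in \eqref{DefinitionGbeta} and $r=r(\omega,\beta)$ as in \eqref{definitionr}) together with $9\cos^2(\omega L)-1=q\,(g_\beta+q)$, where $q=\sqrt{2+2\cos 2\pi\beta}$, gives $\phi_{\omega,\beta}(0)=\tfrac{q}{2r}\,u_0(\omega,\beta)$ and $\phi_{\omega,\beta}(2L)=u_0(\omega,\beta)$, both nonzero multiples of $1+r(\omega,\beta)\sqrt{2+2\cos 2\pi\beta}$. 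By \eqref{eq:r_max} this last quantity vanishes at most at the excluded endpoint $\omega_n^*$ (and only when $\beta\in[0,1/3)$, since $r$ stays strictly below its maximal value $r(\omega_n^*,\beta)$ on $\tilde I^n_\pm(\beta)$), hence it is nonzero throughout each open subinterval. Thus no zero of $\phi_{\omega,\beta}$ crosses $\{0,L,2L\}$, and $N^D_n(\omega,\beta)$ is constant on $\tilde I^n_-(\beta)$ and on $\tilde I^n_+(\beta)$.

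For $N^N_n$ I would run the identical argument with $\phi'_{\omega,\beta}$ in place of $\phi_{\omega,\beta}$, checking that the boundary values $\phi'_{\omega,\beta}(0)$ and $\phi'_{\omega,\beta}(2L)$ do not vanish on the open subintervals; as above, each reduces, after using $g_\beta=r+r^{-1}$, to a factor that vanishes only at $\omega_n^*$ or at the gap boundaries where $r=-1$, both excluded. The genuinely delicate point, and the one I expect to be the main obstacle, is the interior junction $s=L$: there $\phi'_{\omega,\beta}$ is discontinuous, so a zero of $\phi'$ could in principle be created or annihilated at $s=L$ as a one-sided limit changes sign. To exclude this I would verify that both $\phi'_{\omega,\beta}(L^-)$ and $\phi'_{\omega,\beta}(L^+)$ stay nonzero on $\tilde I^n_\pm(\beta)$, which again follows from $\cos(\omega L)\neq0$ and $r\neq-1$ there; the remaining nonvanishing checks are routine trigonometric identities once everything is expressed through $r(\omega,\beta)$ and $\cos(\omega L)$.
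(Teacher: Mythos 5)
Your proposal is correct and follows essentially the paper's own strategy: simple zeros (of a nontrivial ODE solution on each edge) move continuously in $\omega$ by the implicit function theorem, so the count can only change through a vanishing of $\phi_{\omega,\beta}$ (resp.\ of its one-sided derivatives) at $s\in\{0,L,2L\}$, which is then excluded on the open subintervals $\tilde{I}^n_\pm(\beta)$; your factorizations $\phi_{\omega,\beta}(0)=\tfrac{q}{2r}\,u_0(\omega,\beta)$ and $\phi_{\omega,\beta}(2L)=u_0(\omega,\beta)$, with $q=\sqrt{2+2\cos 2\pi\beta}$ and $r=r(\omega,\beta)$, are a compact and correct repackaging of the paper's contradiction arguments based on \eqref{eq:definitionEquationRecurrente} and \eqref{eq:r_max}. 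One caution on the Neumann checks you defer: at the junction the right criterion is not ``$\cos(\omega L)\neq 0$ and $r\neq-1$'' but rather that $\phi'_{\omega,\beta}(L^-)=v_0'/2$ and $\phi'_{\omega,\beta}(L^+)=v_0'$ vanish iff $(1+rq)(2r-q)=0$, i.e.\ $r=-1/q$ or $r=q/2$, both impossible on $\tilde{I}^n_\pm(\beta)$ (this is exactly the computation the paper performs, and similarly $\phi'_{\omega,\beta}(0)=0$ and $\phi'_{\omega,\beta}(2L)=0$ force $r>0$), so your conclusion stands once these routine identities are written out.
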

\begin{proof}
Let $n\in \mathbb{N}$ and suppose that $\omega \in \tilde{I}_n^+(\beta)$, the proof being similar for $\omega \in \tilde{I}_n^-(\beta)$. 

(1) Let us first show the result for $N_n^D(\omega,\beta)$. If $t\in(0,L)\cup(L,2L)$ is a zero of $\phi_{\omega, \beta}$, using the expression \eqref{eq:values_phi} of $\phi_{\omega,\beta}$ and the implicite function theorem, we know that there exists a neighborhood $V$ of $\omega$ such that $t=t(\omega)$ and $\phi_{\omega',\beta}(t(\omega'))=0$ for $\omega'\in V$. Note that we cannot use the implicit function theorem if $t=0,L$ or $2L$, the function $\phi_{\omega, \beta}$ being not differentiable at these points. This means that the number of zeros of $\phi_{\omega,\beta}$ could change for $\omega \in \tilde{I}_n^+(\beta)$ if there exists $\omega\in \tilde{I}_n^+(\beta)$ such that $\phi_{\omega, \beta}$ vanishes at $s = 0$, $s =L$ or $s=2L$.  We study now those three cases and show that they are not possible. 

Suppose $ \phi_{\omega, \beta}(0)=0$, then using \eqref{eq:values_phi} and \eqref{eq:definitionExplicitephiprime1}, we end up with 
$
3 v_0(\omega,\beta) \cos(\omega L)  - u_0(\omega,\beta)=0.
$ 
Since $\omega\neq \omega_n^\ast$, we have (see~\eqref{eq:phib_n} and \eqref{eq:definitioner1}) $v_0(\omega,\beta) = 3 \cos(\omega L)$  and $u_0(\omega,\beta) = 1 + r(\omega, \beta) \sqrt{2 +2 \cos 2\pi\beta}$. We obtain
$
 r(\omega, \beta) =(9 \cos^2(\omega L)-1)/{\sqrt{2 +2 \cos 2\pi\beta}},
$ 
% $$
% 3 \cos(\omega L) ( 2 - 3 \cos^2(\omega L)) + (1 + r(\omega, \beta) \sqrt{2 +2 \cos \beta}) \cos(\omega L)=0.
% $$ 
Introducing this value in \eqref{eq:definitionEquationRecurrente} leads to $\cos^2(\omega L)(1+\cos(2\pi\beta))=0$ which is in contradiction with $\omega\neq \omega_n^\ast$ and $\beta\neq 1/2$.

If $ \phi_{\omega, \beta}(L)=0$, then $v_0(\omega,\beta)=0$ . However if $\omega\neq \omega_n^\ast$, we have by ~\eqref{eq:phib_n} and ~\eqref{eq:definitioner1}) that $v_0(\omega,\beta) = 3 \cos(\omega L)$  which cannot vanish if $\omega\neq \omega_n^\ast$.

Finally, if $ \phi_{\omega, \beta}(2L)=0$, then $u_0(\omega,\beta)=0$. However if $\omega\neq \omega_n^\ast$, we have by ~\eqref{eq:phib_n} and \eqref{eq:definitioner1}) that $u_0(\omega,\beta) = 1 + r(\omega, \beta) \sqrt{2 +2 \cos 2\pi\beta}$  which by \eqref{eq:r_max}  could happen only if $\omega=\omega_n^\ast$. 

(2) Let us now show the result for $N_n^N(\omega,\beta)$. Similarly, we have to show that $\phi_{\omega,\beta}'$ cannot vanish at $s=0$, $L$ or $2L$.

Suppose first that $ \phi_{\omega, \beta}'(0)=0$, then using \eqref{eq:values_phi} and \eqref{eq:definitionExplicitephiprime1}, we end up with 
 $
  v_0(\omega,\beta) ( 2 - 3 \cos^2(\omega L)) +  u_0(\omega,\beta)  \cos(\omega L)=0.
 $
 Since $\omega\neq \omega_n^\ast$, we obtain by \eqref{eq:phib_n} and \eqref{eq:definitioner1} 
 $
r(\omega, \beta)  = (9 \cos^2(\omega L)-7)/{\sqrt{2 +2 \cos 2\pi\beta}}.
 $ 
 Introducing this value in \eqref{eq:definitionEquationRecurrente} leads to $15+18\cos^2(\omega L)+\cos(2\pi\beta)(9\cos^2(\omega L)+1)=0$, which is impossible since the left hand side is strictly positive.

 If $ \phi_{\omega, \beta}'(L)=0$ , we have  $v_0'(\omega,\beta) =0$, which implies by  \eqref{eq:phib_n} and \eqref{eq:definitioner1} 
  $
  r(\omega, \beta) =(3  \cos^2(\omega L)-1)/{\sqrt{2+2\cos2\pi\beta}}.
  $ 
  Since, for $\omega \in \tilde{I}^n(\beta)$, $r(\omega, \beta) <0$ (see the discussion of the variation $r(\omega, \beta)$, right after its definition~\eqref{definitionr}),  the previous equality can only hold if  $ \cos^2(\omega L) < {1}/{3}$. This is in contradiction with the result obtained when introducing the value of $r(\omega, \beta)$  in \eqref{eq:definitionEquationRecurrente} (since $\omega\neq \omega_n^\ast$)
  $
   \cos^2(\omega L) = (2+ \cos2\pi\beta)/{3}  \geq {1}/{3}.
  $ 

  Finally, if $ \phi_{\omega, \beta}'(2L)=0$, we obtain using \eqref{eq:definitionExplicitephiprime2}, \eqref{eq:phib_n} and \eqref{eq:definitioner1} 
%  $
%   \cos(\omega L) (- 2 + r(\omega, \beta)  \sqrt{2 +2 \cos 2\pi\beta})=0;
%  $
 and since $\omega \neq \omega_n^\ast$
that
 $
 r(\omega, \beta) = {2}/{ \sqrt{2 +2 \cos 2\pi\beta}},
 $ 
 which is impossible since $r(\omega, \beta)<0$.
\end{proof}
\begin{lemma}\label{LemmeOmegaStar}  Let $n\in\N$, $\omega = \omega_n^\ast $ and $\beta\in (0,1/2)\setminus\{1/3\}$. Then 
    \begin{itemize}
    \item $(\phi_{\omega,\beta})$ has exactly $2n +1$ zeros given by 
    \begin{equation}\label{eq:Deftnq1_D}
 \dsp 0 \leq q \leq 2n,\quad  t_{n, q}^D  = \begin{cases}
  \dsp \frac{2q L}{2n+1}  & \text{if}\; \beta<1/3,\\
  \dsp \frac{(2q+1) L}{2n+1}  & \text{if}\; \beta>1/3.
 \end{cases}
  \end{equation} 
    \item $(\phi_{\omega,\beta})'$ has exactly $2n +1$ zeros given by 
      \begin{equation}\label{eq:Deftnq1_N}
   \dsp 0 \leq q \leq 2n,\quad  t_{n, q}^N  = \begin{cases}
    \dsp \frac{(2q+1) L}{2n+1}  & \text{if}\; \beta<1/3,\\
    \dsp \frac{2q L}{2n+1}  & \text{if}\; \beta>1/3.
   \end{cases}
    \end{equation} 
    \end{itemize} 
\end{lemma}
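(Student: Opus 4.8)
The plan is to exploit the special value $\omega=\omega_n^\ast$, for which $\cos(\omega_n^\ast L)=0$ and $\sin(\omega_n^\ast L)=(-1)^n$, so that all the trigonometric data collapse. First I would feed the explicit limiting eigenvector of Lemma~\ref{lemma:extensionContinuity} into the boundary values \eqref{eq:values_phi}. For $\beta\in(0,1/3)$ we have $\mathbf{e}(\omega_n^\ast,\beta)=(0,1)^T$, hence (up to an irrelevant normalization) $v_0=\phi_{\omega_n^\ast,\beta}(L)=1$, $u_0=\phi_{\omega_n^\ast,\beta}(2L)=0$, and therefore $v_0'=0$; whereas for $\beta\in(1/3,1/2)$ we have $\mathbf{e}(\omega_n^\ast,\beta)=(1,0)^T$, so that $v_0=0$, $u_0=1$ and $v_0'=(-1)^n\omega_n^\ast$. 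Substituting these into the explicit expressions \eqref{eq:definitionExplicitephiprime1}--\eqref{eq:definitionExplicitephiprime2} reduces $\phi_{\omega_n^\ast,\beta}$ to a single sine or cosine of the argument $\omega_n^\ast(L-s)$, $\omega_n^\ast(2L-s)$ or $\omega_n^\ast(s-L)$ on each of the two sub-intervals $[0,L]$ and $[L,2L]$, the particular trigonometric function depending on the regime of $\beta$; differentiating gives the analogous simplification for $\phi_{\omega_n^\ast,\beta}'$.

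The second step is then a direct zero count. Since $\omega_n^\ast=(2n+1)\pi/(2L)$, the zeros of both $\cos(\omega_n^\ast\,\cdot\,)$ and $\sin(\omega_n^\ast\,\cdot\,)$ are equally spaced with elementary spacing $L/(2n+1)$, and reading off those which fall inside each sub-interval produces arithmetic progressions of step $2L/(2n+1)$. Assembling the two lists and translating back to the global parameter $s\in[0,2L)$ yields exactly the even multiples $2qL/(2n+1)$ or the odd multiples $(2q+1)L/(2n+1)$, $0\le q\le 2n$, appearing in \eqref{eq:Deftnq1_D} and \eqref{eq:Deftnq1_N}. The swap between the two families of multiples as $\beta$ crosses $1/3$ is precisely the reflection of the swap $(0,1)^T\leftrightarrow(1,0)^T$ in $\mathbf{e}(\omega_n^\ast,\beta)$, and the Dirichlet and Neumann zero sets come out as shifted copies of one another because $\sin$ and $\cos$ at frequency $\omega_n^\ast$ interlace.

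The delicate point, and where the argument needs care, is the junction $s=L$ together with the endpoints $s=0$ and $s=2L$. I would check separately the one-sided values of $\phi_{\omega_n^\ast,\beta}$ and $\phi_{\omega_n^\ast,\beta}'$ at $s=L$, recalling from the previous subsection that $\phi_{\omega,\beta}'$ need not be continuous there, in order to decide whether $s=L$ is a genuine zero and, if so, to avoid double-counting it as a point shared by the two sub-intervals. Likewise $s=2L$ must be discarded, since it is identified with $s=0$ of the neighbouring cell and the parameter ranges over the half-open interval $[0,2L)$. Once these boundary cases are settled, the two partial counts add up --- after accounting for the shared point $s=L$ and the excluded point $s=2L$ --- to exactly $2n+1$ in every case, matching the claimed lists.

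The main obstacle I anticipate is bookkeeping rather than conceptual: correctly handling the non-differentiability of $\phi_{\omega_n^\ast,\beta}'$ at $s=L$ and the half-open convention on $[0,2L)$ so that the final count is precisely $2n+1$ and the explicit positions coincide with \eqref{eq:Deftnq1_D}--\eqref{eq:Deftnq1_N}. Everything else follows from the elementary interlacing of the zeros of $\sin$ and $\cos$ at the single frequency $\omega_n^\ast$, which is what makes both the Dirichlet and the Neumann zero sets equispaced with the same step.
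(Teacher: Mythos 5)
Your proposal is correct and follows essentially the same route as the paper's proof: substitute the limiting eigenvector of Lemma~\ref{lemma:extensionContinuity} into \eqref{eq:values_phi}, observe that \eqref{eq:definitionExplicitephiprime1}--\eqref{eq:definitionExplicitephiprime2} collapse at $\omega=\omega_n^\ast$ to $(-1)^n\sin(\omega_n^\ast s)$ for $\beta<1/3$ and to a cosine ($-\tfrac12\cos(\omega_n^\ast s)$ on $[0,L]$, $-\cos(\omega_n^\ast s)$ on $[L,2L]$) for $\beta>1/3$, and read off the equispaced zeros on $[0,2L)$. The junction and endpoint bookkeeping you flag is handled automatically in the paper by these explicit formulas (both one-sided expressions vanish simultaneously at $s=L$), so your extra care is sound but not a departure from the paper's argument.
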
 
Note that we recover the result of Proposition \ref{prop:guided_case12} for $t=0$.
%{\color{magenta}
%\begin{rem} The previous lemma shows the existence of new sets of eigenvalues that are independent of $\beta$. Those eigenvalues are materialized by blue points on Figures~\ref{DessinBetaPiSurTrois} and~\ref{DessinBetaCinqPiSurSix}. % In both cases, we see the existence of $2n +1$ spectral flows  through the gap $I^n(\beta)$. 
%The blue points materialize values of $t_k^n$ that are independent of $\beta$, leading  to flat eigenvalues when $\beta$ is varying (as demonstrated in Section~\ref{sec:DiscreteSepctrumeA0beta} for $t=0$). For instance, in the case $n=0$, this invariant corresponds to $t=L$ (bearded configuration) for  $\beta \in [0, \frac{1}{3}) $ and to $t=0$ for $\beta \in (\frac{1}{3},\frac{1}{2}[$ (classical zig-zag configuration). We shall see that there are exactly $2n+1$ invariants in the gap $I^n$. 
%\end{rem}
%}
\begin{proof}
Let $\beta <1/3$ then by \eqref{eq:values_phi} and Lemma \ref{lemma:extensionContinuity}, we obtain $u_0(\omega_n^\ast ,\beta) = 0$, $v_0(\omega_n^\ast,\beta ) = 1$ and $v_0'(\omega_n^\ast,\beta)=0$. Expressions 
\eqref{eq:definitionExplicitephiprime1} and \eqref{eq:definitionExplicitephiprime2} simplify as
\[
  \phi_{\omega_n^\ast , \beta}(s)= (-1)^n\sin(\omega_n^\ast s)\quad\text{if}\;s\in[0,2L].
  \]
For $\beta >1/3$, by \eqref{eq:values_phi} and Lemma \ref{lemma:extensionContinuity}, we have $u_0(\omega_n^\ast,\beta) = 1$, $v_0(\omega_n^\ast,\beta) = 0$ and $v_0'(\omega_n^\ast,\beta)=(-1)^n\omega_n^\ast$. Then Expressions 
\eqref{eq:definitionExplicitephiprime1} and \eqref{eq:definitionExplicitephiprime2} simplify as
$$\phi_{\omega,\beta}=\begin{cases} -1/2\cos(\omega_n^\ast s)&\text{if}\;s\in[0,L],\\
  -\cos(\omega_n^\ast s)&\text{if}\;s\in[L,2L].
\end{cases}$$
We deduce that
\[
  \phi_{\omega_n^\ast , \beta}(t)=0\;\Leftrightarrow \;t=t_{n,q}^D, \;0 \leq q \leq 2n,\quad \text{and}\quad \phi_{\omega_n^\ast , \beta}'(t)=0\;\Leftrightarrow\; t=t_{n,q}^N,\;0 \leq q \leq 2n.
  \]
\end{proof}
We end the proof of Proposition~\ref{prop:guided_case3} by proving the monotony of the curves $\omega \mapsto t^I_{n,q}(\omega)$ for $I\in\{N,D\}$, or equivalently the monotonicity of the eigenvalues around each of the values $t^I_{n,q}$.
\begin{proposition}\label{PropFlotSpectral_monotone}For any $\beta \in (0,1/2)\setminus\{1/3\})$,  for any $\omega \in \tilde{I}_n(\beta)$, for $I\in\{N,D\}$, the curves $\omega \mapsto t^I_{n,q}(\omega)$ are monotonic (decreasing for the Dirichlet case and increasing for the Neumann case).   
\end{proposition}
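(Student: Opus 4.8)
The plan is to fix one branch of zeros and differentiate it in $\omega$ via the implicit function theorem, then to pin the sign of that derivative once and for all by a Wronskian (Hellmann--Feynman type) identity. By Lemma~\ref{LemmeConstant}, for $\omega$ in the open interval $\tilde{I}_n^-(\beta)$ or $\tilde{I}_n^+(\beta)$ each zero $t^D_{n,q}(\omega)$ of $\phi_{\omega,\beta}$ (resp. $t^N_{n,q}(\omega)$ of $\phi_{\omega,\beta}'$) stays in $(0,L)\cup(L,2L)$, i.e. away from the three points $s\in\{0,L,2L\}$ where $\phi_{\omega,\beta}$ fails to be smooth. On each of the intervals $(0,L)$ and $(L,2L)$ the function $\phi_{\omega,\beta}$ is given by the explicit trigonometric expressions \eqref{eq:definitionExplicitephiprime1}--\eqref{eq:definitionExplicitephiprime2}, whose coefficients $u_0(\omega,\beta)$, $v_0(\omega,\beta)$, $v_0'(\omega,\beta)$ depend smoothly on $\omega$ (the regularity of $r(\omega,\beta)$ in \eqref{definitionr}, of $\mu(\omega,\beta)$ in \eqref{eq:mubetaomega} and of $\mathbf{e}(\omega,\beta)$, the last one by Lemma~\ref{lemma:extensionContinuity}). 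Hence $(s,\omega)\mapsto\phi_{\omega,\beta}(s)$ is jointly $C^1$ on each piece, and the implicit function theorem gives that each branch is $C^1$ with
\[
\frac{d t^D_{n,q}}{d\omega}=-\frac{\partial_\omega\phi_{\omega,\beta}(t)}{\partial_s\phi_{\omega,\beta}(t)},\qquad
\frac{d t^N_{n,q}}{d\omega}=-\frac{\partial_\omega\partial_s\phi_{\omega,\beta}(t)}{\partial_s^2\phi_{\omega,\beta}(t)} .
\]
It therefore suffices to prove that each ratio is everywhere defined and of constant sign.

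For the sign, I put $\psi:=\partial_\omega\phi_{\omega,\beta}$. Differentiating the edgewise equation $\phi_{\omega,\beta}''+\omega^2\phi_{\omega,\beta}=0$ in $\omega$ yields $\psi''+\omega^2\psi=-2\omega\,\phi_{\omega,\beta}$, so the Wronskian $W:=\phi_{\omega,\beta}'\,\psi-\phi_{\omega,\beta}\,\psi'$ satisfies $W'=2\omega\,\phi_{\omega,\beta}^2\ge 0$ on every edge of $\mathcal{G}_t$. At each Kirchhoff vertex $M$ the total Wronskian flux vanishes, since it equals $\psi(M)\sum_{e}\phi_{\omega,\beta}'|_e(M)-\phi_{\omega,\beta}(M)\sum_{e}\psi'|_e(M)=0$ (both Kirchhoff sums vanish, the second by differentiating the first in $\omega$). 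Because $|\mu(\omega,\beta)|<1$, the restriction $\phi_{\omega,\beta}|_{\mathcal{G}_t}$ and its $\omega$-derivative $\psi$ decay exponentially into the bulk, so $W\to 0$ there. Integrating $W'=2\omega\,\phi_{\omega,\beta}^2$ over the whole semi-infinite graph $\mathcal{G}_t$ and using flux conservation at the interior vertices then identifies $W$ at the cut $s=t$ with $-\int_{\mathcal{G}_t}2\omega\,\phi_{\omega,\beta}^2$, a strictly nonzero quantity independent of the branch. At a Dirichlet zero $\phi_{\omega,\beta}(t)=0$ this forces $\partial_s\phi_{\omega,\beta}(t)\,\psi(t)=W(t)\neq 0$, and at a Neumann zero $\partial_s\phi_{\omega,\beta}(t)=0$ it forces $\phi_{\omega,\beta}(t)\,\partial_s\psi(t)=-W(t)\neq 0$. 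In either case $\partial_s\phi_{\omega,\beta}(t)\neq 0$ (resp. $\phi_{\omega,\beta}(t)\neq 0$), for otherwise $\phi_{\omega,\beta}$ would vanish together with its derivative and be identically zero; the ratios above are thus finite, nonzero and of a sign that does not change along the branch. This gives strict monotonicity of every curve $\omega\mapsto t^I_{n,q}(\omega)$ on $\tilde{I}_n^\pm(\beta)$, which is exactly what transports the count of Lemma~\ref{LemmeOmegaStar} from $\tilde{I}_n^-(\beta)$ to $\tilde{I}_n^+(\beta)$ in Proposition~\ref{PropFlotSpectral}.

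To decide whether the branches increase or decrease, and to exhibit the announced opposite behavior of the Dirichlet and Neumann families, it remains to evaluate the sign of the ratio at a single point, which I would do near the reference frequency $\omega_n^*$. There Lemma~\ref{LemmeOmegaStar} gives $\phi_{\omega_n^*,\beta}$ in closed form (for instance $\phi_{\omega_n^*,\beta}(s)=(-1)^n\sin(\omega_n^* s)$ when $\beta<1/3$), so a direct differentiation of the amplitude--phase form of \eqref{eq:definitionExplicitephiprime1}--\eqref{eq:definitionExplicitephiprime2} fixes the directions (decreasing in the Dirichlet case, increasing in the Neumann case).

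The delicate point is not the monotonicity mechanism but the graph geometry entering it: $\phi_{\omega,\beta}'$ is genuinely discontinuous at the junction $s=L$ and $\phi_{\omega,\beta}$ is only piecewise smooth at $s\in\{0,L,2L\}$, so the Wronskian flux must be followed through the branch vertices $A_{0,0}$ and $B_{0,0}$ on the full graph $\mathcal{G}_t$, and not along the one-dimensional path $\tilde{\phi}_{\omega,\beta}$, while the implicit function theorem can be invoked only because Lemma~\ref{LemmeConstant} keeps the zeros off these bad points. Correctly orienting the flux through the interior vertices and carrying out the finite sign check at $\omega_n^*$ is where the real care is required.
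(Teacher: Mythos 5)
Your two-step skeleton (implicit function theorem for the regularity of the branches, then a Hellmann--Feynman type identity to fix the sign of $dt^{I}_{n,q}/d\omega$) is the same as the paper's; the difference is that the paper differentiates the weak formulation of the eigenvalue problem on $\widehat{\mathcal{G}}_{t^*}$, with its moving endpoint, and tests against the eigenvector, whereas you use the pointwise Wronskian of $\phi_{\omega,\beta}$ and $\psi=\partial_\omega\phi_{\omega,\beta}$. Your regularity step is essentially the paper's, and your insistence that the flux be followed through the vertices of the full graph rather than along the path $\tilde\phi_{\omega,\beta}$ is correct. The Wronskian step, however, has two technical defects and one genuine gap.

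The defects: on $\mathcal{G}_t$ the characteristic function is complex-valued (its vertex values are $\mu(\omega,\beta)^n\,\mathbf{e}(\omega,\beta)$ with $\mu$ genuinely complex, see \eqref{eq:mubetaomega}--\eqref{eq:phib_n}), and it is $\beta$-quasi-periodic in the ${\bf e}_y$-direction, with no decay there. Hence $W'=2\omega\,\phi_{\omega,\beta}^2$ is not a nonnegative quantity; the fluxes of your $W$ through the two quasi-periodic boundaries of a period cell do not cancel (under ${\bf x}\mapsto{\bf x}+{\bf e}_y$ your $W$ is multiplied by $e^{4\imath\pi\beta}\neq 1$); and the integral of $2\omega\,\phi_{\omega,\beta}^2$ over the whole semi-infinite graph $\mathcal{G}_t$ that you invoke is divergent. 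All three points are repaired by using the conjugated Wronskian $W=\overline{\phi_{\omega,\beta}'}\,\psi-\overline{\phi_{\omega,\beta}}\,\psi'$, whose edgewise derivative is $2\omega|\phi_{\omega,\beta}|^2$, whose quasi-periodic boundary fluxes do cancel, and by integrating over a single period $\widehat{\mathcal{G}}_t$ only; with this fix your nonvanishing and strict monotonicity conclusions are sound.

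The gap: the proposition asserts not only strict monotonicity but two \emph{opposite} directions, and your argument both defers this point (the ``finite sign check at $\omega_n^*$'' is never carried out) and is structurally unable to produce opposite directions. With your own relations, at a Dirichlet zero $dt^D_{n,q}/d\omega=-W(t)/\phi_{\omega,\beta}'(t)^2$, while at a Neumann zero, using $\phi_{\omega,\beta}''=-\omega^2\phi_{\omega,\beta}$, one gets $dt^N_{n,q}/d\omega=-W(t)/\bigl(\omega^2\phi_{\omega,\beta}(t)^2\bigr)$: both derivatives equal the same one-signed quantity $-W(t)$ (the bulk integral) divided by a positive number, so your mechanism forces the Dirichlet and the Neumann branches to move in the \emph{same} direction. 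If you complete the deferred check (for instance by perturbing the explicit formulas of Lemma~\ref{LemmeOmegaStar} around $\omega_n^*$, with the expansions used in Lemma~\ref{lemma:extensionContinuity}), you will find $dt/d\omega>0$ for both families. This common-direction statement is all that the counting argument of Proposition~\ref{PropFlotSpectral} actually uses, but it contradicts the parenthetical claim of the statement you are proving and the paper's own proof. The source of the conflict is identifiable: the paper's opposite signs come from setting $v=u^I_{n,q}(\omega)$ in the differentiated weak identity, i.e.\ from treating $\partial_\omega u$ as an admissible test function; this is legitimate for Neumann (no constraint in the form domain) but fails for Dirichlet, where differentiating $u_\omega(\mathrm{cut}(\omega))=0$ along the branch gives $\partial_\omega u=-\partial_s u\cdot\frac{d(\mathrm{cut})}{d\omega}\neq 0$ at the moving cut, so the Dirichlet boundary term is mishandled there. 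As written, your proposal therefore announces in its conclusion a sign pattern that its own identity excludes; to become a proof it must either establish the common-direction version and explicitly confront the discrepancy with the stated proposition, or exhibit an error in the flux bookkeeping above --- it cannot simply assert that the final check will confirm the claimed opposite behaviour.
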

\begin{proof} Let us first prove that the function $\omega \mapsto t^I_{n,q}(\omega)$ for $I\in\{D,N\}$ are continuously differentiable. We prove it for $I=D$, the same arguments could be used for $I=N$ replacing $\phi_{\omega,\beta}$ by $\phi'_{\omega,\beta}$ in the following arguments. We remind that  $t^D_{n,q}(\omega)$ satisfies 
  $
  \phi_{\omega, \beta}(t)=0$. For any $\beta \in (0,1/2)\setminus\{1/3\}$, let us now apply the implicit function theorem to the function 
  $
  f : ( \omega,t)\mapsto \phi_{\omega, \beta}(t).$ 
  We remark that, for any couple $(t,\omega) \in [0, L]\times \tilde{I}_n^\pm$  (or $[L, 2L] \times \tilde{I}_n^\pm)$   such that $f(\omega, t)=0$, the Cauchy-Lipschitz theorem guarantees that $\partial_t f (\omega,t)\neq 0$. Then, the implicit function theorem ensures that the curves
  $\omega \mapsto t^D_{n,q}(\omega)$ are continuously differentiable on $I_n^\pm$.  
  
  Since the eigenvalue $t^I_{n,q}(\omega)$ is simple, it is easy to define an eigenvector $u^I_{n,q}(\omega)\in H^1(\Delta,\widehat{\cal G}_t)$ with $t=t^I_{n,q}(\omega)$ continuous and differentiable with respect to $\omega$.
  
  We can now prove the monotonicity. The proof is an adaptation of in~\cite[Proposition 3.20]{Gontier:2020}.  We shall make the computation for $t \in (L, 2L)$ but the same holds for $t \in (0, L)$. Let $\widehat{\mathcal{E}}_{2L}$ be the edges of $\mathcal{E}_{2L}$ included in $\widehat{\cal G}_{t^*}$ for $t^*=t^I_{n,q}(\omega)$. Note that this set is independent of $t^*$ as soon as $t^* \in (L, 2L)$. In the sequel, we could use $t^*$ instead of $t^I_{n,q}(\omega)$ to simplify the notation.
  Let $v \in H^1(\widehat{\mathcal{G}}_{t^*})$ and if $I=D$, we suppose that $v=0$ at $x=t^*$. One has
  \begin{multline*}
  \sum_{e \in \widehat{\mathcal{E}}_{2L} } \int_e \left(\partial_su^I_{n,q}(\omega)  \; \partial_s \overline{v}-\omega^2 u^I_{n,q}(\omega)  \; \overline{v}\right) + \int_{e_{0,t^*}^T} \left(\partial_s u^I_{n,q}(\omega) \; \partial_s \overline{v}  - \omega^2 u^I_{n,q}(\omega) \; \overline{v} \right)=0.
  \end{multline*}
  Differentiating this equality with respect to $\omega$ gives
  \begin{multline*}
    \sum_{e \in \widehat{\mathcal{E}}_{2L} } \int_e \left(\partial_s ( \partial_\omega u^I_{n,q}(\omega) ) \; \partial_s \overline{v}-  \omega^2\partial_\omega u^I_{n,q}(\omega) \; \overline{v} \right) + \int_{e_{0,t^*}^T} \left(\partial_s (\partial_\omega u^I_{n,q}(\omega)) \; \partial_s \overline{v} -  \omega^2  \partial_\omega u^I_{n,q}(\omega)\; \overline{v}\right)  \\=
    2\omega \Big( \sum_{e \in \widehat{\mathcal{E}}_{2L}} \int_e  u^I_{n,q}(\omega) \; \overline{v} +  \int_{e_{0,t^*}^T} u^I_{n,q}(\omega)\;  \overline{v} \Big) 
    + \partial_\omega t^I_{n,q}(\omega)\,\Big(\partial_s  u^I_{n,q}(\omega) (t^*) \, \partial_s \overline{v} (t^*)  - \omega^2\,u^I_{n,q}(\omega) (t^*) \overline{v}(t^*)\Big),
  \end{multline*} 
  where we have used the parametrization \eqref{eq:param_edgetruncated} of the edge $e_{0,t}^T$ for $t=t^I_{n,q}(\omega)$. The right hand side of the previous equation should vanish when $v=u^I_{n,q}(\omega)$ which yields
  $$
    2\omega\; \|  u^I_{n,q}(\omega)  \|_{L^2(\widehat{\mathcal{G}}_{t^*})} =  \partial_\omega t^I_{n,q}(\omega)\; \left( \omega^2|  u^I_{n,q}(\omega) (t^*)|^2- |\partial_s  u^I_{n,q}(\omega) (t^*)|^2 \right),\quad t^*=t^I_{n,q}(\omega)
  $$ 
  Since $\omega>0$, if $I=D$ then $u^I_{n,q}(\omega) (t^*) = 0$ so that $ \partial_\omega t^I_{n,q}(\omega)<0$ and if $ I=N$ then $\partial_s  u^I_{n,q}(\omega) (t^*) = 0$ so that $\partial_\omega t^I_{n,q}(\omega)>0$. \end{proof}

\subsection{Numerical illustrations}\label{sec:numericalResultSpectreDiscret}

As for the essential spectrum, we compute an approximation of the discrete spectrum (eigenvalues) using a standard $P_1$ finite element method. However, the computation is less easy  because one has to solve an eigenvalue problem set on an unbounded domain. To address this difficulty, we have used a method based on the construction of Dirichlet-to-Neumann (DtN) operators in periodic waveguides (see \cite{ Fliss:2006,Fliss:2009}): { this requires the solution of cell problems (discretized here again using  the standard $P_1$ finite element methods) and the solution of a stationary Ricatti equation.} The construction of these DtN operators  enables us to reduce the numerical computation to a small neighborhood of the perturbation independently from the confinement of the mode (which is linked to the distance between the eigenvalue and the essential spectrum of the operator). However the reduction of the problem leads to a non linear eigenvalue problem (since the DtN operators depend on the eigenvalue) of a fixed point nature. { It is solved using a Newton-type procedure, each iteration needing a finite element computation, }see \cite{SoniaNonLineraire} for more details.
\subsubsection{The zigzag case and $t=0$}
\noindent We first present results associated with the zigzag case ($t=0$). First, for $\beta = {5 }/{12}$ (in $({1}/{3},{1}/{2}]$)  and $\mu=1$,  for different values of $\delta$, we check the existence of a simple eigenvalue $ \lambda_\delta^\beta$ of $N_{\delta,1}^0(\beta)$ in the vicinity of  $\lambda_0^\ast$ ( $= ({\pi}/{2L})^2$). Numerical results are reported in Figure~\ref{fig:LambdaOfDeltaDiscret}, illustrating that our graph model is a first order asymptotic model. 
\begin{figure}[htbp]
	\begin{center}
 \includegraphics[width=0.8\textwidth]
 {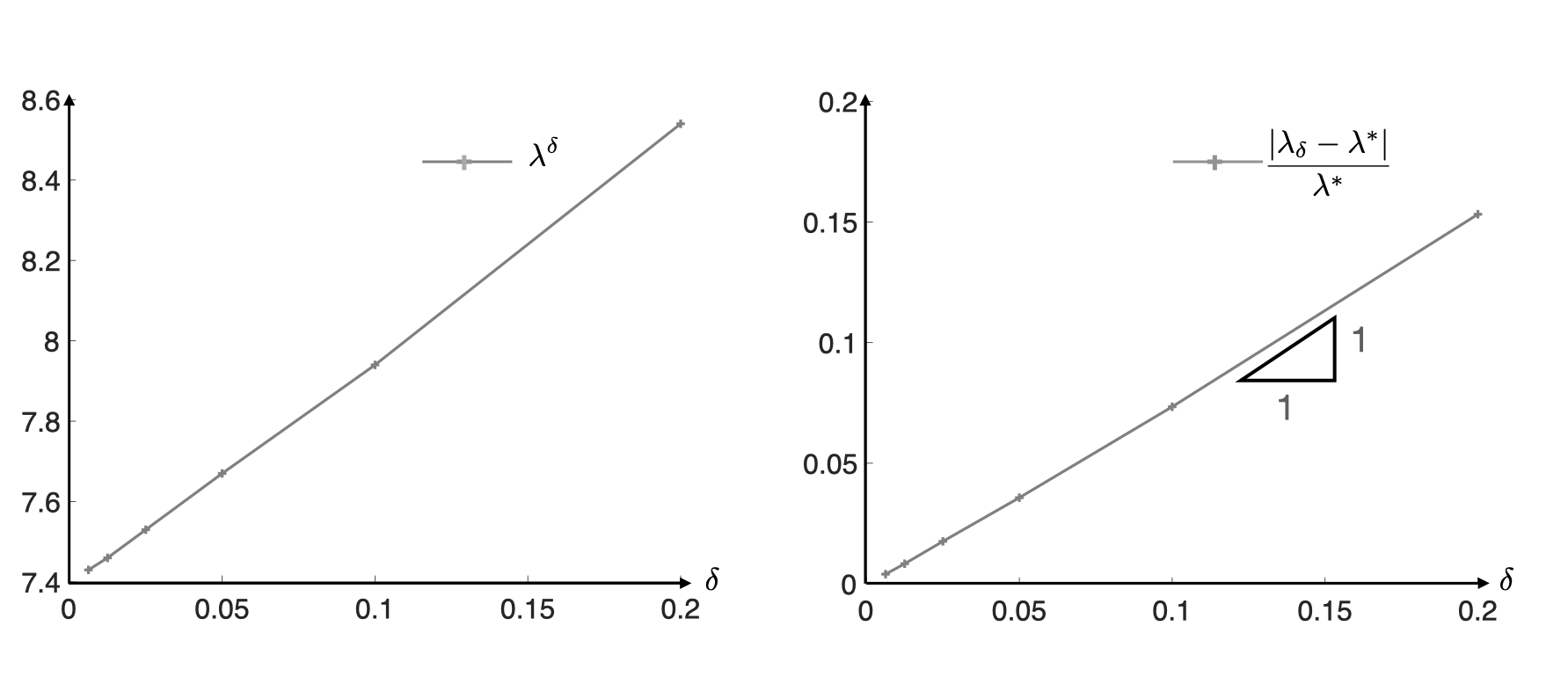}
  \caption{For $\beta = {5}/{12}$: representation of $\delta\mapsto\lambda_\delta$  (left) and the relative error $\delta\mapsto|\lambda_\delta-\lambda^\ast_0|/|\lambda^\ast_0|$ (right).}
	 \label{fig:LambdaOfDeltaDiscret}
 	\end{center}\vspace{-0.5cm}
\end{figure}
 Then, Figure~\ref{fig:VpPlateZigZag}-(left) shows the evolution of the spectrum of $N_{\delta,1}^0(\beta)$ with respect to $\beta$ for $\beta \in  ({1}/{3},{1}/{2} )$ .  The striped blue part represents the essential spectrum (which depends on $\beta$), while the eigenvalue $\lambda_\delta^\beta$ (discrete spectrum) is represented in magenta. As expected, the curve $\beta \mapsto \lambda_\delta^\beta$ is almost flat. Note that with our algorithm, we did not find any eigenvalue for $\beta > {1}/{3}$.
\begin{figure}[htbp]
	\begin{center}\vspace{-0.5cm}
    \includegraphics[width=0.9\textwidth, trim=0cm 0cm 0cm 0cm, clip] {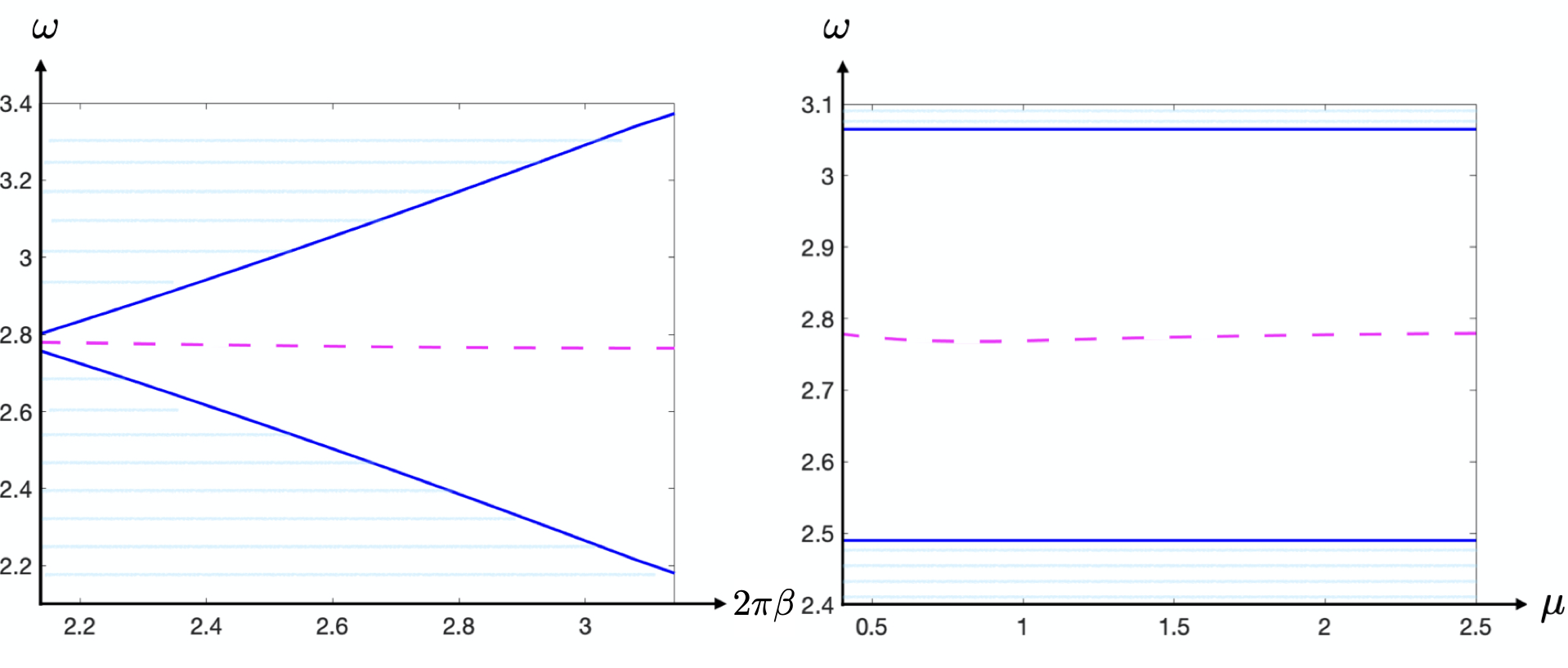}
  \caption{Spectrum of $N_{\delta,1}^0(\beta)$  for $\delta =0.05$:  $\mu=1$, $\beta \in (\frac{1}{3},\frac{1}{2})$ (left) ;  $\beta=\frac{5}{12}$, $\mu \in [0.5, 2.5]$(right).}
	 \label{fig:VpPlateZigZag}
 	\end{center}\vspace{-0.5cm}
\end{figure}

 In a third step,  we modify the width of the perturbed edges from $\delta$ to $\mu \delta$, making varying $\mu$ from $0.5$ to $2.5$. For $\beta = {5}/{12}$ and $\delta =0.05$,  the spectrum of $N_{\delta,\mu}^0(\beta)$  is represented on Figure~\ref{fig:VpPlateZigZag}-(right). Here again,  the striped blue part represents the essential spectrum (and  here is independent of $\mu$). The discrete spectrum, represented with the dotted magenta line, varies very slowly with respect to $\mu$. 

 In the case $\delta=0.1$, the absolute value of a corresponding eigenvector are represented on Figure~\ref{fig:VecteurPropreZigZag}  for $\mu=1$ (left panel) and for  $\mu=2$ (middle panel). We remark that choosing two different values of $\mu$ in the perturbed zig-zag edges also creates a guided mode (see Remark \ref{rem:different_mu}): an example of eigenvector (absolute value) is represented on Figure~\ref{fig:VecteurPropreZigZag}-(right) wherein we choose $\mu = 2$ in the upper perturbed oblique edge and $\mu=1/2$ in the lower perturbed one.
\begin{figure}[htbp]
	\begin{center}
 \includegraphics[width=1\textwidth, trim=0cm 0cm 0cm 0cm, clip]
 {./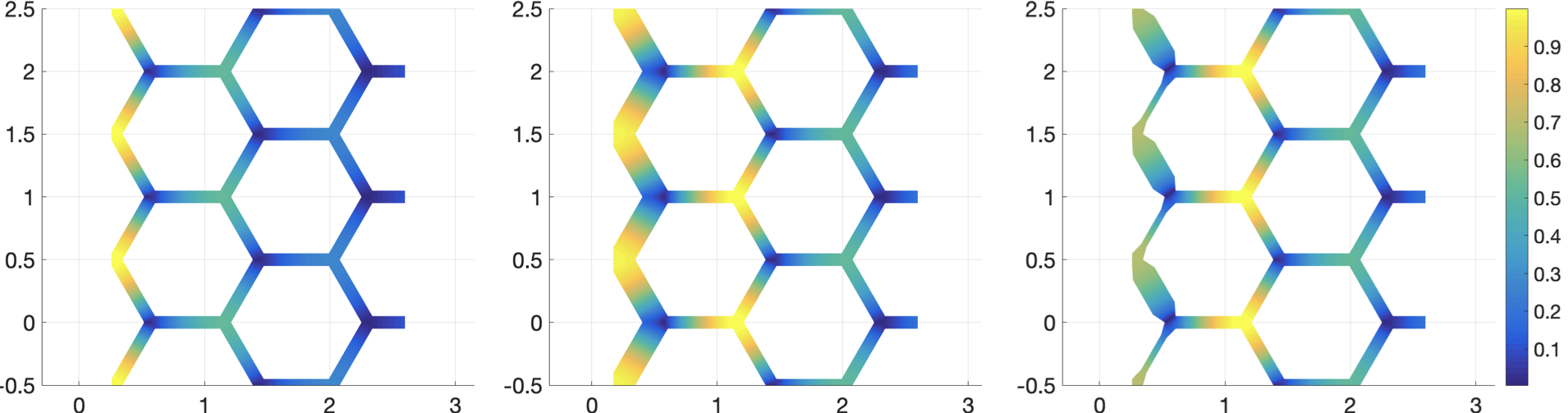}

  \caption{Examples of eigenvectors (absolute value) of $N_{\delta,\mu}^0(\beta)$ for $\beta ={5 }/{12}$ and $\delta =0.1$: $\mu=1$  (left),  $\mu=2$ (middle),  $\mu$ variable (right)}
	 \label{fig:VecteurPropreZigZag}
 	\end{center}\vspace{-0.5cm}
\end{figure}

 In the three pictures of Figure~\ref{fig:VecteurPropreZigZag},  we notice that the absolute value of the eigenvector is  small in the  junctions of type $B$ (namely junctions that shrinks to a $B_{n, m}$ point as $\delta$ goes to $0$). This was predicted by the graph model since the limit eigenvector vanishes on the $B_{n, m}$ (see Remark~\ref{rem:chirality}).   For the operator  $D_{\delta,\mu}^0(\beta)$, similar pictures are represented on Figure~\ref{fig:VecteurPropreZigZagDirichlet} in the case $\beta = \frac{1 }{6}$.  By contrast here, the absolute value is very small in the vicinity of the  junctions on type $A$.\\

\begin{figure}[htbp]
	\begin{center}
  \includegraphics[width=1\textwidth, trim=0cm 0cm 0cm 0cm, clip]
 {./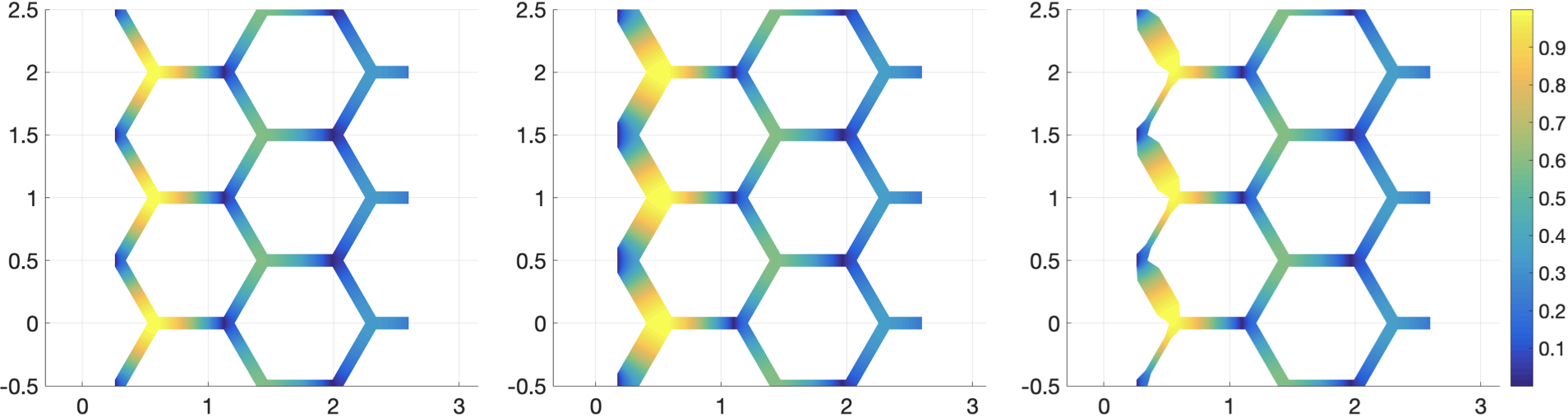}

  \caption{Examples of eigenvectors (absolute value) of $D_{\delta,\mu}^0(\beta)$ for $\beta ={1 }/{6}$ and $\delta =0.1$: $\mu=1$ (left),  $\mu=2$ (middle), $\mu$ variable (right) .}
	 \label{fig:VecteurPropreZigZagDirichlet}
 	\end{center}\vspace{-0.5cm}
\end{figure}

 Finally, we check that those eigenfrequencies indeed correspond to symmetric modes  for $\beta\in ({1}/{3}, {1}/{2})$ (respectively antisymmetric modes for $\beta\in [0, {1}/{3})$)   associated with the Laplacian-Neumann operator posed on an infinite domain obtained by making a mirror symmetry of the domain $\Omega_0^{\delta,\mu}$ (the domain is then infinite in the two directions $x\rightarrow \pm\infty$).   For $\delta =0.1$, the corresponding eigenvectors (real part) are represented in  Figure~\ref{fig:VecteurPropreZigZagGuideComplet} ($\beta ={5 }/{12}$) and Figure~\ref{fig:VecteurPropreZigZagGuideCompletAS} ($\beta ={1 }/{6}$) for two different values of $\mu$.  In all the cases, the associated eigenfrequency is between $7.3$ and $8$. Naturally, the modes are even for $\beta ={5 }/{12}$ while they are odd for  $\beta ={1 }/{6}$.

\begin{figure}[htbp]
	\begin{center}
 \includegraphics[width=1\textwidth, trim=0cm 0cm 0cm 0cm, clip]
 {./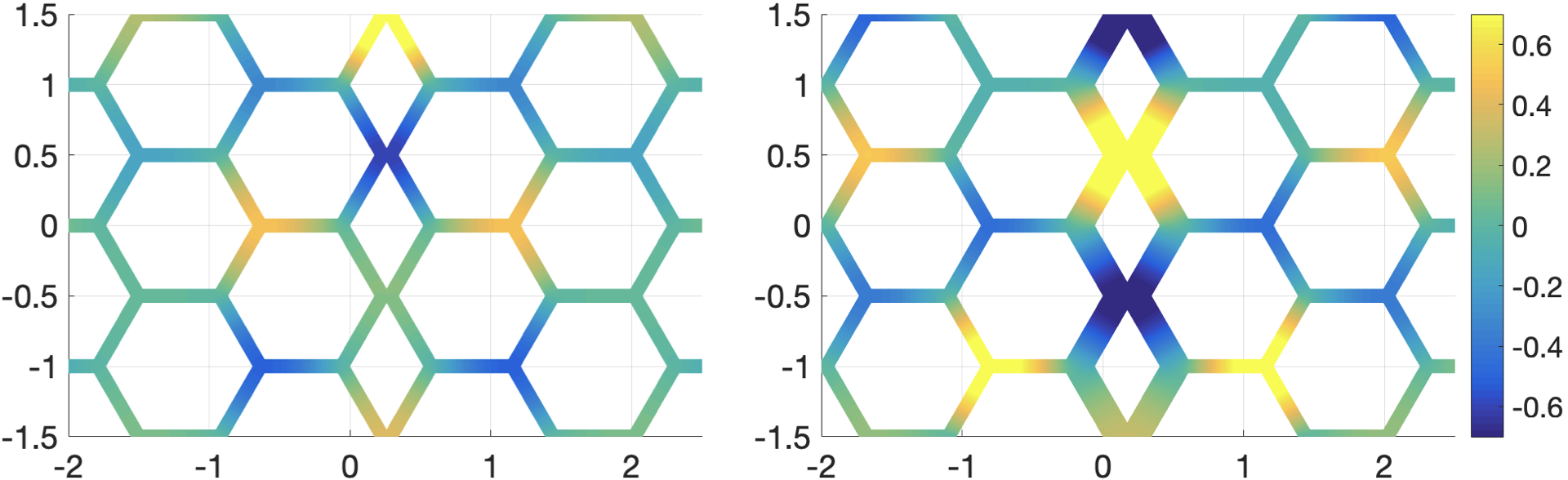}
  \caption{Real part of one eigenvector for $\beta ={5}/{12}$ and $\delta =0.1$:  $\mu=1$ (left);  $\mu=2$ (right).}
	 \label{fig:VecteurPropreZigZagGuideComplet}
 	\end{center}\vspace{-0.5cm}
\end{figure}

\begin{figure}[htbp]
	\begin{center}
 \includegraphics[width=1\textwidth, trim=0cm 0cm 0cm 0cm, clip] {./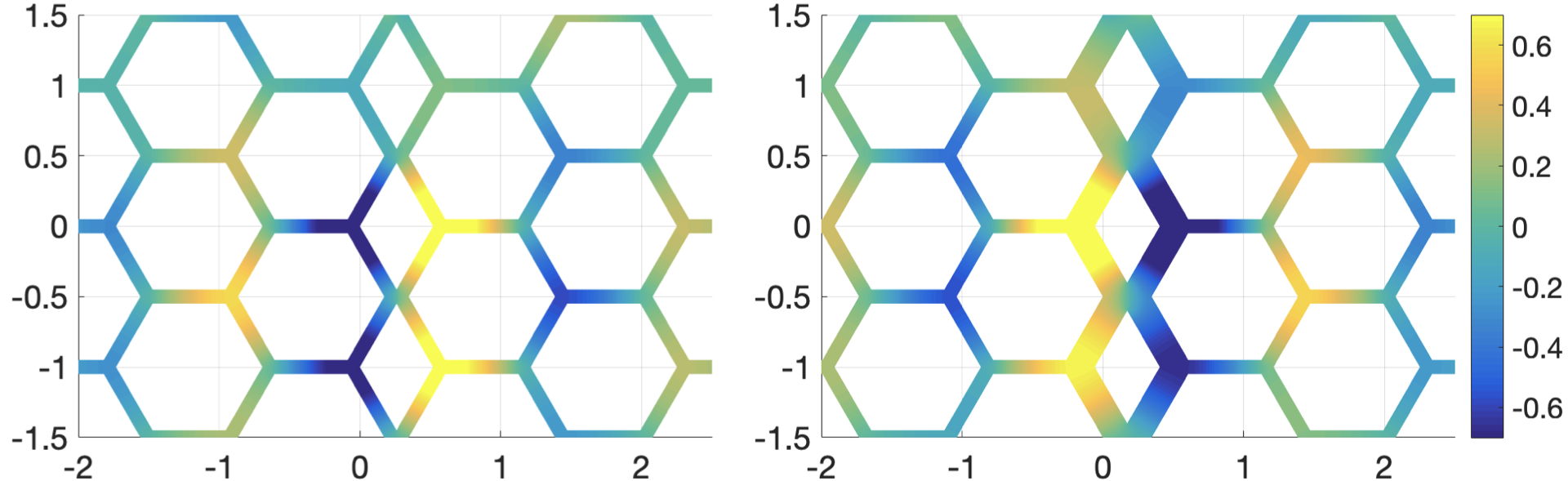}
  \caption{Real part of one eigenvector for $\beta ={1}/{6}$ and $\delta =0.1$:  $\mu=1$ (left);  $\mu=2$ (right).}
	 \label{fig:VecteurPropreZigZagGuideCompletAS}
 	\end{center}\vspace{-0.5cm}
\end{figure}

\subsubsection{The case $t\neq 0$ }

For $t\neq 0$, we focus on the operator  $N_{\delta,1}^t(\beta)$ (with $\mu=1$).  First, we reproduce the experiments of Figure~\ref{DessinBetaPiSurTrois} for a 'thick' graph domain of thickness $\delta = 0.05$.  
Figure~\ref{fig:FlotSpectraux} presents the evolution of the discrete spectrum of $N_{\delta,1}^t(\beta)$ with respect to $t$ in the  gaps $I^0_\delta(\beta)$ and $I^1_\delta(\beta)$ for $\beta = \frac{1}{6}$ (left) and $\beta = {5}/{12}$ (right).   As predicted, we observe the existence of a spectral flow, i.e. functions $t\mapsto\sqrt{\lambda(t)}$ where $\lambda(t)$ is an eigenvalue of $N_{\delta,1}^t(\beta)$, through the gap $I^0_\delta(\beta)$  and three spectral flows for $I^1_\delta(\beta)$. However, in the case   $\beta ={5}/{12}$,  it seems that the curves $t \mapsto \lambda_\delta(t)$ are not always monotone: some brutal change appears when the cut is made in the vicinity of the junction (i.e. $t \approx L$)).

 By contrast to the limit case (Figure~\ref{DessinBetaPiSurTrois}), there is no more eigenvalues strictly independent of $\beta$ (blue points in those figures). However, Figure~\ref{fig:FlatEigenvalues} confirms that the eigenvalues represented with blue stars in Figure~\ref{fig:FlotSpectraux} ($t=L$, $t=L/3$ and $t=5L/3$ for $\beta ={1}/{6}$ and $t=0$, $t=2L/3$ and $t=4L/3$ for $\beta ={5}/{12}$) also vary very slowly with $\beta$ (see the dotted magenta line). The absolute value of two associated eigenvectors  is represented on~Figure~\ref{fig:VPHighFrequency}.

\begin{figure}[htbp]
	\begin{center}\vspace{-0.5cm}
 \includegraphics[width=0.7\textwidth]{./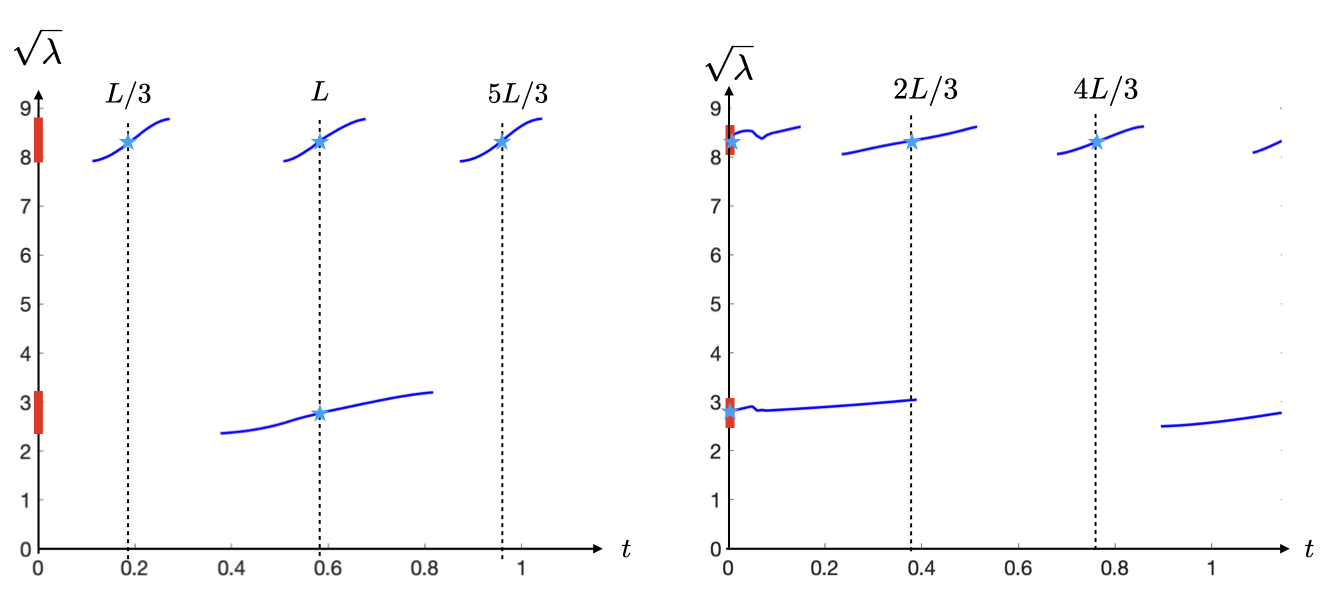}
  \caption{Evolution of the discrete spectrum of  $N_{\delta,1}^t(\beta)$ with respect to $t$ in the first two gaps for  $\delta =0.05$, $\beta =\frac{1}{6}$ (left) and $\beta =\frac{5}{12}$ (right).}
	 \label{fig:FlotSpectraux}
 	\end{center}\vspace{-1cm}
\end{figure}

\begin{figure}[htbp]
	\begin{center}
    \includegraphics[width=0.8\textwidth]{./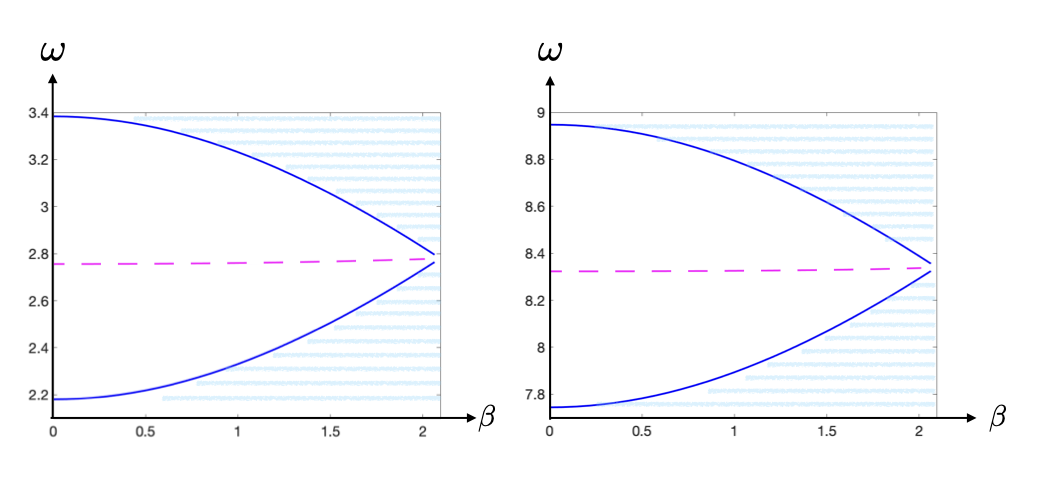}\vspace{-0.5cm}
  \caption{Evolution of the discrete spectrum of  $N_{\delta,1}^t(\beta)$   with respect to $\beta$ ($\beta \in [0, {1}/{3})$) for $t=L$ (left, gap $I^0_\delta(\beta)$) and $t =L/3$ (right, gap $I^1_\delta(\beta)$) }
	 \label{fig:FlatEigenvalues}
 	\end{center}\vspace{-1.3cm}
\end{figure}

\begin{figure}[htbp]
	\begin{center}
	 \includegraphics[width=0.4\textwidth, trim=0cm 0cm 0cm 0cm, clip]
 {./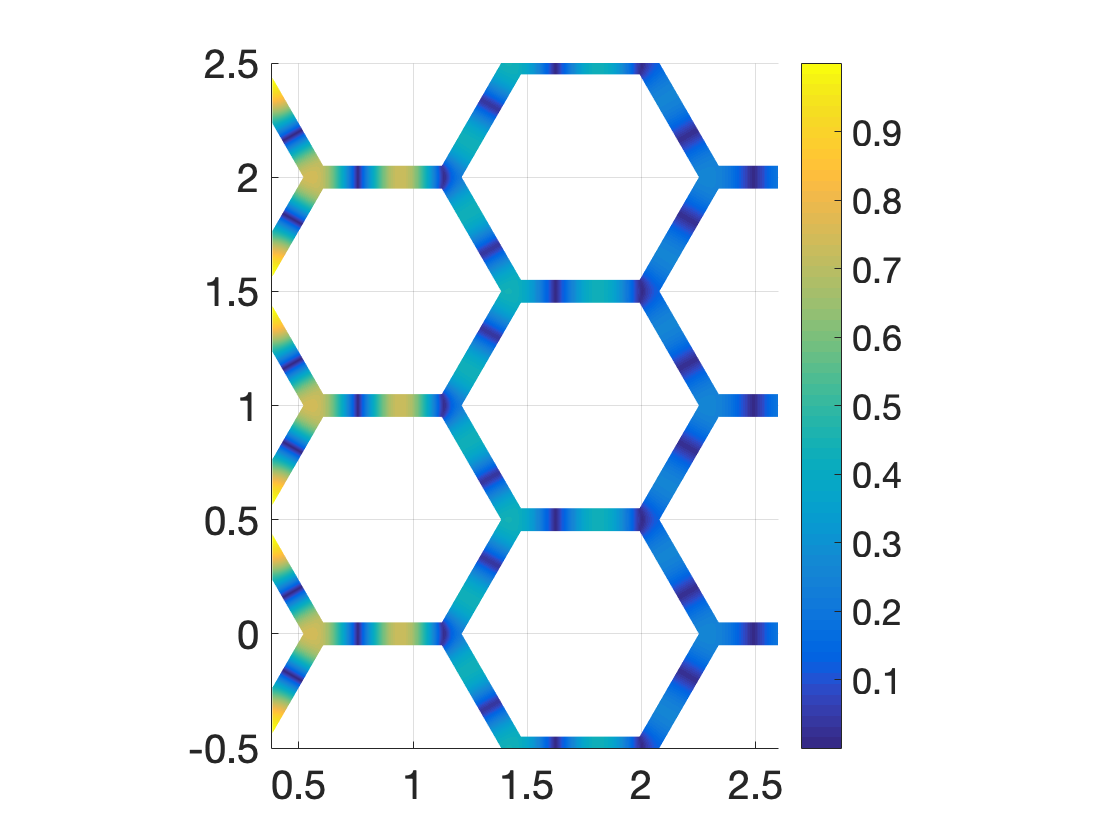}
\includegraphics[width=0.4\textwidth, trim=0cm 0cm 0cm 0cm, clip]
 {./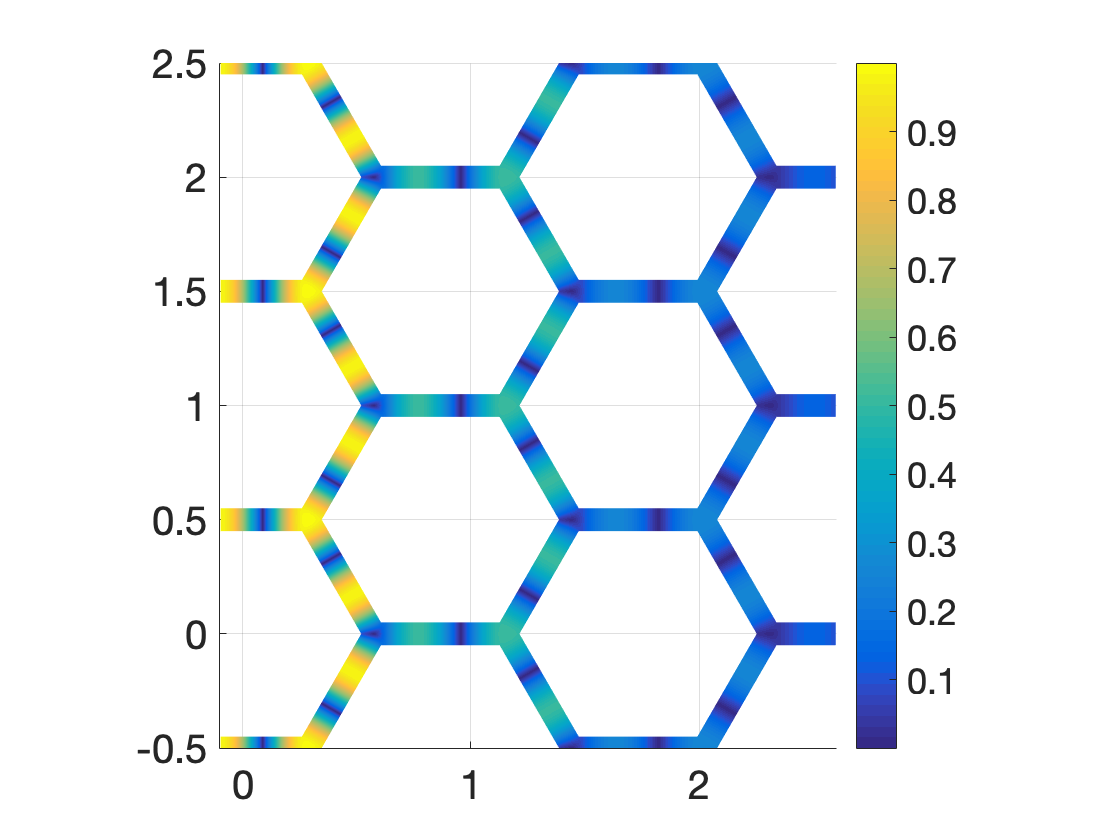}
  \caption{Examples of eigenvectors for $\delta =0.1$:  $t={L}/{3}$ and $\beta ={1}/{6}$ (left)  $t={4L}/{3}$ and $\beta={5}/{12}$ (right).}
	 \label{fig:VPHighFrequency}
 	\end{center}
\end{figure}

	\appendix
    \section{Proof of Proposition \ref{CharacSpectrumGraphHexa}}
    \label{annex:proof_propquantum}
    Let us fix $\kg\in\mathcal{B}$. Suppose that $u\in {D}(\cA(\kg))$ and $\cA(\kg) u=\lambda u$. If $\sqrt{\lambda}\notin \N\pi/L$, by definition of $\cA(\kg)$ and using the parametrization \eqref{eq:param}, we deduce that
        $$\begin{array}{|ll}
        \dsp u\big|_{e_0}(t)=u(A)\frac{\sin(\sqrt{\lambda} (L-t))}{\sin\sqrt{\lambda} L}+u(B)\frac{\sin(\sqrt{\lambda} t)}{\sin \sqrt{\lambda} L},&\\ [5pt]
        \dsp u|_{e_i}(t)=u(A)\frac{\sin(\sqrt{\lambda}(L-t))}{\sin\sqrt{\lambda} L}+u(B)e^{ 2 \imath \pi \kg\cdot\vg_i}\,\frac{\sin(\sqrt{\lambda} t)}{\sin\sqrt{\lambda} L}&\text{for }i\in\{1,2\},
        \end{array}
        $$
        where we have used that $u\in H^1_{\kg}(\cG^\sharp)$. Since $u\in {D}(\cA(\kg))$, the Kirchhoff condition at $A$ and $B$ are satisfied and yield respectively
        $$
        3u(A)\,\cos\sqrt{\lambda} L+(1+e^{2\imath\pi \kg\cdot\vg_1}+e^{2\imath\pi \kg\cdot\vg_2})\,u(B)=0,
        $$
        and
        $$
        (1+e^{-2\imath\pi \kg\cdot\vg_1}+e^{-2\imath\pi \kg\cdot\vg_2})\,u(A)+3u(B)\cos\sqrt{\lambda} L\,=0.
        $$
        We conclude that if $\sqrt{\lambda}\notin \N\pi/L$, there exists a non trivial function $u\in {D}(\cA(\kg))$ such that $\cA(\kg) u=\lambda u$ if and only if 
        \begin{equation}\label{DispersionRelation}
        \cos^2 \sqrt{\lambda} L = \frac{1}{9} \left| 1 +  e^{2\imath\pi \vg_1\cdot\kg} + e^{2\imath\pi\vg_2\cdot\kg}\right|^2
        \end{equation}
        If $\sqrt{\lambda}\in\N\pi/L$, the function $u$ defined by $u|_{e_i}(t)=\alpha_i\,\sin(\sqrt{\lambda} t)$ is in $D(\cA(\kg))$ for all $\kg$ if $\sum_{i=1}^3 \alpha_i=0$ and it satisfies $\cA(\kg) u=\lambda u$. This ends the proof of the first part of the proposition.
      
        Let us now introduce the function
        $f: \,\thetag = (\theta_1, \theta_2)\in\R^2 \mapsto  {1}/{9} | 1 + e^{i \theta_1} + e^{i \theta_2}|^2 \in\R^+$.
        By noting that
        $$
        f(\thetag)= \frac{1}{9} \left( 1 + 8 \cos(\frac{\theta_1 - \theta_2}{2}) \cos(\frac{\theta_1}{2}) \cos(\frac{\theta_2}{2})  \right)\;\;\forall \thetag\in\R^2,
        $$
        one can show that $f(\R^2) \subset [0,1]$. Moreover, we show that
        \begin{equation}\label{DispersionRelation0}
         f(\thetag) = 0\quad  \Leftrightarrow \quad \thetag =   \thetag_{(p,q)}^\pm \quad \mbox{where } \; \thetag_{(p,q)}^\pm = \pm ( \frac{2\pi}{3} + 2 p \pi, - \frac{2\pi}{3} + 2 q \pi) \quad \forall (p, q)\in \Z.
        \end{equation}
        and
        $$
         f(\thetag) = 1 \quad\Leftrightarrow \quad\thetag = \thetag_{(p,q)} \quad \mbox{where } \; \thetag_{(p,q)} = (2 p\pi, 2 q\pi)\; \quad \forall (p,q) \in\Z^2.
        $$
        By using the first part of the proposition, we deduce that $\lambda_0=0$ and
        
            \begin{equation}\label{eq:lambda_1_2}
              \begin{array}{l}
         \sqrt{\lambda_1(\kg)}L=\arccos(\sqrt{f(\thetag(\kg))})\in(0,\pi/2),\\[3pt]
         \sqrt{\lambda_2(\kg)}L=\pi -\arccos(\sqrt{f(\thetag(\kg))}) \in(\pi/2,\pi),\\[3pt]
          \sqrt{\lambda_3(\kg)} L = \pi,
            \end{array}
        \end{equation}
        where $\thetag(\kg)=2\pi(\kg\cdot\vg_1,\kg\cdot\vg_2)$. By periodicity with respect to $\sqrt{\lambda} L$ of the dispersion relation \eqref{eq:disp_rel}, we show easily that
        \begin{equation}\label{eq:per_lambda}
        \forall i\in\{0,1,2\},\quad \sqrt{\lambda_{3n+i}}=\sqrt{\lambda_i}+n\pi/L.
        \end{equation}
        We deduce that it suffices to prove \eqref{eq:Dirac_graph} for $n=0$ to deduce it for all  \Becommente{$n$}. Note that
         $$
          \sqrt{\lambda_{1}(\kg)}L = \sqrt{\lambda_{2}}(\kg)L =  \pi/2  \quad
         \Leftrightarrow\quad (\kg\cdot\vg_1,  \kg\cdot\vg_2) =\pm (\frac{1}{3},- \frac{1}{3})\quad 
         \Leftrightarrow  \dsp \quad \kg\in\{{\bf K}, \,{\bf K}'\} .
         $$ 
         As a result, the dispersion surfaces $\lambda_{1}$ and $\lambda_{2}$ intersect at the vertices of the Brillouin zone. Since $\nabla_{\kg}[f(\thetag(\kg))]= {{2 \pi}}[\vg_1 \;\vg_2]\nabla_\theta f (\thetag(\kg))$ and the $\thetag_{(p,q)}^\pm$ are minima of $f$, we have $\nabla_{\kg}[f(\thetag(\kg))]=0$ for $\kg={\bf K}$ and $\kg={\bf K}'$ (and all the other vertices of the Brillouin zone). Moreover, since $ \nabla_{\kg}\nabla_{\kg}^t[f(\thetag(\kg))]={(2 \pi)^2}[\vg_1\; \vg_2]\nabla_\theta\nabla_\theta^t f (\thetag(\kg))[\vg_1\;\vg_2]^t$, we have
        $$
        [\nabla_\theta\nabla_\theta^t f] (\thetag)= \frac{1}{9}\left[  \begin{matrix}2 & -1  \\ 
          -1 & 2 \end{matrix} \right]\;\text{for}\;\thetag=\thetag_{(p,q)}^\pm\quad\Rightarrow\quad
        \nabla_{\kg}\nabla_{\kg}^t[f(\thetag(\kg))]=\frac{{{(2 \pi)^2}}}{6}\left[ \begin{matrix}1 & 0  \\ 
          0 & 1 \end{matrix} \right]\;\text{for} \;\kg\in\{{\bf K},{\bf K}'\},$$
        which yields
        $$
        \forall \kg\in\mathcal{B},\quad f(\thetag(\kg))= {\frac{ \pi^2}{3}}\|\kg-{\bf K}^*\|^2+ \mathcal{O}(\| \kg-{\bf K}^* \|^3)\;\;\text{for} \;{\bf K}^*\in\{{\bf K},{\bf K}'\},
        $$
         and finally, by \eqref{eq:lambda_1_2}
          \[
             \forall \kg\in\mathcal{B},\quad\begin{array}{|l}
                \dsp \sqrt{\lambda_{1}(\kg)} L=   \frac{\pi}{2 } - {\ \frac{\pi}{\sqrt{3}}} \|\kg-{\bf K}^*\| + \mathcal{O}(\| \kg-{\bf K}^* \|^2),\\[5pt]
                \dsp \sqrt{\lambda_{2}(\kg)} L =   \frac{\pi}{2 } + { \frac{\pi}{\sqrt{3}}} \|\kg-{\bf K}^*\| + \mathcal{O}(\| \kg-{\bf K}^* \|^2).
                \end{array}
         \]

  \section{Proof of Proposition \ref{PropositionDiracgrapheEpais}}\label{annex:proof_propDirac}
    If $\phi_{\delta,1}$ is an eigenvector of  $A_{\delta,1}({\bf K}^*)$ associated with the eigenvalue $\lambda_\delta$ then by Proposition \ref{sym_eigenvectors}, 
    $\phi_{\delta,2}:=\overline{\mathcal{S}\phi_{\delta,1} }$ is  an eigenvector of $A_{\delta,2}({\bf K}^*)$ associated with the same eigenvalue $\lambda_\delta$.
  
  {\bf Step 1: Preliminary notation}
  ~\\It is easy to see that for any ${\bf k}$,
  $
  A^\delta({\bf k})=e^{-2\imath\pi {\bf k}\cdot{\bf x}} \hat{A}^\delta({\bf k}) e^{2\imath\pi {\bf k}\cdot{\bf x}},
  $
  where \\
  $ \hat{A}^\delta({\bf k})=-(\nabla+2\imath\pi{\bf k})^2, \quad D(\hat{A}^\delta({\bf k}))= \left\{  v \in  H^1_{\mathbf{0}}(\Omega_\delta,\Delta), (\nabla+2\imath\pi{\bf k}) v \cdot n = 0 \; \mbox{on} \; \partial \Omega_\delta  \right\}.
  $

  It is easy to see that the eigenvalues of $A^\delta({\bf k})$ coincide with the ones of $\hat{A}^\delta({\bf k})$ and the associated eigenvectors (resp. $\phi^\delta$ and $\hat{\phi}^\delta$) are related by $\hat{\phi}^\delta=e^{-2\imath\pi {\bf k}\cdot {\bf x}}{\phi}^\delta $. Because of the geometry and the boundary conditions, the domain of the operators $\hat{A}^\delta({\bf k})$ still depends on ${\bf k}$. It will be convenient in the sequel to use bilinear forms instead of operators. Let us then introduce
  \begin{equation}\label{eq:form_bil}
  \forall u,v \in H^1_{\mathbf{0}}(\Omega_\delta),\quad [a^\delta({\bf k})](u,v):=\int_{\mathcal{C}_\delta^\sharp}(\nabla+2\imath\pi{\bf k}) u\cdot\overline{(\nabla+2\imath\pi{\bf k}) v}
  \end{equation}
  and it is easy to see that
  \[
   \forall u\in  D(\hat{A}^\delta({\bf k})),\;\forall v \in H^1_{\mathbf{0}}(\Omega_\delta),\quad \int_{\mathcal{C}_\delta^\sharp}\hat{A}^\delta({\bf k})u\,\overline{v} = [a^\delta({\bf k})](u,v).
  \]
  \Becommente{Moreover,}  $\lambda^\delta({\bf k})$ is an eigenvalue of $\hat{A}^\delta({\bf k})$ if and only if there exists $\hat{\phi}^\delta({\bf k})\in H^1_{\mathbf{0}}(\Omega_\delta)$ such that
  \begin{equation}\label{eq:form_bil_eig}
  \forall v \in H^1_{\mathbf{0}}(\Omega_\delta),\quad[a^\delta({\bf k})](\hat{\phi}^\delta({\bf k}),v)=\lambda^\delta({\bf k})\,\int_{\mathcal{C}_\delta^\sharp}\hat{\phi}^\delta({\bf k})\,\overline{v}.
  \end{equation}
  Let us consider now ${\bf k}=\boldsymbol{\xi}+{\bf K}^*$ with ${\bf K}^*\in\{{\bf K},{\bf K}'\}$ and $\boldsymbol{\xi}$ small enough (this will become more precise later on) and the remainder \begin{equation}\label{eq:defremainder} R(\boldsymbol{\xi}):=\lambda^\delta(\boldsymbol{\xi}+{\bf K}^*)-\lambda^\delta({\bf K}^*).
  \end{equation} 
  Since, ${\bf k}\mapsto \lambda^\delta({\bf k})$ is Lipschitz continuous we know that $R$ tends to $0$ when $\boldsymbol{\xi}$ tends to ${\bf 0}$. We want to study the precise behaviour of $R$ for small $\boldsymbol{\xi}$. Let $\hat{\phi}_{\delta}(\boldsymbol{\xi}+{\bf K}^*)$ be an eigenvector of $\hat{A}^\delta(\boldsymbol{\xi}+{\bf K}^*)$ associated to the eigenvalue $\lambda_\delta(\boldsymbol{\xi}+{\bf K}^*)$. We can decompose it as
  \begin{equation}\label{eq:decomp_eigenvector}
  \begin{array}{c}
  \hat{\phi}_{\delta}(\boldsymbol{\xi}+{\bf K}^*) = \alpha_1(\boldsymbol{\xi}) \hat{\phi}_{\delta,1} + \alpha_2(\boldsymbol{\xi})\hat{\phi}_{\delta,2}+\hat{\phi}^R(\boldsymbol{\xi}),\\[3pt]
  \text{where for }i\in\{1,2\}, \;\alpha_i(\boldsymbol{\xi})=(\hat{\phi}_{\delta}(\boldsymbol{\xi}+{\bf K}^*),\hat{\phi}_{\delta,i})_{L^2({\cal C}_\delta^\sharp)}\text{ and  }(\hat{\phi}^R(\boldsymbol{\xi}),\hat{\phi}_{\delta,i})_{L^2({\cal C}_\delta^\sharp)}=0.
  \end{array}
  \end{equation}
   By \eqref{eq:form_bil}, we can decompose $a^\delta(\boldsymbol{\xi}+{\bf K}^*)$ as follows
  \[
  a^\delta(\boldsymbol{\xi}+{\bf K}^*)=a^\delta({\bf K}^*)+2\imath\pi\boldsymbol{\xi}\cdot {\bf b}^\delta({\bf K}^*)+(2\pi)^2\|\boldsymbol{\xi}\|^2\,i_{L^2}
  \]
  where for all ${\bf k}$
  \begin{equation}\label{eq:form_bil_bc}\begin{array}{ll}
  \forall u,v \in H^1_{\mathbf{0}}(\Omega_\delta),&
  [{\bf b}^\delta({\bf k})](u,v):=\int_{\mathcal{C}_\delta^\sharp} \big[u\,\overline{(\nabla+2\imath\pi{\bf k}) v}-(\nabla+2\imath\pi{\bf k})u\,\overline{ v}\big]\\[3pt]
  &i_{L^2}(u,v)=\int_{\mathcal{C}_\delta^\sharp}u\overline{v}.
  \end{array}
  \end{equation}
  Plugging this decomposition in \eqref{eq:form_bil_eig}, with ${\bf k}=\boldsymbol{\xi}+{\bf K}^*$, we obtain that $\forall v \in H^1_{\mathbf{0}}(\Omega_\delta)$
  \begin{multline*}
   [a^\delta({\bf K}^*)-\lambda({\bf K}^*) i_{L^2}](\hat{\phi}^\delta({\bf k}),v)=-2\imath\pi\boldsymbol{\xi}\cdot [{\bf b}^\delta({\bf K}^*)](\hat{\phi}^\delta({\bf k}),v) +(R(\boldsymbol{\xi})-(2\pi)^2\|\boldsymbol{\xi}\|^2 )\,(\hat{\phi}^\delta({\bf k}),v)_{L^2}.
  \end{multline*}
  This can be rewritten in an operator form. By introducing, thanks to Riesz theorem
  \[
  \begin{array}{l}
  \mathbb{A}^\delta_{{\bf K}^*}: H^1_{\mathbf{0}}(\Omega_\delta)\rightarrow \Becommente{H^1_{\mathbf{0}}(\Omega_\delta)},\quad \forall u,v \in H^1_{\mathbf{0}}(\Omega_\delta),\; ( \mathbb{A}^\delta_{{\bf K}^*}u,v)_{H^1}=[a^\delta({\bf K}^*)](u,v)\\[3pt]
  \mathbb{I}_{L^2}: H^1_{\mathbf{0}}(\Omega_\delta)\rightarrow \Becommente{H^1_{\mathbf{0}}(\Omega_\delta)},\quad \forall u,v \in H^1_{\mathbf{0}}(\Omega_\delta),\; (\mathbb{I}_{L^2}u,v)_{H^1}=(u,v)_{L^2}\\[3pt]
  \mathbb{B}^\delta_{{\bf K}^*}: H^1_{\mathbf{0}}(\Omega_\delta)\rightarrow [H^1_{\mathbf{0}}(\Omega_\delta)]^2,\quad \forall u,v \in H^1_{\mathbf{0}}(\Omega_\delta),\; \langle \mathbb{B}^\delta_{{\bf K}^*}u,v\rangle=[{\bf b}^\delta({\bf K}^*)](u,v)
  \end{array}
  \]
  with $(\cdot,\cdot)_{H^1}$ denoting the scalar product of $\Becommente{H^1(\Omega_\delta)}$, we have 
  \begin{multline*}
   [\mathbb{A}^\delta_{{\bf K}^*}-\lambda({\bf K}^*) \mathbb{I}_{L^2}]\,\hat{\phi}^\delta({\bf k})=-2\imath\pi\boldsymbol{\xi}\cdot {\mathbb{B}}^\delta_{{\bf K}^*}\,\hat{\phi}^\delta({\bf k}) +(R(\boldsymbol{\xi})-(2\pi)^2\|\boldsymbol{\xi}\|^2 )\,\mathbb{I}_{L^2}\,\hat{\phi}^\delta({\bf k}).
  \end{multline*}
  We can now plug \eqref{eq:decomp_eigenvector} into the previous formula to finally obtain
  \begin{multline}\label{eq:fred1}
   [\mathbb{A}^\delta_{{\bf K}^*}-\lambda({\bf K}^*) \mathbb{I}_{L^2}]\,\hat{\phi}^R(\boldsymbol{\xi})=\big[-2\imath\pi\boldsymbol{\xi}\cdot {\mathbb{B}}^\delta_{{\bf K}^*}\, +[R(\boldsymbol{\xi})-(2\pi)^2\|\boldsymbol{\xi}\|^2 ]\,\mathbb{I}_{L^2}\big]\,\hat{\phi}^R(\boldsymbol{\xi}) \\
   +\big[-2\imath\pi\boldsymbol{\xi}\cdot {\mathbb{B}}^\delta_{{\bf K}^*}\, +(R(\boldsymbol{\xi})-(2\pi)^2\|\boldsymbol{\xi}\|^2 )\,\mathbb{I}_{L^2}\big]\,\sum_{j=1,2}\alpha_j(\boldsymbol{\xi}) \hat{\phi}_{\delta,j}.
  \end{multline}
  {\bf Step 2: Schur complement reduction}
  ~\\ By Rellich theorem, $\mathbb{I}_{L^2}$ is a compact operator. This implies that $[\mathbb{A}^\delta_{{\bf K}^*}-\lambda({\bf K}^*) \mathbb{I}_{L^2}]$ is a Fredholm operator of index $0$. Moreover, $\lambda({\bf K}^*)$ is an eigenvalue of multiplicity 1 of $A_{\delta,1}({\bf K}^*)$  and $A_{\delta,2}({\bf K}^*)$ but not of $A_{\delta,0}({\bf K}^*)$. Then, we can show easily that $\lambda({\bf K}^*)$ is an eigenvalue of multiplicity 2 of $\mathbb{A}^\delta_{{\bf K}^*}$ and its kernel, denoted $\mathcal{N}$, is span$(\hat{\phi}_{\delta,1},\hat{\phi}_{\delta,2})$.  We deduce that (1) equation \eqref{eq:fred1} has a solution if and only if the r.h.s. is orthogonal in $H^1$ to $\mathcal{N}$ and (2) if $\mathbb{P}$ denotes the orthogonal projection on $\mathcal{N}^\perp$ for the $H^1$-scalar product, then $\mathbb{P}[\mathbb{A}^\delta_{{\bf K}^*}-\lambda({\bf K}^*) \mathbb{I}_{L^2}]\mathbb{P}$ is invertible. Since by \eqref{eq:decomp_eigenvector} and by definition of $\mathbb{I}_{L^2}$
  \begin{equation}\label{eq:proj_eigen}
  \forall i\in\{1,2\},\quad (\mathbb{I}_{L^2}\,\hat{\phi}^R(\boldsymbol{\xi}),\hat{\phi}_{\delta,i})_{H^1} = 0,
  \end{equation}
  the statement (1) implies that $\forall i\in\{1,2\},$
  \begin{multline}\label{eq:cond1}
  ( 2\imath\pi\boldsymbol{\xi}\cdot {\mathbb{B}}^\delta_{{\bf K}^*} \hat{\phi}^R(\boldsymbol{\xi}),\hat{\phi}_{\delta,i})_{H^1}\\ = (\,\big[-2\imath\pi\boldsymbol{\xi}\cdot {\mathbb{B}}^\delta_{{\bf K}^*}\, +(R(\boldsymbol{\xi})-4\pi^2\|\boldsymbol{\xi}\|^2 )\,\mathbb{I}_{L^2}\big]\,\sum_{j=1,2}\alpha_j(\boldsymbol{\xi}) \hat{\phi}_{\delta,j} ,\hat{\phi}_{\delta,i} \,)_{H^1}
  \end{multline} 
  Moreover applying $\mathbb{P}$ to \eqref{eq:fred1}, by using that $\mathbb{P}\hat{\phi}^R(\boldsymbol{\xi})=\hat{\phi}^R(\boldsymbol{\xi})$, $\mathbb{P}\mathbb{I}_{L^2}\hat{\phi}^R(\boldsymbol{\xi})=\hat{\phi}^R(\boldsymbol{\xi})$) and $\mathbb{P}^*\mathbb{I}_{L^2}\hat{\phi}_{\delta,j}=0$ for $j=1,2$, which follows from \eqref{eq:proj_eigen}, we obtain
  \begin{equation}\label{eq:fred2}
   \mathcal{A}(\boldsymbol{\xi},R(\boldsymbol{\xi}))\,\hat{\phi}^R(\boldsymbol{\xi}) 
   =\big[-2\imath\pi\mathbb{P}\boldsymbol{\xi}\cdot {\mathbb{B}}^\delta_{{\bf K}^*}\, \big]\,\sum_{j=1,2}\alpha_j(\boldsymbol{\xi}) \hat{\phi}_{\delta,j}.
  \end{equation}
  where 
  \begin{equation}\label{eq:A_xiR}
  \mathcal{A}(\boldsymbol{\xi},R):=\mathbb{P}[\mathbb{A}^\delta_{{\bf K}^*}-\lambda({\bf K}^*) \mathbb{I}_{L^2}]\mathbb{P}\,\big[\mathbb{I}-\mathcal{A}_1(\boldsymbol{\xi},R)\big]
\end{equation}
  and
  \begin{equation}\label{eq:A1_xiR}
  \mathcal{A}_1(\boldsymbol{\xi},R):=\big[\mathbb{P}[\mathbb{A}^\delta_{{\bf K}^*}-\lambda({\bf K}^*) \mathbb{I}_{L^2}]\mathbb{P}\big]^{-1}\,\big[-2\imath\pi\mathbb{P}\boldsymbol{\xi}\cdot {\mathbb{B}}^\delta_{{\bf K}^*}\, +(R-4\pi^2\|\boldsymbol{\xi}\|^2 )\,\mathbb{I}_{L^2}\big].
  \end{equation}
   When $\boldsymbol{\xi}$ and $R$ are small enough, we have
  \begin{equation}\label{eq:A_A1_prop}
  \|\mathcal{A}_1(\boldsymbol{\xi},R)\|<1,\quad\text{and}\;\big[\mathbb{I}-\mathcal{A}_1(\boldsymbol{\xi},R)\big]^{-1}=\sum_{n\in\N}(\mathcal{A}_1(\boldsymbol{\xi},R))^n.
  \end{equation}
   When $\boldsymbol{\xi}$ tends to $0$, we know that the remainder $R(\boldsymbol{\xi})$ tends also to $0$.
  This means that for $\boldsymbol{\xi}$ small enough, $\mathcal{A}_1(\boldsymbol{\xi}, R (\boldsymbol{\xi}))$ is invertible and $\mathcal{A}(\boldsymbol{\xi}, R (\boldsymbol{\xi}))$  also. We can then deduce from \eqref{eq:fred2} $\hat{\phi}^R(\boldsymbol{\xi})$ in terms of  $\hat{\phi}_{\delta,1}$ and $\hat{\phi}_{\delta,2}$.
  If we replace this expression in \eqref{eq:cond1}, we obtain 
  \begin{equation}\label{eq:syst_22}
  \left( (4\pi^2\|\boldsymbol{\xi}\|^2-R(\boldsymbol{\xi}))\mathbb{I}_2 + \|\boldsymbol{\xi}\|^2  B_2(\boldsymbol{\xi},R(\boldsymbol{\xi})) +  B_1(\boldsymbol{\xi})\right)\left[\begin{matrix}\alpha_1(\boldsymbol{\xi})\\\alpha_2(\boldsymbol{\xi})\end{matrix}\right]=0
  \end{equation}
  where $\forall\, i,j\in\{1,2\}$
  \begin{equation}\label{eq:Bi}
  \begin{array}{l}
  \|\boldsymbol{\xi}\|^2 [B_2(\boldsymbol{\xi},R(\boldsymbol{\xi}))]_{ij}:=(\,4\pi^2\boldsymbol{\xi}\cdot {\mathbb{B}}^\delta_{{\bf K}^*}\,\mathcal{A}(\boldsymbol{\xi}, R(\boldsymbol{\xi}))^{-1} \,\mathbb{P}^*\boldsymbol{\xi}\cdot {\mathbb{B}}^\delta_{{\bf K}^*}\hat{\phi}_{\delta,j} ,\hat{\phi}_{\delta,i}\,)_{H^1},\\[3pt]
    [B_1(\boldsymbol{\xi})]_{ij}:=(\,2\imath\pi \boldsymbol{\xi}\cdot {\mathbb{B}}^\delta_{{\bf K}^*}\hat{\phi}_{\delta,j} ,\hat{\phi}_{\delta,i}\,)_{H^1}
  \end{array}\end{equation}
  By using \eqref{eq:A_xiR}, \eqref{eq:A1_xiR} and \eqref{eq:A_A1_prop}, note that for $\boldsymbol{\xi}$ small enough
  \begin{equation}\label{eq:Bi_comp}
  \|\boldsymbol{\xi}\|^2 [B_2(\boldsymbol{\xi},R(\boldsymbol{\xi}))]_{ij}=\mathcal{O}(\|\boldsymbol{\xi}\|^2+R(\boldsymbol{\xi}))\|\boldsymbol{\xi}\|^2).
  \end{equation}
  Finally, there exists an eigenvector as in \eqref{eq:decomp_eigenvector} if and only if this $2\times 2$ system has a solution which is equivalent to
  \begin{equation}\label{eq:det}
  \text{det}\left( (4\pi^2\|\boldsymbol{\xi}\|^2-R(\boldsymbol{\xi}))\mathbb{I}_2 + \|\boldsymbol{\xi}\|^2  B_2(\boldsymbol{\xi},R(\boldsymbol{\xi})) +  B_1(\boldsymbol{\xi})\right)=0
  \end{equation}
  {\bf Step 3: Behaviour of the remainder} ~\\
  By Proposition \ref{sym_eigenvectors} and since $\hat{\phi}_{\delta,i}=e^{-2\imath\pi\mathbf{k}\cdot\mathbf{x}}{\phi}_{\delta,i}$, we have
  \[
  B_1(\boldsymbol{\xi})=\left[\begin{matrix}0&4\imath\pi v_\delta(\xi_1+\imath\xi_2)\\-4\imath\pi v_\delta(\xi_1-\imath\xi_2)
  \end{matrix}\right].
  \]
  The relation \eqref{eq:det} then rewrites
  \begin{equation}\label{eq:eqR}
  R^2-16\pi^2v_\delta^2\|\boldsymbol{\xi}\|^2=F(\boldsymbol{\xi},R),
  \end{equation}
  where $F$ is a smooth function with respect to $R$, lipschitz continuous which respect to $\boldsymbol{\xi}$ and by \eqref{eq:Bi_comp}, satisfies  for $\boldsymbol{\xi}$ and $R$ small enough
  \begin{equation}\label{eq:FR}
  F(\boldsymbol{\xi},R)=\mathcal{O}(\|\boldsymbol{\xi}\|^4+R\|\boldsymbol{\xi}\|^2)
  \end{equation}
  We suspect then that for $\boldsymbol{\xi}$ small enough, $R(\boldsymbol{\xi})^2\approx16\pi^2v_\delta^2\|\boldsymbol{\xi}\|^2$. Suppose that $v_\delta\neq 0$ and let us introduce $\eta_\pm(\boldsymbol{\xi})$ such that $R(\boldsymbol{\xi})=\pm 4\pi v_\delta\|\boldsymbol{\xi}\|(1+\eta_\pm(\boldsymbol{\xi}))$. From \eqref{eq:eqR} and \eqref{eq:FR}, we obtain if $v_\delta\neq 0$ that
  \begin{equation}\label{eq:eqR2}
  2\eta_\pm+\eta_\pm^2=G_\pm(\boldsymbol{\xi},\eta_\pm),
  \end{equation}
  where 
  $G_\pm$ is a smooth function with respect to $\eta_\pm$, lipschitz continuous which respect to $\boldsymbol{\xi}$ and satisfies by \eqref{eq:FR} for $\boldsymbol{\xi}$ and $\eta_\pm$ small enough
  \begin{equation}\label{eq:GR}
  G(\boldsymbol{\xi},\eta_\pm)=\mathcal{O}(\|\boldsymbol{\xi}\|^2+\eta_\pm\|\boldsymbol{\xi}\|)
  \end{equation}
  By the implicit function theorem, we deduce that for $\boldsymbol{\xi}$ small enough, $\eta_\pm$ is a Lipschitz function of $\boldsymbol{\xi}$ and $\eta_\pm\rightarrow 0$ when $\boldsymbol{\xi}\rightarrow 0$.\\
  {\bf Conclusion : } By definition \eqref{eq:defremainder} of the remainder, we have finally shown that if $\lambda$ is an eigenvalue of $A_\delta({\bf K}^*)$ where ${\bf K}^*\in\{{\bf K},{\bf K}'\}$ such that  $\lambda$ is a simple eigenvalue of $A_{\delta,1}({\bf K}^*)$ and of $A_{\delta,2}({\bf K}^*)$ but not of $A_{\delta,0}({\bf K}^*)$ and if $v_\delta$ defined \eqref{eq:v_delta} does not vanish, then for $\boldsymbol{\xi}$ small enough, there exists two eigenvalues of $A_\delta({\bf K}^*+\boldsymbol{\xi})$ having the behaviour \eqref{eq:Dirac points} with $\alpha^*=4\pi v_\delta$.
 	
\section{Technical results}\label{AppendixTechnique}
\begin{lemma}\label{LemmeEstimationNorme}
Let $\hat{\varphi}_\delta$ be defined by \eqref{DefinitionPhideltaHat}. Then,
$$
\left\| \hat{\varphi}_\delta\right\|_{L^2(\mathcal{C}_\delta^\sharp)} = \frac{\sqrt{3 \delta}}{\sqrt{2}}   + O(\delta^{3/2})
\quad\text{and}\quad
 \left\| \nabla\hat{\varphi}_\delta\right\|_{L^2(\mathcal{C}_\delta^\sharp)} = \frac{\sqrt{3 \delta}}{\sqrt{2}} \sqrt{\lambda_n}  + O(\delta^{3/2})
$$ 
\end{lemma}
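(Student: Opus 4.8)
The plan is to compute both norms by integrating over the decomposition of the periodicity cell $\mathcal{C}_\delta^\sharp$ into the three shrunken edges $\tilde{e}_{i,\delta}$ and the four junction regions $J^A_\delta,\,J^B_{i,\delta}$ introduced before \eqref{DefinitionPhideltaHat}. On the junction regions the function $\hat{\varphi}_\delta$ is constant (equal to $1/\sqrt{L}$ on $J^A_\delta$ and to $0$ on each $J^B_{i,\delta}$), so its gradient vanishes there; and since each junction region is contained in a ball of radius $O(\delta)$ and hence has area $O(\delta^2)$, the junctions contribute $O(\delta^2)$ to $\|\hat{\varphi}_\delta\|_{L^2}^2$ and exactly $0$ to $\|\nabla\hat{\varphi}_\delta\|_{L^2}^2$. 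Thus, up to $O(\delta^2)$, both squared norms are governed entirely by the integrals over the shrunken edges.

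First I would exploit the precise geometry of the fattened edges: for $\sqrt{3}\delta/2 < t_i(\mathbf{x}) < L-\sqrt{3}\delta/2$ the polygon $e_{i,\delta}$ is \emph{exactly} a rectangle of transverse width $\delta$ (this is precisely where the slanted caps end, the vertex $A+\delta(-\sqrt{3}/2,1/2)$ sitting at longitudinal distance $\sqrt{3}\delta/2$ from $A$). Since $\hat{\varphi}_\delta|_{\tilde{e}_{i,\delta}}=\phi_0(g(t_i))$ depends only on the longitudinal variable and $\nabla\hat{\varphi}_\delta=\tau_i\,\phi_0'(g(t_i))\,g'(t_i)$ with $|\tau_i|=1$, Fubini reduces each contribution to a one-dimensional integral carrying the exact transverse factor $\delta$:
\[
\int_{\tilde{e}_{i,\delta}}|\hat{\varphi}_\delta|^2 = \delta\int_{\sqrt{3}\delta/2}^{L-\sqrt{3}\delta/2}|\phi_0(g(t))|^2\,dt, \qquad \int_{\tilde{e}_{i,\delta}}|\nabla\hat{\varphi}_\delta|^2 = \delta\int_{\sqrt{3}\delta/2}^{L-\sqrt{3}\delta/2}|\phi_0'(g(t))|^2\,g'(t)^2\,dt.
\]
The change of variable $u=g(t)$, with $g'$ the constant $L/(L-\sqrt{3}\delta)$, turns these into $\delta\,\frac{L-\sqrt{3}\delta}{L}\,\|\phi_0\|^2_{L^2(0,L)}$ and $\delta\,\frac{L}{L-\sqrt{3}\delta}\,\|\phi_0'\|^2_{L^2(0,L)}$, respectively.

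It then remains to evaluate the two one-dimensional norms using $\phi_0(t)=\frac{1}{\sqrt{L}}\cos(\sqrt{\lambda_n}\,t)$ together with the identity $\sqrt{\lambda_n}\,L=\pi/2+n\pi$, which gives $\cos(\sqrt{\lambda_n}L)=0$ and $\sin(2\sqrt{\lambda_n}L)=0$; these yield at once $\|\phi_0\|^2_{L^2(0,L)}=1/2$ and $\|\phi_0'\|^2_{L^2(0,L)}=\lambda_n/2$. Summing over the three edges and inserting $\frac{1}{L-\sqrt{3}\delta}=\frac{1}{L}(1+O(\delta))$ gives $\|\hat{\varphi}_\delta\|^2_{L^2}=\frac{3\delta}{2}+O(\delta^2)$ and $\|\nabla\hat{\varphi}_\delta\|^2_{L^2}=\frac{3\delta}{2}\lambda_n+O(\delta^2)$; taking square roots and expanding $\sqrt{1+O(\delta)}=1+O(\delta)$ produces the claimed expansions with remainder $O(\delta^{3/2})$. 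The computation is entirely elementary; the only point deserving care is the exact rectangular shape of the shrunken edges, which makes the transverse integration yield the exact factor $\delta$ (not merely an approximate one) and is precisely the source of the factor $1/\sqrt{2}$ via $\|\phi_0\|^2=1/2$, itself a consequence of the mixed Neumann–Dirichlet boundary conditions satisfied by $\phi_0$.
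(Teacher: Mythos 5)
Your proof is correct and follows essentially the same route as the paper's: decompose $\mathcal{C}_\delta^\sharp$ into the junction regions (where $\hat{\varphi}_\delta$ is constant, the gradient vanishes, and the area is $O(\delta^2)$) and the rectangular shrunken edges, reduce to one-dimensional integrals via Fubini and the affine change of variable $u=g(t)$, and evaluate $\|\phi_0\|^2_{L^2(0,L)}=1/2$, $\|\phi_0'\|^2_{L^2(0,L)}=\lambda_n/2$ using $\sqrt{\lambda_n}\,L=\pi/2+n\pi$. If anything, your handling of the junction contribution is slightly more careful than the paper's (which misstates the constant in $\mathrm{Meas}(J^A_\delta)$, harmlessly, since only the $O(\delta^2)$ order matters).
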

\begin{proof}
We have, using the notation of Figure \ref{fig:periodicitycell}
$$
\left\| \hat{\varphi}_\delta(x)\right\|_{L^2(\mathcal{C}_\delta^\sharp)}^2 = \int_{J_\delta^A} \left|\hat{\varphi}_\delta(x)\right|^2 dx + \sum_{i=0}^2 \left( \int_{\tilde{e}_{i,\delta}} \left| \hat{\varphi}_\delta(x)\right|^2 dx + \int_{J_{i,\delta}^B}  \left|\hat{\varphi}_\delta(x)\right|^2 dx \right)
$$
where, by using \eqref{DefinitionPhideltaHat}
$$
\int_{J_\delta^A} \left|\hat{\varphi}_\delta(x)\right|^2 dx = \text{Meas}(J_\delta^A) = \frac{\sqrt{3} \delta^2}{8} \quad \mbox{ and } \int_{{J_{i,\delta}^B}}  \left|\hat{\varphi}_\delta(x)\right|^2 dx =0.
$$ 
Moreover, still using \eqref{DefinitionPhideltaHat}
$$
\int_{\tilde{e}_{i,\delta}} \left| \hat{\varphi}_\delta(x)\right|^2 dx  = \frac{L-\sqrt{3} \delta}{L} \int_0^L \frac{1}{L} \cos^2(\sqrt{\lambda_n} s) ds = \delta \frac{L-\sqrt{3} \delta}{ 2 L} 
$$
which allows to deduce the first result.
Similarly, by using \eqref{DefinitionPhideltaHat}
$$
\left\|\nabla \hat{\varphi}_\delta(x)\right\|_{L^2(\mathcal{C}_\delta^\sharp)}^2 =\sum_{i=0}^2  \int_{\tilde{e}_{i,\delta}} \left| \nabla \hat{\varphi}_\delta(x)\right|^2 dx =3 \delta \int_{\frac{ \sqrt{3}\delta}{2}}^{L -\frac{\sqrt{3}\delta}{2} }  (u'(g(t))^2 g'(t)^2 dt=3\delta  \frac{\lambda_n }{2} \frac{L }{L-\sqrt{3}\delta}
$$ 
which yields the second result.
% with
% \begin{multline*}
% \int_{\tilde{e}_{i,\delta}} \left| \nabla \hat{\varphi}_\delta(x)\right|^2 dx = \delta \int_{\frac{ \sqrt{3}\delta}{2}}^{L -\frac{\sqrt{3}\delta}{2} }  \left(\partial_t \left(u(g(t) \right))\right)^2 dt=\delta \int_{\frac{ \sqrt{3}\delta}{2}}^{L -\frac{\sqrt{3}\delta}{2} }  (u'(g(t))^2 g'(t)^2 dt \\=  \frac{\delta \lambda_n}{L} \frac{L }{L-\sqrt{3}\delta} \int_0^L \sin^2(\sqrt{\lambda_n} s) ds =\delta  \frac{\lambda_n }{2} \frac{L }{L-\sqrt{3}\delta}
% \end{multline*} 
% Therefore
% $$
% \left\|\nabla \hat{\varphi}_\delta(x)\right\|_{L_2(\mathcal{C}_\delta^\sharp)}^2 = \frac{3 \delta\lambda_n }{2} + O(\delta^2).
% $$ 
% As a result,
% $$
% \left\| \hat{\varphi}_\delta(x)\right\|_{H_1(\mathcal{C}_\delta^\sharp)} =\frac{\sqrt{3 \delta}}{\sqrt{2}} \sqrt{ 1 + \lambda_n}  + O(\delta^{3/2})
% $$ 
\end{proof}

\section{Proof of Lemmma~\ref{LemmeQuasiMode}}

Let $\tilde\phi_\delta$ be an eigenvector of $\mathcal{A}_{\delta,1}({\bf K})$ associated with the eigenvalue $\lambda_{n,\delta}$ such that $\|\tilde\phi_\delta \|_{L^2(\mathcal{C}_\delta^\sharp)}=1$. Then let 
$$ r_\delta = \varphi_\delta - \left(  \int_{\mathcal{C}_\delta^\sharp}  \varphi_\delta\overline{\tilde\phi_\delta} \right) \tilde\phi_\delta $$
Note that
$$ 
\int_{\mathcal{C}_\delta^\sharp} r_\delta \overline{\tilde\phi_\delta} dx =0,\quad
\text{and} 
\quad
\int_{\mathcal{C}_\delta^\sharp} \nabla r_\delta \cdot \overline{\nabla \varphi} - \lambda_{n,\delta} \int_{\mathcal{C}_\delta^\sharp} r_\delta\overline{ \varphi}  = L_\delta( \varphi),\;\forall\varphi\in H^1(\mathcal{C}_\delta^\sharp)
$$
where $L_\delta( \tilde\phi_\delta)=0$ and because of~\eqref{Quasi-modes}, we have
$$
 \sup_{ \varphi \in H^1(\mathcal{C}_\delta^\sharp),\| \varphi\|_{H^1} =1} \left|L_\delta( \varphi) \right|=\mathcal{O}(\sqrt{\delta}).
$$ 
Then, using the lemma \ref{LemmeInversibilite} below, we have
\begin{equation}\label{Step1}
     \|r_\delta \|_{H^1(\mathcal{C}_\delta^\sharp)} =\mathcal{O}(\sqrt{\delta}).
\end{equation}
We deduce using also \eqref{DefinitionPhiDelta} that
$$
\exists \theta_\delta\in[0,2\pi),\quad\int_{\mathcal{C}_\delta^\sharp}  \varphi_\delta \overline{\tilde\phi_\delta}=  e^{\imath\theta_\delta } + \mathcal{O}(\sqrt\delta). 
$$ 
Now, let $\phi_\delta =e^{\imath\theta_\delta } \tilde\phi_\delta$. It corresponds to the eigenmode of Lemma \ref{LemmeQuasiMode}. We have $r_\delta= \varphi_\delta - \phi_\delta  +\mathcal{O}(\sqrt\delta)$ with \eqref{Step1} allows to deduce the estimate of Lemma \ref{LemmeQuasiMode}.

\begin{lemma}\label{LemmeInversibilite}
Let $L_\delta$ be a linear form on $H^1(\mathcal{C}_\delta^\sharp)$ such that $L_\delta(\tilde\phi_\delta) =0$. Then, there exists a unique function $v_\delta \in H^1(\mathcal{C}_\delta^\sharp)$ such that
$$
\int_{\mathcal{C}_\delta^\sharp} \nabla v_\delta \cdot \overline{\nabla \varphi} dx - \lambda_{n,\delta} \int_{\mathcal{C}_\delta^\sharp} v_\delta \overline{\varphi} dx= L^\delta(\varphi),\;\forall\varphi\in H^1(\mathcal{C}_\delta^\sharp) \quad  \mbox{ and }  \int_{\mathcal{C}_\delta^\sharp} v_\delta \overline{\tilde\phi_\delta} dx =0.
$$
Moreover, for $\delta$ small enough, there exists a constant $C$ independent of $\delta$ such that 
$$
\| v_\delta \|_{H^1(\mathcal{C}_\delta^\sharp)} \leq C \sup\, \{\,\left|L_\delta( \varphi)\right|,\;\varphi \in H^1(\mathcal{C}_\delta^\sharp),\| \varphi\|_{H^1} =1\} .$$ 
\end{lemma}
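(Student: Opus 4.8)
The plan is to read the variational problem as the inversion, on the $L^2$-orthogonal complement of the eigenfunction $\tilde\phi_\delta$, of the bounded Hermitian sesquilinear form
\[
a_\delta(u,v) := \int_{\mathcal{C}_\delta^\sharp}\nabla u\cdot\overline{\nabla v}\,dx - \lambda_{n,\delta}\int_{\mathcal{C}_\delta^\sharp} u\,\overline{v}\,dx ,
\]
and to extract existence, uniqueness and a $\delta$-uniform bound from the spectral structure of $A_{\delta,1}({\bf K})$ established earlier. First I would fix the abstract framework: on the relevant variational space $V_\delta$ (the closed subspace of $H^1(\mathcal{C}_\delta^\sharp)$ on which $\tilde\phi_\delta$ is the eigenfunction, equipped with the $H^1(\mathcal{C}_\delta^\sharp)$ norm) the form $a_\delta$ is the coercive form $(\nabla u,\nabla v)_{L^2}+(u,v)_{L^2}$ perturbed by the term $-(1+\lambda_{n,\delta})(u,v)_{L^2}$, which is compact since $V_\delta\hookrightarrow L^2(\mathcal{C}_\delta^\sharp)$ is compact. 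Hence the associated operator is self-adjoint with compact resolvent and $a_\delta$ induces a Fredholm map of index $0$ from $V_\delta$ to its dual.

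Existence and uniqueness then follow from the Fredholm alternative together with the orthogonality constraint. By hypothesis $\lambda_{n,\delta}$ is a simple eigenvalue of $A_{\delta,1}({\bf K})$ with normalized eigenfunction $\tilde\phi_\delta$, so the kernel of $a_\delta$ is exactly $\mathbb{C}\,\tilde\phi_\delta$; since $a_\delta(\cdot,\tilde\phi_\delta)=0$ and $L_\delta(\tilde\phi_\delta)=0$, the problem is compatible, and imposing $(v_\delta,\tilde\phi_\delta)_{L^2}=0$ selects the unique solution in $\{\tilde\phi_\delta\}^{\perp}$.

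The heart of the lemma is the $\delta$-uniform estimate, and for this I would argue by spectral decomposition rather than by a (hopeless) direct coercivity bound, the form being indefinite. Split $\{\tilde\phi_\delta\}^{\perp}\subset V_\delta$ into the span $E_-$ of the finitely many eigenfunctions of $A_{\delta,1}({\bf K})$ whose eigenvalues lie below $\lambda_{n,\delta}$ and its orthogonal complement $E_+$; recall that distinct eigenfunctions are orthogonal both in $L^2$ and in $H^1$, and that $a_\delta(u_+,u_-)=0$ for $u_\pm\in E_\pm$. On $E_+$ the uniform gap estimate \eqref{DistanceCSpectre} gives $a_\delta(u,u)\ge C_2\|u\|_{L^2}^2$, which, via $\|\nabla u\|_{L^2}^2=a_\delta(u,u)+\lambda_{n,\delta}\|u\|_{L^2}^2$ and the bound $\lambda_{n,\delta}\le\lambda_n^*+1$, upgrades to $H^1$-coercivity $a_\delta(u,u)\ge c\,\|u\|_{H^1}^2$. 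On $E_-$, $-a_\delta$ is $L^2$-coercive with constant $C_2$, and because every eigenvalue contributing to $E_-$ is bounded by $\lambda_{n,\delta}$ one has the norm equivalence $\|u\|_{H^1}^2\le(1+\lambda_{n,\delta})\|u\|_{L^2}^2$ on $E_-$, hence $-a_\delta(u,u)\ge \frac{C_2}{1+\lambda_{n,\delta}}\|u\|_{H^1}^2$. Writing $u=u_++u_-$ and testing with $v=u_+-u_-$ gives $a_\delta(u,v)=a_\delta(u_+,u_+)+|a_\delta(u_-,u_-)|\ge \gamma\,\|u\|_{H^1}^2$ with $\|v\|_{H^1}=\|u\|_{H^1}$, i.e. a $\delta$-uniform inf--sup (Babuška) constant $\gamma>0$; the desired bound $\|v_\delta\|_{H^1}\le \gamma^{-1}\sup_{\|\varphi\|_{H^1}=1}|L_\delta(\varphi)|$ follows at once.

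The main obstacle is exactly the uniformity in $\delta$: as $\delta\to0$ the cell $\mathcal{C}_\delta^\sharp$ degenerates onto the quantum graph, so generic elliptic constants degenerate and the indefinite form is never coercive. What rescues the argument is that both inputs are $\delta$-robust — $\lambda_{n,\delta}$ stays bounded (it converges to $\lambda_n^*$) and the distance from $\lambda_{n,\delta}$ to the rest of $\sigma(A_{\delta,1}({\bf K}))$ is bounded below by $C_2$ uniformly in $\delta$ by \eqref{DistanceCSpectre}. I expect the only delicate point to be the $\delta$-uniform equivalence of the $H^1$ and $L^2$ norms on the finite-dimensional negative subspace $E_-$; this holds because its eigenfunctions satisfy $\|\nabla\psi\|_{L^2}^2=\mu\,\|\psi\|_{L^2}^2$ with $\mu\le\lambda_{n,\delta}$ bounded, but it is worth spelling out explicitly since it is the one place where a naive estimate could lose control of the constant.
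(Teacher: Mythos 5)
Your proposal is correct, and it hinges on the same two $\delta$-robust inputs as the paper's proof (simplicity of $\lambda_{n,\delta}$ and the uniform gap \eqref{DistanceCSpectre}, plus boundedness of $\lambda_{n,\delta}$), but the uniformity step is executed along a genuinely different route. The paper works at the operator level in $L^2$: it introduces the reduced operator $A_\delta^r$, the restriction of the Bloch operator to the $L^2$-orthogonal complement of $\tilde\phi_\delta$, proves it is self-adjoint, and bounds $\|(A_\delta^r-\lambda_{n,\delta})^{-1}\|\le 2/C_2$ by the distance of $\lambda_{n,\delta}$ to $\sigma(A_\delta^r)$. You work at the form level in $H^1$: you split $\{\tilde\phi_\delta\}^{\perp}$ into the finite-dimensional negative spectral subspace $E_-$ and its complement $E_+$, prove $\delta$-uniform coercivity of $\pm a_\delta$ on each piece, and recombine with the test function $u_+-u_-$ (valid because eigenfunctions with distinct eigenvalues are orthogonal in both $L^2$ and $H^1$, so the cross terms vanish and $\|u_+-u_-\|_{H^1}=\|u\|_{H^1}$), which yields a $\delta$-uniform inf-sup constant and the conclusion via Babu\v{s}ka's theorem. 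Your route has a concrete advantage: it delivers the estimate exactly as stated, with $L_\delta$ measured in the dual norm of $H^1(\mathcal{C}_\delta^\sharp)$. This matters, since in the application (Lemma~\ref{LemmeQuasiMode}) $L_\delta$ is only controlled as a functional on $H^1$ and not as an $L^2$ function, so the paper's $L^2\to L^2$ resolvent bound needs an additional, unwritten step (for instance testing the equation with $v_\delta$ itself, or a functional-calculus bound on $(I+A_\delta^r)^{1/2}(A_\delta^r-\lambda_{n,\delta})^{-1}(I+A_\delta^r)^{1/2}$, which is again uniform thanks to the gap) to produce the stated $H^1$ bound; in exchange, the paper's argument is shorter and avoids the spectral bookkeeping. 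One point you should make explicit: the ``relevant variational space'' must be the symmetry-reduced, ${\bf K}$-quasi-periodic form domain of $A_{\delta,1}({\bf K})$, since the gap \eqref{DistanceCSpectre} is only available for that operator; with test functions ranging over that space, the compatibility identity $a_\delta(\cdot,\tilde\phi_\delta)=0$ is what lets you pass from solvability on $\{\tilde\phi_\delta\}^{\perp}$ to the full variational problem.
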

\begin{proof}
We remind that $ {\lambda}_{n,\delta}$ is a simple and isolated eigenvalue of the self-adjoint operator $A_{\delta,1}$. As a result, since  $L_\delta$ vanishes on $\tilde\phi_\delta$, existence, uniqueness and stability of $v_\delta$ is immediate from Fredholm Alternative. It remains to prove that the stability constant is independent of $\delta$. Let 
$$
L^2_0 = \{ v  \in L^2(\mathcal{C}_\delta^\sharp),\;  \; \int_{\mathcal{C}_\delta^\sharp} v \overline{\tilde\phi_\delta} dx =0\}.
$$
We introduce the reduced operator $A_\delta^r: \mathcal{D}(A_\delta^r) =   \mathcal{D}(A_{\delta, 0}(K))\cap L^2_0  \rightarrow  L^2(\mathcal{C}_\delta^\sharp)$ defined by 
$$
\forall u \in \mathcal{D}(A_\delta^r), \quad A_\delta^r \,u= A_{\delta, 0}({\bf K})\, u 
$$ 
Note that if $u \in \mathcal{D}(A_\delta^r)$, $A_\delta^r u \in L^2_0$ since
$$
 \int_{\mathcal{C}_\delta^\sharp} A_\delta^r u \cdot  \overline{\tilde\phi_\delta} = - \int_{\mathcal{C}_\delta^\sharp} \nabla u \cdot  \nabla \overline{\tilde\phi_\delta} = \lambda_{n, \delta}  \int_{\mathcal{C}_\delta^\sharp} u \cdot  \overline{\tilde\phi_\delta} dx =0.  $$ 
In addition, the operator $A_\delta^r$ is a self-adjoint operator since it is symmetric and Im$(A_\delta^r + I)= L^2_0$. Indeed, for any $f \in L^2_0$, there exists a unique $v \in  \mathcal{D}(A_\delta^r)$ such that 
$$
- \Delta v + v = f \;\mbox{in} \; \Omega_\delta, 
$$
and, we can check that since $f \in L^2_0$, $v$ is also in  $L^2_0$:
$$
0=\int_{\mathcal{C}_\delta^\sharp} \nabla v \cdot \overline{ \nabla  \tilde\phi_\delta} + \int_{\mathcal{C}_\delta^\sharp} v  \overline{\tilde\phi_\delta} dx = (\lambda_{n,\delta} + 1)  \int_{\mathcal{C}_\delta^\sharp} v  \overline{\tilde\phi_\delta} dx
 $$
 Besides the assumption \eqref{DistanceCSpectre} ensures that for $\delta$ small enough, there exists a constant $C_2$ independent of $\delta$ such that any $\lambda \in [\lambda_{n,\delta} - C_2/2,\lambda_{n,\delta} + C_2/2 ]$  does not belong to $\sigma(A_\delta^r ) $ so that 
 $$
 \|(A_\delta^r- \lambda_{n,\delta} I)^{-1}  \| \leq \frac{1}{dist(\lambda_{n,\delta}, \sigma(A_\delta^r))} \leq \frac{2}{C_2}.
 $$  
 
 \end{proof}

\bibliographystyle{plain}
\bibliography{biblioFinal}
\end{document}